\documentclass[11pt]{article}

\usepackage{graphicx}

\usepackage{amssymb,amsmath,amsthm,enumerate,hyperref,bm}

\usepackage{tikz}

\usepackage{fullpage}

\usetikzlibrary{decorations.markings}


\newtheorem{lemma}{Lemma}
\newtheorem{remark}{Remark}
\newtheorem{proposition}[lemma]{Proposition}
\newtheorem{definition}[lemma]{Definition}
\newtheorem{theorem}[lemma]{Theorem}

\newtheorem{corollary}[lemma]{Corollary}
\newtheorem*{Proposition*}{Proposition}

\newcommand{\BIN}{{\mathrm{Bin}}}

\newcommand{\tr}{{\rm tr}}
\newcommand{\Tr}{{\rm Tr}}
\newcommand{\dE}{\mathbb {E}}
\newcommand{\dP}{\mathbb{P}}

\newcommand{\dR}{\mathbb {R}}
\newcommand{\dC}{\mathbb {C}}
\newcommand{\cF}{\mathcal {F}}
\newcommand{\dZ}{\mathbb{Z}}

\newcommand{\cW}{\mathcal {W}}

\newcommand{\cS}{\mathcal {S}}

\newcommand{\SUPP}{ \mathrm{supp}}
\newcommand{\INT}{ \mathrm{full}}

\newcommand{\ABS}[1]{{{\left| #1 \right|}}} 
\newcommand{\BRA}[1]{{{\left\{#1\right\}}}} 
\newcommand{\SBRA}[1]{{{\left[#1\right]}}} 
\newcommand{\NRM}[1]{{{\left\| #1\right\|}}} 
\newcommand{\PAR}[1]{{{\left(#1\right)}}} 


\newcommand{\uB}{{\underline B}}

\newcommand{\1}{1\!\!{\sf I}}\newcommand{\IND}{\1}
\newcommand{\veps}{\varepsilon}
\newcommand{\si}{\sigma}

\newcommand{\BEAS}{\begin{eqnarray*}}
\newcommand{\EEAS}{\end{eqnarray*}}
\newcommand{\BEA}{\begin{eqnarray}}
\newcommand{\EEA}{\end{eqnarray}}
\newcommand{\BEQ}{\begin{equation}}
\newcommand{\EEQ}{\end{equation}}
\newcommand{\BIT}{\begin{itemize}}
\newcommand{\EIT}{\end{itemize}}
\newcommand{\BNUM}{\begin{enumerate}}
\newcommand{\ENUM}{\end{enumerate}}
\newcommand{\AND}{\quad \mathrm{ and } \quad}

\title{Eigenvalues of random lifts and polynomials of random permutation matrices}
\author{Charles Bordenave, Beno\^\i t Collins}

\begin{document}
\maketitle

\begin{abstract}
Let $(\sigma_{1}, \ldots, \sigma_d)$ be a finite sequence of independent random permutations, chosen uniformly 
either among all permutations or among all matchings on $n$ points. We show that,  in  probability, as 
$n\to\infty$, these permutations viewed as operators on the $n-1$ dimensional vector space 
$\{(x_1,\ldots, x_n)\in \dC^n, \sum x_i=0\}$, are asymptotically strongly free. 
Our proof relies on the development of a matrix version of the non-backtracking operator theory and a refined trace method.

As a byproduct, we show that the non-trivial eigenvalues of random $n$-lifts of a fixed based graphs approximately achieve the 
Alon-Boppana bound with high probability in the large $n$ limit. This result generalizes Friedman's Theorem 
stating that with high probability,
the Schreier graph generated by a finite number of independent random permutations is close to Ramanujan. 

Finally, we extend our results to tensor products of random permutation matrices. This extension is especially relevant in the context of quantum expanders.
\end{abstract}

\section{Introduction} 

\subsection{Weighted sum of permutations}
\label{sec:A}

Let $X$ be a countable set and $r,d$ positive integers. We consider $(a_0, \ldots, a_d)$  elements in $M_r (\dC)$ and let 
$\sigma_{i}\in S(X), i\in \{1,\ldots ,d\}$, be permutations of the set $X$. 
Let $\ell^2 (X)$ be the Hilbert space spanned by an orthonormal basis $\delta_x$ indexed by the elements of $x\in X$.
The permutation $\sigma_{i}$ acts naturally as a unitary operator $S_i$ on $\ell^2 (X)$
by $\sigma_i$, $S_i (g)(x) = g( \sigma_i (x))$ for all $g \in \ell^2 (X)$.
Let $1$ be the identity operator on $\ell^2 (X)$. 
We are interested in the operator on $\dC^r \otimes \ell^2 (X)$,  
\begin{equation}\label{eq:defA}
A = a_0 \otimes 1 + \sum_{i=1}^d a_i \otimes S_i.
\end{equation}
If $X$ is a finite set with $n$ elements, $A$ may be viewed as an $rn\times rn$ matrix.

If $X$ is finite,  the constant vector $\IND \in X$ is in $\ell^2 (X)$. In addition, it is
an eigenvector of any permutation matrix of $X$ associated to the eigenvalue $1$. Therefore, 
$$
H_1 = \dC^r \otimes \IND
$$ 
is an invariant vector space of $A$ and $A^*$ of dimension $r$. The restriction of $A$ to $H_1$ is given by
\begin{equation}\label{eq:defA1}
A_1 = a_0 + \sum_{i=1}^d a_i.
\end{equation}
When $X$ is finite, we are interested in the spectrum of $A$ on $H_0 = H _1^\perp$. The space $H_0$ is the set of 
$g \in \dC^r  \otimes \ell^2 (X)$ such that $\sum_x g(x) = 0 \in \dC^r$ where 
$g(x)$ denotes the orthogonal projection of $g$ on $\dC^r \otimes \{ \delta_x\}$
-- canonically identified to $\dC^r$.
We note that $A$ leaves $H_0$ invariant, i.e. $A H_0  \subset H_0$, therefore it also defines an operator on $H_0$. 
We will denote by $A_{|H_0}$ the restriction of $A$ to $H_0$, which we see as an element of 
$B( H_0)$.

We use the following standard notation. For a positive integer  $n$, wet set $ [n] = \{1, \ldots, n\}$.  
The \emph{absolute value} of a bounded operator $T$ is  
$
|T| = \sqrt{ T T^*}, 
$
$\sigma(T)$  denotes the \emph{spectrum} of $T$ and $\|T\|$ its \emph{operator norm}. 
Finally, $\rho(T)$ is the \emph{spectral radius} of $T$ and if $T$ is self-adjoint we set $s(T) = \sup \sigma(T)$ (the right edge of the spectrum). 
For example,
$$
 \| A _{|H_0} \| = \sup_{ g: g \in H_0\setminus\{0\}} \frac{ \| A g \| } { \|g \|  }. $$

It will be often useful to work with the operator $A$ when it is self-adjoint. 
Self-adjointness is ensured by replacing the operators $S_i$ by generic algebraically free unitary operators and checking
self-adjointness. This condition is essentially sufficient (it is sufficient if the cardinal of $X$ is large enough and if one 
requires the property to hold for any choice of permutation $S_i$).
For practical purposes, let us assume that the set $\{1, \ldots, d\}$ is endowed with an involution 
$i \mapsto i^{*}$ (and $i^{**} = i$ for all $i$). 
Then, the symmetry condition is fulfilled as soon as
\begin{equation}\label{eq:sym}
\hbox{$a_0^*=a_0$ \quad and \quad  for all  $i \in \{1, \ldots, d\}$, } \quad (a_i ) ^ * = a_{i^*}   \AND \sigma_{i^ *} = \sigma_i ^{-1}.  
\end{equation}
If the symmetry condition \eqref{eq:sym} holds, then $A$ is self-adjoint. 
In this case, 
$$s(A_{|H_0}) = \sup_{ g: g \in H_0\setminus\{0\}} \frac{ \langle g ,  A g \rangle } { \|g \|^2  } .$$

We are interested in the spectrum of $A$ when $X = [n]$, the permutations $\sigma_i$ are random and $n$ is large.
The operator $A$ becomes a random matrix which we study in the case when $\sigma_i$, $i \in [d]$
are random permutations whose distribution are described 
below.

\begin{definition}[Symmetric random permutations]\label{def:ps}
For some integer $0 \leq q \leq d/2$, we have for $1 \leq i \leq q$, $i^* = i + q$, for $q+1 \leq i \leq 2q$, $i^* = i - q$ 
and for $2q+1 \leq i \leq d$, $i^* = i$. The permutations $(\sigma_i), i \in  \{ 1, \ldots , q \}  \cup \{ 2q+1 , \ldots, d \}$, 
are independent, for $1 \leq i \leq 2q$, $\sigma_i$ is uniformly distributed in $\cS_n$ and $\sigma_{i^*} = \sigma_i^{-1}$. 
If $2q < d$, then we assume that $n$ is even and for $2q + 1 \leq i \leq d$, $\sigma_i$ is a uniformly distributed matching 
on $[n]$ (where a matching is a permutation $\sigma$ such that $\sigma^2 (x) = x$ and $\sigma( x) \ne x$ for all $x \in [n]$). 
\end{definition}

\subsection{Large $n$ limit, non-commutative probability spaces}

The following operator describes the local limit of $A$ (in the sense of Benjamini-Schramm, see \cite{MR2354165,MR3792625}) 
as $n$ grows large. For symmetric random permutations, 
let $q$ be as in Definition \ref{def:ps} and $X = X_\star= \dZ * \cdots * \dZ * \dZ_2* \cdots * \dZ_2$ be the free product 
generated by  $q$ copies of $\dZ$ and 
$d -2q$ copies of $\dZ_2$. We denote by $g_1, \ldots, g_d$ its 
generators, where  if $1 \leq i \leq q$, $(g_i, g_{i+q}) $ 
generates the $i$-th copy of $\dZ$.  
In $\dC^r \otimes \ell^{2} (X_\star)$, we define the convolution operator
\begin{equation}\label{eq:defAfree}
A_\star= a_0 \otimes 1 + \sum_{i=1}^d a_i  \otimes\lambda (g_i) ,
\end{equation}
with $a_0, \ldots , a_d$ from Equation \eqref{eq:defA}, and where $\lambda(g)$ is the left regular representation (left multiplication). 

In the non-commutative probability vocabulary, $A_\star$ is called a \emph{non-commutative random variable}, namely,
it is an element of $\mathcal{A}$ where $\mathcal{A}$ is a unital $^*$-algebra and $\tau$ is a faithful trace on it. 
Here, $\mathcal{A}$ is $M_r(C_r (X_\star))$, where $C_r (X_\star)$ is the reduced $C^*$-algebra of the group $X_\star$
and the trace is $r^{-1}\Tr\otimes \tau$, where $\tau (\lambda (g) )  = \IND (  g= e)$.

Recall that a sequence of complex random variables $(Y_n)$ converges \emph{in probability} to $y \in \dC$, if for any 
$\veps >0$, $ \dP ( |Y_n  - y | \geq \veps)$ converges to $0$ as $n$ goes to infinity.

\subsection{Linear or not linear?}

In this paper we study the spectrum of the operator $A$ defined in Equation 
\eqref{eq:defA}. The spectrum of the limiting operator defined in Equation \eqref{eq:defAfree} 
gives a candidate for the limiting spectrum, which we will show to be correct with high probability. 
This operator $A$ is a \emph{linear} combination of the permutation matrices $S_i$'s with matrix coefficients. 
On the other hand, the abstract of this manuscript mentions strong asymptotic freeness. As defined below, this is a property 
that involves the behaviour of any non-commutative \emph{polynomial} in the variables $S_i$'s with scalar coefficient, i.e. it is \emph{not} 
 necessarily a linear combination of the $S_i$'s. So there is no obvious a priori implication between the two problems. 
It turns out that these questions are actually equivalent. This fact is an important phenomenon that has been
widely used in random matrix theory in the last two decades, known as the \emph{linearization trick}. 
Details are provided in section \ref{sec:cor}.

\subsection{Large $n$ limit, main result}

For symmetric random permutations, it follows from results of Nica \cite{MR1197059} that the operators $(S_i), i \in [d],$ 
are \emph{asymptotically free} in 
probability, in the sense that for any polynomial $P$ in unitaries $\lambda(g_i)$ ($(g_i), i \in [d],$ symmetric generators of 
the group $X_\star$, as per the definition 
above Equation \eqref{eq:defAfree}), the corresponding polynomial $P_n$ obtained by replacing $\lambda(g_i)$ by $S_i$ 
(seen as a random variable permutation on $M_n(\mathbb{C} )$) satisfies that the random variable
$n^{-1}\Tr (P_n) \to \tau (P)$, where this convergence holds in probability. 

This notion is a particular case of the concept of \emph{asymptotic freeness}. A good and modern introduction can be found in 
\cite{MR3585560}. Although the results of this paper can also be seen as a contribution to the asymptotic theory of freeness,
a non-expert reader can safely assume that the explanations developed in this paragraph cover the necessary background
in free probability. 

This notion of convergence proved by Nica for the permutations operators $S_i$ as the dimension $n$ grows to infinity 
shows that for any self adjoint  polynomial $P_n$ in $S_i$, the percentage of eigenvalues
in a given real interval $[a,b]$ converges to the spectral mesure of the limiting polynomial $P$ on the group 
$X_\star$ on the same
interval $[a,b]$. In particular, if $[a,b]$ does not intersect the limiting spectrum, it shows that the percentage of 
eigenvalues in this interval tends to zero. But it does not rule out the possibility for $o(n)$ eigenvalues being in 
this interval. If such eigenvalues exist they are called \emph{outliers}. 
As a matter of fact, in our model, outliers can be made to exist
by taking an appropriate polynomial and the constant vector $\IND$.  For example, $S_1+ S_1^* + \ldots +S_k + S_k^*$ 
has always an eigenvalue $2k$ and this is an outlier as soon as $k\ge 2$.

It is very natural to ask whether there are more outliers than those potential obvious ones. For some 
random matrix models, it was shown that this is not the case. 
For example, a (negative) answer to the unitary version of this problem was achieved by the second author 
and Male \cite{MR3205602}, 
as a continuation of the seminal result of Haagerup and Thorbj{\o}rnsen \cite{MR2183281}. The proofs are based on 
the  linearization trick which reduces such question to the analog question on polynomials of degree one and with 
matrix coefficients, as our operator $A$ in \eqref{eq:defA}. 

Whenever there are no outliers in a limit of a (random) matrix model, one says that it converges {\em strongly}. 
Mathematically, it is equivalent to saying that the norm of any polynomial converges to the supremum of its limiting spectrum, 
specifically, beyond assuming the existence of a limit of
$n^{-1}\Tr (P_n)$ for any polynomial, one assumes in addition that
\begin{equation}\label{eq-strong-convergence}
\lim_n ||P_n||=\lim_\ell (\lim_n n^{-1}\Tr ((P_nP_n^*)^\ell)^{(2\ell)^{-1}}
\end{equation}
(note that this notion is not probabilistic -- when the operators $P_n$ are random, e.g. because they are constructed 
out of random unitaries, then one may consider such notions of convergences to a constant in some probabilistic sense,
for example in probability, or almost surely).

The above ideas are well captured by stating that the spectrum of a self-adjoint 
operator is not far from its limiting spectrum in the 
sense of the Hausdorff distance. 
Recall  that the \emph{Hausdorff distance} between two subsets $S$ and $T$ of $\dR$ is the infimum over all $\veps >0$ 
such that $S \subset T + [-\veps, \veps]$ and $T \subset S + [-\veps, \veps]$. 
Let us also remark that it is not completely obvious at first sight that the notion introduced in Equation 
\eqref{eq-strong-convergence} (i.e. convergence of the operator norm) and the notion of convergence of spectrum in Hausdorff distance are equivalent. One has to check  the absence of outliers \emph{between} two connected components of the limiting spectrum if
it is not connected. This happens to be equivalent because the quantifier for strong convergence runs over \emph{every} polynomial. 
We refer to Section \ref{sec:cor} for additional details.

\begin{theorem}\label{th:main}
If the symmetry condition \eqref{eq:sym} holds, for symmetric random permutations, as $n$ goes to infinity,  the 
Hausdorff distance between $\sigma(A_{|H_0})$ and $\sigma(A_\star)$ converges in probability to $0$. In particular  
$s(A_{|H_0})$  converges in probability to $s(A_\star)$. 
\end{theorem}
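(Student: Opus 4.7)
The plan is to establish the theorem in two directions. One inclusion, that $\sigma(A_\star)$ lies in every Hausdorff neighborhood of $\sigma(A_{|H_0})$ with high probability, already follows from Nica's asymptotic freeness \cite{MR1197059}: the convergence $n^{-1}\Tr(P_n) \to \tau(P)$ for every polynomial $P$ in the $S_i$ implies weak convergence of the empirical spectral measure of every self-adjoint polynomial of the $S_i$, from which the lower inclusion is standard. The essential content of the theorem is therefore the opposite inclusion, the absence of outliers: with high probability $\sigma(A_{|H_0}) \subset \sigma(A_\star) + [-\veps,\veps]$ for every $\veps>0$. Since $A$ is already a linear (matrix-valued) combination of the $S_i$'s, the linearization trick described in Section \ref{sec:cor} guarantees that this matrix-coefficient statement is precisely what one needs to imply strong convergence for arbitrary non-commutative polynomials.

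For the outlier bound, rather than attacking $A$ directly, I would build a \emph{matrix-valued non-backtracking operator} $B$ associated to $A$, acting on a space indexed by oriented labeled edges, roughly $\dC^r \otimes \ell^2([n]\times[d])$. A step of $B$ out of $(x,i)$ is weighted by $a_j$ and lands at $(\sigma_i(x),j)$, subject to the non-backtracking constraint $j \ne i^*$ enforcing that the involution condition \eqref{eq:sym} is respected. The first structural step is to prove an \emph{Ihara--Bass type identity} linking the characteristic polynomials of $A_{|H_0}$ and $B$, with the consequence that an eigenvalue of $A_{|H_0}$ of modulus $s(A_\star)+\veps$ forces an eigenvalue of $B$ of modulus at least $\rho(B_\star) + \veps'$ for some $\veps'>0$. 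A parallel free-probability computation identifies $\rho(B_\star)$ in terms of $s(A_\star)$, so the whole problem reduces to showing $\rho(B) \le \rho(B_\star)+o(1)$ in probability.

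The heart of the argument is then a trace estimate on $B$. Using its self-adjoint companion $BB^*$, one writes
\[
\EE\,\Tr\bigl((BB^*)^{\ell}\bigr) = \sum_{\gamma} \EE\,\Tr\bigl(w(\gamma)\bigr),
\]
where the sum ranges over closed non-backtracking walks $\gamma$ of length $2\ell$ in the labeled multigraph associated to $(\sigma_i)$, and $w(\gamma)$ is the ordered product of matrix weights $a_{i_1}, a_{i_2}^*, \ldots$ along $\gamma$. Choosing $\ell = \ell_n$ of order $\log n$, the contribution of walks whose visited structure is tree-like assembles into exactly $n(1+o(1))\,\rho(B_\star)^{2\ell}$, matching the limiting non-commutative moment. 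Markov's inequality, combined with a standard interpolation from trace to norm, then yields $\rho(B) \le \rho(B_\star)+\veps$ in probability, which via the Ihara--Bass reduction implies the desired upper bound on $\|A_{|H_0}\|$.

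The main obstacle is the control of non-tree-like walks, in particular the \emph{tangled} walks whose trace contains two short cycles inside a small ball; these are not controlled by naive degree counts and if left in place would give estimates larger than $\rho(B_\star)^{2\ell}$. I would handle them in the spirit of Bordenave--Lelarge--Massouli\'e via a tangle-free decomposition: introduce the restriction $B^{(\ell)}$ of $B$ to walks that are tangle-free up to length $\ell$, verify that $B = B^{(\ell)}$ outside an event of probability $o(1)$ (using that short cycles in the random permutation multigraph are rare), and bound $\|B^{(\ell)}\|$ by the trace method applied to tangle-free walks, where a canonical skeleton enumeration keeps the combinatorics under control. A genuine new difficulty in the present setting is that each walk carries an ordered matrix product whose \emph{operator norm}, rather than cardinality, must be tracked; the combinatorial count must therefore be intertwined with the non-commutative moment structure of $A_\star$ so that the dominant contribution actually reassembles to $\rho(B_\star)^{2\ell}$. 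A further layer of care is needed to accommodate the matching generators of Definition \ref{def:ps}, where the constraint $\sigma_i^2 = \mathrm{id}$ creates forced coincidences in walks that must be treated as a separate symmetry class in the enumeration.
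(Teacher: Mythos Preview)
Your overall strategy—reduce to non-backtracking operators and apply a refined trace method with a tangle-free decomposition—matches the paper's. However, there is a genuine structural gap in the reduction step.

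You propose an Ihara--Bass type identity relating $\sigma(A_{|H_0})$ directly to $\sigma(B)$ for the \emph{single} non-backtracking operator $B$ built from the original weights $a_i$, claiming that an outlier of $A_{|H_0}$ forces an outlier of $B$. This works cleanly in the scalar $d$-regular case, where the quadratic relation $\mu = \lambda + (d-1)/\lambda$ gives an explicit bijection. With general matrix weights, however, the analog of Proposition~\ref{prop:nonback} says only that $\lambda \in \sigma(B)$ iff $0 \in \sigma(A_\lambda)$, where $A_\lambda$ has new weights $a_i(\lambda) = \lambda a_i(\lambda^2 - a_{i^*}a_i)^{-1}$ depending non-trivially on $i$; this is \emph{not} a scalar shift of $A$, so knowing $\rho(B_{|K_0}) \le \rho(B_\star)+\veps$ yields no direct information about $\sigma(A_{|H_0})$. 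The paper instead reverses the construction: for each spectral parameter $\mu \notin \INT(\sigma(A_\star))$ it builds a non-backtracking operator $B_\mu$ with $\mu$-dependent weights $\hat a_i(\mu) = G_{oo}(\mu)^{-1}G_{og_i}(\mu)$ (Proposition~\ref{prop:nonback2}), proves the key spectral fact $\rho((B_\star)_\mu) < 1$ for all such $\mu$ (Theorem~\ref{prop:edgeAB}), and then needs $\rho((B_\mu)_{|K_0}) < 1$ \emph{simultaneously for every} $\mu$ at distance $\veps$ from $\sigma(A_\star)$. This family of $\mu$'s is also what upgrades the conclusion from a bound on $\|A_{|H_0}\|$ to the full Hausdorff statement you are asked to prove.

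The consequence for the trace method is that the analog of Theorem~\ref{th:mainB} must hold \emph{uniformly} over all weights $(a_i)$ in a compact set, not just for one fixed choice; the paper handles this with a net argument over weight space (Subsection~\ref{subsec:net}), which is delicate because spectral radii are not Lipschitz in the entries and one must net on $\|B^\ell\|$ instead. Your proposal does not account for this layer. A smaller but real issue: to control the spectral radius of the non-normal $B_{|K_0}$ you need $\Tr\bigl((B^{(\ell)}(B^{(\ell)})^*)^m\bigr)$ with $\ell$ large (a high power of $B$ \emph{first}, then the norm), not $\Tr\bigl((BB^*)^\ell\bigr)$; the latter controls only $\|B\|$, which for non-backtracking operators is strictly larger than $\rho(B_\star)$ (e.g.\ $d-1$ versus $\sqrt{d-1}$ in the regular case), so the tree-like walks would not reassemble to the correct constant.
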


We note that there is an explicit expression for $\| A_\star \|$ and $s (A_\star)$
 in the self-adjoint case. The scalar case $r = 1$ is due to Akemann and Ostrand \cite{MR0442698} and 
the general case for any $r$ and $a_i$ Hermitian is due to Lehner 
 \cite{MR1738412}.

A corollary of this result is

\begin{theorem}\label{thm:strong}
For symmetric random permutations, the permutation operators restricted to $\IND^\perp$, $((S_i)_{|\IND^\perp}), i \in [d],$ 
are asymptotically strongly free in probability.
\end{theorem}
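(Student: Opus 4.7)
The plan is to deduce Theorem \ref{thm:strong} from Theorem \ref{th:main} via the \emph{linearization trick}. Strong asymptotic freeness in probability decomposes into two assertions: (i) convergence in $*$-distribution of $((S_i)_{|\IND^\perp})$ to the generators $\lambda(g_i)$ of $C_r(X_\star)$, and (ii) for every $*$-polynomial $P$, convergence in probability of the operator norm $\|P_n\|$ to $\|P_\star\|$, where $P_n := P((S_i)_{|\IND^\perp})$ and $P_\star := P(\lambda(g_1), \ldots, \lambda(g_d))$. Part (i) follows from Nica's theorem already recalled in the introduction: removing the rank-one direction $\IND$ perturbs each normalized trace by $O(1/n)$ and is invisible in the large-$n$ limit. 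The work therefore lies in (ii).

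For (ii) I would invoke the linearization trick in the form developed by Haagerup-Thorbj\o rnsen \cite{MR2183281} and Collins-Male \cite{MR3205602}. Given a self-adjoint $*$-polynomial $P$ and a real parameter $\mu$, one constructs matrices $a_0(\mu), a_1, \ldots, a_d \in M_r(\dC)$ satisfying the symmetry condition \eqref{eq:sym} (with $r$ depending only on the degree of $P$) such that the associated self-adjoint linear pencil
\begin{equation*}
A(\mu) \;=\; a_0(\mu) \otimes 1 \;+\; \sum_{i=1}^{d} a_i \otimes S_i
\end{equation*}
has a nontrivial kernel precisely when $\mu \in \sigma(P(S_1, \ldots, S_d))$, with a quantitative version bounding $\mathrm{dist}(\mu, \sigma(P))$ by (a monotone function of) the smallest singular value of $A(\mu)$. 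The analogous statement in the reduced $C^*$-algebra $C_r(X_\star)$ relates $\sigma(P_\star)$ to the invertibility of $A_\star(\mu)$. Consequently the Hausdorff convergence $\sigma(A(\mu)_{|H_0}) \to \sigma(A_\star(\mu))$ delivered by Theorem \ref{th:main} transfers to $\sigma(P_n) \to \sigma(P_\star)$ in Hausdorff distance, and in particular $\|P_n\| \to \|P_\star\|$.

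A small bookkeeping point: since $\IND$ is the unique joint $1$-eigenvector of the $S_i$, the restriction of $P_n$ to $\IND^\perp$ corresponds, on the linearized side, to the restriction of $A(\mu)$ to $H_0 = (\dC^r \otimes \IND)^\perp$; the complementary block $H_1 = \dC^r \otimes \IND$ contributes only the deterministic eigenvalue $P(1, \ldots, 1)$, which is discarded. The main obstacle is arranging the linearization to preserve the involutive structure of \eqref{eq:sym}, so that a self-adjoint $P$ compatible with $i \mapsto i^*$ yields a self-adjoint pencil compatible with the same involution; this requires a symmetric variant of the Haagerup-Thorbj\o rnsen construction, which is by now standard and whose full details are promised in Section \ref{sec:cor}. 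Once that is in hand, the deduction of Theorem \ref{thm:strong} from Theorem \ref{th:main} is mechanical.
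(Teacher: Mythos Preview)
Your proposal is correct and follows essentially the same route as the paper: deduce norm convergence for arbitrary polynomials from Theorem~\ref{th:main} via the linearization trick, with the distributional part (i) already covered by Nica. The only packaging difference is that the paper invokes the linearization as a black-box equivalence---norm equality for all self-adjoint degree-one matrix-coefficient pencils (which is exactly what Theorem~\ref{th:main} delivers, since Hausdorff convergence of spectra implies norm convergence for self-adjoint operators) is equivalent to norm equality for all $*$-polynomials---rather than the resolvent form you describe where one builds $A(\mu)$ to detect $\mu\in\sigma(P)$; the abstract equivalence avoids having to argue uniformity in $\mu$, but the two formulations are standard and interchangeable.
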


\subsection{Spectral gaps of random graphs}

In Equation \eqref{eq:defA}, consider the special case where $a_0 = 0$ and for any $i \in [d]$, $a_i = E_{u_i v_i}$, 
for some $u_i,v_i$ in $[r]$ (where $(E_{uv})_{u'v'} =  \IND_{(u,v) = (u',v') }$). Then $A$ is the adjacency matrix of a colored 
graph on the vertex set $[n] \times [r]$ and whose directed edges with color $i \in [d]$ are $((x,u_i) ,(\sigma_i(x),v_i))$, 
for all $x \in [n]$. If the symmetry condition  \eqref{eq:sym} holds, then this graph is undirected. This graph is called 
a {\em $n$-lift of the base graph} whose adjacency matrix $A_1 = \sum_i a_i$ is given by \eqref{eq:defA1}, see Figure \ref{fig:nlift} for a concrete example.

\begin{figure}[htb]
\begin{center}  
\resizebox{12cm}{!}{
\begin{tikzpicture}[main node/.style={circle, draw , fill = gray!20, text = black, thick}]
\tikzstyle{root}=[inner sep=7pt,  draw, black , circle, fill= gray!20]
\node[main node]  at (0,0) (1) {1} ;
\node[main node] at (2,1) (2) {2} ;
\node[main node] at (2,0) (3) {3} ;
\node[main node] at (2,-1 ) (4) {4} ;
\node[main node] at (4,0) (5) {5} ;

\draw[thick,<->] (1) to (2) ; 
\draw[thick,<->] (1) to (3) ; 
\draw[thick,<->] (1) to (4) ; 
\draw[thick,<->] [out = 80, in = 180] (1) to (2,2) [out = 0 , in = 100 ] to (5) ; 
\draw[thick,<->] (5) to (2) ; 
\draw[thick,<->] (5) to (3) ; 
\draw[thick,<->] (5) to (4) ;

\node at (-0.1,0.5) {\scriptsize{8}} ; 
\node at (3.74,0.5) {\scriptsize{1}} ; 

\node at (0.35,0.39) {\scriptsize{9}} ; 
\node at (1.64,0.61) {\scriptsize{2}} ;

\node at (3.65,-0.39) {\scriptsize{7}} ; 
\node at (2.36,-0.61) {\scriptsize{14}} ;

    \coordinate (shift) at (10,0);
    \begin{scope}[shift=(shift)]
\node[root]  at (0,0) (01) {} ;
\node[root,label={\small{$(\sigma_1(x),5)$}}] at (-2,0) (02) {} ;
\node[root,label={\small{$(\sigma_3(x),3)$}}] at (2,0) (03) {} ;
\node[root,label={\small{$(\sigma_2(x),2)$}}] at (0,2 ) (04) {} ;
\node[root,label=below:{\small{$(\sigma_4(x),4)$}}] at (0,-2) (05) {} ;

\node  at (01) {\small{$(x,1)$}};

\draw[thick,<->] (01) to (02) ; 
\draw[thick,<->] (01) to (03) ; 
\draw[thick,<->] (01) to (04) ; 
\draw[thick,<->] (01) to (05) ;

\node at (-0.5,-0.2) {\scriptsize{8}} ; 
\node at (-1.5,0.2) {\scriptsize{1}} ; 

\node at (0.5,0.2) {\scriptsize{10}} ; 
\node at (1.5,-0.2) {\scriptsize{3}} ;

\node at (-0.2,0.5) {\scriptsize{9}} ; 
\node at (0.2,1.5) {\scriptsize{2}} ;

\node at (0.2,-0.5) {\scriptsize{11}} ; 
\node at (-0.2,-1.5) {\scriptsize{4}} ;

\draw[dashed,thick] (03) to (3.1,0) ; 
\draw[dashed,thick] (04) to (1.1,2) ; 
\draw[dashed,thick] (05) to (1.1,-2) ; 
\draw[dashed,thick] (02) to (-3.1,0) ;
\draw[dashed,thick] (02) to (-3.1,-0.3) ;
\draw[dashed,thick] (02) to (-3.1,0.3) ;

\end{scope}

    \coordinate (shift) at (16,-1);
    \begin{scope}[shift=(shift)]
\node[main node]  at (0,0) (11) {1} ;
\node[main node] at (2,1) (12) {2} ;
\node[main node] at (2,0) (13) {3} ;
\node[main node] at (2,-1 ) (14) {4} ;
\node[main node] at (2,2) (15) {5} ;

\draw[<->,thick] (11) to (12) ; 
\draw[<->,thick] (11) to (13) ; 
\draw[<->,thick] (11) to (14) ; 
\draw[<->,thick] (11) to (15) ;

\node at (0.25,0.5) {\scriptsize{8}} ; 
\node at (1.75,1.5) {\scriptsize{1}} ; 

\node at (2.25,2.5) {\scriptsize{7}} ; 
\node at (3.75,3.5) {\scriptsize{14}} ; 

\node at (0.35,-0.45) {\scriptsize{11}} ; 
\node at (1.65,-0.6) {\scriptsize{4}} ; 

\node at (2.5,1.2) {\scriptsize{12}} ; 
\node at (3.5,0.8) {\scriptsize{5}} ;

\node at (2.5,0.2) {\scriptsize{13}} ; 
\node at (3.5,-0.2) {\scriptsize{6}} ; 

\node at (2.5,-0.8) {\scriptsize{14}} ; 
\node at (3.5,-1.2) {\scriptsize{7}} ;

\node[main node] at (4,1) (21) {5} ;
\node[main node] at (4,0) (22) {5} ;
\node[main node] at (4,-1 ) (23) {5} ;
\node[main node] at (4,2 ) (24) {2} ;
\node[main node] at (4,3 ) (25) {3} ;
\node[main node] at (4,4 ) (26) {4} ;

\draw[<->,thick] (21) to (12) ; 
\draw[<->,thick] (22) to (13) ; 
\draw[<->,thick] (23) to (14) ; 
\draw[<->,thick] (15) to (24) ; 
\draw[<->,thick] (15) to (25) ; 
\draw[<->,thick] (15) to (26) ;

\draw[dashed,thick] (21) to (5,1.3) ; 
\draw[dashed,thick] (21) to (5,1) ;
\draw[dashed,thick] (21) to (5,0.7) ;

\draw[dashed,thick] (22) to (5,0.3) ; 
\draw[dashed,thick] (22) to (5,0) ;
\draw[dashed,thick] (22) to (5,-0.3) ;

\draw[dashed,thick] (23) to (5,-1.3) ; 
\draw[dashed,thick] (23) to (5,-1) ;
\draw[dashed,thick] (23) to (5,-0.7) ;

\draw[dashed,thick] (24) to (5,2) ; 
\draw[dashed,thick] (25) to (5,3) ;
\draw[dashed,thick] (26) to (5,4) ;

\end{scope}

\end{tikzpicture}
}

\caption{Left: an undirected base graph with $r=5$ vertices and $q = d/2 = 7$ edges, with $q$ as in Definition \ref{def:ps}. We have $a_1  = E_{15}$, $a_2 = E_{12}$, $a_3 = E_{13}$, $a_4 = E_{14}$, $a_5 = E_{25}$, $a_6 = E_{35}$, $a_7 = E_{45}$ and for $8 \leq i \leq 14$, $a_{i} =  a^*_{i-7} = a^*_{i^*}$. The  subscripts on the arrows are the index of the corresponding $i \in [d]$ (there are not all represented for the sake of readability). Middle: neighborhood in the $n$-lift of a vertex $(x,1)$. Right: picture of the common universal covering tree of the base graph and the $n$-lifts.} \label{fig:nlift}

\end{center}\end{figure}

For random symmetric permutations, the $n$-lift is random. Since the work of Amit and Linial 
\cite{MR1883559,MR2216470} and Friedman \cite{MR1978881}, this class of random graphs has attracted a 
substantial attention \cite{MR2674623,AB-S,MR2799807,MR3385636,MR2799213,FrKo}.  The Alon-Boppana 
lower bounds asserts that for any $\veps >0$, for all $n$ large enough and any permutations 
$(\sigma_i), i \in [d]$, in $\cS_n$, with  the symmetry condition \eqref{eq:sym}, we have
\begin{equation}\label{eq:AB}
s ( A_{|H_0} ) \geq s(A_\star) - \veps,
\end{equation}
(due in this context to Greenberg \cite{greenberg}).
Then,  Theorem \ref{th:main} proves that random $n$-lifts achieve the Alon-Boppana lower bound \eqref{eq:AB} 
up to a vanishing term. It follows that $A$ has up to vanishing terms the largest possible spectral gap (the difference 
between the largest eigenvalue and the second largest). It settles the conjecture in Friedman \cite{MR1978881}, and proves an even stronger statement, i.e. \emph{all} eigenvalues of $A_{|H_0}$ are $\veps$-close to the spectrum of $A_\star$, see Figure \ref{fig:nlift2} for a numerical illustration. In some 
cases, it was already proved in Friedman \cite{MR2437174} ($r=1$, $a_i =1$), Friedman and Kohler \cite{FrKo} and 
Bordenave \cite{bordenaveCAT} ($r \geq 1$, $a_i = E_{u_i v_i}$, $A_1 = \sum_i a_i$ constant row sum) and, up to a multiplicative 
factor, in Puder \cite{MR3385636}.

\begin{figure}[htb]
\begin{center}  
\includegraphics[height = 6cm,width = 12cm]{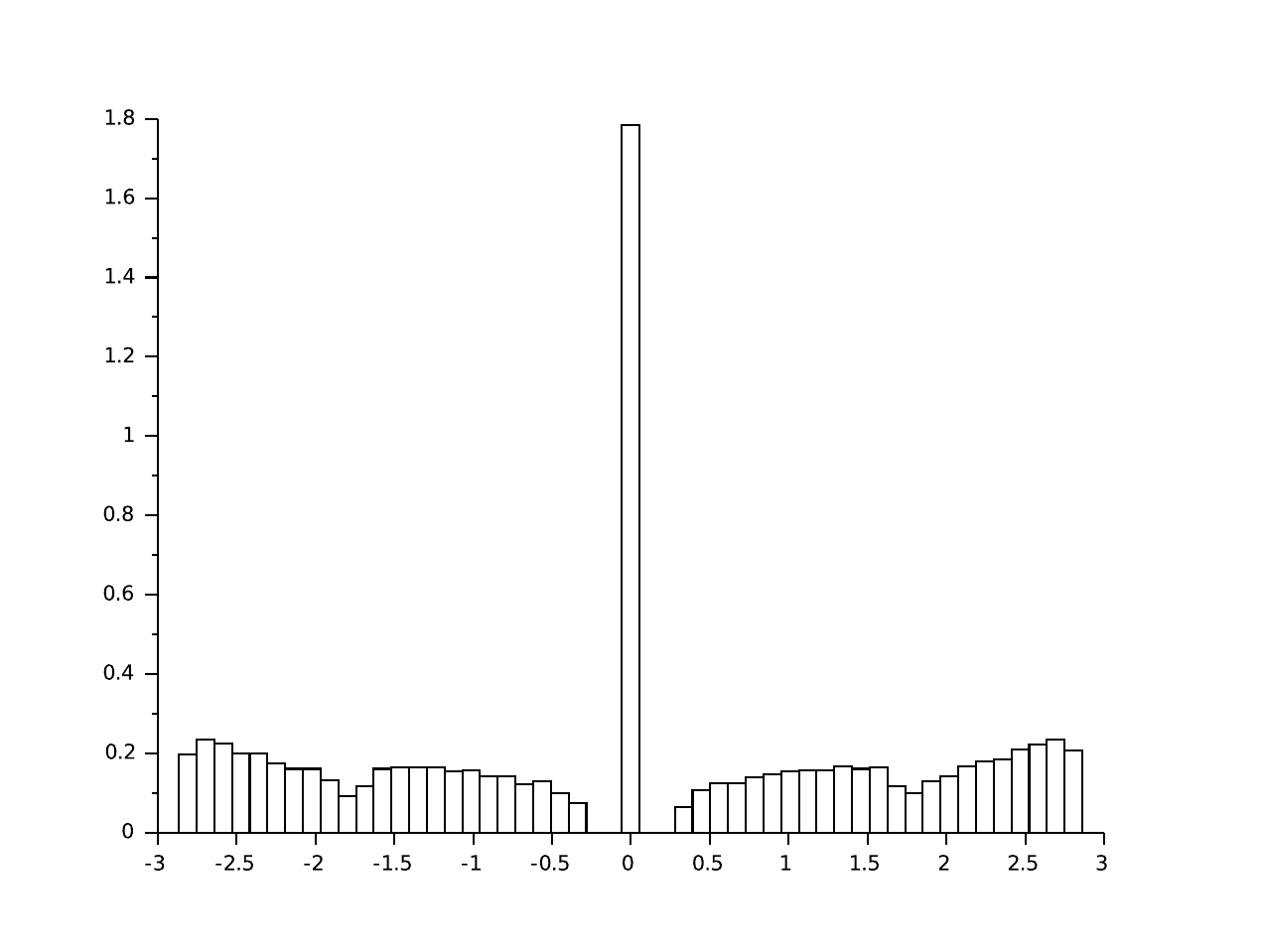} 
\caption{Histogram of the eigenvalues of $A_{|H_0}$ for $n=500$ and a random $n$-lift of the base graph depicted in Figure \ref{fig:nlift}. The spectrum of $A_\star$ is the spectrum of the universal covering tree of the base graph: we have $\sigma(A_\star) = [-a,-b] \cup \{0\} \cup [b,a]$ with $a\simeq 2.866$ and $b \simeq 0.283$.} \label{fig:nlift2}
\end{center}\end{figure}

Now, consider the case, $r=1$, $a_i \geq 0$ and $\sum_i a_i = 1$, then $A$ is the Markov transition matrix of 
an {\em anisotropic random walk}. For the properties of this random walk on the free group, see the monograph by 
Fig\`a-Talamanca and Steger \cite{MR1219707}. More generally, for any integer $r \geq 1$, if $A_1  = \sum_i a_i$ is a stochastic matrix, 
then $A$ is also a stochastic matrix which can be interpreted as a Markov chain on the $n$-lift graph. The Alon-Boppana 
lower bounds \eqref{eq:AB} holds also in this context, see Ceccherini-Silberstein, Scarabotti and 
Tolli \cite{CECCHERINISILBERSTEIN2004735}. Thus, Theorem \ref{th:main} asserts that $A$ has, up to vanishing terms, 
the largest possible spectral gap. Interestingly, our argument will actually show that \eqref{eq:upboundA} is achieved 
with probability tending to one, jointly for all $a_i$ with $\max \| a_i \| \leq 1$.

In the same vein, assume that all $a_i$, $i \in [d]$, have non-negative entries and $a_0  = -\sum_i a_i$. Then $A$ is a 
Laplacian matrix and it is the infinitesimal generator of a continuous time random walk on the $n$-lift. Theorem 
\ref{th:main} proves again that, up to vanishing terms,  random permutations maximize the spectral gap of such processes.

\subsection{Tensor product of random permutation matrices}

We now discuss an extension of our work which is notably relevant in the context of quantum expanders, 
see  \cite{PhysRevA.76.032315,Hastings:2009:CQT:2011781.2011790} and in cryptography \cite{Friedman1996}. 
Let $X$ be a finite set and $r,d$ positive integers. Let $\ell^2 (X ^2) $ be the Hilbert space spanned by an orthonormal basis 
$\delta_{(x,y)}$ indexed by the elements of $(x,y) \in X^2 = X \times X$. 
 We consider $(a_0, \ldots, a_d)$  elements in $M_r (\dC)$ and let 
$\sigma_{i}\in S(X)$ be a permutation of the set $X$ whose corresponding unitary operator on $\ell^2 (X)$ is $S_i$.  
We are now interested in the operator on $\dC^r \otimes \ell^2 (X^2)$, 
\begin{equation} \label{eq:defAT}
A^{(2)} = a_0 \otimes 1 \otimes 1 + \sum_{i=1}^d a_i \otimes S_i \otimes S_i.
\end{equation}
Note that \eqref{eq:defAT} is again an operator of the form \eqref{eq:defA} since $S_i \otimes S_i$ is a unitary operator associated 
to the permutation on $\ell^2 (X^2)$ defined by, for all $(x,y) \in X^2$, $\sigma^{(2)}_i ( x,y)  = (\sigma_i (x), \sigma_i (y))$.  
Note also that we may identify $\ell^2 (X^2)$ with linear operators on $\ell^2(X)$ endowed with the Hilbert-Schmidt scalar 
product $\langle a, b \rangle = \Tr ( a^* b)$. Then $S_i \otimes S_i$ is identified with the linear map $T \mapsto S_i T S^*_i$.

We consider the following orthogonal elements of $\ell^2 (X^2)$, defined in coordinates, for all $(x,y) \in X^2$,
\begin{equation}\label{eq:defIJ}
J_{xy} =  \IND (x \ne y) \AND I_{xy} = \IND(x = y).
\end{equation}
It is immediate to check that for any permutation operator $S$  on $\ell^2 (X)$
$$
( S \otimes S ) (J ) = J \AND  ( S \otimes S ) (I ) = I.
$$
Let $V$ be the vector space spanned by $I$ and $J$. We introduce the vector space of codimension $2r$, 
$$H^{(2)}_0 = \dC^r \otimes V^\perp.$$ 

For random permutations, we have the following analog to Theorem \ref{th:main}. 

\begin{theorem}\label{th:mainT}
Theorem \ref{th:main} holds with $A$ replaced by $A^{(2)}$ defined by \eqref{eq:defAT} and $H_0$ replaced by $H_0^{(2)}$.
\end{theorem}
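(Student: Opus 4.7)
The plan is to reduce Theorem \ref{th:mainT} to two pieces: a direct invocation of Theorem \ref{th:main}, and a parallel application of the non-backtracking trace method that underlies it. Observe first that $A^{(2)}$ is itself of the form \eqref{eq:defA}, with underlying set $X^2 = [n]^2$ and structured permutations $\sigma_i^{(2)}(x,y) = (\sigma_i(x), \sigma_i(y))$. Since each $\sigma_i^{(2)}$ preserves the diagonal $\Delta = \{(x,x) : x \in [n]\}$ and its complement $\bar\Delta = X^2 \setminus \Delta$, the decomposition $\ell^2(X^2) = \ell^2(\Delta) \oplus \ell^2(\bar\Delta)$ is invariant under every $S_i \otimes S_i$. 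Because $I$ and $J$ are the constant vectors in $\ell^2(\Delta)$ and $\ell^2(\bar\Delta)$ respectively, $H_0^{(2)}$ splits as $H_\Delta \oplus H_{\bar\Delta}$, where $H_\Delta = \dC^r \otimes (\ell^2(\Delta) \cap I^\perp)$ and $H_{\bar\Delta} = \dC^r \otimes (\ell^2(\bar\Delta) \cap J^\perp)$, both $A^{(2)}$-invariant.

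On $H_\Delta$ the unitary $\delta_{(x,x)} \mapsto \delta_x$ conjugates the restriction of $S_i \otimes S_i$ to $\ell^2(\Delta)$ into $S_i$ acting on $\ell^2([n])$, and sends $I$ to $\IND$. Thus $A^{(2)}_{|H_\Delta}$ is unitarily equivalent to $A_{|H_0}$ of Theorem \ref{th:main}, which immediately supplies the Hausdorff convergence of its spectrum to $\sigma(A_\star)$ in probability. The real work is on $H_{\bar\Delta}$: there $\sigma_i^{(2)}|_{\bar\Delta}$ is a random permutation on the $n(n-1)$ off-diagonal points, but it is far from uniform, since a single $\sigma_i$ drives both coordinates in lock-step.

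I would then repeat the argument of Theorem \ref{th:main} on this new ground set, with two adjustments. First, the local (Benjamini--Schramm) limit at a typical $(x,y) \in \bar\Delta$ must be re-identified: since for any fixed non-trivial word $w$ in the generators one has $\PP(\sigma_w(x) = y) = O(1/n)$, the diagonal orbit of $(x,y)$ agrees up to distance $O(\log n)$ with the regular orbit of $e$ in $X_\star$, so the local limit is again $A_\star$. Second, the combinatorial trace inputs behind the non-backtracking method must be reworked with $\bar\Delta$ as vertex set: the number of fixed points of $\sigma_w^{(2)}$ on $\bar\Delta$ is $F_w(F_w-1)$, where $F_w$ counts the fixed points of $\sigma_w$ on $[n]$, and more generally the joint trajectory counts in $\bar\Delta$ factor over the two coordinates up to lower-order collision terms. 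The main obstacle is precisely this combinatorial adaptation: controlling the collision events inside the tangle-free and tangle enumeration that drive the norm bound on the associated non-backtracking operator. Once these enumerations are shown to be insensitive, up to $o(1)$, to the joint dependence of the two coordinates, the Ihara--Bass type identity used in Theorem \ref{th:main} transfers the bound back to $A^{(2)}_{|H_{\bar\Delta}}$, yielding $\|A^{(2)}_{|H_{\bar\Delta}}\| \leq s(A_\star) + o(1)$; combined with the diagonal case, this completes the proof.
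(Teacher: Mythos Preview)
Your plan is correct and matches the paper's strategy: the same diagonal/off-diagonal splitting, the diagonal block handled by Theorem \ref{th:main}, and the off-diagonal block treated by rerunning the non-backtracking trace method with the combinatorics adapted to paired coordinates. Two small remarks: the paper obtains the inclusion $\sigma(A_\star)\subset\sigma(A^{(2)}_{|H_0^{(2)}})+[-\veps,\veps]$ even more cheaply, directly from the diagonal block, without redoing the local-limit argument on $\bar\Delta$; and the heart of the off-diagonal adaptation is a centered-moment estimate for $(S_i\otimes S_i)_{xy}-\frac{1}{n(n-1)}$ (the analog of Proposition \ref{prop:exppath}), which is exactly the ``collision control'' you flag as the main obstacle.
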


In Section \ref{sec:mainT}, we will explain how to adapt the proof of Theorem \ref{th:main} to deal with tensor products. 
Interestingly, the analog of Theorem \ref{th:mainT} for random unitaries is not known and it cannot be deduced 
from  \cite{MR3205602,MR2183281}. As corollary of Theorem \ref{th:mainT}, we have the following

\begin{theorem}\label{thm:strongT}
For symmetric random permutations, the permutation operators restricted to 
$V^\perp$, $((S_i \otimes S_i)_{|V^\perp}), i \in [d],$ are asymptotically strongly free in probability.\end{theorem}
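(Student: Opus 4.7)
The plan is to derive Theorem \ref{thm:strongT} from Theorem \ref{th:mainT} via the linearization trick, in perfect parallel with the derivation of Theorem \ref{thm:strong} from Theorem \ref{th:main} (to be detailed in Section \ref{sec:cor}). Asymptotic strong freeness has two ingredients: convergence in non-commutative distribution, and the matching of operator norms. I would treat them in that order.

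For the first ingredient, I would show that for any $*$-polynomial $P$ in $d$ non-commuting variables, the normalized trace of $P((S_i \otimes S_i)_{|V^\perp})$ converges in probability to $\tau(P(\lambda(g_i)))$. The key observation is that for any word $M$ in the generators,
\begin{equation*}
M(S_1 \otimes S_1, \ldots, S_d \otimes S_d) \;=\; M(S_1,\ldots,S_d) \otimes M(S_1,\ldots,S_d),
\end{equation*}
so its unnormalized trace on $\ell^2([n]^2)$ factorizes as $(\Tr M(S_i))^2$. By the Nica asymptotic-freeness result recalled in the introduction, $n^{-1}\Tr M(S_i)$ tends in probability to $\tau(\lambda(M)) = \IND(M = e)$, so $n^{-2}(\Tr M(S_i))^2 \to \IND(M=e)$ in probability. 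Since $I, J \in V$ are eigenvectors of every $S_i \otimes S_i$ (with eigenvalue $1$), replacing the trace on $\ell^2([n]^2)$ by the normalized trace on $V^\perp$ introduces only an additive $O(1/n^2)$ correction, which does not affect the limit.

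For the second ingredient I would invoke the linearization trick. Given any self-adjoint non-commutative polynomial $P$ in the $(S_i \otimes S_i)_{|V^\perp}$, one builds a self-adjoint matrix pencil $a_0 \otimes 1 + \sum_i a_i \otimes x_i$ whose Hermitian matrix coefficients $a_i$ satisfy the symmetry condition \eqref{eq:sym}, and whose invertibility controls that of a shift of $P$. Substituting $x_i = S_i \otimes S_i$ produces exactly the operator $A^{(2)}$ of \eqref{eq:defAT}, restricted to $H_0^{(2)} = \dC^r \otimes V^\perp$. Theorem \ref{th:mainT} says that the Hausdorff distance between $\sigma(A^{(2)}_{|H_0^{(2)}})$ and $\sigma(A_\star)$ tends to $0$ in probability, which via the linearization trick transfers to $\|P((S_i \otimes S_i)_{|V^\perp})\| \to \|P(\lambda(g_i))\|$ in probability for every self-adjoint $P$, and then for every $P$ by the standard reduction to $PP^*$.

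The main obstacle, which is really bookkeeping rather than new mathematics, is verifying that the linearization trick applies verbatim in the tensor-product setting. What must be checked is that the two-dimensional subspace $V = \SPAN(I,J)$ plays the role here that $\dC \cdot \IND$ plays in the untwisted case: it is a common invariant subspace of every $S_i \otimes S_i$, so $\dC^r \otimes V$ is an invariant subspace of $A^{(2)}$ independent of the chosen linearization, and its $r$-dimensional analogue inside the limiting algebra can be split off in the same way. Once this is noted, the algebraic construction of the linear pencil and the resolvent identity that underpin the trick are insensitive to replacing $S_i$ by $S_i \otimes S_i$, and the corollary follows.
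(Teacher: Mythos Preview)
Your proposal is correct and follows essentially the same route as the paper: Section~\ref{sec:cor} deduces Theorem~\ref{thm:strongT} from Theorem~\ref{th:mainT} via the linearization trick, exactly as you outline for the norm-convergence ingredient. The one difference is that for the distributional (asymptotic freeness) ingredient the paper simply cites \cite{MR3573218}, whereas you supply a short direct argument using the factorization $M(S_i\otimes S_i)=M(S_i)\otimes M(S_i)$ for monomials and Nica's result; this is a perfectly valid and self-contained alternative.
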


Note that asymptotic freeness follows, among others, from \cite{MR3573218}.
In particular, when one restricts oneself to sum of generators $\sum_i S_i \otimes S_i$, one obtains that the family 
$S_i\otimes S_i$ viewed as an operator on $V^\perp$ is a nearly optimal quantum expander in the sense of 
Hastings \cite{MR2486279} and Pisier \cite{MR3226740}.

\subsection{Brief overview}

The proof of Theorem \ref{th:main} is divided into two parts. For any $\veps >0$, we will prove that with probability tending 
to one:  
\begin{enumerate}
\item
The spectrum of $A_\star$ is contained in an arbitrarily small neighbourhood of the spectrum of $A_{|H_0}$:
\begin{equation}\label{eq:lowboundA}
\sigma( A_\star ) \subset \sigma (A_{|H_0}) + [-\veps , \veps],
\end{equation}
and 
\item  
The spectrum of $A_{|H_0}$ is contained in an arbitrarily small neighbourhood of the spectrum of $A_\star$:
\begin{equation}\label{eq:upboundA}
 \sigma (A_{|H_0})\subset  \sigma( A_\star )  + [-\veps , \veps].
\end{equation}
\end{enumerate}
The proof of the spectrum inclusion \eqref{eq:lowboundA} is standard and follows from the asymptotic freeness of  
independent permutations, which follows from the already mentioned reference, Nica \cite{MR1197059}.
However we give an alternative argument by supplying a general deterministic criterion which guarantees 
that \eqref{eq:lowboundA} holds and we prove that the symmetric random permutations meet this criterion.

The proof of \eqref{eq:upboundA} is much more involved and it is the main contribution of this work. 
It relies on a novel use of {\em non-backtracking operators}. These operators are defined on an enlarged vector 
space and, despite the fact that they are non-normal, they are much easier to work with. 
Indeed, they have a very simple form on the free product of groups
 $X_\star$. Notably, in Theorem \ref{prop:edgeAB}, 
we  prove that the set $\sigma(A) \backslash \sigma(A_\star)$ is controlled by the spectral radii of a one parameter 
family of non-backtracking operators. This will allow us to reduce the proof of Theorem \ref{th:main} to the proof of 
Theorem \ref{th:mainB} which is an analogous statement for non-backtracking operators. 
Then, the proof of Theorem \ref{th:mainB} follows a strategy similar to the new proof of Friedman's Theorem 
in \cite{bordenaveCAT} but with non-negligible refinements.  

Indeed, the main technical novelty will be the presence of matrix-valued weights $a_i$, $i \in [d]$. 
In particular, we are able to relate directly the expectation of the trace of a power of a non-backtracking matrix on $[n]$ 
to powers of the corresponding non-backtracking operator on the free group 
$X_\star$ (forthcoming Lemma \ref{le:sumpath}).
Another important issue will be that we will need a refined net argument 
to control jointly the spectral radii of the non-backtracking matrices associated to all possible weights $a_i$ with 
uniformly bounded norms. Due to the non-normality of non-backtracking matrices, we have to deal with the bad 
regularity of spectral radii in terms of matrix entries. See Remark \ref{rq:CAT} for a more precise comparison with previous works.

The remainder of this manuscript is organized as follows. In section  \ref{sec:AB}, we prove the spectrum inclusion 
\eqref{eq:lowboundA}. In Section \ref{sec:nonback}, we introduce non-backtracking operators and we reduce the proof 
of Theorem \ref{th:main} to Theorem \ref{th:mainB} on non-backtracking operators. In section \ref{sec:mainB}, we prove 
this last theorem on non-backtracking operators for random permutation matrices. In Section \ref{sec:mainT}, 
we adapt the previous arguments to prove Theorem \ref{th:mainT}. The proof of Theorem \ref{thm:strong} and 
Theorem \ref{thm:strongT} is contained in Section \ref{sec:cor}. It is based on the   linearization trick.
It was developed simultaneously in various areas of mathematics, and applied in operator algebras by Haagerup and 
Thorbj\o rnsen 
and subsequently improved by Anderson \cite{MR3098069}.
For a synthetic introduction we will refer to Mingo and Speicher \cite[p256]{MR3585560}. Finally, proofs of auxiliary 
results are gathered in Section \ref{sec:aux}.

\paragraph{Notation} 
We  use the usual notation $o(\cdot)$ and $O(\cdot)$. 
We  denote by $\dP( \cdot) $ and $\dE( \cdot) $ 
the corresponding probability measures on $\cS_n^d$, 
corresponding to the Definition of Equation \eqref{eq:defA}. Note that $\dP(\cdot)$ and $\dE (\cdot )$ depend implicitly on $n$.  
The coordinate vector at $x \in X$ is denoted $\delta_x$.  It will be convenient to describe our operators as 
matrix-valued operators. For an operator $M$ on $\dC^r \otimes \ell^2 (X)$,  we set for all $x,y \in X$ 
\begin{equation}\label{eq:matrixvalued}
M_{xy} = (1 \otimes \langle \delta_x , \, \cdot \, \delta_y \rangle) (M) \in M_r (\dC).
\end{equation}
In other words, we may see $M$ as an infinite block matrix indexed by $X\times X$ (of matrices in $M_r (\dC)$), and we may 
reformulate the above equation as $M=(M_{xy})_{(x,y)\in X\times X}$.
Finally, we will use the convention that a product over an empty set is equal to $1$ and the sum over an empty set is $0$.  

\paragraph{Acknowledgements} CB was supported by ANR-16-CE40-0024-01 and ANR-14-CE25-0014. He would like to thank Doron Puder 
for early discussions on this problem. BC was supported by JSPS KAKENHI 17K18734, 17H04823, 15KK0162 
and ANR-14-CE25-0003. 
He would like to thank Mikael de la Salle, Camille Male and Amir Dembo for 
enriching discussions on random permutations and free probability.  Both authors are grateful to Gilles Pisier for 
discussions during the finalization phase of the paper, including useful 
references and questions that encouraged us to write Theorem \ref{thm:strongT}. The final version of this paper was completed during the visit of CB to Kyoto University under a JSPS short term
professorship. Both authors gratefully acknowledge the support of JSPS and Kyoto University.  The authors are indebted to Yu Shang-Chun, Ryan O'Donnell and Xinyu Wu for pointing typos and inaccuracies in their careful reading. Finally, the authors would like to thank an anonymous referee for an extremely detailed and insightful report that allowed them to improve
greatly their initial manuscript. 

\section{Inclusion of the spectrum of $A_\star$}
\label{sec:AB}

Assume that $X = [n]$ and that the symmetry condition \eqref{eq:sym} holds. We start by some elementary definitions 
from graph theory and define a natural colored graph $G^\sigma$ associated to the permutations $\sigma_i$, $i \in [d]$. 
\begin{definition}\label{def0}
\begin{enumerate}[-]
\item
A {\em colored edge} $[x,i,y]$ is an equivalence class of  triplets $(x,i,y) \in [n] \times [d] \times [n]$ endowed with 
the equivalence $(x,i,y) \sim (x',i',y')$ if $(x',i',y') \in \{ (x,i,y), (y,i^*,x) \}$. A {\em colored graph} $H$ is a graph whose 
vertices is a subset of $[n]$ and whose edges are a set of colored edges. 

\item
A {\em path} in $H$ of length $k$  from $x$ to $y$ is a sequence $(x_1, i_1, \ldots, x_{k+1})$ such that 
$x_1 = x$, $ x_{k+1} = y$  and $[x_t, i_t , x_{t+1}] $ is an edge of $H$ for any $1 \leq t \leq k$. 
The path is {\em closed} if $x_1 = x_{k+1}$. A {\em cyclic path} in $H$ is a closed path in $H$ such that 
$(x_1, \ldots, x_{k})$ are pairwise distinct. A {\em cycle}
is the colored graph spanned by a cyclic path (that is, the vertex set is $\{x_t: 1 \leq t \leq k\}$ and the edge set is 
$\{[x_t,i_t,x_{t+1}]: 1 \leq t \leq k\}$).  
\item
If $x \in [n]$ and $h$ is a non-negative integer, $(H,x)_h$ denotes the subgraph of $H$ spanned by all edges belonging to a path starting from $x$ and of length at most $h$.

\item
$G^\sigma$ is the colored graph whose vertex set is $[n]$ and whose edges are the set of $[x,i,y]$ such that 
$\sigma_i (x) = y$ (and $\sigma_{i^*} (y) =x$).  
\end{enumerate}
\end{definition}

The inclusion \eqref{eq:lowboundA} is  a direct consequence of the following deterministic proposition whose 
proof can be found in Section \ref{sec:aux} for completeness.

\begin{proposition}\label{prop:inclusion}
Assume that $X = [n]$ and that the symmetry condition \eqref{eq:sym} holds. Let  $A$ and $A_\star$ be defined 
by \eqref{eq:defA} and \eqref{eq:defAfree}.   
For any $\veps > 0$, there exists an integer $h \geq 1$ such that if $(G^\sigma,x)_{h}$ contains no cycle for some 
$x \in [n]$ then
$
\sigma( A_\star ) \subset \sigma (A_{|H_0}) + [-\veps , \veps].
$ 
\end{proposition}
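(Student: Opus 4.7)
The plan is to transplant approximate eigenvectors of $A_\star$ into approximate eigenvectors of $A_{|H_0}$ using the tree-like neighborhood of $x$. Under the symmetry condition $A_\star$ is a bounded self-adjoint operator on $\dC^r \otimes \ell^2(X_\star)$, and finitely supported vectors are dense in this Hilbert space. Combining this with compactness of $\sigma(A_\star)$, I would first choose an integer $h_0 = h_0(\veps)$ and a finite $(\veps/2)$-net $N \subset \sigma(A_\star)$ such that for every $\lambda \in N$ there is a unit vector $\psi_\lambda \in \dC^r \otimes \ell^2(X_\star)$ supported on the ball $B_{h_0}$ of radius $h_0$ around the identity $e$ in the Cayley graph of $X_\star$, with $\|(A_\star - \lambda)\psi_\lambda\| < \veps/4$.

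Given the hypothesis that $(G^\sigma, x)_h$ contains no cycle, the unique reduced word representation in $X_\star = \dZ * \cdots * \dZ_2 * \cdots$ yields an injective, colored-graph-isomorphic embedding $\phi : B_h \to [n]$ with $\phi(e) = x$: one walks in $G^\sigma$ starting from $x$ following the generators of reduced words, and acyclicity is precisely the condition that distinct reduced words of length at most $h$ give distinct endpoints in $[n]$. In particular $n \geq |B_h|$. Provided $h \geq h_0 + 1$, transplanting via $\phi$ gives a unit vector $\tilde\psi_\lambda \in \dC^r \otimes \ell^2([n])$ supported on $\phi(B_{h_0})$ with $\|(A - \lambda)\tilde\psi_\lambda\| = \|(A_\star - \lambda)\psi_\lambda\| < \veps/4$, because on vectors supported in $\phi(B_{h_0})$ the local action of $A$ is identified through $\phi$ with the local action of $A_\star$ on $B_{h_0}$ (both producing a vector supported in $B_{h_0+1}$, whose image under $\phi$ still lies inside the acyclic neighborhood).

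Since $A$ leaves both $H_0$ and $H_1 = \dC^r \otimes \IND$ invariant, decomposing $\tilde\psi_\lambda = \tilde\psi_{0,\lambda} + \tilde\psi_{1,\lambda}$ along $H_0 \oplus H_1$ gives $\|(A_{|H_0} - \lambda)\tilde\psi_{0,\lambda}\| \leq \|(A - \lambda)\tilde\psi_\lambda\| < \veps/4$, while Cauchy--Schwarz applied to the explicit formula for the projection onto $H_1$ yields $\|\tilde\psi_{1,\lambda}\|^2 \leq |B_{h_0}|/n$. Choosing $h$ large enough that $|B_h| \geq 2|B_{h_0}|$, which is possible since $|B_h| \to \infty$ as $h \to \infty$, forces $n \geq |B_h|$ to be large enough to ensure $\|\tilde\psi_{0,\lambda}\| \geq 1/\sqrt{2}$; normalizing then shows $\lambda$ lies within $\veps/2$ of $\sigma(A_{|H_0})$, and the net argument extends this to all $\lambda \in \sigma(A_\star)$.

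The main delicate point is the existence and local isomorphism property of the embedding $\phi$, which requires carefully matching the colored edge equivalence $[x,i,y] \sim [y,i^*,x]$ to the inversion structure of the generators of $X_\star$, so that the action of $A$ at each $\phi(g)$ for $g \in B_{h_0+1}$ picks up exactly the right matrix coefficients. Once this local identification is in place, the rest is routine bookkeeping of approximation errors and the choice of $h$ large relative to $h_0$.
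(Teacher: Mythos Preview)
Your argument is correct and takes a genuinely different route from the paper's proof. The paper proceeds via spectral measures: it observes that the spectral measure $\mu_A^x$ at the vertex $x$ has the same first $2h$ moments as $\mu_{A_\star}^o$ (both being normalized traces of sums over closed paths weighted by products of the $a_i$'s, which coincide when the $h$-neighborhood is a tree), then uses Stone--Weierstrass to approximate a tent function supported on $[\lambda-\veps,\lambda+\veps]$ by a polynomial of degree at most $2h$, and concludes via a short lemma stating that if $\int f\,d\mu_A^x > 1/n$ then $\sigma(A_{|H_0}) \cap \SUPP(f) \ne \emptyset$. Your approximate-eigenvector transplantation is more elementary in that it avoids spectral measures and Stone--Weierstrass, trading them for Weyl's criterion and an explicit isometric embedding $B_{h_0+1} \hookrightarrow [n]$; the paper's approach, on the other hand, fits naturally into the Benjamini--Schramm local-convergence viewpoint and packages the passage from $A$ to $A_{|H_0}$ in a single clean inequality rather than an orthogonal-projection estimate. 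Both arguments rest on the same local identification of $(G^\sigma,x)_h$ with a ball in the Cayley tree of $X_\star$, which you rightly flag as the point needing care, and both ultimately need $|B_h| \to \infty$ to force $n$ large (implicit in the paper's final parenthetical remark); this holds whenever $X_\star$ is infinite, i.e.\ in every case except the degenerate $d=1$ with a single $\dZ_2$ factor.
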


As a corollary, we obtain the first half of Theorem \ref{th:main}. 

\begin{corollary}\label{cor:inclusion}
Let $A$ and $A_\star$ be as in \eqref{eq:defA} with the symmetry condition \eqref{eq:sym}. For symmetric 
random permutations, for any $\veps >0$, with probability tending to one as $n$ goes to infinity, $
\sigma( A_\star ) \subset \sigma (A_{|H_0}) + [-\veps , \veps].
$ 
\end{corollary}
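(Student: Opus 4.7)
Given $\veps > 0$, let $h \geq 1$ be the integer supplied by Proposition \ref{prop:inclusion}. The desired inclusion $\sigma(A_\star) \subset \sigma(A_{|H_0}) + [-\veps,\veps]$ holds as soon as some vertex $x \in [n]$ has $(G^\sigma, x)_h$ acyclic. Hence, writing $N_{\mathrm{bad}}$ for the number of vertices $x$ such that $(G^\sigma, x)_h$ contains a cycle, it suffices to show that $\dP(N_{\mathrm{bad}} < n) \to 1$ as $n \to \infty$, since on this event at least one vertex has an acyclic $h$-neighborhood and Proposition \ref{prop:inclusion} applies.

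The plan is a routine first moment computation on the number of short cycles. For each length $k \leq 2h+1$, each ordered tuple of $k$ distinct vertices $(y_1, \ldots, y_k)$ in $[n]$, and each color pattern $(i_1, \ldots, i_k) \in [d]^k$, the probability that the edges $[y_t, i_t, y_{t+1}]$ (indices cyclic modulo $k$) all appear in $G^\sigma$ is $O(n^{-k})$: each edge constraint conditions one of the independent uniform permutations (or matchings), and each such constraint has probability $O(1/n)$ given the previous constraints. Summing over the $O(n^k)$ choices of vertex tuple and the $O(1)$ color patterns gives that $\dE[\#\{\text{cyclic paths of length} \leq 2h+1\text{ in }G^\sigma\}] = O(1)$, uniformly in $n$.

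Each such cycle lies in the $h$-neighborhood of only $O_{d,h}(1)$ vertices (namely, vertices of the cycle together with their descendants up to depth $h$, at most $(2h+1)(2d)^h$ in total). Consequently $\dE[N_{\mathrm{bad}}] = O(1)$, and Markov's inequality yields $\dP(N_{\mathrm{bad}} \geq n) = O(1/n) \to 0$, completing the reduction.

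The only point demanding some care is the per-configuration probability bound when some $\sigma_{i_t}$ is a uniform matching (its involutive structure and the constraint $\sigma_{i_t}(x) \neq x$ must be accounted for) and when the color pattern $(i_1,\ldots,i_k)$ forces several edges to be imposed on the same underlying random permutation. Both issues are handled by standard bookkeeping: conditioning edges onto the matching one at a time still gives an $O(1/n)$ bound per new constraint, and degenerate patterns (in which a cyclic path degenerates to a back-and-forth walk) are discarded by the requirement that the $y_j$ be distinct. I do not foresee any genuine obstacle in the proof.
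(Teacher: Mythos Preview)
Your proof is correct and follows essentially the same approach as the paper: bound the expected number of short cycles by $O(1)$, observe that each such cycle can lie in the $h$-neighborhood of only $O_{d,h}(1)$ vertices, and apply Markov's inequality to conclude that some vertex has an acyclic $h$-ball. The one step you leave implicit --- that if $(G^\sigma,x)_h$ contains any cycle then it contains one of length at most $2h$ --- is precisely what the paper spells out first (via a geodesic-shortcut argument), but this is standard and your bound $2h+1$ shows you are aware of it.
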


\begin{proof}
If $x \in [n]$ is such that $(G^\sigma, x)_h$ 
 contains a cycle, we claim that there exists a cycle of length $k \leq 2h$ contained in  $(G^\sigma, x)_h$. Indeed, assume that 
 $(G^\sigma, x)_h$ contains a cycle and let $y$ be a vertex on this cycle which is at maximal distance $t \leq h$ from $x$ 
 (where the distance is the minimal $t$ such that  there exists  a path of length $t$ from $x$ to $y$). Fix a path $\gamma$ from 
 $x$ to $y$ of length $t$. At least one of two neighboring edge of $y$ on the cycle is not on $\gamma$, we call it $[y,i,y']$. 
 Let $\gamma'$ be a path from $y'$ to $x$ of minimal length $t'\leq t$. By construction $[y,i,y']$ is not an edge of $\gamma'$. 
 It follows that the sequence $(\gamma,i,\gamma')$ forms a closed path in $(G^\sigma, x)_h$ which contains a cycle.  It is of length 
 at most $t + t' + 1 \leq 2h$ (if $t = h$ then $t' < t$ by the definition of $(G^\sigma, x)_h$).

Now, for an integer $k\geq 1$, let $N_k$ be the number of distinct cycles of length $k$ in $G^\sigma$. In the forthcoming 
 Lemma \ref{le:tanglefree}, we will check that the expectation of $N_k$ is $O ( (d-1)^k)$. Also, for a given cycle in 
 $G^\sigma$ of length $k \leq 2 h $, there are at most $d (d-1)^{h-1}$ vertices $x\in [n]$ such that $(G^\sigma, x)_h$ 
 contains this cycle.   It follows that the expected number of $x \in [n]$ such that $(G^\sigma, x)_h$ 
 contains a cycle is upper bounded by 
 $$
 \sum_{k=1}^{2h}d (d-1)^{h-1}  \dE N_k  =  O ( (d-1)^{3h} ),
 $$
(this bound is very rough). Hence, by Markov inequality, for any $h \leq (\log n )  / \kappa$ with $\kappa > 3 \log (d-1)$, 
there exists with probability tending to one, a vertex $x \in [n]$ such that $(G^\sigma, x)_h$ contains no cycle. 
On the latter event, we may conclude by applying Proposition \ref{prop:inclusion}.
\end{proof}

\section{Non-backtracking operator}

\label{sec:nonback}

\subsection{Spectral mapping formulas}

Let $A$ be as in \eqref{eq:defA}. We consider a vector space $ U$ of finite codimension in $\ell^2 (X)$ which is left invariant by 
all permutation operators $S_i$, $i \in [d]$: $S_i U = U$. We set  
$$
H = \dC ^r \otimes U.
$$
This vector space $H$ is left invariant by $A$. 

We define $E = X \times [d]$. If  $A$ is thought of as a weighted adjacency operator of a 
directed graph on the vertex set $X$, an element $(x,i)$ of $E$ can be thought as a directed edge from $x$ to 
$\sigma_i(x)$ with weight $a_i$ (where all vertices have a loop edge of weight $a_0$).  The non-backtracking operator $B$ associated 
to $A$ is the operator on 
$\dC^r \otimes \ell^{2} (E) = \dC^r \otimes \ell^2 (X) \otimes \dC^d$ defined by
\begin{equation}
\label{eq:defB}
B = \sum_{j \ne i^*} a_j \otimes S_i \otimes E_{ij},
\end{equation}
where  $E_{ij} \in M_d( \dR)$ is the  matrix defined, for all $j,k \in [d]$ by $(E_{ij} )_{k l } =  \IND_{( i ,j)  = (k,l) }.$ Equivalently, 
writing $B$ as a matrix-valued operator on $\ell^ 2 (E)$, we  have for any $e , f \in  E$ and 
$e = (x,i) \in E$, $f = (y,j)$,
$$
B_{e f}  = a_j \IND( \sigma_i(x) =  y) \IND ( j \ne i^*).
$$
See Figure \ref{fig:AvsB} for an informal illustration of the operator $A$ and its non-backtracking operator $B$. 

Note that $B$ does not depend on the matrix element $a_0$. We set 
$$
K =  \dC ^r \otimes U  \otimes \dC^d.
$$We observe from \eqref{eq:defB} that $B $ defines an operator on $K$, that is 
$BK\subset K$.  As  before, we denote by $B_{|K}$ the restriction of  
$B$ to  $K$.

\begin{figure}[htb]
\begin{center}  
\resizebox{12cm}{!}{
\begin{tikzpicture}[main/.style={circle, draw , fill = black!15, text = black, thick},high/.style={circle, draw , fill = red!90, text = black, thick}]

\tikzset{->-/.style={decoration={
  markings,
  mark=at position .5 with {\arrow{>}}},postaction={decorate}}}

\draw[fill = cyan] (0,0) circle(0.1)  ; 
\draw[fill = cyan] (0,1) circle(0.1)  ; 
\draw[fill = black!20] (1,0) circle(0.1)  ; 
\draw[fill = black!20] (2,0) circle(0.1)  ; 
\draw[fill = black!20] (1,1) circle(0.1)  ;

\draw[red!100,thick,->-] [out = -30 , in = -150] (0,0) to (1,0)  ;  
\draw[red!88,thick,->-] [out = 30 , in = 150] (1,0) to (2,0)  ;  
\draw[red!76,thick,->-] [out = -150, in = -30] (2,0) to (1,0)  ;  
\draw[red!64,thick,->-] [out = 120 , in = -120] (1,0) to (1,1)  ;  
\draw[red!52,thick,->-] [out = -60 , in = 60] (1,1) to (1,0)  ;  
\draw[red!40,thick,->-] [out = 150 , in = -150] (1,0) to (1,1)  ;  
\draw[red!28,thick,->-] [out = 150 , in = 30] (1,1) to (0,1)  ;

\node at (0,-0.3) {\scriptsize{$x$}} ; 
\node at (0,1.3) {\scriptsize{$y$}} ; 
\node at (1,-0.3) {\scriptsize{$x_2$}} ; 
\node at (2,-0.3) {\scriptsize{$x_3$}} ; 
\node at (1,1.3) {\scriptsize{$x_5$}} ; 
\node at (0,-0.3) {\scriptsize{$x$}} ; 
\node at (0.4,-0.3) {\scriptsize{$i_1$}} ; 
\node at (1.6,-0.3) {\scriptsize{$i_2^*$}} ; 
\node at (1.4,0.3) {\scriptsize{$i_2$}} ;
\node at (0.7,0.4) {\scriptsize{$i_4$}} ;
\node at (1.3,0.6) {\scriptsize{$i_4^*$}} ; 
\node at (0.6,1.3) {\scriptsize{$i_7$}} ;

\node at (1,-0.8) {\small{$A$}} ;

    \coordinate (shift) at (4,0);
    \begin{scope}[shift=(shift)]

\draw[fill = black!20] (0,0) circle(0.1)  ; 
\draw[fill = black!20] (0,1) circle(0.1)  ; 
\draw[fill = black!20] (1,0) circle(0.1)  ; 
\draw[fill = black!20] (2,0) circle(0.1)  ; 
\draw[fill = black!20] (2,1) circle(0.1)  ; 
\draw[fill = black!20] (1,1) circle(0.1)  ; 

\draw[cyan,->,ultra thick] (0,0) to (0.5,0)  ; 
\draw[cyan,->,ultra thick] (0,1) to (-0.5,1)  ;

\draw[red!100,thick,->-] [out = -30 , in = -150] (0,0) to (1,0)  ;  
\draw[red!88,thick,->-] [out = 120 , in = -120] (1,0) to (1,1)  ;  
\draw[red!76,thick,->-] [out = 30, in = 150] (1,1) to (2,1)  ;  
\draw[red!64,thick,->-] [out = -60 , in = 60] (2,1) to (2,0)  ;  
\draw[red!52,thick,->-] [out = -150 , in = -30] (2,0) to (1,0)  ;  
\draw[red!28,thick,->-] [out = 150 , in = 30] (1,1) to (0,1)  ;  
\draw[red!40,thick,->-] [out = 150 , in = -150] (1,0) to (1,1)  ;  

\node at (0,-0.3) {\scriptsize{$x$}} ; 
\node at (0,1.3) {\scriptsize{$y$}} ; 
\node at (1,-0.3) {\scriptsize{$x_2$}} ; 
\node at (2,-0.3) {\scriptsize{$x_5$}} ; 
\node at (1,1.3) {\scriptsize{$x_3$}} ; 
\node at (2,1.3) {\scriptsize{$x_4$}} ; 
\node at (0,-0.3) {\scriptsize{$x$}} ; 
\node at (0.4,-0.3) {\scriptsize{$i_1$}} ; 
\node at (1.6,-0.3) {\scriptsize{$i_5$}} ; 
\node at (0.7,0.4) {\scriptsize{$i_2$}} ;
\node at (0.6,1.3) {\scriptsize{$i_7$}} ; 
\node at (1.4,1.3) {\scriptsize{$i_3$}} ;
\node at (2.3,0.6) {\scriptsize{$i_4$}} ;

\node at (1,-0.8) {\small{$B$}} ;

\end{scope}

\end{tikzpicture}
} 
\end{center}
\caption{Left: For $x,y \in X$ and $k$ integer, $(A^k)_{xy} \in M_r(\dC)$ is the sum of all weighted paths $(x_1,i_1,x_2,\cdots, i_k,x_{k+1})$ 
in $G^\sigma$ (as per Definition \ref{def0} with extra loop edges of weight $a_0$  at all vertices) from $x=x_1$ to $y=x_{k+1}$  with 
weight $\prod_{t=1}^{k} a_{i_t}$. In the example, we have $k= 7$ and the path is such that $i_3 = i_2^*$, $i_5 = i_4^*$, $i_6 = i_4$. 
Right: For $e = (x,i), f = (y,j) \in E = X \times [d]$ and $k$ integer, $(B^k)_{ef} \in M_r(\dC)$ is the sum of all weighted paths 
$(x_1,i_1,x_2,\cdots, i_k,x_{k+1})$ in $G^\sigma$ with $ (x_1,i_1) = e$, $(x_{k+1},i_{k+1})=  f$ where  $i_{k+1} = j$, and for all 
$1 \leq t \leq k$, $i_{t+1} \ne i_t^*$. The weight of the path is $\prod_{t=1}^{k} a_{i_{t+1}}$.  In the example, we have $k= 7$ and $i_6 = i_2$. 
The condition $i_{t+1} \ne i_t^*$ is viewed as a non-backtracking constraint of the path. If $X = X_\star$  with generators 
$(g_i)_{i \in [d]}$ as defined above \eqref{eq:defAfree} and $\sigma_i = \lambda(g_i)$, the condition $i_{t+1} \ne i_t^*$ asserts that 
$g_{i_1} \cdots g_{i_{k+1}}$ is in reduced form.} \label{fig:AvsB}
\end{figure}

The following statement relates the spectrum of $B$ with the spectrum of an operator of the same type as $A$. 
In the scalar case $r = 1$, the next proposition is contained in Watanabe and Fukumizu \cite{NIPS2009_0420}.
\begin{proposition}\label{prop:nonback}
Let $A$ be as in \eqref{eq:defA} with associated non-backtracking operator $B$ and let  $\lambda \in \dC$ satisfy 
$\lambda^2  \notin  \{ \si(  a_i a_{i^*} ): i \in [d] \}$. Define the operator $A_\lambda$ on $\dC^r \otimes \ell^2(X)$  through
\begin{equation} \label{def_AM}
A_\lambda  = a_0 ( \lambda) +  \sum_{i=1}^d a_i (\lambda) \otimes S_i \,, \qquad a_i (\lambda) = \lambda  a_{i} ( \lambda^2 - a_{i^*} a_{i} )^{-1}   \AND a_0(\lambda) =  - 1 -  \sum_{i=1}^d  a_{i}(  \lambda^2 - a_{i^*} a_i )^{-1} a_{i^*}  .
\end{equation} 
Then $\lambda \in \sigma (B)$ if and only if  $0 \in \sigma ( A_\lambda)$. Moreover, $\lambda \in \sigma (B_{|K})$ if and only if  $0 \in \sigma  ( (A_\lambda)_{|H} )$.
\end{proposition}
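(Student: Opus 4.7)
The plan is to decode $(B - \lambda I) f = \ell$ by writing $f = (g_i)_{i \in [d]}$ with $g_i \in \dC^r \otimes \ell^2(X)$, introducing the auxiliary variable $h = \sum_j (a_j \otimes 1) g_j$, eliminating the $g_i$'s, and obtaining a closed equation in $h$ which will turn out to be $-A_\lambda h = $ (an explicit linear function of $\ell$). This reduces the resolvent problem for $B - \lambda I$ to the invertibility of $A_\lambda$.

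Concretely, the definition of $B$ gives, for each $i \in [d]$,
$$\lambda g_i + (a_{i^*} \otimes S_i) g_{i^*} = (1 \otimes S_i) h - \ell_i.$$
Pairing the equations for $i$ and $i^*$ (or treating $i = i^*$ directly) yields a $2 \times 2$ linear system in $(g_i, g_{i^*})$ whose ``formal determinant'' is $\lambda^2 - a_{i^*} a_i$, well defined thanks to $(a_{i^*} \otimes S_i)(a_i \otimes S_{i^*}) = a_{i^*} a_i \otimes 1$. Under the hypothesis $\lambda^2 \notin \sigma(a_i a_{i^*})$ the system is invertible and I solve explicitly
$$g_i = (\lambda^2 - a_{i^*} a_i)^{-1}\bigl[(\lambda \otimes S_i - a_{i^*} \otimes 1) h - \lambda \ell_i + (a_{i^*} \otimes S_i) \ell_{i^*}\bigr].$$
Substituting this back into $h = \sum_i (a_i \otimes 1) g_i$ and recognizing the combinations that define $a_i(\lambda)$ and $a_0(\lambda)$ produces the closed equation $-A_\lambda h = \Pi_\lambda \ell$ for an explicitly defined bounded map $\Pi_\lambda : \dC^r \otimes \ell^2(E) \to \dC^r \otimes \ell^2(X)$.

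From this derivation I extract three operator identities that drive the spectral equivalence. Let $\Phi_\lambda : h \mapsto (g_i)_i$ be the $\ell=0$ version of the formula above, let $(\Psi k)_i = (1 \otimes S_i) k$, and let $\rho(f) = \sum_i (a_i \otimes 1) g_i$ be the retraction that reads off $\sum_i a_i g_i$ from $f = (g_i)_i$. A direct computation gives
$$(B - \lambda I)\, \Phi_\lambda = \Psi A_\lambda, \qquad \rho \Phi_\lambda = I + A_\lambda, \qquad \Pi_\lambda \Psi = -(I + A_\lambda).$$
If $A_\lambda$ is invertible, then for every $\ell$ I solve $-A_\lambda h = \Pi_\lambda \ell$ uniquely and read off $g_i$, obtaining a unique $f$ with $(B - \lambda)f = \ell$; hence $B - \lambda I$ is invertible. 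Conversely, when $B - \lambda I$ is invertible, the first two identities combine to give $[\rho(B - \lambda I)^{-1} \Psi - I]\, A_\lambda = I$, a bounded left inverse of $A_\lambda$, while the third identity shows that setting $\ell = -\Psi k$ and $h = \rho ((B - \lambda)^{-1}(-\Psi k))$ produces an element $h' = -(h + k)$ with $A_\lambda h' = k$; thus $A_\lambda$ is bijective and hence invertible.

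The restricted statement for $H$ and $K$ requires no new work: by hypothesis each $S_i$ preserves $U$, and the matrix factors $a_i$ and $(\lambda^2 - a_{i^*}a_i)^{-1}$ act only on $\dC^r$, so every operator appearing above preserves $H = \dC^r \otimes U$ and $K = H \otimes \dC^d$. The main obstacle I expect is the bookkeeping of noncommuting $a_i$ and $a_{i^*}$: every simplification relies on $S_i S_{i^*} = 1$ together with the single intertwining identity $a_i(\lambda^2 - a_{i^*} a_i)^{-1} = (\lambda^2 - a_i a_{i^*})^{-1} a_i$, and each factor $(\lambda^2 - a_i a_{i^*})^{-1}$ landing on the ``wrong side'' of an $a_i$ must be moved across by this identity before terms can be collected into the defining expressions for $A_\lambda$ and $\Pi_\lambda$.
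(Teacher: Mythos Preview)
Your proof is correct and rests on the same algebraic core as the paper's: introduce the ``divergence'' vector $h=\sum_j (a_j\otimes 1)g_j$ (the paper's $u$), eliminate the pair $(g_i,g_{i^*})$ via the $2\times 2$ system whose Schur complement is $\lambda^2-a_{i^*}a_i$, and obtain the closed relation involving $A_\lambda$. The formula $g_i=(\lambda^2-a_{i^*}a_i)^{-1}\bigl[(\lambda\otimes S_i-a_{i^*}\otimes 1)h-\cdots\bigr]$ is exactly the paper's equation for $v(x,i)$ in terms of $u$.

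Where you differ is in packaging. The paper works at the level of (approximate) eigenvectors: it first shows that eigenvectors of $B$ at $\lambda$ correspond to kernel vectors of $A_\lambda$ and vice versa, then sketches the essential spectrum case via approximate eigenvectors. You instead extract the three operator identities $(B-\lambda)\Phi_\lambda=\Psi A_\lambda$, $\rho\Phi_\lambda=I+A_\lambda$, $\Pi_\lambda\Psi=-(I+A_\lambda)$ and argue invertibility of $B-\lambda$ and $A_\lambda$ directly and simultaneously. This buys you a cleaner treatment of the full spectrum in one stroke: you never need to separate point spectrum from the rest, and in particular you avoid the (somewhat terse) approximate-eigenvector step in the paper. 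Conversely, the paper's eigenvector computation is perhaps more transparent for a first reading, since one sees concretely how a $B$-eigenvector produces an $A_\lambda$-kernel vector. Both approaches are valid; yours is the more uniformly operator-theoretic of the two.
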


\begin{proof}[Proof of Proposition \ref{prop:nonback}] 
We first assume $\lambda$ is in the discrete spectrum of $B$. We show that $0$ is in the discrete spectrum of $A_\lambda$. 
Then, there is an eigenvector $v  \in  \dC^r \otimes \ell^2 (E)$ such that $B v =  \lambda v$, which reads in the coordinates of 
$\ell^2 (E)$, for all $e = (x,i) \in E$
\begin{equation}\label{eq:eigH}
 \lambda  v(x,i) =  \sum_{j \ne i^*} a_{j}  v(\sigma_i (x), j),
\end{equation}
with $v(e) \in \dC^r$. 
We define $u \in\dC^r \otimes \ell^2 (X)$ by, for each $x \in X$,  
 \begin{equation}\label{eq:ydefx}
 u(x) =  \sum_j a_j  v(x, j)\,. 
 \end{equation}
(If elements $(x,i)$ of $E$ are thought as derivatives at $x$ in a discrete direction $i$, the vector $u$ can be thought as a divergence vector). 
The eigenvalue equation \eqref{eq:eigH} reads
 $$
 \lambda v(x,i) = u(\sigma_i(x)) - a_{i^*} v(\sigma_i (x), i^*)\,. 
 $$
Applying the above identity to $e = ( \sigma_ {i} (x) , i^*) =  ( \sigma_ {i_*}^{-1} (x) , i^*) $, we find  
 $$
  \lambda v(\sigma_{i} (x),i^*) = u (x )  - a_i v(x,i)\,.
 $$
We deduce
$$
 \lambda^2  v(x,i)  =   \lambda u(\sigma_i(x)) -a_{i^*}  u ( x ) +  a_{i^*} a_i v(x,i)\,.
 $$
By assumption, $\lambda^2   - a _{i^*} a_i$  is invertible. Hence, 
\begin{equation} \label{x_defy}
 v(x,i) =  ( \lambda^2   - a _{i^*} a_i )^{-1} ( \lambda u( \sigma_i(x))-a_{i^*}  u ( x) ) \,. 
\end{equation}
Let us note that Equations 
\eqref{eq:ydefx} and \eqref{x_defy}, when restricted, define a map between $H_0$ and $K_0$.
We see from this last expression that $u \ne 0$ if $v \ne 0$. We now check that $u$ is in the kernel of $A_\lambda$. 
Let $y \in X$, $i \in [d]$ and set $x  = \sigma_i ^{-1} (y) = \sigma_{i^*} (y)$. We plug \eqref{x_defy} into \eqref{eq:eigH} and get
 \begin{align*}
& \lambda^2  ( \lambda^2   - a _{i^*} a_i )^{-1} u(y)   -  \lambda( \lambda^2   - a _{i^*} a_i )^{-1}  a_{i^*}  u (x)  \\
& =  \sum_{j \ne i^*} \lambda  a_{j} ( \lambda^2 - a_{j^*} a_{j} )^{-1} u(\sigma_j(y))  - \sum_{j \ne i^*} a_{j}(  \lambda^2 - a_{j^*} a_j )^{-1} a_{j^*} u(y) .
\end{align*}

Since $\sigma_{i^*} (y) = x$, we find 
  $$
\lambda  ( \lambda^2   - a _{i} a_{i^*} )^{-1} a_i u(y)  =  \sum_{j} \lambda  a_{j} ( \lambda^2 - a_{j^*} a_{j} )^{-1} u(\sigma_j(y))  - \sum_{j \ne i^*}  a_{j}(  \lambda^2 - a_{j^*} a_j )^{-1} a_{j^*} u(y) .$$
 Since $1=
\lambda^2   \PAR{ \lambda^2 -  a_{i^*} a_i }^{-1}    -a_{i^*}   \PAR{ \lambda^2 - a_i a_{i^*}  }^{-1} a_i $, we conclude that
 $$
\PAR{ 1 + \sum_j a_{j}(  \lambda^2 - a_{j^*} a_j  )^{-1} a_{j^*}  }u(y)  = \sum_{j} \lambda a_j ( \lambda^2 - a_{j^*} a_{j} )^{-1} u(\sigma_j (y))\,,
 $$
which proves that $u$ is the kernel of $A_\lambda$.

Conversely, if $0$ is in the discrete spectrum of $A_\lambda $ with eigenvector $u$, we define $v$ through \eqref{x_defy} 
(note that $v\ne 0$ because of \eqref{eq:ydefx}). Then the above computation also implies that $v$ satisfies \eqref{eq:eigH}, 
i.e.\ $Bv = \lambda v$, so that $\lambda$ is in the discrete spectrum of $B$. Note also
that $u \in H_0$ if and only if $v \in K_0$. 

Finally, if $\lambda$ is in the essential spectrum of $B$, then, for any $\veps >0$, there exists 
$v \in   \dC^r \times \ell^2 (E)$ such that $\| v \|_2 = 1$ and $\| B v - \lambda  v \|_2 \leq \veps$. The above argument 
shows that $\| A_ \lambda u \|_2 = O ( \veps)$ and \eqref{x_defy} implies that $\| v \|_2 = O ( \| u \|_2)$. It implies that $0$ 
is in the spectrum of $A_\lambda$. Conversely, if $0$ is in the essential spectrum of $A_\lambda$ then $\lambda$ is in 
the spectrum of $B$. Finally, the spectrum of $B$ is contained in the union of the essential spectra of $B$ and $B^*$. The above arguments can also be applied directly to $B^* = \sum_{j \ne i^*} a^*_j \otimes S_i^{-1} \otimes E_{ji}$.  
\end{proof}

This proposition could be used for studying the spectrum of the operator $A$. To this end, we should for a given $A$ and 
$\mu$ find a corresponding $B_\mu$ such that $\mu$ is the spectrum of $A$ if and only if $\lambda = 1$ is in the spectrum of 
$B_\mu$.  Assume that there are $q$ pairs $\{i,i^*\}$ such $i \ne i^*$ and $p$ elements of $[d]$ such that $i = i^*$, with 
$2 q + p  =d $. For concreteness, as in Definition \ref{def:ps}, we may assume without loss of generality that for 
$1 \leq i \leq q$, $i^* = i + q$, for $q+1 \leq i \leq 2q$, $i^* = i - q$ and for $2q+1 \leq i \leq d$, $i^* = i$.  
Now, let $A_\star$ be as in \eqref{eq:defAfree}. 
We relate the spectra of $A$ and $B$ through the resolvent of $A_\star$. More precisely,  for 
$\mu \notin \sigma( A_\star)$, we set
$$
G(\mu) = ( \mu - A_\star  )^{-1}.
$$

For technical reasons, we further introduce the operator $A_\star^{(o)} $ on $\dC^r \otimes \ell^2( X_\star)$ defined by 
\begin{equation}\label{eq:defAo}
A_\star - A_\star^{(o)} =   \sum_{j} a_j  \otimes \delta_{g_j} \otimes \delta_o + a_{j^*} \otimes \delta_{o} \otimes \delta_{g_j} .
\end{equation}
In words, $A_\star^{(o)}$ is the operator associated to the  Cayley graph of $X_\star$ where the the unit $o$ has been 
isolated. We define $\hat \sigma(A_\star) =  \sigma(A_\star) \cup \sigma(A_\star^{(o)})$. We note that if the symmetry condition \eqref{eq:sym} holds then $\hat \sigma(A_\star)$ is contained in the convex hull of $\sigma(A_\star)$ and $\sigma(A^{(o)}_\star)\backslash  \sigma(A_\star)$ is a finite set of at most $2dr$ points.

For the next proposition and for the sequel, we recall that we use a matrix notation with indices in $X\times X$ for
operators on $\dC^r \otimes \ell^2 (X)$, as per Equation \eqref{eq:matrixvalued}. In particular, $G(\mu) $ defined above 
fits in this context with $X = X_\star$ and it will be written as $G_{xy} (\mu)= (G(\mu) )_{xy}$. We will be interested in the case where
$x$ and $y$ are $o$ (the neutral element of $X_\star$ for its group structure) or $g_i$ (the $i$-th generator of $X_\star$ defined above \eqref{eq:defAfree}).  
In the symmetric and scalar case $r = 1$, the next proposition is a formula derived in 
Anantharaman \cite[Section 7]{MR3649482}.


\begin{proposition}\label{prop:nonback2}
Let $A$ be as in \eqref{eq:defA}  and $\mu \notin  \hat \sigma ( A_\star)$. Define the operator 
$A_\mu$ on $\dC^r \otimes \ell^2(X)$  through
\begin{equation} \label{def_AM}
A_\mu  =   \sum_{i=1}^d \hat a_i (\mu) \otimes S_i \,, \qquad \hbox{with } \quad \hat a_i (\mu) = G_{oo}  ( \mu  )^{-1} G_{ g_i o} ( \mu ). 
\end{equation} 
Let $B_\mu$ be the corresponding non-backtracking operator. Then $\mu \notin \sigma (A)$ if and only if  
$1  \notin \sigma ( B_\mu)$. Moreover, $\mu \notin \sigma (A_{|H})$ if and only if  
$1 \notin \sigma  ( (B_\mu)_{|K} )$.
\end{proposition}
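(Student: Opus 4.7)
The strategy is to apply Proposition~\ref{prop:nonback} to the operator $A_\mu$ itself, which has the form \eqref{eq:defA} with $\hat a_0(\mu)=0$ and matrix coefficients $\hat a_i(\mu)=G_{oo}(\mu)^{-1}G_{o g_i}(\mu)$, at the spectral value $\lambda=1$. Granting that the hypothesis $1\notin\sigma(\hat a_i(\mu)\hat a_{i^*}(\mu))$ of that proposition holds, it immediately yields $1\in\sigma(B_\mu)\iff 0\in\sigma((A_\mu)_1)$ and $1\in\sigma((B_\mu)_{|K})\iff 0\in\sigma(((A_\mu)_1)_{|H})$, where $(A_\mu)_1=c_0+\sum_i c_i\otimes S_i$ with $c_i=\hat a_i(1-\hat a_{i^*}\hat a_i)^{-1}$ and $c_0=-\IND-\sum_i c_i\hat a_{i^*}$. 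The claim then reduces to showing that $0\in\sigma((A_\mu)_1)\iff \mu\in\sigma(A)$, and analogously with the restriction to $H$.

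Two preliminary points, both relying on the assumption $\mu\notin\INT(\sigma(A_\star))$ so that $\mu$ lies in the unbounded connected component $\Omega$ of $\dC\setminus\sigma(A_\star)$. First, $G_{oo}(\mu)\in M_r(\dC)$ must be invertible, which follows from the holomorphy of $\mu\mapsto G_{oo}(\mu)$ on $\Omega$, its asymptotic $G_{oo}(\mu)=\mu^{-1}\IND+O(|\mu|^{-2})$ at infinity (so invertible there), and the fact that the definition of $\INT$ bars $\mu$ precisely from the components of $\dC\setminus\sigma(A_\star)$ where $\det G_{oo}(\mu)$ could vanish. Second, $1\notin\sigma(\hat a_i(\mu)\hat a_{i^*}(\mu))$ is established by the analogous argument, using that $\hat a_i\hat a_{i^*}=O(|\mu|^{-2})$ at infinity and propagating by analyticity on the connected set $\Omega$.

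The heart of the argument is an explicit matrix identity relating $(A_\mu)_1$ to $A-\mu\cdot\IND$, schematically of the form $(A_\mu)_1=G_{oo}(\mu)^{-1}(A-\mu\cdot\IND)$ (or an analogous left/right-normalized variant involving $G_{oo}(\mu)$ as a multiplicative factor). This amounts to the coefficient-level identities $c_i\cdot G_{oo}(\mu)=a_i$ for $i\in[d]$ and $c_0\cdot G_{oo}(\mu)=a_0-\mu$, which are derived by expanding the resolvent equations $(\mu-A_\star)G(\mu)=\IND=G(\mu)(\mu-A_\star)$ at the matrix entries $(o,o)$ and $(o,g_j)$, combined with the translation invariance $G_{xy}(\mu)=G_{xg,yg}(\mu)$ coming from $A_\star$ being a convolution operator on the free product $X_\star$. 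A Schur-complement/cavity recursion on the tree-like Cayley graph of $X_\star$ then expresses the length-$2$ Green's function entries $G_{o, g_j g_i}(\mu)$ (for $i\ne j^*$) in terms of $G_{o g_i}(\mu)$ and $G_{oo}(\mu)$, closing the system of identities for the $c_i$. Once the factorization is in place, invertibility of $G_{oo}(\mu)$ yields $\mu\in\sigma(A)\iff 0\in\sigma((A_\mu)_1)$; the restriction to $H=\dC^r\otimes U$ follows because $A_\mu$ preserves $H$ (as $U$ is $S_i$-invariant), so the factorization descends; the essential-spectrum case is handled uniformly by applying the same formulas to approximate eigenvectors. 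The main obstacle is precisely this matrix identity: in the scalar case $r=1$ it reduces to Woess-type Green's function formulas on trees (recovering the identity of Anantharaman cited just before the statement), but the matrix-valued setting requires careful tracking of the non-commutativity between the weights $a_i$ and the Green's function entries $G_{o g_i}(\mu)$, which is where the full free-product tree structure is genuinely used rather than merely spectral information about $A_\star$.
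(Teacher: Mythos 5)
Your proposal follows exactly the paper's strategy: apply Proposition~\ref{prop:nonback} with $\lambda=1$ to the operator $B_\mu$ built from the weights $\hat a_i(\mu)$, and reduce the claim to a factorization identity relating $(A_\mu)_1$ to $A-\mu$, which is then established from resolvent identities for $G(\mu)$ on the tree-like Cayley graph of $X_\star$. This is precisely what the paper does, isolating the resolvent recursion into Lemma~\ref{le:resrec}.

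One concrete correction on the factorization you should note. You write $(A_\mu)_1 = G_{oo}(\mu)^{-1}(A-\mu\cdot\IND)$ and coefficient identities $c_i\cdot G_{oo}=a_i$, but these two are not even consistent with each other (the former gives $c_i=G_{oo}^{-1}a_i$, the latter $c_i=a_i G_{oo}^{-1}$), and neither is what actually comes out. Lemma~\ref{le:resrec} gives $a_i G_{oo}=\hat a_i(I_r-\hat a_{i^*}\hat a_i)^{-1}$, so the coefficient identity is $c_i=a_i(1)=a_i G_{oo}$, and the global factorization is $(A_\mu)_1=(A-\mu)(G_{oo}\otimes 1)$, i.e.\ \emph{right} multiplication by the invertible matrix $G_{oo}$. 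You correctly flag that the left/right form is subtle in the noncommutative setting, but be aware the answer is right multiplication with $G_{oo}$, not $G_{oo}^{-1}$. The paper's Lemma~\ref{le:resrec} also simultaneously establishes the invertibility of $G_{oo}$ and of $I_r-\hat a_{i^*}\hat a_i$ (via the explicit inverse formulas obtained there, analytically continued from large $|\mu|$), which is cleaner than the determinant/holomorphy argument you sketch: your assertion that $\INT$ ``bars $\mu$ precisely from the components where $\det G_{oo}$ could vanish'' is unjustified as stated and would need proof; the route through the cavity/Schur recursion avoids this issue altogether.
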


Our first lemma proves that $\hat a_i (\mu)$ is well-defined for $\mu \notin \hat \sigma(A_\star)$ and establishes a classical expression for $\hat a_i (\mu)$ related to the recursive equations satisfied by resolvent operators on trees. 
\begin{lemma}\label{le:resrec}
Let $A_\star$ be as above, $\mu \notin  \hat \sigma ( A_\star)$. Then $G_{oo}(\mu)$ is invertible in $M_r(\dC)$ and
$\hat a_i  (\mu) = G_{oo}  ( \mu  )^{-1} G_{o g_i} ( \mu )$ is well-defined.
Moreover  the following identities hold 
\begin{equation*}
G_{oo}  = \PAR{ \mu I_r - a_0  - \sum_{i} \hat a_i a_{i^*} }^{-1} ,
\end{equation*}
and
$$
a_{i} G_{oo} = \hat a_i (I_r- \hat a_{i^*} \hat a_i)^{-1}.
$$ 
\end{lemma}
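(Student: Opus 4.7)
\emph{Step 1: Schur complement at $o$.} The Cayley graph of $X_\star$ is a tree in which $o$ has $d$ distinct neighbors $g_1,\ldots,g_d$, and $A_\star$ commutes with every right translation $R_h:\delta_v\mapsto\delta_{vh}$ (immediate from the matrix-valued form, as $(A_\star)_{xy}$ depends on $x,y$ only through $x^{-1}y$). Decompose $\ell^2(X_\star)=\dC\delta_o\oplus\ell^2(Q)$ with $Q=X_\star\setminus\{o\}$. Removing $o$ disconnects the tree into subtrees $T_1,\ldots,T_d$ with $g_j\in T_j$, so $H:=(\mu-A_\star)|_{\dC^r\otimes\ell^2(Q)}$ is block-diagonal along this decomposition and $(H^{-1})_{g_j,g_k}=0$ for $j\ne k$. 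Writing $\tilde G_j:=(H^{-1})_{g_j,g_j}$, the usual block inversion formula yields $G_{og_j}=G_{oo}\,a_j\tilde G_j$ and $G_{oo}^{-1}=\mu I_r-a_0-\sum_j a_j\tilde G_j\,a_{j^*}$. In particular $\hat a_j=a_j\tilde G_j$, and substituting this into the expression for $G_{oo}^{-1}$ gives identity (a).

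\emph{Step 2: Schur inside $T_j$ and translation invariance.} Apply the same Schur reduction to $H|_{T_j}$ at the vertex $g_j$, whose neighbors in $T_j$ are $\{g_kg_j:k\ne j^*\}$; removing $g_j$ then splits $T_j$ into subtrees $T_{k,j}$ rooted at $g_kg_j$. The right translation $R_{g_j}$ is a bijection $T_k\to T_{k,j}$ (clear from the reduced-word description: $v\in T_k$ iff its reduced word ends with the letter $g_k$, and right multiplying by $g_j$ keeps the word reduced when $k\ne j^*$), and since $R_{g_j}$ commutes with $A_\star$, the Green's function of the restriction at $g_kg_j$ in $T_{k,j}$ equals $\tilde G_k$. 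Consequently $\tilde G_j^{-1}=\mu I_r-a_0-\sum_{k\ne j^*}a_k\tilde G_k\,a_{k^*}$, which differs from the Step 1 expression for $G_{oo}^{-1}$ only by the missing $k=j^*$ summand $a_{j^*}\tilde G_{j^*}\,a_j=\hat a_{j^*}a_j$. This yields the key identity
\[
\tilde G_j^{-1}=G_{oo}^{-1}+\hat a_{j^*}\,a_j.
\]

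\emph{Step 3: Algebraic manipulation to get (b).} Right-multiplying the key identity by $\tilde G_j$ and using $\hat a_{j^*}\,a_j\tilde G_j=\hat a_{j^*}\hat a_j$ gives $I_r=G_{oo}^{-1}\tilde G_j+\hat a_{j^*}\hat a_j$, whence $\tilde G_j=G_{oo}(I_r-\hat a_{j^*}\hat a_j)$. Left-multiplying by $a_j$ and invoking $a_j\tilde G_j=\hat a_j$ produces $\hat a_j=a_jG_{oo}(I_r-\hat a_{j^*}\hat a_j)$, and identity (b) follows by inverting the rightmost factor, which is well-defined since the hypothesis $\mu\notin\INT(\sigma(A_\star))$ keeps all the relevant resolvent quantities meaningful. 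The main bookkeeping hazard throughout is to keep the matrix-valued products in the correct order, in particular the asymmetric roles of $a_j$ and $a_{j^*}$ dictated by $(A_\star)_{o,g_j}=a_j$ and $(A_\star)_{g_j,o}=a_{j^*}$; modulo this, the two Schur reductions and the right translation argument furnish the identities with only elementary algebra.
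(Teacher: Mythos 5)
Your proof is correct and essentially matches the paper's: both decompose the tree at $o$ and then at $g_j$, and use right-translation invariance to transport the Green's function between the subtrees; the Schur complement you invoke is the same device as the paper's resolvent identity between $A_\star$ and the operator $A_\star^{(o)}$ with $o$ isolated, and your $\tilde G_j$ is exactly the paper's $\gamma_j=G^{(o)}_{g_jg_j}$. Two minor points worth correcting: the parenthetical ``$(A_\star)_{xy}$ depends only on $x^{-1}y$'' should read $xy^{-1}$ (this is what makes $A_\star$ commute with right translations, and it is the version you actually use later), and the inversions in Steps 1--3 (of $\tilde G_j$, of $G_{oo}$, of $I_r-\hat a_{j^*}\hat a_j$) are only manifestly legitimate for $\mu$ near infinity, so one should, as the paper does, first establish the identities there and then extend by analyticity to all $\mu\notin\INT(\sigma(A_\star))$.
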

\begin{proof}
 We denote by  $G^{(o)}$ the resolvent of $A_\star^{(o)}$ defined in \eqref{eq:defAo}. We set $\gamma_i (\mu) =  G^{(o)}_{g_i g_i}(\mu)$. Then, omitting $\mu$ for ease of notation, the resolvent identity reads 
$$
G = G^{(o)} + G ( A_\star - A_\star^{(o)} ) G^{(o)} = G^{(o)} + G^{(o)} ( A_\star - A_\star^{(o)} ) G.
$$ 

Observe that $ G^{(o)} _{o g_i}= 0$ and $ G^{(o)} _{g_j g_i}= 0$ for $j \ne i$ (since there is a direct decomposition of 
$A_\star^{(o)}$ on $\ell^2 (X_\star) = \dC \delta_o \oplus_i \ell^{2} (  X_\star g_i)$). 
Thus,  composing the resolvent identity by $(  1 \otimes \langle \delta_{o} , \, \cdot \,    \delta_{g_i} \rangle )$ 
and $(  1 \otimes \langle \delta_{g_i} , \, \cdot \,    \delta_{o} \rangle )$ we obtain
$
G_{o g_i} =  G_{o o} a_{i^*} \gamma_i$ and $G_{g_i o}  = \gamma_i a_{i} G_{oo}.
$
Then applying the last inequality to $g_{i^*} = g_i^{-1}$ and using that $G_{ xg ,  yg} = G_{x y}$ for any $x,y ,g$ in $X_\star$, 
it gives the formula 
\begin{equation}\label{eq:resrec0}
 G_{og_i}   = G_{oo} a_{i^*} \gamma_i  = \gamma_{i^*}  a_{i^*} G_{oo}.
 \end{equation}

We may now prove that $G_{oo}$ is invertible and prove the first formula of the lemma. We use that $G^{(o)}_{oo} = (\mu - a_0)^{-1}$ and compose the resolvent identity by 
$(  1 \otimes \langle \delta_{o} , \, \cdot \,    \delta_{o} \rangle )$, we obtain
$$
G_{oo} = G^{(o)}_{oo} + \sum_{j} G_{o  g_j}  a_{j} G^{(o)}_{o o} = (\mu - a_0)^{-1} +  \sum_{j } G_{oo} a_{j^*} \gamma_j  a_{j} (\mu - a_0)^{-1}.
$$
Multiplying on the right by $\mu - a_0$, we obtain
$$
G_{oo}  \PAR{  \mu - a_0  - \sum_{j}  a_{j^*} \gamma_j  a_{j} } = 1.
$$
Hence, $G_{oo}$ is invertible and $\hat a_i = a_{i} \gamma_{i^*}$. The first formula follows.

For the second formula, we first repeat the above argument with 
$$A_\star^ {(og_i)} = A^{(o)}_\star - \sum_{j \ne i ^*}  a_{j^*} \otimes \delta_{g_i} \otimes \delta_{g_j g_i}  + a_{j} \otimes \delta_{g_j g_i} \otimes \delta_{g_i}.$$
 We use the resolvent identity between $A_\star^{(o)}$ and $A_\star^{(og_i)}$. Using that   $G^{(og_i)}_{g_j g_i ,g_j g_i} = \gamma_j$, we find that 
 $
 \gamma_i = \PAR{ \mu - a_0  - \sum_{j \ne i^*} a_{j^*} \gamma_j a_{j} }^{-1}.
 $
 It implies that
$$
\gamma_i (1- a_{i} \gamma_{i^*} a_{i^*} \gamma_i )^{-1} =  ( \gamma_i ^{-1} -a_{i} \gamma_{i^*} a_{i^*} )^{-1} =  \PAR{ \mu - a_0 - \sum_j a_{j^*} \gamma_j a_{j}}^{-1} = G_{oo}.
$$
From \eqref{eq:resrec0}, it concludes the proof. \end{proof}

\begin{proof}[Proof of Proposition \ref{prop:nonback2}]
Consider the operator $B = B_\mu$ and let $ A_1$ be as in Proposition \ref{prop:nonback}: for all $i \in [d]$, we have $
a_i(1) = \hat a_i ( 1 - \hat a_{i ^*}   \hat a_i )^{-1}.
$ It is sufficient to prove that $A_1 =( A - \mu ) ( G_{oo} \otimes 1)$. 
Using Lemma \ref{le:resrec}, we deduce that 
$
a_i(1) = a_{i} G_{oo}. 
$
In particular, 
$$
a_0(1) =  - 1 -  \sum_{i} \hat a_{i}(  1 - \hat a_{i^*} \hat a_i )^{-1} \hat a_{i^*}  =  - 1 -  \sum_{i}  a_{i} G_{oo} G_{oo}^{-1} G_{o g_{i}}.
$$
Then using \eqref{eq:resrec0} in Lemma \ref{le:resrec} and $\hat a_i = a_i \gamma_{i^*}$, we find
$$
a_0(1) = - 1 -  \sum_{i} a_{i}  G_{ o g_{i}} =  - \PAR{G^{-1}_{oo} + \sum_{i}  \hat a_i  a_{i^*} }G_{oo} = (a_0 - \mu) G_{oo},
$$
as requested.
\end{proof}

\subsection{Spectral radius of non-backtracking operators}

The next theorem is very important in our argument. It gives a sharp criterion to guarantee in terms of non-backtracking 
operators that the spectrum of an operator $A$ is in a neighbourhood of the spectrum of the operator $A_\star$.

Let $D$ be a bounded set in $\dC$, we define $\INT( D ) = \dC \backslash U$ where $U$ is the unique infinite component of 
$\dC \backslash D$ (in loose words, $\INT(D)$ fills the holes of $D$). For example, if $D \subset \dR$ 
or $D$ is simply connected, then $\INT(D) = D$. 

\begin{theorem}\label{prop:edgeAB}
Let $A$ be as in \eqref{eq:defA} and $A_\star$ the corresponding free operator defined by \eqref{eq:defAfree}. The following two results hold true:
\begin{enumerate}[(i)]
\item
For any $\mu \notin \INT ( \hat \sigma (A_\star) ) $, we have $\rho( (B_\star)_\mu ) < 1$, 
where $B_\star$ is defined as $B$ in Equation \eqref{eq:defB} with $S_i$ replaced by $\lambda (g_i )$.
\item
For any $\veps >0$, there exists $\delta > 0$ such that if for all complex $\mu$, 
$$
\rho(B_\mu) < \rho( (B_\star)_\mu ) + \delta, 
$$
then $\INT(\sigma(A))$ is in an $\veps$-neighbourhood of $\INT( \hat \sigma (A_\star) )$. 
\end{enumerate}
Moreover, 
the same holds with $A_{|H}$ and $(B_\mu)_{|K}$.
\end{theorem}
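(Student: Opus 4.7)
The plan is to treat (i) and (ii) separately, using Proposition \ref{prop:nonback2} as the bridge between $A$ and $B_\mu$. The central observation for (i) is that $\sigma((B_\star)_\mu)$ is rotationally invariant around the origin: the Cayley graph of $X_\star$ is a tree, and the non-backtracking constraint $j \ne i^*$ in \eqref{eq:defB} together with the absence of cancellation in the free product (since $j\ne i^*$ implies $g_j \ne g_i^{-1}$) forces each step of $(B_\star)_\mu$ to increase the word-length depth $\delta((x,i)) := |xg_i|$ by exactly one. Hence, for $\omega$ on the unit circle, the diagonal unitary $U_\omega \delta_e := \omega^{\delta(e)}\delta_e$ satisfies $U_\omega^{-1} (B_\star)_\mu U_\omega = \omega^{-1}(B_\star)_\mu$, which yields $\sigma((B_\star)_\mu) = \omega \cdot \sigma((B_\star)_\mu)$ for every such $\omega$.

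For (i), combining this symmetry with Proposition \ref{prop:nonback2} gives, for $\mu \notin \sigma(A_\star)$, that the whole unit circle is disjoint from $\sigma((B_\star)_\mu)$ (not just the point $1$). The spectral projection $P(\mu)$ onto the part of $\sigma((B_\star)_\mu)$ lying in $\{|z|>1\}$ is then a holomorphic operator-valued function of $\mu$ on the open connected set $U_\infty := \mathbb{C}\setminus\INT(\sigma(A_\star))$, which coincides with the unbounded component of $\mathbb{C}\setminus\sigma(A_\star)$. Lemma \ref{le:resrec} gives $\|(B_\star)_\mu\| = O(|\mu|^{-1})$, so $P(\mu) = 0$ for $|\mu|$ large and, by analytic continuation, $P(\mu) \equiv 0$ on $U_\infty$. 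This proves $\rho((B_\star)_\mu) < 1$ throughout $U_\infty$, as claimed, and the argument on the invariant subspace $K$ is identical once we verify that the shift structure respects $K$.

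For (ii), I argue by contrapositive. Suppose $\mu_0 \in \INT(\sigma(A))$ satisfies $d(\mu_0, \INT(\sigma(A_\star))) > \veps$, so that $\mu_0 \in U_\infty$. Choose $R$ large enough that $\|B_\mu\|, \|(B_\star)_\mu\| \le 1/2$ for $|\mu| \ge R$ (possible by Lemma \ref{le:resrec}). On the compact set $F_\veps := \{\mu \in U_\infty : d(\mu, \INT(\sigma(A_\star))) \ge \veps,\ |\mu| \le R\}$, part (i) and upper semicontinuity of $\rho$ yield a uniform bound $\rho((B_\star)_\mu) \le 1 - \delta_0$ for some $\delta_0 = \delta_0(\veps) > 0$. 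Setting $\delta := \delta_0/2$, the assumed hypothesis forces $\rho(B_\mu) < 1$ on $F_\veps \cup \{|\mu| > R\}$, hence $1 \notin \sigma(B_\mu)$, hence $\mu \notin \sigma(A)$ on that set by Proposition \ref{prop:nonback2}. Since this set is path-connected and contains both $\mu_0$ and points of arbitrarily large modulus, $\mu_0$ lies in the unbounded component of $\mathbb{C}\setminus\sigma(A)$, contradicting $\mu_0 \in \INT(\sigma(A))$; the proof for $A_{|H}$ and $(B_\mu)_{|K}$ is verbatim the same.

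The main obstacle is the rotation-invariance argument in (i): without it, Proposition \ref{prop:nonback2} only excludes the single point $1$ from $\sigma((B_\star)_\mu)$, whereas we need to exclude the entire exterior of the open unit disk. The subsequent analytic-continuation step for the outer spectral projection is the technical heart; part (ii) is comparatively soft, reducing to a compactness estimate plus the topological fact that the complement of an $\veps$-shell in $U_\infty$ remains connected to infinity.
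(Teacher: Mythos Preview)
Your proof of (i) contains a genuine gap: the conjugation identity $U_\omega^{-1}(B_\star)_\mu U_\omega = \omega^{-1}(B_\star)_\mu$ is false as stated, because the depth function $\delta((x,i)) = |xg_i|$ does \emph{not} increase by one along every non-backtracking step. Take $X_\star = F_2$ with generators $g_1, g_2$ and $g_3 = g_1^{-1}$, $g_4 = g_2^{-1}$ (so $1^* = 3$, $2^* = 4$). The edge $e = (g_1 g_2,\, 4)$ has $\delta(e) = |g_1 g_2 g_2^{-1}| = 1$; one legal successor is $f = (g_1,\, 3)$ (since $3 \ne 4^* = 2$), for which $\delta(f) = |g_1 g_1^{-1}| = 0$. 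Your ``absence of cancellation'' reasoning implicitly assumes that $xg_i$ is already reduced, i.e.\ that $(x,i)$ is an \emph{outgoing} edge. Outgoing edges do form a $B_\star$-invariant subspace on which your $U_\omega$ works, but the full operator acts on all of $E_\star$, and on incoming edges the depth can decrease. Without rotational invariance you cannot pass from $1 \notin \sigma((B_\star)_\mu)$ to ``the unit circle misses the spectrum'', so your Riesz projector $P(\mu)$ is not well-defined throughout $U_\infty$ and the analytic-continuation step collapses.

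The paper avoids full rotational invariance. Lemma~\ref{le:cercle} proves only the weaker fact $\{|z| = \rho(B_\star)\} \subset \sigma(B_\star)$, using the orthogonality $\tau(B_\star^n (B_\star^m)^*) = 0$ for $n\ne m$ together with Haagerup's inequality to show $\rho(B_\star) = \lim_n \tau(|B_\star^n|^2)^{1/2n}$ and then a direct resolvent estimate. Lemma~\ref{le:contrho} upgrades upper semicontinuity to genuine \emph{continuity} of $(a_1,\dots,a_d)\mapsto\rho(B_\star)$ by identifying $\rho(B_\star)^2$ with the spectral radius of an explicit finite-dimensional positive map on $M_{rd}(\dC)$. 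The argument for (i) is then an intermediate-value step: if $\rho((B_\star)_{\mu_0}) \ge 1$ for some $\mu_0 \in U_\infty$, continuity along a path to infinity produces $\mu_t$ with $\rho = 1$, whence $1 \in \sigma((B_\star)_{\mu_t})$ by Lemma~\ref{le:cercle}, contradicting $\mu_t \notin \sigma(A_\star)$ via Proposition~\ref{prop:nonback2}. Your treatment of (ii) is essentially the paper's and is fine once (i) is in hand; note however that the paper needs continuity (not just upper semicontinuity) for its IVT step, which is precisely what your projector idea would have bypassed had the rotation symmetry been valid.
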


\begin{lemma}\label{le:cercle}
Let $A_\star$ be as in \eqref{eq:defAfree} and $B_\star$ its corresponding non-backtracking operator. We have 
$$
\{ z \in \dC: |z| = \rho ( B_\star) \} \subset \sigma( B_\star).
$$
\end{lemma}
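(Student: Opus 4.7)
My plan is to reduce the lemma to showing that $\sigma(B_\star)$ is invariant under the rotation $\lambda\mapsto e^{i\theta}\lambda$ for every $\theta\in\dR$. Once this is established, the conclusion follows immediately: $\sigma(B_\star)$ is a non-empty compact subset of $\dC$, hence contains a point $\lambda_0$ of modulus $\rho(B_\star)$, and rotational invariance forces the entire circle $\{|z|=\rho(B_\star)\}$ to lie in $\sigma(B_\star)$.

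To obtain rotational invariance I exploit that the Cayley graph of $X_\star=\dZ^{*q}*\dZ_2^{*(d-2q)}$ is a tree, which I root at the identity $o$ with the induced word length $|\cdot|$. Call an edge $e=(x,i)\in X_\star\times[d]$ \emph{forward} if $|\sigma_i(x)|=|x|+1$ and \emph{backward} otherwise; let $E_+, E_-$ be the sets of forward and backward edges and $\mathcal{H}_\pm=\dC^r\otimes\ell^2(E_\pm)$, so that $\dC^r\otimes\ell^2(X_\star\times[d])=\mathcal{H}_+\oplus\mathcal{H}_-$. The non-backtracking constraint $j\ne i^*$ yields the following key fact: whenever $(B_\star)_{ef}\ne 0$ with $f=(y,j)$ backward, the edge $e=(x,i)$ is also backward. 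Indeed, $f\in E_-$ means that $j$ is the unique direction from $y$ toward its parent, so the exclusion $i^*\ne j$ forces $x=\sigma_{i^*}(y)$ to be a child of $y$, giving $|x|=|y|+1$. Consequently $\mathcal{H}_-$ is $B_\star$-invariant, and $B_\star$ takes the block lower-triangular form
\[
B_\star=\begin{pmatrix} T_{++} & 0 \\ T_{-+} & T_{--}\end{pmatrix}
\]
relative to $\mathcal{H}_+\oplus\mathcal{H}_-$, from which $\sigma(B_\star)=\sigma(T_{++})\cup\sigma(T_{--})$ by the usual block-triangular argument.

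Both diagonal blocks turn out to be graded shift operators with respect to the tail-length $n(e)=|x|$. A forward edge $(y,j)$ with $y\ne o$ admits a unique non-backtracking predecessor in $E_+$, namely the edge from the parent of $y$ to $y$, which has tail-length $|y|-1$; and $(o,j)$ has no forward predecessor at all. Dually, all $d-1$ non-backtracking predecessors of a backward edge $(y,j)$ have their tails at children of $y$, hence at tail-length $|y|+1$. Thus $T_{++}$ lowers the tail-length by one and $T_{--}$ raises it by one. Introducing the diagonal unitary $U_\theta$ on $\mathcal{H}_+$ defined by $U_\theta(v\otimes\delta_e)=e^{i\theta n(e)}\,v\otimes\delta_e$, the shift property immediately yields $U_\theta T_{++}U_\theta^{-1}=e^{-i\theta}T_{++}$, so that $\sigma(T_{++})$ is rotationally invariant; the analogous unitary on $\mathcal{H}_-$ gives $V_\theta T_{--}V_\theta^{-1}=e^{i\theta}T_{--}$ and the same conclusion for $T_{--}$. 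Taking the union, $\sigma(B_\star)$ is rotationally invariant about the origin, and the lemma follows.

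The main point to handle carefully is the parent/child dichotomy on the Cayley graph in the presence of the order-two generators, but this is immediate since the free product $X_\star$ still has a tree as its Cayley graph and the exclusion $j\ne i^*$ rules out precisely the reverse-orientation step. The matrix-coefficient factor $\dC^r$ is transparent: $U_\theta$ acts as the identity on it and therefore commutes with multiplication by each $a_j$.
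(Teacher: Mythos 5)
Your strategy is entirely different from the paper's, which never decomposes $B_\star$: it shows the powers $B_\star^n$ have pairwise orthogonal ranges under the trace $\tau$, deduces $\tau(|B_\star^n|^2)^{1/2n}\to\rho(B_\star)$ from Gelfand's formula and Haagerup's inequality, and then makes $\tau(|R(z)|^2)$, hence $\|R(z)\|$, blow up as $|z|\downarrow\rho(B_\star)$ uniformly in $\arg z$. Your geometric route via forward/backward edges is also attractive, and the structural claims are correct: $\mathcal{H}_-$ is $B_\star$-invariant (your parent/child argument is sound), so $B_\star$ is block lower-triangular, and the graded-shift property makes both $\sigma(T_{++})$ and $\sigma(T_{--})$ rotationally invariant. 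The genuine gap is the step ``$\sigma(B_\star)=\sigma(T_{++})\cup\sigma(T_{--})$ by the usual block-triangular argument.'' That equality holds for matrices but fails for bounded operators on an infinite-dimensional Hilbert space: the bilateral shift $S$ on $\ell^2(\dZ)$ is block lower-triangular relative to $\ell^2(\{n<0\})\oplus\ell^2(\{n\ge 0\})$, has $\sigma(S)$ equal to the unit circle, yet both diagonal blocks are unilateral shifts with spectrum the closed unit disk. In general one only has the easy inclusion $\sigma(B_\star)\subset\sigma(T_{++})\cup\sigma(T_{--})$ (invert the blocks), which is the wrong direction: it yields no lower bound on $\sigma(B_\star)$, which is precisely what the lemma demands.

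The argument can be repaired, but it needs an extra ingredient you did not supply. Since $T_{--}=B_\star|_{\mathcal{H}_-}$ is a genuine restriction, $\|T_{--}^n\|\le\|B_\star^n\|$, so $\rho(T_{--})\le\rho(B_\star)$; dually $\mathcal{H}_+$ is $B_\star^*$-invariant and $T_{++}^*=B_\star^*|_{\mathcal{H}_+}$, so $\rho(T_{++})\le\rho(B_\star)$. Combined with the easy inclusion, $\rho(B_\star)=\max\bigl(\rho(T_{++}),\rho(T_{--})\bigr)$. Say $\rho(T_{--})=\rho(B_\star)$; then by your rotational invariance the circle $\{|z|=\rho(B_\star)\}$ lies in $\partial\sigma(T_{--})$ (points of maximal modulus in a compact set are boundary points). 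One must now invoke that boundary points of the spectrum of a restriction lie in the spectrum of the ambient operator: $\partial\sigma(T_{--})\subset\sigma_{\mathrm{ap}}(T_{--})\subset\sigma_{\mathrm{ap}}(B_\star)\subset\sigma(B_\star)$, where $\sigma_{\mathrm{ap}}$ is the approximate point spectrum (the first inclusion is the standard fact that $\partial\sigma$ consists of approximate eigenvalues, the second is immediate because approximate eigenvectors inside $\mathcal{H}_-$ serve as approximate eigenvectors for $B_\star$). The case $\rho(T_{++})=\rho(B_\star)$ is symmetric via $B_\star^*$. Without this approximate-eigenvector step your proof does not close.
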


\begin{proof}
If $\rho(B_\star)   = 0$, there is nothing to prove. We may thus assume $$ \rho ( B_\star)   = 1.$$ 

 We start by a consequence of Gelfand's formula on the spectral radius.  
 Let $k \geq 1$ be an integer and let $M$ be a bounded operator on $\dC^k \otimes \ell^2 ( X_ \star )$ in the 
 $C^\star$-algebra generated by 
 operators of the form $b \otimes \lambda(g)$, $b \in M_k(\dC)$, $g \in X_\star$. We introduce the standard tracial state $\tau$ defined by 
\begin{equation}\label{eq:deftau}
\tau(M) = \PAR{ \frac 1 k \Tr (\cdot) \otimes \langle \delta_o  ,  \cdot \, \delta_o \rangle } (M). 
\end{equation}
  Gelfand's formula asserts that $\rho(M) = \lim_n \|  M^n \|^{1/n}$
 (see for example \cite[Theorem 1.3.6]{MR1074574}). 
 Moreover, since $\| M \|^2 \geq \tau( |M|^2)$, we find
$$
\lim\sup_n \tau(|M^n|^2 )^{\frac 1 {2n} } \leq \rho(M ). 
$$
On the other hand, Haagerup's inequality (for matrix valued operators, see Buchholz \cite{MR1476122}) asserts that 
$$\| M \|^2 \leq L(m) \sum_{ x \in X_\star} \| M_{ox}\|^2  \leq k L(m)\tau (|M|^ 2) , $$ 
where $m = \sup \{ |x|: M_{ox} \ne 0 \}$ and $L(m)$ grows polynomially with $m$ (and depends implicitly on $d$). Hence, 
$$
\rho (M)^{2n}  \leq \| M^{n} \| ^2 \leq k L(nm) \tau ( |M^n |^2 ). 
$$
Since $L(nm)^{1/n}$ converges to $1$ as $n$ grows to infinity, we find
$$
 \rho (M) \leq \liminf_n \tau(  |M^n|^2 )^{\frac{1}{2n}} . 
$$
So finally 
$$
\rho( M) = \lim_{n} \tau(  |M^n|^2 )^{\frac{1}{2n}}. 
$$

As a consequence, writing $B_\star$ as a convolution operator on $\dC^r \otimes \ell^2 (X_\star) \otimes \dC^d$, we get that 
\begin{equation}\label{eq:gelfand}
1 = \rho( B_\star) = \lim_{n}  \tau(  |B_\star^n|^2 )^{\frac{1}{2n}}. 
\end{equation}
In particular, there exist $c >0$ and $\ell \geq 0$, such that for any integer $n \geq 0$, 
\begin{equation}\label{eq:tauBstarn}
\tau(  |B_\star^n|^2 ) \geq  (rd L(n))^{-1} \geq c (n+1)^{-\ell}.
\end{equation}

Now, we observe that for any $f \in \dC^r$,
\begin{equation}\label{eq:Bstarn}
B_\star^{n} ( f \otimes \delta_{(o,i)}  ) = \sum_{x} \PAR{\prod_{s=2}^{n+1} a_{x_s} f } \otimes \delta_{x},
\end{equation}
where the sum is over all reduced words in $X_\star$ $x = (x_1, \ldots, x_{n+1})$ of length $n+1$ with $x_1 = g_i$ 
and where we have set for $j \in [d]$,  $a_{g_j} = a_{j}$.  The lemma is ultimately a consequence of the fact that the vectors 
$B_\star^{n} ( f \otimes \delta_{(o,i)})$, $n \geq 0$, are orthogonal. More precisely, we find for $n \ne m$,
\begin{equation}\label{eq:puissorth}
\tau ( B_\star^{n} (B_\star^{m})^*  ) = 0.
\end{equation}

Now, let $z \in \dC$ with $|z|  = 1 + \veps > \rho ( B_\star) = 1 $ and  $R = ( z - B_\star )^{-1}$ be the resolvent of 
$B_\star$.  Since $R(z)$ is analytic on the complement of the spectrum, this is also true for its $k$-th derivative for any integer $k\geq 1$. It is thus sufficient to check that for some well-chosen $k$ and $c_1 >0$,
$\tau ( | \partial^k R(z)|^2  ) \geq c_1 \veps^{-1}$ for all $z$  with $|z| = 1 + \veps$.  
Since $|z| > \rho( B_\star)$, we have the converging Taylor expansion
$$
\partial^k R(z)  = \sum_{n=0}^\infty (n+1)(n+2) \cdots (n+k) z^{-n-k-1} B_\star^{n}.
$$
From \eqref{eq:tauBstarn}-\eqref{eq:puissorth}, we find 
$$
\tau (\partial^k |R (z) |^2 )  =    \sum_{n=0}^\infty (n+1)^2(n+2)^2 \cdots (n+k)^2 |z|^{-2n-2k-2} \tau ( |B_\star^{n}|^2 ) \geq c  \sum_{n=0}^\infty  (n+1)^{2k - \ell} (1+\veps)^{-2n-2k-2 }.
$$
If $2k \geq \ell$, this last expression is at least $c_1 \veps^{-1}$. This concludes the proof. 
\end{proof}

\begin{lemma}\label{le:contrho}
The map $(a_1, \cdots, a_d) \mapsto \rho(B_\star)$ is continuous for any norm on $M_r(\dC)^d$.
\end{lemma}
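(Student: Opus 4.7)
The plan is to establish continuity of $a = (a_1, \ldots, a_d) \mapsto \rho(B_\star(a))$ by proving upper and lower semi-continuity separately. The starting observation is that $a \mapsto B_\star(a)$ is linear in the entries $a_i$ (by the definition \eqref{eq:defB} with $S_i$ replaced by $\lambda(g_i)$), and therefore Lipschitz continuous from $(M_r(\dC)^d, \|\cdot\|)$ into the $C^*$-algebra $M_{rd}(C_r(X_\star))$ equipped with the operator norm. Since all norms on the finite-dimensional vector space $M_r(\dC)^d$ are equivalent, it suffices to check continuity with respect to one of them.

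For upper semi-continuity I would invoke Fekete's lemma: in any Banach algebra, subadditivity of $\log \|B^n\|$ gives $\rho(B) = \inf_n \|B^n\|^{1/n}$. Each map $a \mapsto \|B_\star(a)^n\|^{1/n}$ is continuous (composition of the polynomial map $a \mapsto B_\star(a)^n$, the operator norm, and the $n$-th root), so $\rho(B_\star(\cdot))$ is an infimum of continuous functions and hence upper semi-continuous.

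For lower semi-continuity I would reuse the tracial formula developed inside the proof of Lemma \ref{le:cercle}. Set $f_n(a) := \tau(|B_\star(a)^n|^2)^{1/(2n)}$. Gelfand's formula combined with the matrix-valued Haagerup inequality of Buchholz \cite{MR1476122} (already invoked in Lemma \ref{le:cercle}) yields
$$
\rho(B_\star(a)) = \lim_{n\to\infty} f_n(a), \qquad f_n(a) \leq \|B_\star(a)^n\|^{1/n} \leq C_n \, f_n(a),
$$
where $C_n := (rd\, c(n))^{1/(2n)} \to 1$ with $c(n)$ of polynomial growth. Each $f_n$ is continuous in $a$ because $\tau(|B_\star(a)^n|^2)$ is a polynomial of degree $2n$ in the entries of $a_j$ and $a_j^*$. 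On any bounded neighborhood of $a_0$ the sandwich gives $|f_n(a) - \|B_\star(a)^n\|^{1/n}| = O((C_n-1)\|B_\star(a)\|)$ uniformly in $a$; combined with Fekete's identity $\rho(B) = \inf_n \|B^n\|^{1/n}$ applied uniformly on the neighborhood, this would upgrade the pointwise convergence $f_n \to \rho(B_\star(\cdot))$ to locally uniform convergence, and the uniform limit of the continuous functions $f_n$ is continuous.

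The delicate step is lower semi-continuity. For a generic bounded non-normal operator, the spectral radius is only upper semi-continuous (spectra can ``collapse'' under norm perturbation, as with a quasinilpotent limit). What rescues us here is the specific structure of $B_\star$ as a convolution operator on the free product $X_\star$, encoded through the matrix-valued Haagerup inequality and the orthogonality $\tau(B_\star^n(B_\star^m)^*) = 0$ for $n \neq m$ that was already exploited in Lemma \ref{le:cercle}. This structure is precisely what makes the polynomial expressions $f_n(a)$ a good approximation of $\rho(B_\star(a))$ with quantitative, uniform error, and hence forces the continuity.
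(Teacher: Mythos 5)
Your proposal takes a route that is genuinely different from the paper's, and while the upper semi-continuity half is sound, the lower semi-continuity half has a real gap.

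For upper semi-continuity you are fine: Gelfand's formula gives $\rho(B_\star(a)) = \inf_n \|B_\star(a)^n\|^{1/n}$, an infimum of continuous functions of $a$, hence upper semi-continuous.

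For lower semi-continuity your argument does not close. The sandwich $f_n(a) \le \|B_\star(a)^n\|^{1/n} \le C_n f_n(a)$ with $C_n \to 1$ shows that $f_n$ and $g_n := \|B_\star(\cdot)^n\|^{1/n}$ converge to each other uniformly on bounded sets, and it also gives the one-sided uniform bound $f_n(a) \ge \rho(B_\star(a))/C_n \ge \rho(B_\star(a)) - M(C_n-1)$. But none of this upgrades the \emph{pointwise} convergence $g_n(a) \to \rho(B_\star(a))$ to a \emph{uniform} one. Fekete's lemma is a statement about one fixed sequence; there is no "uniform" version of it, and indeed the rate of convergence in Gelfand's formula can be arbitrarily non-uniform in a parameter (it depends on how far from normal $B_\star(a)$ is). Without uniform convergence of $f_n$ to $\rho(B_\star(\cdot))$ — or some monotonicity in $n$ that would let you invoke Dini — you cannot conclude that the pointwise limit of the continuous $f_n$ is continuous. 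Worse, all the inequalities you have available ($f_n \le g_n$, $g_n \ge \rho$, $g_n \le C_n f_n$) go in the direction of $\rho \le C_n f_n$; they express $\rho$ as an infimum of continuous functions, which is again only upper semi-continuity. The orthogonality $\tau(B_\star^n (B_\star^m)^*) = 0$ that you invoke is used in Lemma \ref{le:cercle} to show the circle $\{|z|=\rho(B_\star)\}$ is in the spectrum; it plays no role in this continuity argument.

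The paper closes the gap by an entirely different, and finite-dimensional, mechanism. Writing $Z_n$ for the block-diagonal matrix whose $i$-th block is $\tau_i(|B_\star^n|^2) = \sum_x |\prod_{s} a_{x_s}|^2$ (sum over reduced words of length $n+1$ starting with $g_i$), one checks the exact recursion $Z_{n+1} = L(Z_n)$, $Z_0 = 1$, where $L$ is the \emph{finite-dimensional} operator on $M_{dr}(\dC)$ given by $L(x)_{ii} = \sum_{j \ne i^*} a_j x_{jj} a_j^*$. This $L$ is of non-negative type (it preserves the PSD cone), so the Krein--Rutman result (Theorem \ref{th:PF}(ii)) yields $\lim_n \|L^n(1)\|_2^{1/n} = \rho(L)$, and hence $\rho(B_\star)$ is identified with $\sqrt{\rho(L)}$. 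Since $L$ is a finite matrix whose entries are quadratic polynomials in the $a_i$, and since the spectral radius is continuous on any finite-dimensional matrix space, continuity of $a \mapsto \rho(B_\star(a))$ follows at once. The moral: the obstruction to lower semi-continuity of spectral radius is an infinite-dimensional phenomenon, and the paper avoids it by transferring the computation to a finite-dimensional transfer operator; the Haagerup inequality alone does not accomplish that reduction.
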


Before proving Lemma \ref{le:contrho} which requires some preliminaries, let us prove Theorem \ref{prop:edgeAB}.

\begin{proof}[Proof of Theorem of \ref{prop:edgeAB}]

Let us first prove (ii) assuming (i). By Lemma \ref{le:contrho}, there exists 
$\delta >0$ such that $\rho ( (B_\star)_\mu ) < 1 - \delta$ for all $\mu$ at distance larger than $\veps$ from 
$\INT ( \hat \sigma (A_\star ))$. Hence for all $\mu$ at distance larger than $\veps$ from $ \INT ( \hat \sigma (A_\star ))$, 
we have $\rho ( B_\mu) < 1$ and thus by Proposition \ref{prop:nonback2}, $\mu \notin \sigma(A)$.

As for (i),
assume on the contrary that there exists $\mu_0 \notin \INT(\hat \sigma(A_\star )) $ such that 
$\rho(B_{\mu_0}) \geq 1$. Since $\mu_0 \notin \INT(\hat \sigma(A_\star )) $, there exists a continuous function 
$t \mapsto \mu_t$, from $[0,\infty)$ to $\dC \backslash \INT(\hat \sigma(A_\star ))$  such that $|\mu_t|$ goes to infinity as $t$ 
goes to infinity. Note that also $ \rho( (B_\star)_\mu )  \leq \| (B_\star)_{\mu} \|$ goes to $0$ with $|\mu|$ going to infinity. 
Therefore Lemma \ref{le:contrho} and the intermediate value Theorem imply that there exists $t \geq 0$ such that 
$\mu_t  \notin \INT(\hat \sigma(A_\star ))$ and $ \rho( (B_\star)_{\mu_t} ) = 1 $. 
Then, by Lemma \ref{le:cercle}, $1\in \sigma( (B_\star)_{\mu_t})$ and, by Proposition \ref{prop:nonback2}, 
we deduce that $\mu_t \in \sigma( A_\star)$. This is a contradiction since 
$\sigma( A_\star) \subseteq \INT(\hat \sigma(A_\star ))$. 
\end{proof}

Let us now prove  Lemma \ref{le:contrho}. We start with a preliminary statement which is a non-commutative 
finite-dimensional Perron-Frobenius Theorem due to Krein and Rutman  \cite{MR0038008}. Let $m \geq 1$ be an integer 
and let $L$ be a linear operator on $M_m (\dC)$. We endow $M_m ( \dC)$ with the standard inner product 
$$
\langle x ,  y \rangle = \tr ( x^* y ). 
$$
The Frobenius norm is the associated hilbertian norm: $\| x \|_ 2 = \sqrt{ \tr ( x^* x )}$.  We say that $L$ is of non-negative 
type if for any $x$ positive semi-definite, $Lx$ is also positive semi-definite. We start with an elementary property of 
non-negative operators.

\begin{lemma}\label{le:stabnn}
Assume that $L$ is of non-negative type.  Then $L$ maps hermitian matrices to hermitian matrices. Moreover, 
for any  integer $n \geq 0$, $L^n$ and $L^*$ are of non-negative types.  
\end{lemma}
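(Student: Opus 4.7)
The plan is to dispatch the three assertions in increasing order of difficulty: preservation of hermiticity is immediate from the Jordan decomposition, stability under iteration is a one-line induction, and non-negativity of $L^*$ is the only genuine point, which I would handle via the trace-duality characterization of the positive cone.

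First I would show that $L$ maps hermitian matrices to hermitian matrices. Spectrally decomposing, any hermitian $h \in M_m(\dC)$ writes as $h = h_+ - h_-$ with $h_\pm \geq 0$. By hypothesis $L(h_+)$ and $L(h_-)$ are positive semi-definite, hence in particular hermitian, so $L(h) = L(h_+) - L(h_-)$ is hermitian. The claim that $L^n$ is of non-negative type then follows by induction on $n$: if $x \geq 0$, then $L^{n} x = L(L^{n-1} x) \geq 0$ by the inductive hypothesis combined with the assumption on $L$.

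The main obstacle is showing that $L^*$ is of non-negative type, since $L^*$ is defined only through the inner product $\langle x, y \rangle = \tr(x^* y)$ and carries no intrinsic positivity structure. I would invoke the elementary trace duality: a matrix $w \in M_m(\dC)$ is psd if and only if $w$ is hermitian and $\tr(w y) \geq 0$ for every psd $y$ (one direction is $\tr(wy) = \tr(w^{1/2} y w^{1/2}) \geq 0$, the other by testing on rank-one projections $y = vv^*$). Given $x \geq 0$, the definition of the adjoint yields
\[
\tr\bigl((L^* x)^* y\bigr) \;=\; \tr(x^* L y) \;=\; \tr(x L y),
\]
and since both $x$ and $L y$ are psd this quantity is a non-negative real number for every psd $y$.

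It remains to check that $L^* x$ is itself hermitian, at which point the trace duality combined with the displayed inequality concludes the proof. To that end, I would test the pairing on hermitian $y$: the first step of the lemma ensures $L y$ is hermitian, so $\tr(x L y)$ is real, and taking complex conjugates in the adjoint identity gives the same value for $\tr((L^* x) y)$. Subtracting produces $\tr\bigl(((L^* x)^* - L^* x) y\bigr) = 0$ for every hermitian $y$, which forces $(L^* x)^* = L^* x$ by the standard fact that the only hermitian (up to an $i$) matrix orthogonal to all hermitians is zero. This is the only subtle point in the argument; everything else is purely formal.
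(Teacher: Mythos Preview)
Your proposal is correct and follows essentially the same route as the paper: Jordan decomposition for hermiticity preservation, induction for $L^n$, and the trace-duality characterization of the psd cone for $L^*$. The only cosmetic difference is in verifying that $L^*x$ is hermitian: the paper first records $(Ly)^* = L(y^*)$ and then uses the criterion ``$w$ hermitian iff $\langle w, y\rangle = \langle y^*, w\rangle$ for all $y$'', whereas you test against hermitian $y$ and kill the skew-hermitian part directly; both arguments are equally short.
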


\begin{proof}
Since $L$ maps positive semi-definite matrices to positive semi-definite matrices, it also maps  negative semi-definite 
matrices to negative semi-definite matrices. Consequently, writing an hermitian matrix as $x = a - b$ with $a,b$ positive 
semi-definite, we find $L$ maps hermitian  matrices to hermitian matrices. 

By induction on $n$, it is immediate from the definition that $L^n$ is also of non-negative type. For $L^*$, let us first check 
that it maps hermitian matrices to hermitian matrices. First, from what precedes $(Ly)^* = L(y^*)$ 
(we write $y = y_1 + i y_2$ with $y_1, y_2$ hermitian and use linearity and $Ly_i$ hermitian). 
Note that a matrix $x$ is hermitian if and only if for any matrix $y$, $\tr(x^* y )= \tr (x y)$. Since $\tr(xy) = \tr(yx)$, 
we get that $x$ is hermitian if and only if $\langle x, y \rangle = \langle y^* , x\rangle$. 
Let $x$ be hermitian. For any $y$ we have 
$\langle L^* x , y \rangle = \langle  x ,  L y \rangle =  \langle   (Ly)^* ,   x \rangle  =  \langle   L(y^*) ,   x \rangle  =  \langle   y^* ,  L ^* x \rangle$ 
where we have used at the second identity that for any $y$, $\langle x , y \rangle = \langle y^* , x^* \rangle$ and $x = x^*$. 
We thus have proved that $L^*$ maps hermitian matrices to hermitian matrices. 

Similarly, an hermitian matrix $x$ is positive semi-definite if and only if for any $y$ positive semi-definite 
$\langle x , y \rangle = \tr (x y)  = \tr (y^{1/2} x y^{1/2} )\geq 0$. However, if $x,y$ are positive semi-definite 
$\langle L ^* x , y \rangle = \langle x , L y \rangle  \geq 0$, since $L$ is non-negative. It concludes the proof. \end{proof}

The following theorem is a direct consequence of the Krein-Rutman Theorem \cite{MR0038008}, see e.g. 
Deimling \cite[Theorem 19.1]{MR787404}.  For completeness, we are have included a proof in Section \ref{sec:aux}. 

\begin{theorem}\label{th:PF}
Assume that $L$ is of non-negative type and let $\rho$ be its spectral radius. 
\begin{enumerate}[(i)]
\item \label{PF1} $\rho$ is an eigenvalue of $L$ and it has a positive semi-definite eigenvector. 
\item \label{PF2} If $x$ is positive definite, we have 
$$
\lim_{n \to \infty} \| L^n x \|_{2}^{\frac 1 n} = \rho .
$$
 \end{enumerate}
\end{theorem}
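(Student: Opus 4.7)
The plan is to prove (ii) first and then bootstrap it into (i) by a Brouwer-based regularization argument. Throughout I use Lemma \ref{le:stabnn}, which guarantees that every iterate $L^n$ is again of non-negative type and preserves the Hermitian matrices; in particular, $L^n$ is monotone with respect to the Hermitian (Loewner) order, so any inequality $a\le b$ between Hermitian matrices is preserved under $L^n$.

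For (ii), the upper bound $\limsup_n\|L^n x\|_2^{1/n}\le\rho$ is immediate from $\|L^n x\|_2\le\|L^n\|_{\mathrm{op}}\|x\|_2$ together with Gelfand's formula applied to $L$ viewed as a linear operator on the finite dimensional space $M_m(\dC)$. For the matching lower bound I would reduce everything to the orbit of $I_m$. Any Hermitian $y$ satisfies $-\|y\|_{\mathrm{op}}I_m\le y\le\|y\|_{\mathrm{op}}I_m$, so $\pm L^n y\le\|y\|_{\mathrm{op}}L^n I_m$ in the Hermitian order, whence $\|L^n y\|_{\mathrm{op}}\le\|y\|_{\mathrm{op}}\|L^n I_m\|_{\mathrm{op}}$. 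Decomposing an arbitrary $z\in M_m(\dC)$ into its Hermitian and anti-Hermitian parts then yields $\|L^n\|_{\mathrm{op}\to\mathrm{op}}\le 2\|L^n I_m\|_{\mathrm{op}}$, and combined with equivalence of norms on $M_m(\dC)$ and Gelfand this forces $\|L^n I_m\|_{\mathrm{op}}^{1/n}\to\rho$. Finally, for a strictly positive $x$ one has $x\ge\lambda_{\min}(x)I_m$, hence $L^n x\ge\lambda_{\min}(x) L^n I_m$ as positive semi-definite matrices, and therefore $\|L^n x\|_2\ge\|L^n x\|_{\mathrm{op}}\ge\lambda_{\min}(x)\|L^n I_m\|_{\mathrm{op}}$, giving the lower bound.

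For (i), I would regularize and apply a fixed-point argument. Define $L_{\veps}y:=Ly+\veps(\tr y)I_m$, which is again of non-negative type and sends $C\setminus\{0\}$ into the interior of $C$ (since $\veps(\tr y)I_m$ is strictly positive whenever $y\in C\setminus\{0\}$). On the compact convex set $S:=\{y\in C:\tr y=1\}$, the continuous map $T_{\veps}(y):=L_{\veps}y/\tr(L_{\veps}y)$ is well defined (the denominator is bounded below by $\veps m>0$ on $S$) and maps $S$ into itself, so Brouwer's fixed point theorem produces $x_{\veps}\in S$ with $L_{\veps}x_{\veps}=\mu_{\veps}x_{\veps}$ and $\mu_{\veps}:=\tr(L_{\veps}x_{\veps})>0$; moreover $x_{\veps}=T_{\veps}(x_{\veps})$ lies in the interior of $C$, i.e.\ is strictly positive definite. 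Applying part (ii) to the non-negative-type operator $L_{\veps}$ and the strictly positive $x_{\veps}$, the identity $L_{\veps}^n x_{\veps}=\mu_{\veps}^n x_{\veps}$ yields $\mu_{\veps}=\rho(L_{\veps})$; since $L_{\veps}\to L$ and the spectral radius is continuous on finite dimensional linear operators, $\mu_{\veps}\to\rho$. Compactness of $S$ lets us extract a subsequence $x_{\veps}\to x_0\in S$, and passing to the limit in the eigenvector relation gives $Lx_0=\rho x_0$ with $x_0$ a nonzero positive semi-definite eigenvector. I expect the main obstacle to be the squeezing step in (ii): one has to use the Loewner order carefully in the non-commutative setting to transfer control of arbitrary test vectors to the single orbit $\{L^n I_m\}_{n\ge0}$; once this is done, the rest of (ii) and the whole perturbation argument for (i) follow essentially mechanically.
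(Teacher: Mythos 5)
Your proposal is correct, and it takes a genuinely different route from the paper. The paper proves part (ii) by first establishing part (i) in the special case where $L$ is positive semi-definite as an operator on $(M_m(\dC),\langle\cdot,\cdot\rangle)$, then applying it to $(L^n)^*L^n$ to extract positive semi-definite unit eigenvectors $y_n$ with eigenvalue $\|L^n\|^2$, and finally bounding the overlap $\langle x,y_n\rangle$ from below via the estimate $1=\tr(y_n^2)\le\tr(y_n)\le\langle x,y_n\rangle\|x^{-1}\|$. Your argument for (ii) avoids adjoints entirely: it uses Loewner-order monotonicity of each $L^n$ (which, as you correctly note, is exactly the content of ``non-negative type'' combined with Lemma \ref{le:stabnn}) to squeeze all of $M_m(\dC)$ against the single orbit $\{L^nI_m\}$, with the inequality $\|L^n\|_{\mathrm{op}\to\mathrm{op}}\le 2\|L^nI_m\|_{\mathrm{op}}$ doing the work of both bounds once Gelfand is applied; the positive definiteness of $x$ then enters cleanly through $x\ge\lambda_{\min}(x)I_m$. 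Your proof of (i) is also different and, notably, more complete: the paper only proves (i) in the positive semi-definite case (deferring the general statement to Krein-Rutman), whereas your regularization $L_\veps y=Ly+\veps(\tr y)I_m$ combined with Brouwer on the trace-one slice of the PSD cone, and the identification $\mu_\veps=\rho(L_\veps)$ via part (ii), gives the full statement for any operator of non-negative type. One dependency to make explicit: Gelfand's formula for the operator norm associated to $\|\cdot\|_{\mathrm{op}}$ on $M_m(\dC)$ is used for the lower bound, while the one for $\|\cdot\|_2$ is used for the upper bound — both are valid since all norms on $\mathrm{End}(M_m(\dC))$ yield the same spectral radius — but the ``equivalence of norms'' phrase is not actually needed in the squeeze, since $\|L^nI_m\|_{\mathrm{op}}\le\|L^n\|_{\mathrm{op}\to\mathrm{op}}\le 2\|L^nI_m\|_{\mathrm{op}}$ already does the job.
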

 
We are ready for the proof of Lemma  \ref{le:contrho}.

\begin{proof}[Proof of Lemma \ref{le:contrho}]
From \eqref{eq:gelfand}, we have $\rho (B_\star) = \lim_{n \to \infty} \| \tau_i ( | B_\star^n |^2 )  \|^{1/{2n}}$ for some
 $i \in [d]$. With the notation of \eqref{eq:Bstarn}, we find 
$$   \tau_i( |B_\star^{n} |^2  )  =  \tau_i (   B_\star^n (B_\star^n )^*  ) = \sum_{x} \ABS{\prod_{s=2}^{n+1} a_{x_s}}^2,$$ 
where  the sum is over all reduced words $x = (x_1, \ldots, x_{n+1})$ in $X_\star$ of length $n+1$  with 
$x_1 = g_i$. Let $m = d r$ and let $Z_n $ be the block diagonal matrix in $M_{m} (\dC)$ with diagonal blocks in $M_r(\dC)$, 
$(  \tau_1(  |B_\star^{n} |^2  )  , \ldots ,\tau_d( |B_\star^{n} |^2  ) )$,  we find $Z_0 = 1$ and for integer $n \geq 0$, 
\begin{equation}\label{eq:recG}
Z_{n+1}  = L (Z_n),
\end{equation}
where $L$ is the operator on $M_m(\dC)$ defined as follows. For $x \in M_m (\dC)$, we write in  
$x = (x_{ij})$, $i,j \in [d]$ for its blocks in $M_r (\dC)$. Then $L(x)$ is block diagonal with diagonal blocks 
$(L(x)_{11} , \ldots, L(x)_{dd})$ and for all $i$ in $[d]$,
\begin{equation}\label{eq:defL}
L (x)_{ii}   = \sum_{j \ne i^*}  a_j  x_{jj} a_{j} ^* \in M_r (\dC).
\end{equation}
It is straightforward to check that  $L$ is of non-negative type. Indeed, if $x$ is positive semi-definite then, 
for each $j \in [d]$, $x_{jj}$ is also positive semi-definite in $M_r(\dC)$ and thus $a_j  x_{jj} a_{j} ^*$ is positive semi-definite.

Now, from \eqref{eq:recG}, $Z_n = L^n (1)$. We deduce from Theorem \ref{th:PF}\eqref{PF2} that $\rho (B_\star)$ is 
equal to the square root of the the spectral radius of $L$.  We recall finally that the spectral radius is a continuous function on $M_m(\dC)$ for 
any norm.
\end{proof}

\subsection{Random weighted permutations}
\label{subsec:mainB}

We now consider symmetric random permutations, $X = [n]$. We consider the vector space 
$K_0$ of  vectors $f \in \dC^r \otimes \ell^2 (E)$ such that $\sum_x f(x,i) = 0$ for all $i \in [d]$ (that is the orthogonal of 
$\dC^r \otimes \IND \otimes \dC^d$). The following result is the central technical contribution hidden behind the proof of Theorem \ref{th:main}.

\begin{theorem}\label{th:mainB}
For any $0 < \veps < 1$, for symmetric random permutations, with probability tending to one as $n$ goes to infinity, for all 
$(a_i), i \in [d],$ such that $\max_i ( \| a_i \| \vee  \| a_i ^{-1} \|^{-1} )  \leq \veps^{-1}$, we have
\begin{equation*}
\rho  (B_{|K_0})  \leq \rho( B_\star)+ \veps, 
\end{equation*}
where $B$ is the non-backtracking operator associated to $A$ defined by \eqref{eq:defA} and $\rho( B_\star)$ is the 
spectral radius of the corresponding non-backtracking operator in the free group.
\end{theorem}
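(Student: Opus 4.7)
The plan is a moment method for the non-backtracking operator $B$, in the spirit of the new proof of Friedman's theorem \cite{bordenaveCAT}, but with matrix-valued weights and a uniform-in-$(a_i)$ control. By Gelfand's formula and the standard norm-trace bound, for any integers $\ell,k \geq 1$,
$$
\rho(B_{|K_0})^{2k\ell} \leq \NRM{B_{|K_0}^{\ell}}^{2k} \leq \Tr\PAR{\Pi (B^{\ell}(B^{\ell})^{*})^{k}\Pi},
$$
where $\Pi$ is the orthogonal projection onto $K_0$. Since $K_0$ has codimension $rd$, a rank correction shows that it suffices to bound the unrestricted expected trace $\dE\Tr((B^{\ell}(B^{\ell})^{*})^{k})$, minus an explicit contribution from the trivial invariant subspace. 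Taking $\ell=\lfloor c\log n\rfloor$ for $c$ small and $k$ a large enough constant, Markov's inequality reduces the theorem to showing
$$
\dE\,\Tr\PAR{(B^{\ell}(B^{\ell})^{*})^{k}} \leq n\cdot (\rho(B_\star)+\veps/2)^{2k\ell},
$$
uniformly over the admissible weights $(a_i)$.

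The second step is the tangle-free truncation. I would replace $B^{\ell}$ by $B^{(\ell)}$, obtained by restricting the sum to non-backtracking walks of length $\ell$ whose underlying graph in $G^\sigma$ contains at most one cycle. A variant of Lemma \ref{le:tanglefree} shows that, with high probability, every $\ell$-neighborhood of a vertex contains at most one cycle when $\ell\leq c\log n$, so $B^{\ell}$ and $B^{(\ell)}$ agree on this event. The expected trace of $(B^{(\ell)}(B^{(\ell)})^{*})^{k}$ expands as a sum, over equivalence classes of closed colored multigraphs $H$ traced by $2k$ concatenated non-backtracking paths of length $\ell$, of an embedding factor $n^{v(H)}(1+O(1/n))$ times the trace (in $M_r(\dC)$) of the product of weights $a_{i_t}$ along the walk. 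The tangle-free restriction forces the excess $e(H)-v(H)$ to stay small, so each extra edge gains a factor $n^{-1}$. The new ingredient, essentially the forthcoming Lemma \ref{le:sumpath}, identifies the matrix-weight sum along each walk class on $[n]$ with the corresponding convolution sum on the free product $X_\star$. The tree-like (minimal-excess) contribution then collapses to $\tau(|B_\star^{\ell}|^{2k})$, which by the Gelfand identity \eqref{eq:gelfand} of Lemma \ref{le:cercle} equals $(\rho(B_\star)+o(1))^{2k\ell}$; the higher-excess terms are absorbed by the prefactor $n$.

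The main obstacle will be making the estimate \emph{uniform} over the compact set $\{(a_i):\NRM{a_i}\leq 1/\veps,\ \NRM{a_i^{-1}}\leq 1/\veps,\ \text{symmetry \eqref{eq:sym}}\}$. Because $B$ is non-normal, the map $(a_i)\mapsto\rho(B)$ is not quantitatively continuous, so the union bound must be carried at the level of the trace estimate itself, where the dependence on the entries of the $a_i$ is polynomial and hence Lipschitz with a controllable constant (this is the refinement highlighted in Remark \ref{rq:CAT}). Concretely, I would take a $\delta$-net of size $O(\delta^{-C(r,d)})$ in the parameter space, apply the high-probability trace bound at each net point with $\delta$ polynomially small in $n$, and use the Lipschitz dependence of $\dE\Tr((B^{\ell}(B^{\ell})^{*})^{k})$ together with Lemma \ref{le:contrho} (continuity of $\rho(B_\star)$) to transfer the bound to all admissible weights. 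The lower bound $\NRM{a_i^{-1}}^{-1}\geq\veps$ enters here to keep the Perron--Frobenius operator $L$ of \eqref{eq:defL} uniformly positive and its spectral radius uniformly bounded below, so the $\rho(B_\star)$ identification via Theorem \ref{th:PF} is stable and the net argument closes. This uniform trace method is where the bulk of the technical work lies.
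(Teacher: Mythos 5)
You take essentially the paper's route --- non-backtracking trace method, tangle truncation, and a net over the weights --- but the step that handles the restriction to $K_0$ hides a genuine gap: the ``rank correction'' and the tangle truncation do not commute. For $B^\ell$, $\Tr(\Pi(B^\ell(B^\ell)^*)^k\Pi)$ does split as $\Tr(\cdot)-\Tr_{K_1}(\cdot)$ because $B$ preserves both $K_0$ and $K_1$; but the truncated matrix $B^{(\ell)}$ preserves neither, so once you replace $B^\ell$ by $B^{(\ell)}$ that split is gone. Moreover $\Tr_{K_1}((B^\ell(B^\ell)^*)^k)=\Tr((B_1^\ell(B_1^\ell)^*)^k)$ with $B_1=\sum_{j\ne i^*}a_j\otimes E_{ij}$, which is of order $\|B_1\|^{2k\ell}$ and for your choice $2k\ell\asymp\log n$ dominates the target $n(\rho(B_\star)+\veps/2)^{2k\ell}$. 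So you cannot bound the full trace and subtract afterwards: the cancellation has to be built into the combinatorial expansion. The paper wires it in by centering \emph{every} entry along the walk, introducing $\underline S_i=S_i-\tfrac1n\IND\IND^T$ and the centered truncated matrix $\underline B^{(\ell)}$ of Eq.~\eqref{eq:defD}, then proving (Lemma~\ref{le:decompBl}) that on the $\ell$-tangle-free event, for $g\in K_0$,
\begin{equation*}
B^{\ell}g \;=\; \underline B^{(\ell)}g \;-\; \frac1n\sum_{k=1}^{\ell}R_k^{(\ell)}g,
\end{equation*}
where the remainder matrices $R_k^{(\ell)}$ of Eq.~\eqref{eq:defR} are forced because ``tangle-free'' is not preserved under splitting a walk at an interior time, and require their own (cruder) trace bound, Proposition~\ref{prop:normR}. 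Your $\Pi B^{(\ell)}\Pi$ only centers the two endpoints; it is \emph{not} $\underline B^{(\ell)}$, the interior variables $(S_{i_t})_{x_tx_{t+1}}$ stay uncentered, and Proposition~\ref{prop:exppath} --- the estimate giving the crucial $n^{-1/2}$ gain from each consistent multiplicity-one edge --- does not apply. Without the identity above and the bound on the $R_k^{(\ell)}$, the argument does not close.

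Two smaller points. Taking $k=O(1)$ and $\ell\asymp\log n$ gives only polynomially small failure probability from Markov, while the net has size $c_1^\ell$, also polynomial in $n$; this can be made to fit with careful constants, but the paper deliberately uses the F\"uredi--Koml\'os exponent $m\asymp\log n/\log\log n$ to make the per-point failure probability $\exp(-c(\log n)^2/\log\log n)$, so the union bound is automatic. Also, the hypothesis $\|a_i^{-1}\|\le\veps^{-1}$ is not used to keep the operator $L$ of \eqref{eq:defL} ``uniformly positive'' --- $L$ is of non-negative type for any weights --- rather it enters the weight bound in Lemma~\ref{le:sumpath} (see \eqref{eq:defe12}, the bound for multiplicity-one edges) and, in the net construction, to guarantee a nearby net point $b_0$ still satisfies $\|b_0^{-1}\|\le 2/\veps$.
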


Note that in the above Theorem, the assumption $\max_i ( \| a_i \| \vee  \| a_i ^{-1} \|^{-1} )  \leq \veps^{-1}$ entails a control
on the norm of $a_i ^{-1}$ and in particular, the assumption that it is invertible. This is a technical assumption which does appear in the 
main result Theorem \ref{th:main}. This  will however not be a major obstacle for proving Theorem \ref{th:main} in the next subsection 
by using  the fact that invertible matrices in are dense in the space of all matrices $M_r(\dC)$.

As a corollary, in the next subsection we  obtain a proof of Theorem \ref{th:main}. 
The forthcoming Section \ref{sec:mainB} is devoted to the proof of Theorem \ref{th:mainB}.  It relies on a 
refinement of the trace method F\"uredi and Koml\'os \cite{MR637828} which was developed in 
\cite{M13,BLM,bordenaveCAT}.
In special case where $a_i$ is $E_{x_i y_i}$, this theorem is contained \cite{bordenaveCAT} and under an extra 
assumption in \cite{FrKo}.

\subsection{Proof of Theorem \ref{th:main}}

\begin{proof}[Proof of Theorem \ref{th:main}] 
We start by proving the last claim: $s(A_{|H_0})$ converges in probability toward $s(A_\star) $. From Corollary \ref{cor:inclusion}, it suffices to prove the upper bound: for any $\veps >0$, with high probability $s(A_{|H_0}) \leq s(A_\star) + \veps$. The proof is a combination of 
Theorem \ref{th:mainB} and Theorem \ref{prop:edgeAB} applied to $H = H_0$ and $K = K_0$. We may assume that 
$\max_i \| a_i\| \leq 1$. If $A$ and $A'$ are operators of the form \eqref{eq:defA} with associated matrices 
$(a_i)$ and $(a'_i)$ and the same $(S_i)$, we have $\| A - A' \| \leq \sum_i \| a'_i - a_i \|$. Hence, up to modifying 
$\veps$ in $\veps / 2$, in order to prove the upper bound \eqref{eq:upboundA}, it is enough to consider weights 
$(a_i)$ such that for any $i$, $\| a_i \| \leq 1$ and $\| a_i ^{-1} \| \leq 2d / \veps$ and check that on an event of high probability
 the upper bound \eqref{eq:upboundA} holds. We already know that $\| A_{|H_0} \| \leq d$. 
 Let $\mu > s(A_\star) + \veps$ be a real number. Recall from Lemma \ref{le:resrec} that $\hat a_i (\mu) = a_i \gamma_{i^*} (\mu)$. Since  $\mu > s(A_\star) + \veps$ and $\sigma(A_\star^{(o)})$ is contained in the convex hull of $\sigma(A_\star)$,  $ G^{(o)} (\mu) = (\mu - A^{(o)}_\star)^{-1}$ is positive semi-definite. More precisely, from the spectral theorem $ (\mu + d)^{-1} I \leq G^{(o)} (\mu) \leq \veps^{-1} I$.  Thus,  we have  $(\mu + d)^{-1} I_r \leq |\gamma_{i^*} (\mu)| \leq \veps^{-1} I_r$. 
 Hence, if $\mu  \leq d$, we get $\|\hat a_i(\mu) \| \leq \| a_i \| \| \gamma_{i^*} (\mu) \| \leq  1 / \veps$ and  
 $\| \hat a_i( \mu)^{-1}  \| \leq \| a^{-1} _i\| \|  \gamma_{i^*} (\mu)^{-1} \|  \leq (2d )^2 /  \veps $. 
 It remains to use the event of high probability in Theorem \ref{th:mainB} with 
 $\veps' = (\veps / (2d)^2 ) \wedge \delta$ where $\delta >0$ is as in Theorem \ref{prop:edgeAB}. This proves the last claim of Theorem \ref{th:main}. 
 
 It remains to prove the first claim of Theorem \ref{th:main}: the convergence in probability of $\sigma(A_{|H_0})$ toward $\sigma(A_\star)$ for the Hausdorff distance. From Corollary \ref{cor:inclusion}, it suffices to prove the upper bound \eqref{eq:upboundA}.  First,  what precedes applied to minus $A$ implies the convergence in probability of the operator norm of $A_{|H_0}$ when the symmetry condition \eqref{eq:sym} holds. 
 Therefore, from  Pisier in \cite[Proposition 6]{MR1401692}, see Section \ref{sec:cor} below, for any matrix-valued polynomial $P_n $ in the $S_i$'s, $\| (P_n)_{|H_0} \| $ converges in probability towards $\| P_\star \|$, the corresponding polynomial in the $\lambda(g_i)$'s. For $T$ self-adjoint, we have  $s(T) = \| T - c I \| -  c$ for all $c \geq \| T \|$. Hence, if $P_\star$ and $P_n$ are self-adjoint, this also implies the convergence of right-edge of the spectrum $s((P_n)_{|H_0})$ toward $s(P_\star)$ in probability.

We may then prove \eqref{eq:upboundA}. Fix $\veps >0$ and let $J  = \sigma(A_\star) + [-\veps, \veps]$. Since $\sigma(A_\star)$ is compact, $J = \cup_i [\alpha_i,\beta_i]$ is a finite disjoint union of intervals (possibly reduced to a singleton). Consider the polynomial $q (x) = \prod_i (x-\alpha_i)(x-\beta_i)$ and set $P_n = q (A)$, $P_\star = q (A_\star)$. By construction, $q$ is negative on the interior of $J$ and non-negative on $J^c$. Hence $s(P_\star) < 0$. From what precedes, with probability tending to one, $s ((P_n)_{|H_0}) < 0$. It implies that $A_{|H_0}$ has no eigenvalue on $J^c$ with probability tending to $1$. This proves \eqref{eq:upboundA}.
\end{proof}

\section{Proof of Theorem \ref{th:mainB}}

\label{sec:mainB}

\subsection{Overview of the proof}
\label{subsec:o}

Let us first describe the method introduced by F\"uredi and Koml\'os \cite{MR637828} to bound the norm of a random matrix.  
Let  $M$ be a random matrix in $M_n (\dC)$. Imagine that we want to prove that, for some $\rho > 0$, for any $\veps > 0$, 
with probability tending to $1$ as $n$ goes large, 
\begin{equation}\label{eq:normM}
\| M \| \leq  \rho (1 + \veps).
\end{equation}
For integer $k\geq 1$, we write 
$$
\| M \|^{2k} = \| M M^* \|^{k} = \| (M M^*)^k \| \leq \tr \PAR{ ( M M^*)^k }.  
$$
At the last step, we might typically loose a factor proportional to $n$, since the trace is the sum of $n$ eigenvalues. 
Hence, it is reasonable to target a bound of the form
\begin{equation}\label{eq:EtrM}
\dE \tr \PAR{ ( M M^*)^k } \leq n \rho^{2k} \PAR{ 1 + \veps}^{2k}. 
\end{equation}
If we manage to establish such an upper bound, we would deduce from Markov inequality that for any $\delta >0$, the event 
$$
\| M \| \leq \rho (1 +  \veps) ( 1 + \delta) 
$$
has probability at least $$1 - n \PAR{  1 +  \delta }^{-2k} = 1 - \exp\PAR{ - 2 k \log (1 + \delta) + \log n}.$$
  
Hence, the bound on the trace \eqref{eq:EtrM} implies the bound \eqref{eq:normM} with 
$\veps'=  \veps + o(1)$ if $k \gg \log n$. Then, the problem of bounding the norm of $M$ has been reduced to bounding 
the expression
$$
\dE \tr \PAR{ ( M M^*)^k } = \sum_{x_1,\ldots , x_{2k}} \dE \prod_{t=1}^{k} M_{x_{2t-1},x_{2t}} \bar M_{x_{2t+1},x_{2t}},
$$
where the sum is over all sequences $(x_1, \ldots, x_{2k})$ in $[n]$ with the convention $x_{2k+1} = x_1$. 
The right-hand side of the above expression may usually be studied by combinatorial arguments.

As it is described, the method of F\"uredi and Koml\'os cannot be applied directly in this context. 
Indeed, assume for concreteness that $r = 1$ and that $A_1$ and $A$ are stochastic matrices and that the symmetry 
condition \eqref{eq:sym} holds (recall that $A_1 = \sum_i a_i$ is defined in \eqref{eq:defA1})
We are then interested in the matrix $M = A - \frac 1 n \IND \otimes \IND$ and 
$\| A_{|H_0}\| = \| M\|$ and we aim at a bound of the type \eqref{eq:normM} for some $\rho < 1$. 
However, with probability at least  $ c_0 n^{-c_1}$, $\{1,2\}$ is a connected component of $G^{\sigma}$ 
where $G^\sigma$ is the colored graph introduced in Definition \ref{def0} (this can be checked from the forthcoming 
computation leading to \eqref{eq:subgraph}). On this event, say $E$, the eigenvalue $1$ has multiplicity 
at least $2$ in $A$, hence $\| M\|  =  1$. We deduce that 
$$
\dE \tr \PAR{ ( M M^*)^k } \geq \dE \| M \|^{2k} \geq \dP ( E ) \geq c_0n^{-c_1}.
$$
However, the latter is much larger than $\rho^{2k}$ if $k \gg \log n$ and \eqref{eq:EtrM} does not hold. More generally  the 
presence of subgraphs with many edges in $G^\sigma$ prevents the bound \eqref{eq:EtrM} to hold for $ k \gg \log n$. 
Such subgraphs were called {\em tangles} by Friedman \cite{MR2437174}.  We will circumvent this intrinsic difficulty as follows.
 Let $B$ be as in Theorem \ref{th:mainB}. 
\begin{enumerate}[(i)]
\item {\em Bound the spectral radius by the norm of a large power: } we fix a positive integer $\ell$. We recall that $K_0$ is 
the vector space of codimension $rd$ orthogonal to $K_1 = \dC^r \otimes \IND \otimes \dC^d$. We have
\begin{equation}\label{eq:basicrho}
\rho (B_{|K_0}) = \rho (( B^\ell)_{|K_0} )^{1 / \ell} \leq \sup_{g \in K_0, \| g \|_2 = 1 }    \| B^{\ell} g \|_2^{1 / \ell}.
\end{equation}

\item {\em Remove the tangles (Subsection \ref{subsec:PD}): } we  obtain a matrix $B^{(\ell)}$ which coincides with $B^\ell$ 
on an event of high probability.

\item {\em Project on $K_0$ (Subsection \ref{subsec:PD}): } we  project $B^{(\ell)}$ on $K_0$ to evaluate the right-hand side 
of \eqref{eq:basicrho}. We  obtain a matrix $\underline B^{(\ell)}$. However, since $K_0$ is not always an invariant 
subspace of $B^{(\ell)}$,  there will also be some remainder matrices. 

\item {\em Method of F\"uredi and Koml\'os (Subsection \ref{subsec:FK}): } we may then evaluate the norm of 
$\underline B^{(\ell)}$ by taking a trace of power $2m$ and estimate its expectation. The $2m \ell$ plays the role of $2k$ in the above presentation 
of the method of F\"uredi and Koml\'os. We thus need $2 m \ell \gg \log n$ to get a sharp estimate. We  obtain 
$2 m \ell$ of order $(\log n)^2 /( \log \log n)$. We  use at a crucial step that we removed the tangles 
(Lemma \ref{le:enumpath} and Lemma \ref{le:enumpathR}). We  also connect the expected trace of powers of 
non-backtracking matrices to powers of the corresponding non-backtracking operator on the free group (Lemma \ref{le:isopath}).

\item {\em Net argument (Subsection \ref{subsec:net}): } to prove Theorem \ref{th:mainB}, we need to bound all 
spectral radii of $B_{|K_0}$ for all weights $a_i$ with uniformly bounded norm. We will use a net argument on the norm 
of $(B_{|K_0})^{\ell}$ conditioned on the event that there is no tangles in $G^\sigma$.

\end{enumerate}

\begin{remark}\label{rq:CAT} Let us comment on the main differences with \cite{M13,BLM,bordenaveCAT}, notably \cite{bordenaveCAT} 
which is the closest. The steps (i)-(iii) are similar to \cite{bordenaveCAT}. In the analog of step (iv),  the work in \cite{bordenaveCAT} is 
greatly simplified by the  fact that, with the terminology of the present paper, the weights $a_i$ are matrices of the standard 
basis $E_{uv}$, $u,v \in [r]$. In this special case, the spectral radius of $\rho(B_\star)$ has an explicit combinatorial expression 
and products of $a_i$ have a simple combinatorial description. Finally, step (v) is not present in \cite{bordenaveCAT}.\end{remark} 

\subsection{Path decomposition}
\label{subsec:PD}

In this subsection, we set $X = [n]$ and let $A$ be as in \eqref{eq:defA}. We denote by $B$ the non-backtracking 
matrix of $A$. Here we  give  an upper bound on $\rho (B_{|K_0})$ in terms of operator norms of new matrices which will be tuned for 
the use of the method of F\"uredi and Koml\'os.  We fix a positive integer $\ell$.  The right hand side of 
Equation \eqref{eq:basicrho} can be studied by a weighted expansion of paths.  To this end, we will use some definitions 
for the  sequences in $E$ which we will encounter to express the entries of $B^{\ell}$ as a weighted sum of 
non-backtracking paths. Recall the definition of a colored edge $[x,i,y]$ and a colored graph in Definition \ref{def0} 
and see Figure \ref{fig:defg} for an illustration of the new definitions. Recall that  $E = X \times [d]$.

\begin{definition}\label{def1}
For a positive integer $k$, let $ \gamma = (\gamma_1, \ldots, \gamma_{k} ) \in E^k$, with $\gamma_t = ( x_t, i_t)$. 
\begin{enumerate}[-]
\item
The sets of vertices and pairs of colored edges of $\gamma$ are the sets   
$V_\gamma = \{ x_{t}: \hbox{$1 \leq t\leq k$}\}$ and $E_\gamma = \{ [x_{t}, i_t , x_{t+1} ]: 1 \leq t \leq k-1\}$. 
We denote by $G_\gamma$ the colored graph with vertex set $V_\gamma$ and  colored edges $E_\gamma$.  
\item
An {\em (extended) path} of length $k-1$ is an element of $E^k$.  The path $\gamma$ is {\em non-backtracking} if for 
any $t \geq 1$, $  i _{t+1} \ne  i^*_{t}  $.  The subset of non-backtracking paths of $E^k$ is $\Gamma^{k}$.  If $e, f \in E$, 
we denote by $\Gamma^{k}_{ef}$   paths in $\Gamma^k$ such that $\gamma_1 = e$, $\gamma_{k} = f$. 
\item
The {\em weights} of the path $\gamma$ is the element of $M_r(\dC)$,
$$
a (\gamma) = \prod_{t=2}^{k} a_{i_{t}}.
$$
\end{enumerate}
\end{definition}

\begin{figure}[htb]
\begin{center}  
\resizebox{7cm}{!}{
\begin{tikzpicture}[main node/.style={circle, draw , fill = lightgray, text = black, thick}]
\node[main node]  at (0,0) (2) {2} ;
\node[main node] at (2,0) (5) {5} ;
\node[main node] at (4,0) (1) {1} ;
\node[main node] at (4,2 ) (3) {3} ;
\node[main node] at (2,2) (4) {4} ;
\node[main node] at (6,0) (6) {6} ;

\draw[cyan,ultra thick] [out = 60 , in = 0] (2) to (0,1) ; \draw[cyan,ultra thick] [out = 180 , in = 120] (0,1) to (2) ; 
\draw[ magenta,ultra thick] (2) to (5) ; 
\draw[ orange!80,ultra thick] (5) to (1) ;  
  \draw[cyan,  ultra thick] [out = 70 , in = -70]  (1) to (3) ;  
    \draw[olive,  ultra thick]  [out = 110 , in = -110] (1) to (3) ;  
 \draw[magenta,  ultra thick] (3) to (4) ;
 \draw[cyan,  ultra thick] (4) to (5) ;
  \draw[cyan,  ultra thick] (1) to (6) ;

  \node[text = black!80] at (0,1) {1} ;   
  \node[text = black!80] at (1,0) {2} ; 
    \node[text = black!80] at (3,0) {3,7,11} ; 
    \node[text = black!80] at (5,0) {14} ;   
       \node[text = black!80] at (4.45,1) {4,12} ; 
       \node[text = black!80] at (3.55,1) {8,13} ; 
    \node[text = black!80] at (3,2) {5,9} ; 
       \node[text = black!80] at (2,1) {6,10} ;

\end{tikzpicture}
}
{\small $$ \gamma =  (2,1)  (2,2) (5,3) ( 1,1)  (3,2)   (4,1)   (5,3) (1,4) (3,2)  (4,1)  (5,3)  (1,1)  (3,4)    (1,1)  (6,2) $$} 
\vspace{-25pt}
\caption{An example where the involution $i^*$ is the identity. The colored graph $G_\gamma$ associated to a path 
$\gamma \in \Gamma^{15}$. The numbers on the edges are the values of $t$ such that $[x_t,i_t,x_{t+1}]$ is equal to this edge. 
We have $V_\gamma = [6]$ and $E_\gamma = \{ [2,1,2], [2,2,5], [5,3,1] , [1,1,3] , [3,2,4], [4,1,5], [1,4,3], [1,1,6]\}$.} \label{fig:defg}

\end{center}\end{figure}

By construction, from \eqref{eq:defB} we find that
$$
(B^{\ell}) _{e f}=  \sum_{\gamma \in \Gamma^{\ell+1} _{e f}} a(\gamma)  \prod_{t=1}^{\ell} ( S_{i_t} )_{x_{t} x_{t+1} } .
$$
Observe that in the above expression, $\Gamma^{\ell+1}$ and $a(\gamma)$ do not depend on the permutation matrices $S_i, i \in[d]$.  We set 
\begin{equation}\label{eq:defS}
\underline S_{i }   = S_{i}- \frac 1 n  \IND \otimes \IND .
\end{equation}
Note that $\underline S_i$ is the orthogonal projection of $S_i$ on $\IND^\perp$. Hence, setting as in \eqref{eq:defB}, $\underline B = \sum_{j \ne i ^* } a_j \otimes \underline S_i \otimes E_{ij}$, we get  that, if $g \in K_0$, 
\begin{equation}\label{eq:BBBk}
B^\ell g = \underline B^\ell g. 
\end{equation}
Moreover, arguing as above we find
\begin{equation}\label{eq:defdefub}
(\underline B^{\ell}) _{e f} = \sum_{\gamma \in \Gamma^{\ell+1} _{e f}} a(\gamma)  \prod_{t=1}^{\ell} ( \underline S_{i_t} )_{x_{t} x_{t+1} } 
\end{equation}
The matrix $\underline B$ will however not be used. Indeed, as pointed in \S \ref{subsec:o},  due to polynomially 
small events which would have had a big influence on the expected value of $B^\ell$ or $\underline B^\ell$ for large 
$\ell$, we will first reduce  the above sum over $\Gamma^{\ell+1} _{e f}$ to a sum over a smaller subset. 
We will only afterward project on $K_0$, it will create some extra remainder terms. We now introduce a key definition 
(recall the definition of cycles and $(H,x)_\ell$ in Definition \ref{def0}). 

\begin{definition}[Tangles]\label{def2}
A graph $H$ is {\em tangle-free} if it contains at most one cycle, $H$ is {\em $\ell$-tangle-free} if for any vertex $x$, $(H,x)_{\ell}$ 
contains at most one cycle. Otherwise, $H$ is tangled or $\ell$-tangled. We say that $\gamma \in E^k$ is tangle-free or tangled 
if $G_\gamma$ is. Finally,  $F^k$ and $F^{k} _{e f}$ will denote the subsets of tangle-free paths in $\Gamma^k$ and $\Gamma^{k}_{ef}$.
\end{definition}

Now, recall the definition of the colored graph $G^\sigma$ in Definition \ref{def0}. Obviously, if $G^\sigma$ is 
$\ell$-tangle-free  and $0 \leq k \leq 2\ell$ then 
\begin{equation}\label{eq:BkBk}
B^{k}   = B^{(k)},
\end{equation}
where 
$$
(B^{(k)}) _{e f}=  \sum_{\gamma \in F^{k+1} _{e f}} a(\gamma)  \prod_{t=1}^{k} ( S_{i_t} )_{x_{t} x_{t+1} } .
$$
We define similarly the matrix $\underline B^{(k)}$ by 
\begin{equation}\label{eq:defD}
(\underline B^{(k)}) _{e f}=  \sum_{\gamma \in F^{k+1} _{e f}} a(\gamma)  \prod_{t=1}^{k} ( \underline S_{i_t} )_{x_{t} x_{t+1} } .
\end{equation}
Beware that even if $G^\sigma$ is $\ell$-tangle-free  and $2 \leq k \leq \ell$,  $\underline B^k$ is a priori different from 
$ \underline B^{(k)}$ (in \eqref{eq:defdefub} and \eqref{eq:defD} the summand is the same but the sum 
in \eqref{eq:defdefub} is over a larger set). Nevertheless, at the cost of extra terms, as in \eqref{eq:BBBk}, 
we may still express $B^{(\ell)}g$ in terms of $\underline B^{(\ell)}g$ for all $g \in K_0$. We start with the following 
telescopic sum decomposition:
\begin{eqnarray}\label{eq:iopl}
(B^{(\ell)}) _{e f} &  =  & (\uB^{(\ell)} )_{ef}  + \sum_{\gamma \in F^{\ell+1} _{e f}} a(\gamma)    \sum_{k = 1}^\ell \PAR{ \prod_{t=1}^{k-1} ( \underline S_{i_t} )_{x_t x_{t+1}}} \PAR{ \frac 1{ n}  } \PAR{\prod_{t = k+1}^\ell (S_{i_t} )_{x_t x_{t+1}}  },
\end{eqnarray}
which follows from the identity, 
$$
\prod_{t=1}^\ell x_t = \prod_{t=1}^\ell y_t + \sum_{k=1}^{\ell}\PAR{ \prod_{t=1}^{k-1} y_t } ( x_k - y_k) \PAR{\prod_{t = k+1}^{\ell} x_t}.
$$
\begin{figure}[htb]
\begin{center}  
\resizebox{12cm}{!}{
\begin{tikzpicture}[main node/.style={circle,fill , text = black, thick}]

\draw[orange , -,thick] (0.0,0)  [out = 0, in = 0] to (0.45,0.8)  ; 
\draw[orange , -,thick] (0.45,0.8) [out = 180 , in = 180]   to (0.9,0)   ; 
\draw[ -,thick] (0.9,0) to (1.1,0)  ; 
\draw[ cyan,  -,thick] (1.1,0) [out = 0, in = 0]  to (1.55,0.8)  ; 
\draw[ cyan , -,thick] (1.55,0.8) [out = 180 , in = 180]  to (2,0) ;  

\node[above]  at (0,0) (1) {\tiny{$\gamma_1$}} ;
\node[above]  at (2,0) (2)   {\tiny{$\gamma_{\ell+1}$}} ;
\node[above]  at (0.85,-0.05) (3)  {\tiny{$ \gamma_{k}$}} ;
\node[below]  at (1.25,0.05) (4) {\tiny{$\gamma_{k+1}$}} ;

\draw [fill] (0,0) circle [radius=0.03] ;
\draw [fill] (0.9,0) circle [radius=0.03] ;
\draw [fill] (1.1,0) circle [radius=0.03] ;
\draw [fill] (2,0) circle [radius=0.03] ;

\draw[orange , -,thick] (3.0,0)  [out = 0, in = 0] to (3.45,0.8)  ; 
\draw[orange , -,thick] (3.45,0.8) [out = 180 , in = 180]   to (3.9,0)   ; 
\draw[ -,thick] (3.9,0) to (4.1,0)  ; 
\draw[ cyan,  -,thick] (4.1,0) [out = 0, in = 0]  to (4,0.5)  ; 
\draw[ cyan , -,thick] (4,0.5) [out = 180 , in = 90]  to (3.7,0) ;  
\draw[ cyan , -,thick] (3.7,0) [out = -90 , in = -90]  to (5,0) ;  

\node[above]  at (3,0) {\tiny{$\gamma_1$}} ;
\node[above]  at (5,0)  {\tiny{$\gamma_{\ell+1}$}} ;
\node[above]  at (3.85,-0.05)  {\tiny{$ \gamma_{k}$}} ;
\node[below]  at (4.25,0.05)  {\tiny{$\gamma_{k+1}$}} ;

\draw [fill] (3,0) circle [radius=0.03] ;
\draw [fill] (3.9,0) circle [radius=0.03] ;
\draw [fill] (4.1,0) circle [radius=0.03] ;
\draw [fill] (5,0) circle [radius=0.03] ;

\draw[orange , -,thick] (6.0,0)  [out = 0, in = 0] to (6.45,0.8)  ; 
\draw[orange , -,thick] (6.45,0.8) [out = 180 , in = 180]   to (6.9,0)   ; 
\draw[ -,thick] (6.9,0)[out = 0, in = -45]  to (7.15,0.27)  ;
 \draw[ -,thick] (7.15,0.27)[out = 135, in = 90]  to (6.9,0)  ;
\draw[ cyan,  -,thick] (6.9,0) [out = -60, in = -240]  to (8,0) ;  

\node[above]  at (6,0)  {\tiny{$\gamma_1$}} ;
\node[above]  at (8,0)  {\tiny{$\gamma_{\ell+1}$}} ;
\node[above]  at (6.75,-0.05)  {\tiny{$ \gamma_{k}$}} ;
\node[below]  at (7.1,0.0)  {\tiny{$\gamma_{k+1}$}} ;

\draw [fill] (6,0) circle [radius=0.03] ;
\draw [fill] (6.9,0) circle [radius=0.03] ;
\draw [fill] (8,0) circle [radius=0.03] ;

\end{tikzpicture}}
\caption{Tangle-free paths whose union is tangled.} \label{fig:Gamma3}
\end{center}\end{figure}

\vspace{-10pt}

We now rewrite \eqref{eq:iopl} as a sum of matrix products for lower powers of $\uB^{(k)}$ and $B^{(k)}$ up to some 
remainder terms. We decompose a path $\gamma  = (\gamma_1, \ldots, \gamma_{\ell +1} ) \in \Gamma^{\ell+1}$ 
as a path $\gamma'=  (\gamma_1, \ldots, \gamma_{k} ) \in  \Gamma ^{k}$, a path 
$\gamma''= (\gamma_{k}, \gamma_{k+1} ) \in \Gamma^2$ and a path 
$\gamma''' = (\gamma_{k+1}, \ldots, \gamma_{\ell +1}) \in \Gamma ^{\ell - k+1}$. 
If the path $\gamma$ is in $F^{\ell+1}$ (that is, $\gamma$ tangle-free), then  the three paths are tangle-free, 
but the converse is not necessarily true, see Figure \ref{fig:Gamma3}. This will be the origin of the remainder terms.  
For $1 \leq k \leq \ell$, we denote by $F^{\ell+1}_{k}$ the set of $\gamma \in \Gamma^{\ell+1}$ as above such that
$\gamma'  \in F^{k}$, $\gamma'' \in F^{2} = \Gamma^2$ and $\gamma''' \in F^{\ell - k +1}$. 
Then $F^{\ell+1} \subset F^{\ell+1}_{k}$. Setting, $F^{\ell+1}_{k,ef} = F^{\ell+1}_{k} \cap \Gamma^{\ell+1}_{ef}$, 
we write in \eqref{eq:iopl}
\begin{eqnarray*}
\sum_{\gamma \in F^{\ell+1} _{e f}}    \PAR{\star }  = \sum_{\gamma \in F^{\ell+1} _{k,e f}}  \PAR{ \star } - \sum_{\gamma \in F^{\ell+1} _{k,e f} \backslash F^{\ell+1} _{e f}}   \PAR{\star },
\end{eqnarray*}
where $(\star)$ is the summand on the right hand side of  \eqref{eq:iopl}.  We have 
$$
a(\gamma) = a(\gamma') a(\gamma'')  a(\gamma''').
$$
We denote by  $\overline B$ the matrix on $\dC^r \otimes \dC^E$ defined by 
$$\overline B= \sum_{j \ne i^*}  a_j \otimes (\IND \otimes \IND)  \otimes  E_{ij}.$$  
Observe that 
$\overline B_{ef} = \sum a(\gamma) $ where the sum is over all $\gamma $ in $\Gamma^2_{ef} = F^2_{ef}$.  We get 
$$
 \sum_{\gamma \in F^{\ell+1} _{k,e f}} a(\gamma)     \PAR{ \prod_{t=1}^{k-1} ( \underline S_{i_t} )_{x_t x_{t+1}}} \PAR{ \frac 1{ n}  } \PAR{\prod_{t = k+1}^\ell (S_{i_t} )_{x_t x_{t+1}}  } =  \PAR{ \frac{1}{n}} \uB^{(k-1)} \PAR{\overline B  B^{(\ell - k)}}_{ef}. 
$$
We set for all $e,f \in E$, 
\begin{equation}\label{eq:defR}
(R^{(\ell)}_{k} )_{ef}   =  \sum_{\gamma  \in F^{\ell+1} _{k,e f} \backslash F^{\ell+1} _{e f}} a(\gamma)  \PAR{ \prod_{t=1}^{k-1} ( \underline S_{i_t} )_{x_t x_{t+1}}}  \PAR{\prod_{t = k+1}^\ell (S_{i_t} )_{x_t x_{t+1}}  }.
\end{equation}
We have  from \eqref{eq:iopl} that \begin{eqnarray*}
B^{(\ell)}   &= & \uB^{(\ell)}   +    \frac 1 {n}  \sum_{k = 1} ^{\ell}  \uB^{(k-1)} \overline B  B^{(\ell - k)}  -   \frac 1 {n}     \sum_{k = 1}^{\ell} R^{(\ell)}_{k}.\end{eqnarray*}
Now, observe that if $G^\sigma$ is $\ell$-tangle free, then, from  \eqref{eq:BkBk}, 
$\overline B B^{(\ell - k)} = \overline B B^{\ell -k}$. Moreover, the kernel of $\overline B$ contains $K_0$. 
Since $B^{\ell-k} K_0 \subset K_0$, we find that $\overline B B^{\ell-k} = 0$ on $K_0$. 
So finally, if $G^\sigma$ is $\ell$-tangle free, for any $g \in K_0$,  
\begin{eqnarray*}
 B^{\ell} g \  &   =&  \uB^{(\ell)}  g   -  \frac 1{n}   \sum_{k = 1}^\ell R^{(\ell)}_{k}  g .\label{eq:decompBkx}
\end{eqnarray*}
Putting this last inequality in \eqref{eq:basicrho}, the outcome of this subsection is the following lemma. 

\begin{lemma}\label{le:decompBl}
Let $\ell \geq 1$ be an integer and $A$ as in \eqref{eq:defA} be such that $G^\sigma$ is $\ell$-tangle free. Then,
$$
\rho (B_{|K_0}) \leq  \PAR{\|  \uB^{(\ell)} \|  +  \frac 1{ n}   \sum_{k = 1}^\ell \| R^{(\ell)}_{k} \|} ^{1/\ell}.$$
\end{lemma}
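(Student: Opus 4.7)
The statement is essentially a packaging of the identities already derived in Subsection \ref{subsec:PD}. The plan is to combine the spectral radius bound \eqref{eq:basicrho} with the path decomposition of $B^\ell$ on $K_0$, and then apply the triangle inequality together with the elementary bound $\|Mg\|_2 \leq \|M\|\,\|g\|_2$.

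More precisely, I would proceed as follows. First, invoke \eqref{eq:basicrho}, which gives
$$
\rho(B_{|K_0}) = \rho((B^\ell)_{|K_0})^{1/\ell} \leq \sup_{g \in K_0,\; \|g\|_2 = 1} \|B^\ell g\|_2^{1/\ell}.
$$
Next, use the hypothesis that $G^\sigma$ is $\ell$-tangle-free. As shown in the discussion preceding the lemma (combining \eqref{eq:BkBk} with the telescopic sum \eqref{eq:iopl}, the factorization $a(\gamma) = a(\gamma')a(\gamma'')a(\gamma''')$, and the observation that $K_0 \subset \ker \overline B$ together with $B^{\ell-k} K_0 \subset K_0$), for every $g \in K_0$ we have the identity
$$
B^\ell g = \underline B^{(\ell)} g - \frac{1}{n} \sum_{k=1}^{\ell} R^{(\ell)}_k g.
$$
Here the use of the $\ell$-tangle-free hypothesis is twofold: it ensures $B^k = B^{(k)}$ (and hence $\overline B B^{\ell-k} = \overline B B^{(\ell-k)}$) for the relevant powers $k \leq \ell$, which is what allows us to kill the middle sum using $\overline B_{|K_0} = 0$.

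Given that identity, the triangle inequality yields
$$
\|B^\ell g\|_2 \leq \|\underline B^{(\ell)} g\|_2 + \frac{1}{n} \sum_{k=1}^{\ell} \|R^{(\ell)}_k g\|_2 \leq \PAR{\|\underline B^{(\ell)}\| + \frac{1}{n} \sum_{k=1}^{\ell} \|R^{(\ell)}_k\|} \|g\|_2,
$$
for any $g \in K_0$. Taking the supremum over unit vectors $g \in K_0$ and then raising to the power $1/\ell$ completes the proof.

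There is no real obstacle here: all the nontrivial work (the path decomposition, the telescopic expansion \eqref{eq:iopl}, and the verification that the middle term vanishes on $K_0$ when $G^\sigma$ is $\ell$-tangle-free) has already been carried out in the paragraphs preceding the lemma. The statement is simply the synthesis of these facts with the standard inequality $\rho(M_{|K_0}) \leq \|M^\ell_{|K_0}\|^{1/\ell}$.
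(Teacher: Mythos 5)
Your argument is correct and is essentially identical to the paper's: the authors derive the identity $B^\ell g = \uB^{(\ell)} g - \tfrac{1}{n}\sum_{k=1}^\ell R^{(\ell)}_k g$ for $g\in K_0$ in the paragraph immediately preceding the lemma (using \eqref{eq:BkBk}, the telescopic expansion \eqref{eq:iopl}, and the vanishing of $\overline B B^{\ell-k}$ on $K_0$), and then state the lemma as the result of feeding this into \eqref{eq:basicrho} via the triangle inequality. Your packaging matches theirs step for step, including the correct identification of where the $\ell$-tangle-free hypothesis enters.
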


\subsection{Estimates on random permutations}

In this subsection, we study some properties of permutations matrices $S_i$ for the symmetric random permutations. 

The first proposition gives a sharp estimate on the expected product of the variables $(\underline S_{i})_{xy}$. 
This estimate will be used to bound entries in products of $\uB^{(\ell)}, R^{(\ell)}_{k}$ and their transposes. 
Note that if $i \ne i^*$, $(\underline S_{i})_{xy}$ is centered: $\dE (\underline S_i )_{xy}  = 0$ while if $i = i^*$, 
$(\underline S_{i})_{xy}$  is almost centered, for $x \ne y$, $\dE (\underline S_{i} )_{xy}  = 1 / (n-1)- 1 / n = O (1 / n^2)$. 

We start with some definitions on colored graphs (as defined in Definition \ref{def0}). 
\begin{definition}\label{def:consistent} 
\begin{enumerate}[-]
\item Let $H$ be a colored graph with colored edge set $E_H$.  A colored edge $e = [x,i,y] \in E_H$ is {\em consistent} if for any $e' = [x',i',y'] \in E_H$, $(x,i) = (x',i')$ or $(y,i^*) = (x',i')$ implies that 
$e = e'$ (recall that $[x,i,y] = [y,i^*,x]$).   It is {\em inconsistent} otherwise.

\item For a sequence of colored edges $ ( e_1, \ldots ,e_\tau )$, the {\em multiplicity} of  $e \in \{ e_t :  1 \leq t \leq \tau \}$ is 
$\sum_t \IND (e_t  =e )$. The edge $e$ is consistent or inconsistent if it is consistent or inconsistent in the colored graph spanned by $ \{ e_t :  1 \leq t \leq \tau \}$.

\end{enumerate}
\end{definition}

 In  Figure \ref{fig:defg}, the edges 
$[1,1,6]$ and $[1,1,3]$ are inconsistent.

\begin{proposition}[\cite{bordenaveCAT}]
\label{prop:exppath}
For symmetric random permutations, there exists a  constant $c>0$ such that for any sequence of colored edges
$ (f_1, \ldots, f_{\tau})$, with $f_t  = [x_t,i_t,y_t]$, $\tau \leq \sqrt{n}$ and any $\tau_0 \leq \tau$, we have, 
$$
\ABS{ \dE \prod_{t= 1} ^{\tau_0}  (\underline S_{i_t} )_{x_t  y_{t}}  \prod_{t= \tau_0+1} ^{\tau}  ( S_{i_t} )_{x_t  y_{t}} }  \leq c \, 2^{b} \PAR{ \frac{1 }{ n }}^{e} \PAR{ \frac{  3\tau   }{ \sqrt{n} }}^{e_1}, 
$$ 
where $e = |\{ f_t : 1 \leq t \leq \tau \}|$,  $b$ is the number of inconsistent edges and $e_1$ is the number of  
$1 \leq t \leq \tau_0$ such that $[x_t, i_t , x_{t+1}]$ is consistent and has multiplicity one.\end{proposition}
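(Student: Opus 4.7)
The strategy is to expand each of the first $\tau_0$ factors via $(\underline{S}_{i_t})_{x_t y_t} = (S_{i_t})_{x_t y_t} - 1/n$, so the expectation becomes a signed sum over subsets $J \subseteq [\tau_0]$ of $(-1/n)^{\tau_0 - |J|}$ times $\dE \prod_{t \in J \cup (\tau_0,\tau]}(S_{i_t})_{x_t y_t}$. Each remaining expectation factors across the orbits of the involution $i \mapsto i^*$, and the contribution of each orbit is the probability that a uniformly random permutation of $[n]$ (or a uniformly random matching, when $i = i^*$) satisfies the associated constraints $\sigma_i(x_t) = y_t$. This probability vanishes if the constraints force $\sigma_i$ to take two distinct values at a single point (an inconsistency in the sense of Definition \ref{def:consistent}), and otherwise equals $(n-k)!/n!$ (or its matching analog), where $k$ is the number of distinct consistent constraints on that orbit.

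Next I would apply Stirling's approximation, $(n-k)!/n! = n^{-k}(1 + O(k^2/n))$ in the regime $k \leq \tau \leq \sqrt n$. Summing over orbits, each distinct colored edge in the sequence contributes a base factor of $n^{-1}$, which accounts for the $(1/n)^e$ factor. The factor $2^b$ handles inconsistent edges: whenever a colored edge in $[\tau_0]$ is inconsistent, any term in the expansion where the corresponding $(S_i)$-factor is kept in both conflicting copies gives orbit probability zero, so at least one occurrence must be replaced by $-1/n$, and there are at most two such choices per inconsistent edge.

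The most delicate factor, $(3\tau/\sqrt n)^{e_1}$, comes from multiplicity-one consistent edges in $[\tau_0]$. For such an edge the two contributions $(S_i)_{xy}$ and $-1/n$ in the $J$-expansion nearly cancel: the residual is $\dE[(\underline S_i)_{xy} \mid \text{rest}] \approx k/(n(n-k))$, where $k$ is the number of other constraints imposed on $\sigma_i$. For $k \leq \tau \leq \sqrt n$ this residual has size $O(\tau/n^2)$, so after absorbing the $n^{-1}$ already accounted for in $(1/n)^e$, each such edge contributes at most an extra factor $3\tau/\sqrt n$. The slack $\sqrt n$ in place of $n$ is what keeps the bound uniform in all configurations, and the constant $3$ absorbs the Stirling correction together with the matching-case contribution.

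As this proposition is cited essentially verbatim from \cite{bordenaveCAT}, I would follow the inductive argument given there, peeling off one colored edge of $(f_1,\ldots,f_\tau)$ at a time and classifying it as inconsistent, multiplicity-one consistent (in $[\tau_0]$), or of higher multiplicity, while matching the chosen occurrence with the appropriate branch of the $J$-expansion. The main technical obstacle is the combinatorial bookkeeping of the signed cancellations across the $2^{\tau_0}$ terms of the expansion, performed in a way that treats the uniform-permutation and uniform-matching subcases of Definition \ref{def:ps} simultaneously; the assumption $\tau \leq \sqrt n$ is used in an essential way to keep all Stirling corrections and matching-case corrections small.
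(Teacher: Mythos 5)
Your proposal correctly identifies the broad strategy (expand $\underline S = S - \tfrac1n$, factor over the orbits of the involution $i\mapsto i^*$ by independence, estimate orbit probabilities via $(n-k)!/n!$), which is also how the cited reference proceeds. Note, however, that the paper does not reprove the proposition: its proof is literally the one-line reduction to \cite[Propositions 8 and 25]{bordenaveCAT} via independence of the $S_i$, so there is no ``paper's argument'' to compare against beyond that reduction.

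That said, your handling of the factor $(3\tau/\sqrt n)^{e_1}$ has a genuine gap, and I want to flag it because it is exactly the nontrivial content of the cited propositions. First, there is an arithmetic inconsistency: you estimate the conditional residual of a single multiplicity-one factor as $k/(n(n-k)) = O(\tau/n^2)$, and then say that after dividing out the $n^{-1}$ already booked in $(1/n)^e$ each such edge contributes an extra factor $3\tau/\sqrt n$; but $O(\tau/n^2)/(1/n) = O(\tau/n)$, not $O(\tau/\sqrt n)$. More importantly, the one-at-a-time conditional-expectation argument does not compose over $e_1 > 1$ multiplicity-one edges. Given the event that the higher-multiplicity constraints hold, the centered factors $(\underline S_{i})_{x_t y_t}$ for the different multiplicity-one edges sharing that color $i$ are strongly correlated, and $\dE\bigl[\prod_{t\in T}(\underline S_i)_{x_t y_t}\mid \text{rest}\bigr]$ is not the product of the individual conditional expectations. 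Moreover ``the rest'' is a moving target: once you are inside the $2^{\tau_0}$-term $J$-expansion, whether a multiplicity-one factor is replaced by $-1/n$ or kept as $S$ changes what remains, so there is no fixed conditioning that lets you peel these off one by one. The proof in the reference (and the tensor analogue that \emph{is} proved in this paper, Proposition \ref{prop:exppathT}) conditions exactly once on the filtration $\cF$ generated by the images and preimages of the \emph{non}-multiplicity-one entries, after which the joint conditional expectation of all the remaining multiplicity-one factors is a signed binomial-type sum bounded by an alternating-sum lemma (Lemma \ref{le:binT} here, its scalar precursor being \cite[Lemma 9]{bordenaveCAT}). The $\sqrt n$ scale comes out of that lemma's bound, not out of a per-edge conditional estimate; describing the argument as inductive, edge-by-edge peeling does not match what is actually done. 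To repair your sketch you would need to isolate the set $T$ of multiplicity-one times, introduce the split $T = T_1 \cup T_2$ as in the proof of Proposition \ref{prop:exppathT}, estimate $\dE_\cF$ of the $T_2$-product via the alternating-sum lemma, and control $|T_1|$ and the higher-multiplicity part separately.
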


\begin{proof}
Using the independence of the matrices $S_i$ (up to the involution), the claim is contained in \cite[Proposition 8]
{bordenaveCAT} for matchings and \cite[Proposition 25]{bordenaveCAT} for permutations. 
\end{proof}

Recall that the graph $G^\sigma$ is the colored graph with vertex set of $[n]$ and edges set of $[x,i,y]$ such that 
$\sigma_i (x) = y$ (and $\sigma_{i^*} (y) =x$).  

\begin{lemma}\label{le:tanglefree}
Let $A$ be as in \eqref{eq:defA} for symmetric random permutation. For some constant $c >0$, for any integer 
$1 \leq \ell \leq \sqrt n$, the expected number of cycles of length $\ell$ in $G^\sigma$ is bounded by $ c (d-1)^\ell$. 
The probability that $G^\sigma$ is $\ell$-tangled is at most $c \ell^3 (d-1)^{4\ell}  / n$. 
\end{lemma}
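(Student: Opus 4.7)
The plan is to prove both assertions by direct combinatorial enumeration combined with the moment estimate Proposition~\ref{prop:exppath}. The first part is a standard cyclic-path count; the second reduces ``tangled'' to the appearance of a bounded-size connected subgraph of excess $2$, which is then caught by a union bound over rooted shapes.

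For the cycle count, I would enumerate cyclic paths $(x_1, i_1, x_2, \ldots, x_\ell, i_\ell, x_1)$ with pairwise distinct vertices. For $\ell \geq 3$, vertex-distinctness automatically forces $i_{t+1} \neq i_t^*$ at every step (including the wrap-around $i_1 \neq i_\ell^*$), since otherwise two consecutive vertices would coincide. Hence the number of cyclic paths is at most $n^\ell \cdot d(d-1)^{\ell-1}$, and each cycle is spanned by exactly $2\ell$ cyclic paths (choice of base point and orientation). For any fixed cyclic path, the $\ell$ edges $[x_t,i_t,x_{t+1}]$ are pairwise distinct and each is consistent in the sense of Definition~\ref{def:consistent} (different endpoints, no repeated colored edge). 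Proposition~\ref{prop:exppath} with $\tau_0=0$ (so $e_1=0$) and $b=0$ then yields $\dP(\gamma \subseteq G^\sigma) \leq c/n^\ell$, and combining gives $\dE N_\ell \leq n^\ell d(d-1)^{\ell-1}/(2\ell) \cdot c/n^\ell = O((d-1)^\ell)$, with the small cases $\ell \leq 2$ absorbed into the constant.

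For the tangled bound, the first step is to extract a bounded-size witness. If $(G^\sigma,x)_\ell$ is tangled, its cycle space has dimension $\geq 2$, so a BFS tree rooted at $x$ inside $(G^\sigma,x)_\ell$ has at least two non-tree edges; each such chord, together with the tree path between its endpoints, produces a fundamental cycle of length $\leq 2\ell+1$. Their union is a connected subgraph $T \subseteq G^\sigma$ containing $x$ with $v := |V(T)| \leq 4\ell+2$ and $e := |E(T)| \geq v+1$ (excess $\geq 2$). Any such minimal $T$ belongs to one of three topological families of excess-$2$ connected graphs (theta graph, figure-eight, handcuff), each parameterized by three nonnegative length parameters summing to at most $4\ell+2$. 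This gives an $O(\ell^3)$ bound on the number of topological shapes.

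Now I would run the union bound. For each rooted colored shape $H$ with $v$ vertices, $e \geq v+1$ edges, and depth $\leq \ell$, the number of injective labelings $V(H) \hookrightarrow [n]$ sending the root to a fixed $x$ is at most $n^{v-1}$. Since $H$ is simple in the colored sense, all its edges are consistent ($b=0$), and Proposition~\ref{prop:exppath} with $\tau_0=0$ gives $\dP(\text{labeled copy of }H \subseteq G^\sigma) \leq c/n^e$. Along every cycle and tree-path of $T$ the non-backtracking constraint is forced by vertex-distinctness (just as in the first part), so the number of admissible colorings of the $e \leq 4\ell+2$ edges is bounded by $d(d-1)^{e-1} \leq d(d-1)^{4\ell+1}$. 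Summing over $x \in [n]$ (factor $n$), over the $O(\ell^3)$ topological shapes, and over colorings, and using $e - v \geq 1$, the total is bounded by $n \cdot O(\ell^3) \cdot d(d-1)^{4\ell+1} \cdot n^{v-1} \cdot c/n^e = O(\ell^3 (d-1)^{4\ell}/n)$, which is the desired estimate. The main technical nuisance is the bookkeeping needed to turn the three topological families plus edge colorings into a clean $\ell^3 (d-1)^{4\ell}$ counting bound; exploiting non-backtracking (rather than using the naive $d^{4\ell}$) is essential to obtain exactly $(d-1)^{4\ell}$.
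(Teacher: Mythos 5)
Your overall approach matches the paper's: reduce the cycle count to a count of consistently colored cyclic paths, reduce ``$\ell$-tangled'' to the appearance of a bounded-size excess-$2$ subgraph, and finish with the subgraph-probability estimate $\dP(H \subset G^\sigma) \lesssim n^{-|E_H|}$ combined with a union bound over shapes, labels, and colors. The paper proves the subgraph estimate directly (its inequality \eqref{eq:subgraph}), while you invoke Proposition~\ref{prop:exppath} with $\tau_0 = 0$ and all edges consistent; both give $c/n^e$, so this substitution is fine, and the cycle count and the final bookkeeping are sound.

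There is, however, a genuine gap in your extraction of the witness subgraph. You take the union $T$ of two fundamental cycles of a BFS tree rooted at $x$ and assert that $T$ is connected, contains $x$, and satisfies $|E(T)| \geq |V(T)| + 1$. None of these is guaranteed: a fundamental cycle of a BFS tree need not pass through the root, and two fundamental cycles may be vertex-disjoint. In the disjoint case $T$ is two disjoint cycles, so $|E(T)| = |V(T)|$ (not $\geq |V(T)|+1$), and your final estimate $n^{v-e}\le 1/n$ collapses to $n^0 = 1$ --- you would lose the crucial factor $1/n$ in the lemma. The repair, which the paper builds directly into its two parameterized families $H_{k,m,s}$ and $H'_{k,k',m}$, is to include a connecting path between the two cycles (or equivalently, to pass to a minimal connected excess-$2$ subgraph of the ball, which is a theta, figure-eight, or handcuff). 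Once $T$ is forced to be connected with excess exactly $2$, the identity $|E(T)| = |V(T)| + 1$ holds and the union bound delivers $O(\ell^3 (d-1)^{4\ell}/n)$ as required; the size of $T$ stays $O(\ell)$ (one can even recover a bound near $4\ell$ by noting that a cycle at depth $d$ from $x$ has length at most $2(\ell - d) + O(1)$ while the connecting path has length $O(d_1 + d_2)$). So: same route as the paper, with one step -- the connectedness of the witness -- stated as fact when it actually requires the insertion of a connecting segment.
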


\begin{proof}
Let $H$ be a colored graph as in Definition \ref{def0},
with vertex set $V_H \subset [n]$ and edge set $E_H$. Let us say that $H$ is consistently 
colored if all its edges are consistent (as per Definition \ref{def:consistent}). 
If $H$ is not consistently colored then the probability that $H \subset G^\sigma$ is $0$. 
Assume that $H$ is consistently colored and that $E_H$ contains $e_i$ edges of the form $[x,i,y]$. If $i \ne i^*$, 
the probability these $e_i$ edges are present in $G^\sigma$ is
$$
\prod_{t=0} ^{e_i -1} \frac{1}{n-t} \leq \PAR{ \frac{1}{n-e_i +1}}^{e_i}.
$$
If  $i = i^*$, this probability is 
$$
\prod_{t=0} ^{e_i -1} \frac{1}{n-2t-1} \leq \PAR{ \frac{1}{n-2e_i +1}}^{e_i}.
$$
We  use that, for any integers, $k,\ell$ with $k \ell  \leq \alpha n$, $\ell \leq n/2$,  
\begin{equation*}
(n - \ell )^k \geq e^{-2\alpha} n^k,
\end{equation*}
(indeed, $(n-\ell)^k =  n^{k} \exp ( k \log ( 1 - \ell /n)) \geq n^{k} \exp ( - 2 k \ell / n)$ since $\log (1 -x) \geq - x / (1 - x)$ for 
$0 \leq x  <1$).  Using the independence of the permutations $\sigma_i$ (up to the involution), we deduce that, if 
$|E_H| \leq \sqrt n$
\begin{equation}\label{eq:subgraph}
\dP \PAR{ H \subset G^\sigma } \leq c \PAR{ \frac{1}{n}}^{|E_H|},
\end{equation}
for some constant $c>0$. 

Now, the number of properly consistently cycles in $[n]$ of length $\ell$ is at most
$$
n^\ell d (d-1)^{\ell -1},
$$
indeed, $n^\ell$ bounds the possible choices of the vertex set and $d (d-1)^{\ell -1}$ the possible colors of the edges. 
Since a cycle has $\ell$ edges, we get from \eqref{eq:subgraph} that the expected number of cycles of length $\ell$ 
contained in $G^\sigma$ is at most $c d(d-1)^{\ell-1}$ as claimed.

\begin{figure}[htb]
\begin{center}  
\resizebox{12cm}{!}{
\begin{tikzpicture}[main node/.style={circle, draw , fill = lightgray, text = black, thick}]
\node[main node] at (2,0) (6) {$x_6$} ;
\node[main node] at (1,1.7320508) (5) {$x_5$} ;
\node[main node] at (-1,1.7320508) (4) {$x_4$} ;
\node[main node] at (-2,0) (3) {$x_3$} ;
\node[main node] at (1,-1.7320508) (1) {$x_1$} ;
\node[main node] at (-1,-1.7320508) (2) {$x_2$} ;

\draw[orange!80, -,ultra thick] (1) to (2) ; 
\draw[magenta, -,ultra thick] (2) to (3) ; 
\draw[cyan, -,ultra thick] (3) to (4) ; 
\draw[magenta, -,ultra thick] (4) to (5) ; 
\draw[cyan, -,ultra thick] (5) to (6) ; 
\draw[magenta, -,ultra thick] (6) to (1) ; 
\draw[orange!80, -,ultra thick] (6) to (4) ;

\node[main node] at (10,0) (60) {$x_6$} ;
\node[main node] at (9,1.7320508) (50) {$x_5$} ;
\node[main node] at (7,1.7320508) (40) {$x_4$} ;
\node[main node] at (6,0) (30) {$x_3$} ;
\node[main node] at (9,-1.7320508) (10) {$x_1$} ;
\node[main node] at (7,-1.7320508) (20) {$x_2$} ;

\node[main node] at (12,0) (70) {$x_7$} ;
\node[main node] at (13.7320508,1) (80) {$x_8$} ;
\node[main node] at (13.7320508,-1) (90) {$x_9$} ;

\draw[orange!80, -,ultra thick] (10) to (20) ; 
\draw[magenta, -,ultra thick] (20) to (30) ; 
\draw[cyan, -,ultra thick] (30) to (40) ; 
\draw[magenta, -,ultra thick] (40) to (50) ; 
\draw[cyan, -,ultra thick] (50) to (60) ; 
\draw[magenta, -,ultra thick] (60) to (10) ; 
\draw[orange!80, -,ultra thick] (60) to (70) ; 
\draw[magenta, -,ultra thick] (70) to (80) ; 
\draw[orange!80, -,ultra thick] (80) to (90) ; 
\draw[cyan, -,ultra thick] (90) to (70) ;

\end{tikzpicture}
}
\caption{The involution $i^*$ is the identity. On the left hand side, a consistently colored $H_{6,1,4}$, on the right hand 
side a consistently colored $H_{6,3,1}$.} \label{fig:2cycles}

\end{center}\end{figure}

Similarly, if $G^\sigma$ is $\ell$-tangled, then there exists a ball of radius $\ell$ which contains two cycles. Depending on 
whether these two cycles intersect or not, it follows that $G^\sigma$ contains as a subgraph, either two cycles connected 
by a line segment or a cycle and a line segment (see Figure \ref{fig:2cycles}), where the line segment can be of length $0$. 
More formally, for integers $1 \leq s \leq k$ 
and $m \geq 1$ define $H_{k,m,s}$ as the colored graph with vertex set $\{x_t: 1 \leq t \leq k+m-1\}$ of size $k+m-1$ 
and colored edges, for $1 \leq t \leq k+m-1$, $[x_t,i_t,x_{t+1}]$, with  $x_{k+m} = x_s$, and $[x_k ,i_{k+m},x_1]$, where  
all $k+m$ edges are distinct. The graph $H_{k,m,s}$ depends implicitly on the choice of the $x_t$'s and $i_t$'s. Similarly, 
for integers $k,k'\geq 1$ and $m \geq 0$, let $H'_{k,k',m}$ be the colored graph with vertex set $\{x_t: 1 \leq t \leq k+k'+m-1\}$ 
of size $k+k'+m-1$ and colored edges $[x_t,i_t,x_{t+1}]$ for $ 1 \leq t \leq k+k'+m-1$ with $x_{k+k'+m} = x_{k+m}$, and the 
edge $[x_k ,i_{k+k'+m},x_1]$. Again, the graph $H'_{k,k',m}$ depends implicitly on the choice of the $x_t$'s and $i_t$'s. 
Then, if $G^\sigma$ is $\ell$-tangled either it contains as a subgraph, for some  $x_t$'s and $i_t$'s, a consistently 
colored graph $H_{k,m,s}$ with $k , m  \leq 2\ell$ or a consistently colored graph $H_{k,k',m}$ with $k ,  k' + m \leq 2\ell$.

The number of consistently colored graphs $H_{k,m,s}$ in $[n]$ is at most
$$
n^{k+m-1} d (d-1)^{k+m-1},
$$
and the number of consistently colored graphs $H'_{k,k',m}$ in $[n]$ is at most
$$
n^{k+k'+m-1} d (d-1)^{k+k'+m-1} ,
$$

From \eqref{eq:subgraph}, we deduce that the probability that $G^\sigma$ is $\ell$-tangled is at most 
$$
\sum_{k , s , m \leq  2 \ell} n^{k+m-1} d (d-1)^{k+m-1}  c \PAR{ \frac{1}{n}}^{k+m} +\sum_{k , k' + m \leq 2 \ell}n^{k+k'+m-1} d (d-1)^{k+k'+m-1}  c \PAR{ \frac{1}{n}}^{k+k'+m}.
$$
The latter is $O \PAR{ \frac{\ell^3 (d-1)^{4\ell}}{n} }$ as claimed.
\end{proof}

\subsection{Trace method of F\"uredi and Koml\'os}

\label{subsec:FK}
\subsubsection{Norm of $\uB^{(\ell)}$}

Here, we give a sharp bound on the operator norm of the matrices $\uB^{(\ell)}$ for symmetric random permutations. 
In this subsection, we fix a collection $(a_i), i \in [d],$ of matrices such that $\max_i ( \| a_i \| \vee  \| a_i ^{-1} \|^{-1} ) \leq \veps^{-1}$ for some $\veps >0$. Then $B_\star$ is 
the corresponding non-backtracking operator in the free group. The constants may depend implicitly on $r$, $d$ and $\veps$.

\begin{proposition} \label{prop:normB} 
Let $\veps > 0$. If $1 \leq \ell \leq  \log n$, then the event
\begin{equation*}
 \| \uB^{(\ell)} \| \leq ( \log n) ^{20} ( \rho (B_\star) + \veps ) ^\ell,
\end{equation*}
holds with the probability at least $1 -  c e^{-\frac{\ell \log n}{ c \log \log n}} $ where $c >0$ depends on $r$, $d$ and $\veps$.

\end{proposition}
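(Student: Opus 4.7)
The plan is to apply the trace method of F\"uredi--Koml\'os at a power $m$ tuned so that $L := 2m\ell$ is of order $(\log n)^2 / \log \log n$. First, I write
\[ \| \uB^{(\ell)} \|^{2m} \le \Tr\PAR{ (\uB^{(\ell)} (\uB^{(\ell)})^*)^m }, \]
take expectations, and expand each factor $(\uB^{(\ell)})_{ef}$ using the path sum \eqref{eq:defD}. The expectation becomes a sum indexed by $2m$-tuples $(\gamma^{(1)},\ldots,\gamma^{(2m)})$ of tangle-free non-backtracking paths of length $\ell$, concatenated alternately head-to-tail and tail-to-head into a closed ``necklace'' of total length $L$. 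Each term contributes the product of matrix weights $\prod_s a(\gamma^{(s)})$ (with appropriate adjoints on the odd-indexed factors) and the expectation of a product of $L$ entries of the centered permutation matrices $\underline S_{i_t}$.

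Next, I would control this permutation-matrix expectation by Proposition \ref{prop:exppath}: if $e$ denotes the number of distinct colored edges visited by the necklace, $b$ the number of inconsistent edges, and $e_1$ the number of consistent edges of multiplicity one, the bound is $O(2^b n^{-e} (3L/\sqrt n)^{e_1})$. This penalises necklaces with many once-used edges, so the dominant contribution comes from necklaces in which every visited edge is used at least twice and where the matrix weights essentially pair as $a(\gamma) a(\gamma)^*$. I would then group the summation by a discrete ``type'' (numbers of vertices, edges, inconsistent edges, excess multiplicities) and enumerate necklaces of each type, crucially using the $\ell$-tangle-free hypothesis on each factor $\gamma^{(s)}$ to keep the enumeration polynomial in $L$, following the strategy of \cite{bordenaveCAT}.

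The core identification is that the ``main type'' sum, in which every edge is used exactly twice in consistently paired fashion, collapses onto a matrix-weighted walk on the covering tree of the Cayley graph and reproduces the tracial expectation of $(B_\star B_\star^*)^{m\ell}$ on the free product $X_\star$. By the Gelfand-type computation inside the proof of Lemma \ref{le:cercle}, this quantity is at most $C \rho(B_\star)^{L}$. Contributions of non-dominant types (excess edges, inconsistent edges, or free edges) acquire a compensating small factor from the $n^{-e}$ and $(L/\sqrt n)^{e_1}$ penalties, and the sum over types yields a global overhead of at most $(\log n)^{C}$. Combining,
\[ \dE \Tr\PAR{ (\uB^{(\ell)} (\uB^{(\ell)})^*)^m } \le n (\log n)^{C} (\rho(B_\star) + \veps/2)^{L}. \]
Choosing $2m = c \log n / \log \log n$ absorbs $n^{1/(2m)}$ into a polylogarithmic factor, and Markov's inequality against the target $(\log n)^{20}(\rho(B_\star) + \veps)^\ell$ converts the remaining slack into the exponential tail $c\exp(-\ell \log n / (c \log\log n))$.

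The main obstacle is the enumeration step together with the identification of the dominant combinatorial sum with a free-group quantity. Since the weights $a_i \in M_r(\dC)$ do not commute, one cannot reduce products $a(\gamma)$ to scalar spectral objects, so matching to $\rho(B_\star)$ requires carrying matrix-valued weights coherently through the entire bookkeeping, grouping them in a way compatible with the pairing of doubly-used edges, and using the matrix-valued Perron--Frobenius structure of Theorem \ref{th:PF} to interpret the resulting sum. This matrix-valued refinement is precisely where the present argument must go beyond the standard-basis case treated in \cite{bordenaveCAT}.
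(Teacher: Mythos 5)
Your overall scaffold---trace method with $m$ of order $\log n / \log\log n$, expansion of $\Tr\bigl\{(\uB^{(\ell)}(\uB^{(\ell)})^*)^m\bigr\}$ into necklaces of $2m$ tangle-free non-backtracking paths of length $\ell$, control of the permutation expectations by Proposition \ref{prop:exppath}, and enumeration of isomorphism types using tangle-freeness---matches the paper's proof up through Lemma \ref{le:enumpath} and Lemma \ref{le:isopath}. The genuine gap is your ``core identification.'' You claim the dominant sum reproduces $\tau\bigl((B_\star B_\star^*)^{m\ell}\bigr)$ and that, by the Gelfand computation in Lemma \ref{le:cercle}, this is at most $C\rho(B_\star)^{2m\ell}$; both parts fail. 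First, the necklace produced by the trace concatenates $2m$ non-backtracking paths \emph{with no non-backtracking constraint at the $2m$ joints}, so it is not a combinatorial avatar of the alternating product $(B_\star B_\star^*)^{m\ell}$. Second, and more importantly, even if it were, Lemma \ref{le:cercle} gives $\tau(|B_\star^n|^2)^{1/(2n)} \to \rho(B_\star)$, which requires $B_\star$ raised to the $n$-th power inside; the quantity $\tau\bigl((B_\star B_\star^*)^n\bigr)^{1/(2n)}$ instead converges to $\|B_\star B_\star^*\|^{1/2} = \|B_\star\|$, which in general strictly exceeds $\rho(B_\star)$ because $B_\star$ is non-normal. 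An argument yielding $\|B_\star\|^\ell$ rather than $\rho(B_\star)^\ell$ would not close the Alon--Boppana gap that the whole non-backtracking machinery is designed to close.

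What the paper actually does (Lemma \ref{le:sumpath}) is a \emph{per-edge} rather than a global factorization: fix the graph-isomorphism class, merge degree-$2$ vertices of $G_\gamma$ to form $G'_\gamma$, and observe that when summing over the remaining color labelings within the class, each merged edge $[\pi]$ of length $k$ visited with multiplicity exactly $2$ contributes a color sum $\sum_{\text{NB}} \|\prod_s a_{i_s}\|^2 = \|B_\star^k\delta_{(o,i)}\|_2^2 \leq c\,\rho(B_\star)^{2k}$, where the operator now appears in the correct form $B_\star^k$. Once-visited edges pay the $(L/\sqrt n)^{e_1}$ penalty from Proposition \ref{prop:exppath}, and excursions along already-doubled edges are absorbed by the quantity $\rho_0 = \max\|\prod_{s=1}^{k_0}a_{i_s}\|^{1/k_0} \leq \rho(B_\star)$. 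Rebuilding your proposal around this per-$[\pi]$ factorization, rather than a global identification with a free-probability trace, is what it takes to land on $\rho(B_\star)$ rather than $\|B_\star\|$.
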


The proof relies on the method of moments.
Let $m$ be a positive integer. With the convention that $f_{2m + 1} = f_1$, we get 
\begin{eqnarray}
\|\uB^{(\ell)}  \| ^{2 m} = \|\uB^{(\ell)}{\uB^{(\ell)}}^*  \| ^{m} & \leq & \tr \BRA{ \PAR{  \uB^{(\ell)}{\uB^{(\ell)}}^*}^{m}  } \nonumber\\
& = & \sum_{(f_1, \ldots, f_{2m})\in E^{2m}} \tr  \prod_{j=1}^{m}  (\uB^{(\ell)}) _{f_{2j-1} , f_{2 j}} ( {\uB^{(\ell)}}^* )_{f_{2j} , f_{2 j+1}} \nonumber \\
& =  &  \sum_{\gamma \in W_{\ell,m} }\prod_{j=1}^{2m}   \prod_{t=1}^{\ell} (\underline S_{i_{j,t}} )_{x_{j,t}  x_{j,t+1}}  \tr \prod_{j=1}^{2m} a( \gamma_j)^{\veps_j}   ,  \label{eq:trBl}
\end{eqnarray}
where $a(\gamma_j)^{\veps_j} = a(\gamma_j)$ or $a(\gamma_j)^*$ depending on the parity of $j$ and $W_{\ell,m}$ is the set of  $\gamma = ( \gamma_1, \ldots, \gamma_{2m})$ such that $\gamma_j = (\gamma_{j,1} , \ldots, \gamma_{j,\ell+1}) \in F^{\ell+1}$, $\gamma_{j,t} = ( x_{j,t}, i_{j,t})$ and for all $j = 1, \ldots, m$, 
\begin{equation}\label{eq:bound}
\gamma_{2j,1} = \gamma_{2j+1, 1} \quad \hbox{ and } \quad  \gamma_{2j-1,\ell+1} = \gamma_{2j, \ell+1},
\end{equation}

with the convention that $\gamma_{2m+1} = \gamma_{1}$, see Figure \ref{fig:fleur}. The proof of Proposition \ref{prop:normB} 
is based on an upper bound on the expectation of the right hand side of \eqref{eq:trBl}. We write 
\begin{eqnarray}
\dE \|\uB^{(\ell)}  \| ^{2 m} & \leq  &   \sum_{\gamma \in W_{\ell,m} } |w(\gamma)| \,   \tr | a ( \gamma ) |  ,  \label{eq:trBl2}
\end{eqnarray}
where we have set 
$$
w(\gamma) = \dE \prod_{j=1}^{2m}  \prod_{t=1}^{\ell} (\underline S_{i_{j,t}} )_{x_{j,t}  x_{j,t+1}} \AND a(\gamma) = \prod_{j=1}^{2m}  a ( \gamma_j)^{\veps_j}.
$$

\begin{figure}[htb]
\begin{center}  
\resizebox{10cm}{!}{
\begin{tikzpicture}[main node/.style={circle,fill , text = black, thick}]
\node  at (0,0) (1) {} ;
\node at (2,0) (2) {} ;
\node[right] at (3,1.73205) (3) {{\tiny $\gamma_{1,\ell+1} = \gamma_{2,\ell+1}$}}   ;
\node at (2,3.4641016) (4) {} ;
\node at (0,3.4641016) (5) {} ;
\node[left] at (-1,1.73205) (6) {{\tiny $\gamma_{2i-1,\ell+1} = \gamma_{2i,\ell+1}$}}  ;

\node at (1,0.8660254) (a) {} ;
\node[below] at (1.8660254,1.3660254) (b) {{\tiny \hspace{10pt}$\gamma_{1,1} = \gamma_{12,1}$}} ;
\node  at (1.8660254,2.23205) (c) {} ;
\node at (1,2.5980762) (d) {} ;
\node  at (0.1339746,2.23205) (e) {} ;
\node[below ]  at (0.1339746,1.3660254) (f) {{\tiny\hspace{-5pt} $\gamma_{2i,1} = \gamma_{2i+1,1}$}}  ;

\draw[cyan , ->,thick] (1.8660254,1.3660254)  [out = 0 , in = 180] to (3) {} ;  \node at (2.3,1.46) {{\small $\gamma_1$}} ; 
\draw[cyan ,<-,thick] (3)  [out = 180 , in = 0] to (1.8660254,2.23205) {} ;  \node at (2.3,2) {{\small $\gamma_2$}}  ; 
\draw[cyan ,->,thick] (1.8660254,2.23205)  [out = 60 , in = -120] to (4)   ; 
\draw[cyan ,-,thick] (4)  [out = -120 , in = 60] to (1,2.5980762)  ; 
\draw[cyan ,->,thick] (1,2.5980762)  [out = 120 , in = -60] to (5)   ; 
\draw[cyan ,-,thick] (5)  [out = -60 , in = 120] to (0.1339746,2.23205)   ; 
\draw[cyan ,->,thick] (0.1339746,2.23205) [out = 180 , in = 0] to (6)   ;  \node at (-0.339746,2) {{\small $\gamma_{2i-1}$}} ; 
\draw[cyan ,-,thick] (6)  [out = 0 , in = 180] to (0.1339746,1.3660254)  ; \node at (-0.339746,1.46) {{\small $\gamma_{2i}$}} ; 
\draw[cyan ,->,thick] (0.1339746,1.3660254)  [out = -120 , in = 60] to (1)   ; \node at (0,0.8) {{\small $\gamma_{2i+1}$}} ; 
\draw[cyan ,-,thick] (1)  [out = 60 , in = -120] to (1,0.8660254)   ; 
\draw[cyan ,->,thick] (1,0.8660254)  [out = -60 , in = 120] to (2) {} ; \node at (2.05,0.8)  {{\small $\gamma_{12}$}}  ; 
\draw[cyan ,-,thick] (2)  [out = 120 , in = -60] to (1.8660254,1.3660254)    ;

\end{tikzpicture}
}
\caption{A path $\gamma = (\gamma_1, \ldots , \gamma_{12})$ in $W_{\ell,6}$, each $\gamma_i$ is tangle-free.} \label{fig:fleur}

\end{center}\end{figure}

First, to deal with this large sum, we partition  $W_{\ell,m}$ in isomorphism classes. Permutations on $[n]$ and $[d]$ act 
naturally on $W_{\ell,m}$.  We consider the isomorphism class $\gamma \sim \gamma'$ if there exist $\sigma \in \cS_n$ and 
$(\tau_x)_x \in (\cS_d) ^n$ such that, with $\gamma'_{j,t}  = (x'_{j,t} , i'_{j,t})$, for all $1 \leq j \leq 2m$, $1 \leq t \leq \ell+1$,
 $x'_{j,t} = \sigma ( x_{j,t})$, $i'_{j,t}  = \tau_{x_{j,t}}( i_{j,t})$ and $(i'_{j,t}  ) ^*  = \tau_{x_{j,t+1}}( (i_{j,t}) ^* )$.  
 For each $\gamma \in W_{\ell,m}$, we define $G_\gamma$ as in Definition \ref{def1}, 
 $V_\gamma = \cup_j V_{\gamma_j} = \{ x_{j,t}:  1 \leq j \leq 2m , 1 \leq t \leq \ell +1\}$ and 
 $E_\gamma = \cup_j E_{\gamma_j} = \{ [ x_{j,t} , i_{j,t} , x_{j,t+1} ]:   1 \leq j \leq 2m , 1 \leq t \leq \ell \}$ are the sets of 
 visited vertices and visited pairs of colored edges along the path.  Importantly, $G_\gamma$ is connected.   
 We may then define a canonical element in each isomorphic class as follows. We say that a path $\gamma \in W_{\ell,m}$ 
 is {\em canonical} if $\gamma$ is minimal in its isomorphism class for the lexicographic order 
 ($x$ before $x+1$ and $(x,i)$ before $(x,i+1)$), that is $\gamma_{1,1} = (1,1)$ and $\gamma_{j,t}$ minimal over all 
 $\gamma'_{j,t}$ such that $\gamma' \sim \gamma$ and $\gamma'_{k,s} = \gamma_{k,s}$ for all $(k,s) \prec (j,t)$.  
 Our first lemma bounds the number of isomorphism classes. This lemma is  a variant of \cite[Lemma 17]{BLM} 
 and \cite[Lemma 13]{bordenaveCAT}. It relies crucially on the fact that an element $\gamma \in W_{\ell,m}$ is 
 composed of $2m$ tangle-free paths.  

\begin{lemma}\label{le:enumpath}
Let $\cW_{\ell,m} (v,e) $ be the subset of canonical paths with $|V_\gamma| = v$ and $|E_\gamma |= e$. We have 
$$
| \cW _{\ell,m} (v,e) | \leq   (2 d \ell  m )^{6 m \chi  + 10 m },
$$
with $\chi = e - v +1 \geq 0$. 
\end{lemma}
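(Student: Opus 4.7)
The plan is to bound $|\cW_{\ell,m}(v,e)|$ by the number of possible \emph{encodings} of a canonical $\gamma$, where the encoding records only the ``non-generic'' choices the walk makes. First I classify each step $(j,t)$ of $\gamma$ using the history of $\gamma_1,\ldots,\gamma_{j-1}$ together with $\gamma_{j,1},\ldots,\gamma_{j,t}$: it is a \emph{tree step} (T) if the colored edge $[x_{j,t},i_{j,t},x_{j,t+1}]$ is appearing for the first time and $x_{j,t+1}$ is a brand-new vertex of $V_\gamma$; a \emph{cycle step} (C) if the edge is new but $x_{j,t+1}$ has been seen before; and a \emph{repeat step} (R) otherwise. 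Under the canonical (minimal lexicographic) labeling, a T-step carries no additional information: the new vertex receives the next unused integer and the new color is forced. The numbers of steps of each type are constrained: tree steps account for $v-1$ edges, so the number of C-steps over the entire walk is exactly $\chi$, while the total count of R-steps is $2m\ell-e$.

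Next I account for the starting data of the $2m$ sub-paths. Each $\gamma_{j,1}\in E$ must be specified, subject to the gluing constraints \eqref{eq:bound}; this contributes at most $(d\cdot 2m(\ell+1))^{O(m)}$ possibilities. For each of the $\chi$ cycle steps, I must record the target vertex in $V_\gamma$ and the color: at most $d\cdot v \leq 2d\ell m$ choices per C-step, giving $(2d\ell m)^{\chi}$ overall. The genuine difficulty is the R-steps, which may be far more numerous than $\chi$ --- a naive accounting at cost $d$ per R-step would yield $d^{2m\ell}$, hopelessly too large.

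This is where tangle-freeness of each $\gamma_j$ enters, and it is the main obstacle. Since $G_{\gamma_j}$ has at most one cycle, a non-backtracking walk restricted to $G_{\gamma_j}$ is strongly constrained: once it leaves the tree-exploration regime by using an already-seen edge, it is forced along a deterministic trajectory until it reaches a vertex incident to a non-tree edge of $G_\gamma$, where it can branch. Using this I will argue that the R-portion of $\gamma_j$ decomposes into at most a constant number of maximal contiguous intervals, and that within each interval the walk is determined except at a bounded number of branching vertices whose total count over all of $\gamma_j$ is $O(\chi+1)$, each with at most $d$ outgoing choices. The positions of the intervals (start/end times) cost $O(\ell^2)$ per interval, i.e.\ at most $(2d\ell m)^{O(1)}$ per $\gamma_j$.

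Multiplying the contributions --- $(2d\ell m)^{O(m)}$ from starting data, $(2d\ell m)^{\chi}$ from C-steps, and $(2d\ell m)^{O(m\chi+m)}$ from the interval/branching data of the R-steps across all $2m$ sub-paths --- and absorbing constants into the exponents yields the bound $(2d\ell m)^{6m\chi+10m}$. The hardest and most delicate step is the structural analysis of R-intervals that exploits tangle-freeness to upgrade from the trivial $d^{2m\ell}$ bound to one whose exponent is linear in $\chi+1$ per path; everything else is routine bookkeeping analogous to \cite[Lemma~13]{bordenaveCAT} and \cite[Lemma~17]{BLM}, with the matrix-valued weights $a_i$ playing no role in the combinatorial count.
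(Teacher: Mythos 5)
Your high-level strategy---encode canonical paths so that generic steps carry no information and bound the number of possible encodings---is the paper's, and your T/C/R classification roughly parallels the paper's decomposition into first times, important times and forest walks. However, the treatment of the R-steps, which you correctly flag as the crux, rests on two claims that are false as stated. You assert that after an R-step the walk is ``forced along a deterministic trajectory until it reaches a vertex incident to a non-tree edge of $G_\gamma$''; this fails because a non-backtracking walk on the spanning tree $T_\gamma$ is not forced at vertices of tree-degree $\geq 3$, and such vertices need have no incident excess edge. What is true is that the forest walk is determined by its two \emph{endpoints}, and that is exactly why the paper's marks record the exit vertex $x_{j,\tau}$ at which the walk next leaves the forest; recording only start/end \emph{times} of your intervals, as you propose, does not pin down the trajectory.

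Second, the claim that the R-steps of a single $\gamma_j$ fall into a constant number of maximal intervals with only $O(\chi+1)$ branching decisions is also unsupported: $\gamma_j$ can loop $\Theta(\ell)$ times around its unique cycle, producing $\Theta(\ell)$ visits to excess edges and to branching vertices, and the number of maximal R-intervals can itself be of order $\chi$, not $O(1)$. The paper's way past this is the short-cycling/long-cycling/superfluous classification of important times: the many visits while looping around $C_j$ are \emph{superfluous} and are reconstructed from a single short-cycling mark (which records the closing time $t_1$, the exit time $t_2$, and the next departure from the forest), while the remaining \emph{long-cycling} times are bounded by $\chi-1$ per path because an excess edge of $E_\gamma$ lying outside $C_j$ cannot be traversed twice by the tangle-free $\gamma_j$. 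This explicit count---not a forced-trajectory argument---is what produces the exponent $6m\chi + 10m$, and it is the missing ingredient in your sketch.
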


\begin{proof}
We bound $| \cW_{\ell,m} ( v, e) | $ by constructing an encoding of the canonical paths (that is, an injective map from 
$\cW_{\ell,m}(v,e)$ to a  set whose cardinality is easily upper bounded).  For $i \leq i \leq 2m$ and $1 \leq t \leq \ell$, 
let $e_{j,t} = ( x_{j,t},i_{j,t}, x_{j,t+1} )$ and $[e_{j,t}]=   [ x_{j,t},i_{j,t}, x_{j,t+1} ] \in E_\gamma$ the corresponding colored edge. 
We explore the sequence $(e_{i,t})$ in lexicographic order denoted by $\preceq$ (that is $(j,t)\preceq (j+1,t')$ and 
$(j,t)\preceq(j,t+1)$). We think of the index $(j,t)$ as a time. 
We define $(j, t)^-$ as the largest time smaller than $(j, t)$, i.e. $(j, t)^- = (j, t - 1)$ if 
$t\geq 2$, $ (j, 1)^- = (j- 1, \ell) $ if $j \geq 2$ and, by convention, $(1, 1)^- = (1, 0).$

We denote by $G_{(j,t)}$ the graph spanned by the edges $ \{ [e_{j',t'} ]: (j',t')\preceq (j,t)\}$. The graphs $G_{(j,t)}$ are 
non-decreasing over time and by definition $G_{(2m,\ell)} = G_\gamma$. We may define a growing spanning forest 
$T_{(j,t)}$ of $G_{(j,t)}$ as follows: $T_{(1,0)}$ has no edge and a single vertex, $1$. Then, $T_{(j,t)}$ is obtained 
from $T_{(j,t)^-}$ by adding the edge $[e_{j,t}]$ if its addition does not create a cycle in $T_{(j,t)^-}$. 
We then say that $[e_{j,t}]$ is a {\em tree edge}. By construction $T_{(j,t)}$ is spanning forest of $G_{(j,t)}$ 
and $T_\gamma = T_{(2m,\ell)}$ is a spanning tree of $G_\gamma$. An edge $[e_{j,t}]$ in 
$G_\gamma \backslash T_\gamma$ is called an {\em excess edge}. Since  $T_\gamma$ has $v-1$ edges, we have
\begin{equation}\label{eq:defchi}
\chi = \ABS{ \BRA{ f \in E_\gamma: \hbox{ $f$ is an excess edge}} } = e - v +1 \geq 0.
\end{equation}

\begin{figure}[htb]
\begin{center}  
\resizebox{14cm}{!}{
\begin{tikzpicture}[main node/.style={circle, draw , fill = lightgray, text = black, thick}]
\node[main node]  at (0,0) (1) {1} ;
\node[main node] at (2,0) (2) {2} ;
\node[main node] at (4,0) (3) {3} ;
\node[main node] at (3,1.7320508 ) (4) {4} ;
\node[main node] at (6,0) (5) {5} ;

\draw[cyan, -,ultra thick] (1) to (2) ; 
\draw[ magenta, -,ultra thick] (2) to (3) ; 
\draw[ cyan, -,ultra thick] (3) to (4) ;  
  \draw[orange!80,  -,ultra thick] (4) to (2) ;  
 \draw[cyan,  -, ultra thick] (3) to (5) ;

  \node[text = black!80] at (1,0) {1} ; 
    \node[text = black!80] at (3,0) {2,5,8,11} ; 
    \node[text = black!80] at (5,0) {12} ;   
       \node[text = black!80] at (3.6,0.8660254) {3,6,9} ; 
    \node[text = black!80] at (2.4,0.8660254) {4,7,10} ;

  \node[main node]  at (10,0) (10) {1} ;
\node[main node] at (12,0) (20) {2} ;
\node[main node] at (14,0) (30) {3} ;
\node[main node] at (13,1.7320508 ) (40) {4} ;
\node[main node] at (16,0) (50) {5} ;

\draw[cyan, -,ultra thick] (10) to (20) ; 
\draw[magenta, -,ultra thick] (20) to (30) ; 
\draw[cyan, -,ultra thick] (30) to (40) ;  
 \draw[cyan,  -, ultra  thick] (30) to (50) ;

\end{tikzpicture}
}
{\small $$\gamma_1 =  (1,1)     (2,2)     (3,1)    (4,3)    (2,2) (3,1)    (4,3)   (2,2)   (3,1)   (4,3)   (2,2) (3,1)     (5,2) $$} 

\vspace{-25pt}

\caption{A canonical path $\gamma_1 \in F^{13}$ and its associated spanning tree, the involution $i^*$ is the identity. 
The times $(1,t)$ with $t \in \{ 1,2,3,12\}$ are first times and  $ t= \{ 4,7, 10\} $ are important times, $(1,4)$ is the short 
cycling time, $(1,7), (1,10)$ are superfluous.  With the notation below, 
$t_1 = 4$, $t_0= 2$, $t_2 = 12$, $\tau = 13$.} \label{fig:impo}

\end{center}\end{figure}

Now, from \eqref{eq:bound}, for each $j$, there is a smallest time $(j,\sigma)$, which we call {\em the merging time}, 
such that $G_{j,\sigma}$ will be connected. By convention, if $x_{j,1} \in G_{(j,1)^-}$, we set $\sigma  =0$ 
(for example from \eqref{eq:bound} if $j$ is odd, $\sigma = 0$).  We say that $(j,t)$ is a {\em first time}, 
if it is not a merging time and if $[e_{j,t}]$ is a tree edge which has not been seen before (that is $e_{j,t} \ne e_{k, s}$ for 
all $(k,s) \preceq (j,t)$). We say that $(j,t)$ is an {\em important time} if $[e_{j,t}]$ is an excess edge (see Figure \ref{fig:impo}). 

By construction, since the path $\gamma_j$ is non-backtracking,  it can be decomposed by the successive repetition 
of (i) a sequence of first times (possibly empty), (ii) an important time or a merging time and (iii) a path on the forest defined 
so far (possibly empty). Note also that, if $t \geq 2$ and $(j,t)$ is a first time, then $i_t = p$ and $x_{j,t+1} = m +1$ 
where $m$ is the number of previous first times (including $(j,t)$) and $p$ is minimal over all $i\geq 1$, such that 
$i\ne i^*_{t-1}$. Indeed, since $\gamma$ is canonical, every time that a new vertex in $V_\gamma$ is visited its number 
has to be minimal, and similarly for the number of the color of a new edge. It follows that if $(j,t), \ldots, (j,t+s)$ are first 
times and $x_{j,t}$ is known then the values of $e_{j,t},\ldots, e_{j,t+s}$ can be unambiguously computed.

We can now build a first encoding of $\cW_{\ell,m}$. If $(j,t)$ is an  important time, we mark the time $(j,t)$  by the 
vector $(i_{j,t},x_{j,t+1},x_{j,\tau})$, where $(j,\tau)$ is the next time that $e_{j,\tau}$ will not be a tree edge of the forest 
$T_{j,t}$ constructed so far (by convention, if the path $\gamma_j$ remains on the tree, we set $\tau = \ell+1$). 
For $t=1$, we also add the {\em starting mark} $(x_{j,1},\sigma, x_{j,\tau})$ where $\sigma$ is the merging time and 
$(j,\tau) \geq (j,\sigma)$ is as above the next time that $[e_{j,\tau}]$ will not be a tree edge of the forest constructed so far.  
Since there is a unique non-backtracking path between two vertices of a tree, we can reconstruct 
$\gamma \in \cW_{\ell,m}$ from the starting marks and the position of the important times and their marks. 
It gives rise to a first encoding.

In this encoding, the number of important times could be large  (see Figure \ref{fig:impo}). We will now use  the assumption 
that each path $\gamma_j$ is tangle-free to partition important times into three categories, {\em short cycling}, 
{\em long cycling} and {\em superfluous} times. For each $j \in [2m]$, we consider the first occurrence of a time $(j,t_1)$ 
such that $x_{j,t_1+1} \in \{ x_{j,1},  \ldots, x_{j,t_1} \}$. If such $t_1$ exists, the last important time $(j,t_s) \preceq (j,t_1)$ 
will be called the short cycling time.  Let $1 \leq t_0 \leq t_1$ be such that $x_{j,t_0} = x_{j,t_1+1}$. 
By assumption, $C_j = (e_{j,t_0},\ldots, e_{j,t_1-1})$ will be the unique cycle visited by $\gamma_j$.  
We denote by $(j,t_2)$ the next $t_2 \geq  t_1$ such that $e_{j,t_2}$ in not in $C_j$ (by convention $t_2 = \ell+1$ if 
$\gamma_j$ remains in $C_j$). 
We modify the mark of the short cycling time $(j,t_s)$ as  $(i_{j,t_s},x_{j,t_s+1}, x_{j,t_1}, t_2 ,x_{j, \tau})$, where 
$(j,\tau) \succeq (j, t_2)$, is the next time that $[e_{j,\tau}]$ will not be a tree edge of the forest constructed so far. 
Important times $(j,t)$ with $1 \leq t < t_s$ or $\tau \leq t \leq \ell$ are called long cycling times, they receive the usual mark 
$(i_{j,t},x_{j,t+1},x_{j,\tau})$. The other important times are called superfluous. By convention, if there is no short cycling time, 
we call anyway, the last important time, the short cycling time. We observe that for each $j$, the number of long cycling times 
on $\gamma_j$ is bounded by  $\chi-1$ (since there is at most one cycle, no edge of $E_\gamma$ can be seen twice 
outside those of $C_j$, the $-1$ coming from the fact the short cycling time is an excess edge). 

We now have our second encoding. We can reconstruct $\gamma$ from the starting marks, the positions of the long cycling 
and the short cycling times and their marks. For each $j$, there are at most $1$ short cycling time and $ \chi-1$ long cycling 
times. There are at most $  (\ell +1)^{2m \chi}$ ways to position them. There are at most $d v^2$ different possible marks for a
long cycling time and $d v^3 (\ell+1)$ possible marks for a short cycling time. Finally, there are $v^2(\ell+1) $ possibilities for a 
starting mark. We deduce that    
$$
| \cW _{\ell,m} (v,e) | \leq    (\ell +1)^{2 m \chi} (v^2 (\ell+1) )^{2m}   (d v^2 ) ^{2m (\chi-1)}(d v^3 (\ell+1)) ^{2m}. 
$$
Using $v \leq 2 \ell m+1$ and $\ell +1 \leq 2 \ell$, we obtain the claimed bound. \end{proof}

Our second lemma bounds the sum of $a (\gamma)$ in an equivalence class.

\begin{lemma}\label{le:sumpath} Let $\rho = \rho( B_\star) + \veps$ and $k_0$ be a positive integer. 
Then, there exists a constant 
$c >0$ depending on $r$, $d$ and $\veps$ such that for any $\gamma\in \cW_{\ell,m}(v,e)$, 
\begin{equation}\label{eq:sumpath}
\sum_{\gamma':  \gamma' \sim \gamma} \tr |a (\gamma' ) | \leq  c^{m + \chi + e_1} n^v  \rho_0 ^{2(\ell m -v )}  \rho^{2v},
\end{equation}
where $\chi = e - v +1$, $e_1$ is the number of edges of $E_\gamma$ with multiplicity one and 
$$
\rho_0 = \max \NRM{ \prod_{s=1}^{k_0} a_{i_s} }^{\frac 1 {k_0}},
$$ 
and the maximum is over all non-backtracking sequences $(i_1, \cdots, i_{k_0})$, that is $i_{s+1} \ne i_{s ^*}$. Moreover, for all $k_0$ large enough, we have $\rho_0 \leq \rho$. \end{lemma}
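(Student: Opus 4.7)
The plan is to split the sum $\sum_{\gamma' \sim \gamma}\tr|a(\gamma')|$ into a combinatorial vertex-labeling factor and an analytic weight factor, then to identify where $\rho_0$ and $\rho$ separately arise in the latter through the operator-theoretic content of Lemma \ref{le:contrho}.

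First I would parametrize the equivalence class: every $\gamma' \sim \gamma$ is determined by a pair $(\sigma, (\tau_x))$, where $\sigma$ is an injection $V_\gamma \hookrightarrow [n]$ and $(\tau_x)_{x \in V_\gamma}$ is a family of color permutations in $\cS_d$ compatible with the $*$-involution on the edges of $G_\gamma$. Since $a(\gamma')$ depends only on the colors of $\gamma'$ and not on the vertex labels, the $\sigma$-summation contributes exactly $n(n-1)\cdots(n-v+1) \le n^v$, and the problem reduces to bounding the inner sum $\sum_{(\tau_x)} \tr|a(\gamma'_\tau)|$, which may be interpreted as a sum over consistent edge-colorings of $G_\gamma$.

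The second step is the analytic heart. I would fix a spanning tree $T_\gamma$ of $G_\gamma$ (say the one constructed in the encoding of Lemma \ref{le:enumpath}), with $v-1$ tree edges and $\chi = e-v+1$ excess edges, and decompose each tangle-free non-backtracking subpath $\gamma_j$ into maximal \emph{tree-stretches} (walks along $T_\gamma$ alone) separated by vertex-revisits, excess-edge traversals and short-cycle closings. On each tree-stretch of length $k \ge k_0$ the product $\prod_s a_{i_s}$ has operator norm at most $\rho_0^k$ by the definition of $\rho_0$; the total length of such stretches is approximately $2(m\ell - v)$, producing the factor $\rho_0^{2(m\ell - v)}$. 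The complementary $2v$ ``special'' steps, one pair per visited vertex, generate after summation over the remaining $(\tau_x)$-freedom matrix sums of the form $x \mapsto \sum_{j \ne i^*}a_j \, x \, a_j^*$; these sums are precisely iterates of the completely positive map $L$ from the proof of Lemma \ref{le:contrho}, which is of non-negative type and whose spectral radius equals $\rho(B_\star)^2$. Theorem \ref{th:PF}(ii) then gives the exponent $\rho(B_\star)^{2v} \le \rho^{2v}$. The constant $c^{m+\chi+e_1}$ absorbs (i) the number of decomposition pieces, which is proportional to $m+\chi$ per path, and (ii) the edges of multiplicity one, whose isolated factor $a_i$ cannot be paired with an $a_i^*$ and hence escapes the quadratic $L$-structure; each such edge must be bounded individually by $\|a_i\|\le \veps^{-1}$.

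Finally, for the claim $\rho_0 \le \rho$ when $k_0$ is large enough: for any unit $f \in \dC^r$ and any non-backtracking word $(i_1,\ldots,i_{k_0})$, the vector $B_\star^{k_0}(f \otimes \delta_{(o, i_0)})$ has $a_{i_1}\cdots a_{i_{k_0}} f$ among its coordinates, so $\|a_{i_1}\cdots a_{i_{k_0}}\|\le \|B_\star^{k_0}\|$, hence $\rho_0 \le \|B_\star^{k_0}\|^{1/k_0}$; by Gelfand's formula the right-hand side converges to $\rho(B_\star)$, so $\rho_0 < \rho(B_\star) + \veps = \rho$ once $k_0$ is large enough. The main obstacle is the bookkeeping in the second paragraph: identifying exactly the $2v$ ``special'' steps attached to vertex visits and ensuring that, after summation over $(\tau_x)$, they contribute $v$ applications of $L$ rather than additional powers of $\rho_0$, while separately extracting the $c^{e_1}$ factor from edges of multiplicity one whose weight does not fit into the quadratic $L$-framework.
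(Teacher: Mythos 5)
Your proposal correctly identifies the two analytic ingredients the paper uses --- the completely positive map $L$ from the proof of Lemma~\ref{le:contrho} with $\rho(L)=\rho(B_\star)^2$, and the crude submultiplicative bound $\|\prod_s a_{i_s}\|\le c\,\rho_0^k$ for non-backtracking color strings --- and your Gelfand-formula argument for $\rho_0\le\rho$ is valid, though the paper instead extracts the maximizing term from $\sum\|\prod a_{i_s}\|^2\le\rho^{2k_0}$ in \eqref{eq:Bkstar}, which is a bit more direct. But the decomposition you sketch in your second paragraph, which you yourself flag as the ``main obstacle,'' is where the proof actually lives, and the specific bookkeeping you propose does not hold up.

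Tying the $\rho$-factor to ``$2v$ special steps, one pair per vertex'' and the $\rho_0$-factor to ``tree-stretches of total length $\approx 2(m\ell-v)$'' conflates vertex counting with edge counting. The number of applications of $L$ is governed by the \emph{edges} of the contracted graph $G'_\gamma$ (obtained from $G_\gamma$ by merging degree-$2$ vertices), not by the vertices. Concretely: each maximal degree-$2$ arm $[\pi]$ of $G'_\gamma$, when traversed for the first or second time, contributes a factor bounded by $\sum_{\text{nb colorings}}\|\prod a_{i_s}\|^2\le c\,\rho^{2k}$ after summing over the $(\tau_x)$-freedom; summing arm lengths gives exponent $2e$, not $2v$. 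The $\rho_0$-factor applies only to traversals of arms already visited twice before, whose combined length is $\sum_t(t-2)_+e_t=2(\ell m-e)+e_1$, not $2(\ell m-v)$. The exponents $\rho^{2v}\rho_0^{2(\ell m-v)}$ of the statement are then recovered from $\rho^{2e}\rho_0^{2(\ell m-e)}$ via the Euler identity $e=v+\chi-1$ together with $(\rho/\rho_0)^{2(\chi-1)}\le c^\chi$ (which needs $\rho_0\ge\veps$, itself a consequence of the assumption $\|a_i^{-1}\|\le\veps^{-1}$). A spanning-tree decomposition cannot substitute for this: a tree-stretch may be a first visit of the tree edges (in which case the sum over colors should yield $\rho$, not $\rho_0$) or a third-or-later repeat (in which case $\rho_0$ is correct), and without distinguishing these by edge multiplicity on $G'_\gamma$ there is no mechanism by which the $(\tau_x)$-sum produces exactly $v$ iterates of $L$. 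The needed step is the merged-graph construction, the $t\wedge 2$ versus $(t-2)_+$ multiplicity split, and the Euler-characteristic conversion at the end.
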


\begin{proof}
We start by proving \eqref{eq:sumpath}. The proof relies on a decomposition of $G_\gamma$ where the path is split into 
sub-paths on the free group. Let $v_k$ (respectively $v_{\geq k}$) be the set of vertices of $G_\gamma$ of degree 
$k$ (respectively $\geq k$). We have 
$$
v_1 + v_2 + v_{\geq 3} = v \AND v_1 + 2 v_2 + 3 v_{\geq 3} \leq \sum_k k v_k = 2e. 
$$
Subtracting from the right hand side twice the left hand side, we deduce that 
\begin{equation*}\label{eq:boundv3}
v_{\geq 3} \leq 2( e - v) + v_1 \leq 2 \chi  + 2m - 2.
\end{equation*}
Indeed, at the last step the bound $v_1 \leq 2m$ follows from the observation that since each $\gamma_j$ is non-backtracking,
 only a vertex $x \in V_\gamma$ such that $x = x_{j,1}$ or $x = x_{j,\ell+1}$ for some $ 1 \leq j \leq 2m$ can be of degree $1$. 

Now, consider the set $V'_\gamma \subset V_\gamma$ formed by vertices of degree at least $3$ and vertices 
$x \in V_\gamma$ such that $x = x_{i,1}$ or $x = x_{i,\ell+1}$ for some $ 1 \leq i \leq 2m$. From what precedes, 
$$v' = |V'_\gamma | \leq 2 \chi + 4 m - 2.$$ 

We now build the graph $G'_\gamma$ on $V'_\gamma$ obtained from $G_\gamma$ by merging degree $2$ vertices  along 
edges. More formally, let $P_\gamma$ be the set of non-backtracking sequences 
$\pi = (y_1,i_1, \ldots, y_{k}, i_{k}, y_{k+1})$ with $[y_{s},i_s,y_{s+1}] \in E_\gamma$ for 
$1 \leq s \leq k$ and $y_1 , y_{k+1} \in V'_\gamma$, $y_2, \ldots, y_{k} \in V_\gamma \backslash V'_\gamma$. 
We set $\pi^* = (y_{k+1}, i_{k}^* , y_{k}, \ldots, y_1) \in P_\gamma$. Since all vertices not in $V'_\gamma$ have degree 
$2$, two distinct paths $\pi,\pi' \in P_\gamma$ are either disjoint (except the endpoints) or $\pi^* = \pi'$.  
As in Definition \ref{def0}, we define a (generalized) colored edge as an equivalence class $[\pi]$ of   $\pi$ in 
$P_\gamma$ endowed with the equivalence $\pi \sim  \pi' $ if $ \pi' \in \{  \pi ,  \pi^* \}$. 
Then $G'_\gamma = (V'_\gamma, E'_\gamma)$ is the colored graph with edge set $E'_\gamma$, the set of $[\pi]$ with 
$\pi =  (y_1,i_1, \ldots, y_{k}, i_{k}, y_{k+1}) \in P_\gamma$, $[\pi]$ being an edge between $y_1$ and $y_{k+1}$, see 
Figure \ref{fig:impo2}.  Let $e' = |E'_\gamma|$. We find easily that this operation of merging degree $2$ vertices preserves 
the Euler characteristic (if $\pi = (y_1,i_1, \cdots, y_{k}, i_k, y_{k+1})$ is in $P_\gamma$, it replaces $k$ edges and $k-1$ 
vertices of $G_\gamma$ by a single edge in $G'_\gamma$), that is
$$
e' - v' + 1 = e - v +1 = \chi.
$$
It follows that 
\begin{equation}\label{eq:eprime}
e' \leq 3 \chi  + 4m -3.
\end{equation}

\begin{figure}[htb]
\begin{center}  
\resizebox{14cm}{!}{
\begin{tikzpicture}[main node/.style={circle, draw , fill = lightgray, text = black, thick}]
\node[main node]  at (0,0) (1) {1} ;
\node[main node] at (2,0) (2) {2} ;
\node[main node] at (4,0) (3) {3} ;
\node[main node] at (3,1.7320508 ) (4) {4} ;
\node[main node] at (6,0) (5) {5} ;

\draw[cyan, -,ultra thick] (1) to (2) ; 
\draw[ magenta, -,ultra thick] (2) to (3) ; 
\draw[ cyan, -,ultra thick] (3) to (4) ;  
  \draw[orange!80,  -,ultra thick] (4) to (2) ;  
 \draw[cyan,  -, ultra thick] (3) to (5) ;
  \draw[magenta,-,ultra thick] [out = 30 , in = -30] (3) to (4.5,0.8660254 ) ; \draw[magenta,ultra thick] [out = 150 , in = 90]  (4.5,0.8660254 )  to (3) ; 
  \draw[magenta,-,ultra thick] [out = 150 , in = 210] (2) to (1.5,0.8660254 ) ; \draw[magenta,ultra thick] [out = 30 , in = 90]  (1.5,0.8660254 )  to (2) ;

  \node[main node]  at (10,0) (10) {1} ;
\node[main node] at (12,0) (20) {2} ;
\node[main node] at (14,0) (30) {3} ;
\node[circle,draw] at (13,1.7320508 ) (40) {4} ;
\node[main node] at (16,0) (50) {5} ;

\draw[cyan, -,ultra thick] (10) to (20) ; 
\draw[magenta, -,ultra thick] (20) to (30) ; 
  \draw[orange!80,  -,ultra thick] (40) to (20) ;
\draw[cyan, -,ultra thick] (30) to (40) ;  

 \draw[cyan,  -, ultra  thick] (30) to (50) ;

 \draw[magenta,-,ultra thick] [out = 30 , in = -30] (30) to (14.5,0.8660254 ) ; \draw[magenta,ultra thick] [out = 150 , in = 90]  (14.5,0.8660254 )  to (30) ; 
  \draw[magenta,-,ultra thick] [out = 150 , in = 210] (20) to (11.5,0.8660254 ) ; \draw[magenta,ultra thick] [out = 30 , in = 90]  (11.5,0.8660254 )  to (20) ;

\end{tikzpicture}
}
{\small$$ \gamma_1 =  (1,1)     (2,2)     (3,1)    (4,3)    (2,2)     (3,1)     (5,2) $$
$$
\gamma_2 = (1,1)      (2,2)   (2,3)   (4,1)    (3,2)     (3,1)     (5,2) 
$$} 
\vspace{-25pt}
\caption{A canonical path $\gamma = (\gamma_1,\gamma_2) \in \cW_{6,1}$ and its associated graphs $G_\gamma$ and $G'_\gamma$, the involution $i^*$ is the identity. We have $V_\gamma \backslash V'_\gamma = \{ 4\}$ and $[\pi]$ is an edge of $G'_\gamma$ with $\pi = (2,3,4,1,3)$. This edge has multiplicity $2$. The edge $[(1,1,2)]$ has multiplicity $2$.} \label{fig:impo2}

\end{center}\end{figure}

Now, we recall  the multiplicity introduced above Proposition \ref{prop:exppath}. If $[x,i,y] \in E_\gamma$, the {\em multiplicity} 
of $[x,i,y]$, denoted by $m_{[x,i,y]}$, is the number of times that $[\gamma_{j,s}, i_{j,s}, \gamma_{j,s+1} ] = [x,i,y]$.  Since 
$\gamma$ is non-backtracking along each edge of $E'_\gamma$, we observe that if $[\pi] \in E'_\gamma$ with 
$\pi  =  (y_1,i_1, \ldots, y_{k}, i_{k}, y_{k+1})$ then all edges $[y_s,i_s,y_{s+1}]$ have the same multiplicity. 
We may thus unambiguously define the  multiplicity $m_{[\pi]}$ of an edge $[\pi]  \in E'_\gamma$,  see Figure \ref{fig:impo2}.  Let $e_t$  be the number of edges of multiplicity equal to $t$. We have
\begin{equation}\label{eq:propet}
\sum_t e_t = e \AND  \sum_t t  e_t = 2\ell m. 
\end{equation}
We find
\begin{equation} \label{eq:bounde3}
\sum_{t} ( t -2 )_+ e_t = \sum_{t} ( t -2 ) e_t  + e_1 =  2(\ell m - e )+e_1.
\end{equation}

Since the path $\gamma_j$ is non-backtracking, we may decompose it into successive visits of the edges of $E'_\gamma$. 
More precisely, we decompose $\gamma_j $ as $\gamma_j = (p_{j,1}, p_{j,2}, \ldots , p_{j,k_j})$ where either (i) $p_{j,t}$ 
follows an edge of $E'_\gamma$ which is visited for the first or second time, or (ii) $p_{j,t}$ follows a sequence  of edges of 
$E'_\gamma$ which have been visited previously at least two times. By construction, in the decomposition of the whole path 
$\gamma$, there are at most $2e'$ subpaths $p_{j,t}$ of type $(i)$ and thus $2 e' + 4 m$ subpaths of type $(ii)$. 
We may then write 
\begin{eqnarray}\label{eq:agamma}
\| a (\gamma) \|   \leq   \prod_{j=1}^{2m} \NRM{ a ( \gamma_j ) }  \leq  \delta   \prod_{ p_{j,t} }   \NRM{ \prod_{s= 1}^{k} a_{i_s}} = \delta  \prod_{ p_{j,t}   \,  {\scriptsize \hbox{type $(i)$}}}   \NRM{ \prod_{s= 1}^{k} a_{i_s}}  \prod_{ p_{j,t}   \,  {\scriptsize \hbox{type $(ii)$}}}   \NRM{ \prod_{s= 1}^{k} a_{i_s}},
\end{eqnarray}
where in the above product $p_{j,t} = (y_1, i_1, y_2, i_2, \ldots , y_k,  i_{k})$ and 
$$
\delta =  \prod_{j=1}^{2m} \frac{\|  a_{i_{j,\ell+1}} \| }{ \| a_{i_{j,1}}\|}  \leq  \veps^{-4m}, 
$$
accounts for the boundary effects.  To estimate \eqref{eq:agamma}, we shall use the two rough bounds 
\begin{eqnarray}\label{eq:defe12}
 \NRM{ \prod_{s= 1}^{k} a_{i_s}} \leq  \NRM{ \prod_{s= 1}^{k} a_{i_s}}^2   \NRM{\PAR{ \prod_{s= 1}^{k} a_{i_s}}^{-1} } \leq \veps ^{-k}  \NRM{\prod_{s= 1}^{k} a_{i_s} }^2.
 \end{eqnarray}
We notice also, since $ \max_i \| a_i \|    \leq \veps^{-2} \rho _0$, 
\begin{eqnarray}\label{eq:defe13}
\NRM{ \prod_{s= 1}^{k} a_{i_s}} &\le & \PAR{ \prod_{s=1}^{\lfloor k / k_0 \rfloor} \| a_{i_{k_0 s-k_0 + 1}}\cdots  a_{i_{k_0 s}} \|} \| a_{i_{ k_0 \lfloor k / k_0 \rfloor+1} } \cdots a_{i_k}  \|  \\
& \leq & \rho_0 ^{\lfloor k / k_0 \rfloor} \max_i \| a_i\|^{ k - k_0 \lfloor k / k_0 \rfloor} \leq \veps^{-2k_0} \rho_0^{k},
\end{eqnarray}
(which  uses the non-backtracking condition $i_{s+1} \ne i^*_{s}$). 
Now, in \eqref{eq:agamma}, we decompose the product over $p_{j,t}$ of type $(i)$ and of type $(ii)$. {\em We first assume for simplicity that the symmetry condition \eqref{eq:sym} holds}. Then due to \eqref{eq:sym}, we note that the norm of product of $a_i$'s along an edge $[\pi ]$ does not  depend on whether we take the product along $\pi $ or $\pi^*$:
\begin{equation*}\label{eq:symconddede}
  \NRM{ \prod_{s= 1}^{k} a_{i_s}}  = \NRM{ \PAR{\prod_{s= 1}^{k} a_{i_s}}^* } =   \NRM{ \prod_{s= 1}^{k} a_{i_{k-s+1}^*}}. 
\end{equation*}
   Using  this and \eqref{eq:defe12} when $m_{[\pi]}= 1$, we arrive at 
\begin{equation}\label{eq:pjti}
 \prod_{ p_{j,t}   \,  {\scriptsize \hbox{type $(i)$}}}   \NRM{ \prod_{s= 1}^{k} a_{i_s}}  =   \prod_{[\pi ]  \in E'_\gamma}   \NRM{ \prod_{s= 1}^{k} a_{i_s}} ^{m_{[\pi]}  \wedge 2} \hspace{-20pt} \leq  \prod_{[\pi ]  \in E'_\gamma} ( \veps^{-k} )^ {\IND_{(m_{[\pi]}  = 1)} } \NRM{ \prod_{s= 1}^{k} a_{i_s}} ^{2}   =  \veps ^{-e_1}  \prod_{[\pi ]  \in E'_\gamma} \NRM{\prod_{s= 1}^{k} a_{i_s} }^2,
\end{equation}
where in the above product $\pi =  (y_1,i_1, \ldots, y_{k}, i_{k}, y_{k+1})$.  Similarly, for each $p_{j,t}$ of type $(ii)$, we use 
\eqref{eq:defe13}. Since there are at most $2e' + 4m$ subpaths  $p_{j,t}$ of type  $(ii)$ and since the sum of length of 
$p_{j,t}$ is equal to $\sum_{t} ( t -2 )_+ e_t =2(\ell m - e )+e_1$ by \eqref{eq:bounde3}, from \eqref{eq:defe13}, we find
$$
 \prod_{ p_{j,t}   \,  {\scriptsize \hbox{type $(ii)$}}}   \NRM{ \prod_{s= 1}^{k} a_{i_s}}  \leq    \prod_{ p_{j,t}   \,  {\scriptsize \hbox{type $(ii)$}}}   \veps^{-2k_0} \rho_0^{k}  \leq \veps^{ -2k_0 ( 2e' + 4m)} \rho_0^{2(\ell m - e )+e_1}.
$$
We finally plug the last two upper bounds into \eqref{eq:agamma}. Using \eqref{eq:eprime}, for some $c >0$, we arrive at  
\begin{equation*}\label{eq:boundag}
\| a (\gamma) \|   \leq  c ^{m+\chi + e_1 } \rho_0^{2(\ell m - e)}  \prod_{[\pi ]  \in E'_\gamma} \NRM{\prod_{s= 1}^{k} a_{i_s} }^2.
\end{equation*}
Thus, summing over all $\gamma' \sim \gamma$, we obtain
\begin{equation}\label{eq:boundag2} 
\sum_{\gamma':  \gamma' \sim \gamma} \|a (\gamma' ) \| \leq  c ^{m+ \chi + e_1} \rho_0^{2 (\ell m - e)} n^v \prod_{[\pi ]  \in E'_\gamma} \PAR{ \sum \NRM{\prod_{s= 1}^{k} a_{i_s} }^2},
\end{equation}
where, for $\pi  =  (y_1,i_1, \ldots, y_{k}, i_{k}, y_{k+1})$, the sum is over all non-backtracking sequence $(i_1, \ldots, i_{k})$.

Finally, in this last expression,  $  \sum \NRM{\prod_{s= 1}^{k} a_{i_s} }^2  $ can be bounded in terms of the spectral radius 
of the non-backtracking on the free group. Let $B_\star$ be the non-backtracking operator on the free group associated to 
$A_\star$ defined in \eqref{eq:defAfree}.  There exists $c>0$ such that for any $(g,i)\in X \times [d]$, for any integer $k \geq 0$, 
\begin{equation*}
\| B^k_\star \delta_{(o,i)} \|^2_2 = \sum \NRM{\prod_{s= 2}^{k+1} a_{i_s} }^2 \leq \max (  c  , \rho^{2k} ) ,
\end{equation*}
where the sum is over all non-backtracking sequence $(i_1, \ldots, i_{k+1})$ such that $i_1 = i$. Moreover from Lemma  
\ref{le:contrho}, the same constant $c$ may be taken for all $B_\star$ with weights such that $\max_i (\|a_i\|)  \leq \veps^{-1}$. 
Also, at the cost of changing the constant $c$ and taking $k_0$ large enough, we find in  \eqref{eq:boundag2}
\begin{equation}\label{eq:Bkstar}
 \sum \NRM{\prod_{s= 1}^{k} a_{i_s} }^2  \leq c \rho^{2k}  \AND  \sum \NRM{\prod_{s= 1}^{k_0} a_{i_s} }^2  \leq  \rho^{2k_0}.
\end{equation}
Since the sum of the length of all $[\pi] \in E'_\gamma$ is $e$, we get from \eqref{eq:boundag2},
$$
\sum_{\gamma':  \gamma' \sim \gamma} \|a (\gamma' ) \| \leq c ^{m+ \chi + e_1} \rho_0^{2(\ell m - e)} n^v  c^{e'} \rho^{2e}. 
$$
It remains to use again \eqref{eq:eprime}, $ e = v  + \chi - 1$ and adjust the constant $c$. Since 
$\tr |a(\gamma) | \leq d \| a(\gamma) \|$, we obtain \eqref{eq:sumpath}. 

This concludes the proof of \eqref{eq:sumpath} under the extra symmetry condition \eqref{eq:sym}. When this assumption does not hold, the bound \eqref{eq:pjti} is not accurate for the edges $[\pi] \in E'_\gamma$ with $m(\pi) \geq 2$ which are visited in both directions. In which case, in \eqref{eq:pjti}, the factor $\NRM{\prod_{s= 1}^{k} a_{i_s} }^2$ is replaced by 
$$
\NRM{\prod_{s= 1}^{k} a_{i_s} } \NRM{\prod_{s= 1}^{k} a_{i_{k-s+1}} }. 
$$
For those edges $[\pi]$ when we sum over all non-backtracking sequences $(i_1,\ldots,i_k)$ in \eqref{eq:boundag2}, from Cauchy-Schwarz inequality we get:
$$
\sum \NRM{\prod_{s= 1}^{k} a_{i_s} }\NRM{\prod_{s= 1}^{k} a_{i_{k-s+1}} } \leq \sqrt{ \sum \NRM{\prod_{s= 1}^{k} a_{i_s} }^2} \sqrt{\sum \NRM{\prod_{s= 1}^{k} a_{i_{k+1-s}} }^2} \leq c \rho^{2k},
$$
where we have used \eqref{eq:Bkstar} and the fact the $(i_1,\ldots,i_k)$ is non-backtracking is equivalent to $(i_k,\ldots,i_1)$ non-backtracking. The remainder of the argument leading to  \eqref{eq:sumpath} is identical.

Finally, the claimed lower bound, $\rho \geq \rho_0$ 
is a direct consequence of the right hand side of \eqref{eq:Bkstar} by considering only the non-backtracking sequence in the
 sum which maximizes $ \NRM{\prod_{s= 1}^{k_0} a_{i_s} }$. \end{proof}

Our final lemma gives a bound on $w(\gamma)$ defined below \eqref{eq:trBl2}. In the sequel, for an integer $n\in\mathbb{Z}$, we set $n_+$ to be its positive part, i.e. $n_+=\max (0,n)$.

\begin{lemma}\label{le:isopath}
There exists a constant $c >0$ such that for any $\gamma\in \cW_{\ell,m}(v,e)$ and $2 \ell m \leq \sqrt{  n}$, 
$$
 | w (\gamma) | \leq  c^{m + \chi}  \PAR{ \frac{1} {n} }^e  \PAR{\frac{ 6\ell m }{ \sqrt n} } ^{  (e_1 - 4 \chi - 4  m)_+}.  
$$
with $\chi = e - v +1$ and $e_1$ is the number of edges of $E_\gamma$ with multiplicity one. Moreover, 
$$
e_1 \geq 2 ( e - \ell m ). 
$$
\end{lemma}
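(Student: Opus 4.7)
The plan is to prove the bound on $|w(\gamma)|$ by a single application of Proposition \ref{prop:exppath}, then to control the resulting combinatorial quantities in terms of $\chi$, $m$ and $e_1$. Unpacking the definition, $w(\gamma) = \dE \prod_{j=1}^{2m}\prod_{t=1}^{\ell} (\underline S_{i_{j,t}})_{x_{j,t}\,x_{j,t+1}}$, and viewing the concatenation of the $2m$ sub-paths as a single sequence of $\tau = 2\ell m$ colored edges with $\tau_0=\tau$ (all factors are of $\underline S$-type), Proposition \ref{prop:exppath} gives
$$
|w(\gamma)| \leq c\, 2^{b}\PAR{\frac 1 n}^{e}\PAR{\frac{6\ell m}{\sqrt n}}^{\widetilde e_1},
$$
where $b$ is the number of inconsistent edges of $G_\gamma$ and $\widetilde e_1$ is the number of time indices $(j,t)$ at which the traversed edge is both consistent and of multiplicity one. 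Since any multiplicity-one edge is traversed exactly once, $\widetilde e_1$ equals the number of consistent multiplicity-one edges, so $\widetilde e_1 \leq e_1$.

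The hard part will be to bound $b$ by a universal multiple of $\chi + m$. My strategy is a structural analysis centered on the following observation: at any interior degree-two vertex $x$ lying on some $\gamma_j$, non-backtracking forces the two incident half-edges at $x$ to carry distinct colors. Indeed, if $x=x_{j,t}$ with $1<t\leq \ell$, the walk enters $x$ via the half-edge $(x,i_{j,t-1}^*)$ and exits via $(x,i_{j,t})$, with $i_{j,t}\neq i_{j,t-1}^*$ by the non-backtracking condition; since $x$ has only two half-edges (degree two), these must be the two half-edges of $x$, and therefore $d(x,i)\leq 1$ for every color $i$, so $x$ creates no inconsistency. Consequently every inconsistent edge has an endpoint either of degree $\geq 3$ in $G_\gamma$ or among the at most $4m$ endpoints of the sub-paths $\gamma_j$. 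Combining $v_{\geq 3}\leq 2\chi+2m-2$ (from the bookkeeping used in Lemma \ref{le:enumpath}) with the degree-sum identity $\sum_x \deg(x)=2e$ then bounds $\sum_{x\in V_{\geq 3}}\deg(x) = O(\chi+m)$, and a simple count yields $b \leq C(\chi+m)$ for a universal $C$.

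To conclude, I factor out the target exponent: writing $\widetilde e_1 = (e_1-4\chi-4m)_+ + \bigl(\widetilde e_1 - (e_1-4\chi-4m)_+\bigr)$, the residual exponent is at most $\min(\widetilde e_1, 4\chi+4m) \leq 4\chi+4m$, so that under the hypothesis $2\ell m\leq\sqrt n$ (which gives $6\ell m/\sqrt n\leq 3$)
$$
\PAR{\frac{6\ell m}{\sqrt n}}^{\widetilde e_1} \leq 3^{4\chi+4m}\PAR{\frac{6\ell m}{\sqrt n}}^{(e_1-4\chi-4m)_+}.
$$
Absorbing $2^b\cdot 3^{4\chi+4m}\leq c^{m+\chi}$ into the prefactor then delivers the claimed bound. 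The final inequality $e_1 \geq 2(e-\ell m)$ is an easy double counting: with $e_t$ the number of edges of multiplicity exactly $t$, one has $\sum_t e_t = e$ (total edges of $E_\gamma$) and $\sum_t t\,e_t = 2\ell m$ (total edge-traversals, since each of the $2m$ sub-paths has length $\ell$), hence $\sum_{t\geq 2}(t-1)e_t = 2\ell m - e$, whence $\sum_{t\geq 2}e_t \leq 2\ell m - e$, and therefore $e_1 = e - \sum_{t\geq 2}e_t \geq 2e - 2\ell m$.

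The main obstacle is the structural control of $b$: the non-backtracking color-distinctness at interior degree-two vertices localizes all inconsistencies onto the small skeleton $G'_\gamma$ and the path endpoints, which is essential both for extracting the $c^{m+\chi}$ prefactor from $2^b$ and for matching the exponent $(e_1-4\chi-4m)_+$ in the statement; everything else is a mechanical translation of Proposition \ref{prop:exppath}.
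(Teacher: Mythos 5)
Your high-level plan---apply Proposition \ref{prop:exppath} to the concatenated length-$2\ell m$ sequence, bound the number $b$ of inconsistent edges, and reorganize the exponent---is the right one, your observation that non-backtracking forces the two half-edges at an interior degree-two vertex to carry distinct colors is correct and nicely localizes all inconsistencies onto high-degree vertices and path endpoints, and your last paragraph proves $e_1\geq 2(e-\ell m)$ exactly as the paper does. But the route you take to bound $b$ is genuinely different from the paper's, and it is not sharp enough to close the argument as you have written it.

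The paper does not use a degree count: it tracks inconsistencies \emph{dynamically along the walk}, re-using the decomposition of Lemma \ref{le:enumpath} into first times, merging times and first visits of excess edges, and argues that a new inconsistency can only be created at one of those steps, each contributing at most $2$; this yields $b\leq 4\chi+4m$. Your static degree count cannot reach that constant. From $v_{\geq 3}\leq 2\chi+2m-2$, $v_1\leq 2m$ and $\sum_x\deg(x)=2e$ one only gets
$\sum_{\deg\geq 3}\deg(x)=2(\chi-1)+v_1+2v_{\geq 3}\leq 6\chi+6m-6$,
and adding the endpoint vertices pushes the constant further up, so your route gives $b\leq C(\chi+m)$ with $C$ strictly larger than $4$.

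This matters, because your final displayed inequality
$\PAR{\frac{6\ell m}{\sqrt n}}^{\widetilde e_1}\leq 3^{4\chi+4m}\PAR{\frac{6\ell m}{\sqrt n}}^{(e_1-4\chi-4m)_+}$
is only valid if the residual $\widetilde e_1-(e_1-4\chi-4m)_+$ is \emph{nonnegative}. You only observe that it is at most $4\chi+4m$. In the typical regime $6\ell m/\sqrt n<1$, a negative residual makes the factor $(6\ell m/\sqrt n)^{\text{residual}}$ grow like $(\sqrt n/(6\ell m))^{|\text{residual}|}$, which is not absorbable into $c^{m+\chi}$ uniformly in $n$. Nonnegativity of the residual is precisely the statement that the number of inconsistent multiplicity-one edges is $\leq 4\chi+4m$, which follows from $b\leq 4\chi+4m$ but not from $b\leq C(\chi+m)$ with $C>4$. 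As written, your argument would at best yield a weaker version of the lemma with $(e_1-C\chi-Cm)_+$ in the exponent for your larger $C$; to prove the lemma as stated you need the paper's walk-based accounting for $b$.
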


\begin{proof}
We start by the last statement. Let $e_{\geq 2}$ be the number of edges of $E_\gamma$  of multiplicity at least $2$. From  \eqref{eq:propet}, we have
$$
e_1 + e_{\geq 2}  = e \quad  \AND  e_1 + 2 e_{\geq 2} \leq 2\ell m. 
$$ 
Therefore, $e_1 \geq 2( e - \ell m)$ as claimed. Let $b$ the number of inconsistent edges (recall the definition above 
Proposition \ref{prop:exppath}).  Using the terminology of the proof of Lemma \ref{le:enumpath}, a new inconsistent edge 
can appear at the the start of a sequence of first times, at a first visit of an excess edge or at the merging time.  Every such 
step can create $2$ inconsistent edges. Since each non-empty sequence of first times is followed either by a merging time or 
by a first visit of an excess edge, we deduce from \eqref{eq:defchi} that $b \leq 4 \chi + 4 m$. So finally, the number of 
consistent edges of $\gamma$ of multiplicity one is at least $(e_1 - 4 \chi - 4 m)_+$. It remains to apply  
Proposition \ref{prop:exppath}. 
\end{proof}

All ingredients are in order to prove Proposition \ref{prop:normB}.

\begin{proof}[Proof of Proposition \ref{prop:normB}]
For $n \geq 3$, we define 
\begin{equation}\label{eq:choicem}
m = \left\lfloor  \frac{ \log n }{13 \log (\log n)} \right\rfloor.
\end{equation}
For this choice of $m$, $n ^{  1 / (2m) } = o ( \log n )^{7}$ and $\ell m = o ( \log n) ^2$. Set $\rho = \rho ( B_\star) + \veps/2$, 
it suffices to prove that 
\begin{equation}\label{eq:boundS}
S = \sum_{\gamma \in W_{\ell,m} }   | w (\gamma) |  \tr  |a (\gamma) | \leq n  (c \ell m )^{10 m} \rho ^{\ell m}.
\end{equation} Indeed, Proposition \ref{prop:normB} follows immediately from \eqref{eq:trBl2}-\eqref{eq:boundS} and Markov 
inequality. Recall that $G_\gamma$ is connected for any $\gamma \in W_{\ell,m}$.  Hence, 
$|E_\gamma | \geq |V_\gamma| -1$ and
\begin{eqnarray*}
S & \leq & \sum_{v = 1}^\infty \sum_{ e  = v - 1} ^{\infty} |\cW_{\ell,m} (v,e) | \max_{ \gamma \in \cW_{\ell,m} (v,e)} \PAR{ | w(\gamma) | \sum_{\gamma' \sim \gamma}\tr  |a(\gamma')| }.
 \end{eqnarray*}
Let $\gamma \in \cW_{\ell,m} (v,e)$ with $e_1$ edges of multiplicity one and $\chi = e - v +1$. 
We use Lemma \ref{le:sumpath} with $\veps' = \veps /2$ and Lemma \ref{le:isopath}. 
Since $a \leq b + (a -b)_+$ and $e_1 \geq 2 ( e - \ell m ) $ (by Lemma \ref{le:isopath}), we find, 
\begin{eqnarray*}
| w(\gamma) | \sum_{\gamma' \sim \gamma} \tr |a(\gamma')|  &\leq  & n^v  c^{m + \chi +e_1} \rho_0^{2 (\ell m  - v)} \rho^{2v}  \PAR{ \frac{1} {n} }^v \PAR{\frac{ 8\ell m }{ \sqrt n} } ^{  (e_1 - 4 \chi - 4  m)_+}  \\
&  \leq &n  c^{5(m + \chi)}\rho_0^{2(\ell m  - v)} \rho^{2v}  \PAR{ \frac{1} {n} }^\chi \PAR{\frac{  8 c \ell m }{ \sqrt n} } ^{  (2  (v- \ell m -1)  - 2 \chi -  4m )_+}.
\end{eqnarray*}
We set $\alpha = ( 8 c \ell m )^2 /  n$ and $\ell' = \ell+2$. Since $\rho \geq \rho_0$ (if $k_0$ is chosen large enough in 
Lemma \ref{le:sumpath}), we deduce from Lemma \ref{le:enumpath} that, for some new constant $c >0$, 
\begin{eqnarray*}
S & \leq &  \sum_{v=1}^{\infty} \sum_{\chi =  0} ^{\infty}   n ( c \ell  m )^{6 m \chi  + 10 m }   \rho^{2\ell m}  \PAR{ \frac{1} {n} }^\chi  \alpha  ^{  (  v- \ell' m  -1 -   \chi)_+}. \nonumber\\
&  =  &  S_1 + S_2 + S_3, \label{eq:Sboundddd}
 \end{eqnarray*}
where $S_1$ is the sum over $\{ 1 \leq  v \leq \ell 'm, \chi \geq 0 \}$, $S_2$ over $\{ v  > \ell' m , 0  \leq  \chi <    v - \ell' m   \}$, 
and $S_3$ over $\{ v  >  \ell' m ,  \chi  \geq   v - \ell' m\}$. We find, 
\begin{eqnarray*}
S_1 & =  &    n (c \ell  m)^{10 m}  \rho^{2\ell m}  \sum_{v=1}^{ \ell' m}   \sum_{\chi = 0} ^{ \infty } \PAR{\frac{  (c \ell m) ^{6m} }{ n } }^{\chi} \\
& \leq & n (c \ell  m )^{10 m}(\ell' m)  \rho^{ 2  \ell m}  \sum_{\chi = 0} ^{ \infty } \PAR{\frac{  (c \ell m) ^{6m} }{ n } }^{\chi}.
\end{eqnarray*}
For our choice of $m$ in \eqref{eq:choicem}, for $n$ large enough, 
$$
\frac{   (c \ell m) ^{6m} }{ n } \leq \frac{( \log n )^{12m} }{n} \leq n^{-1/13}. 
$$
In particular, the above geometric series converges  and, adjusting the value of $c$,  the right hand side of 
\eqref{eq:boundS} is an upper bound for $S_1$ (since $\ell' m \leq c^m$ for $c >1$ and $n$ large enough). 
Similarly, since $\alpha =  ( 8 \ell m )^2 /  n $, for $n$ large enough, 
\begin{eqnarray*}
S_2 & = &   n  (c \ell  m )^{10 m}  \rho^{2\ell m}  \sum_{v= \ell' m+1}^{ \infty }    \alpha^{v -\ell' m-1} \sum_{\chi =0} ^{ v - \ell'  m - 1  } \PAR{\frac{  (c \ell m) ^{6m} }{ \alpha n } }^{ \chi}  \\
& \leq & n  (c \ell  m )^{10 m}  \rho^{2\ell m}  \sum_{v = \ell' m+1}^{ \infty }   \alpha^{v -\ell' m -1 } \,    2 \,   \PAR{\frac{  (c \ell m) ^{6m} }{ \alpha  n } }^{ v  - \ell'  m -1}  \\
& = &2  n (c \ell  m )^{10 m} \rho^{2 \ell m}  \sum_{t = 0}^{ \infty}    \PAR{ \frac {(c \ell m) ^{6m} }{ n } }^{ t}.
\end{eqnarray*}
Again, the geometric series is convergent and the right hand side of \eqref{eq:boundS} is an upper bound for $S_2$. 
Finally, the same manipulation gives for $n$ large enough, 
\begin{eqnarray*}
S_3 & = &   n (c \ell  m)^{10 m}  \rho^{2\ell m} \sum_{v = \ell' m+1}^{ \infty }  \sum_{\chi = v - \ell'  m  } ^{ \infty } \PAR{\frac{  (c \ell m) ^{6m} }{ n } }^{\chi}\\
& \leq & n (c \ell  m)^{10 m}  \rho^{2\ell m} \sum_{v = \ell' m+1}^{ \infty }   2  \PAR{\frac{  (c \ell m) ^{6m} }{ n } }^{v - \ell'  m }  \\
& = & 2 n (c \ell  m )^{10 m}  \rho^{2 \ell m }   \sum_{t = 1}^{ \infty}     \PAR{  \frac {(c \ell m) ^{6m} }{ n } }^{ t}.
\end{eqnarray*}
The right hand side of \eqref{eq:boundS} is again an upper bound for $S_3$. It concludes the proof.  \end{proof}

\subsubsection{Norm of $R_k^{(\ell)}$}

Here, we give a rough bound on the operator norm of the matrices $R_k^{(\ell)}$ for symmetric random permutations. 
In this subsection, we fix a collection $(a_i), i \in [d],$ of matrices such that $\max_i ( \| a_i \| ) \leq \veps^{-1}$ for some $\veps >0$. The constants may depend implicitly on $r$, $d$ 
and $\veps$.

\begin{proposition} \label{prop:normR} 
For any $1 \leq k, \ell \leq  \log n$, the event
$$ \| R_k^{(\ell)} \| \leq \PAR{\log n}^{40} \rho_1^\ell,$$
holds with the probability at least $1 -  c e^{-\frac{\ell \log n}{ c \log \log n}} $ where $c >0$ and $\rho_1 >0$  depend on 
$r$, $d$ and $\veps$.
\end{proposition}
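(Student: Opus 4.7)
The proof proceeds via the trace method of F\"uredi and Koml\'os exactly as in Proposition \ref{prop:normB}, with three modifications dictated by the structure of \eqref{eq:defR}: (a) each path $\gamma_j$ is now required to lie in $F^{\ell+1}_{k,\cdot\cdot} \setminus F^{\ell+1}_{\cdot\cdot}$, i.e.\ its three subpaths of lengths $k-1,\,1,\,\ell-k$ are tangle-free while $\gamma_j$ itself is tangled; (b) the product mixes $2m(k-1)$ centered entries $(\underline S_{i_t})_{xy}$ with $2m(\ell-k)$ non-centered entries $(S_{i_t})_{xy}$; and (c) only a rough constant $\rho_1$ is required in place of the sharp $\rho(B_\star)+\veps$.

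First I would write $\|R_k^{(\ell)}\|^{2m} \leq \tr((R_k^{(\ell)}(R_k^{(\ell)})^*)^m)$ and expand as in \eqref{eq:trBl} to obtain
\[
\dE \|R_k^{(\ell)}\|^{2m} \;\leq\; \sum_{\gamma \in \cW_{k,\ell,m}} |w(\gamma)| \,\tr|a(\gamma)|,
\]
where $\cW_{k,\ell,m}$ is the set of $2m$-tuples of paths in $F^{\ell+1}_{k,\cdot\cdot}\setminus F^{\ell+1}_{\cdot\cdot}$ satisfying the closure condition \eqref{eq:bound}. Partition $\cW_{k,\ell,m}$ into isomorphism classes of canonical paths of type $(v,e)$ with excess $\chi = e-v+1$, exactly as in the proof of Lemma \ref{le:enumpath}.

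Next I would enumerate canonical paths, invoking the (forthcoming) Lemma \ref{le:enumpathR} which refines Lemma \ref{le:enumpath}. The key input is that each $\gamma_j$ is tangled while its three pieces are not: by the same dichotomy used in the proof of Lemma \ref{le:tanglefree}, this forces $G_{\gamma_j}$ to contain either two cycles linked by a path or a ``theta'' structure, and such a configuration must straddle one of the two gluing vertices $\gamma_{j,k}$, $\gamma_{j,k+1}$. Encoding the location of this additional tangle adds a mark of size $O(d\,\ell\,v)$ per path $\gamma_j$, yielding a bound $|\cW_{k,\ell,m}(v,e)| \leq (2d\ell m)^{C(m\chi+m)}$ analogous to Lemma \ref{le:enumpath}. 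For the expectation I would apply Proposition \ref{prop:exppath} with $\tau = 2m(\ell-1)$, $\tau_0 = 2m(k-1)$; the mixed product still produces a factor $(1/n)^e\,(3\tau/\sqrt n)^{(e_1 - 4\chi-4m)_+}$, where now $e_1$ counts only those multiplicity-one consistent edges that fall in one of the $2m(k-1)$ centered positions. For the weights, I would use the crude bound $\|a(\gamma_j)\|\le \veps^{-\ell}$ from $\max_i \|a_i\|\le\veps^{-1}$, giving a universal $\rho_1 = \veps^{-1}$ in place of the sharp Lemma \ref{le:sumpath} (an improved $\rho_1$ close to $\rho(B_\star)$ could be obtained by reusing Lemma \ref{le:sumpath}, but is unnecessary here).

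Finally, choosing $m = \lfloor \log n / (13 \log\log n)\rfloor$ as in \eqref{eq:choicem}, the sum over $(v,e)$ splits into three pieces $S_1,S_2,S_3$ analogous to the ranges $\{v\le \ell'm\}$, $\{v>\ell'm,\,\chi<v-\ell'm\}$, $\{v>\ell'm,\,\chi\ge v-\ell'm\}$ in the proof of Proposition \ref{prop:normB}. Each is bounded by a convergent geometric series, yielding $\dE\|R_k^{(\ell)}\|^{2m} \leq n\, (c\ell m)^{20 m}\,\rho_1^{2\ell m}$; Markov's inequality then gives the desired probability estimate, absorbing the factor $(c\ell m)^{20m}$ into $(\log n)^{40\cdot 2m}$ for $n$ large. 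The main obstacle is the enumeration step, specifically the combinatorial lemma that extracts the quantitative benefit of having the tangle forced to cross a gluing vertex; once that is in place, the arithmetic of $S_1,S_2,S_3$ is essentially identical to Proposition \ref{prop:normB}, but with a universal (rather than sharp) exponential base $\rho_1$.
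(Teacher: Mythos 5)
Your overall strategy (trace method, canonical paths, crude weight bound) matches the paper's, but there are two genuine gaps in how you close the argument, both visible in your claimed moment bound $\dE\|R_k^{(\ell)}\|^{2m} \leq n\,(c\ell m)^{20m}\rho_1^{2\ell m}$.

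First, the factor of $n$ should not be there, and its presence is fatal. You sum over all excess values $\chi = e - v + 1 \geq 0$, implicitly allowing ``tree'' contributions with $\chi = 0$; these produce $n^v(1/n)^e = n^{1-\chi} = n$ and hence the $n$ in your bound. The paper observes that this cannot happen. The graph one must count, call it $\widehat G_\gamma$, is spanned by the colored edges that actually appear in the random product $\widehat w(\gamma)$, i.e.\ positions $1,\dots,k-1$ and $k+1,\dots,\ell$ but \emph{not} position $k$ (that factor is replaced by the scalar $1/n$ in the telescoping \eqref{eq:iopl}). Since $\gamma'_j$ and $\gamma''_j$ are connected tangle-free paths and $\gamma_j$ itself is tangled, if any connected component of $\widehat G_{\gamma_j}$ were a tree, then adding back the single position-$k$ edge would create at most one cycle, contradicting that $\gamma_j$ is tangled. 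So every component of $\widehat G_\gamma$ contains a cycle, giving $|V_\gamma| \leq |\widehat E_\gamma|$, i.e.\ $\chi \geq 1$; this eliminates trees and the $n$ factor. Your observation that the tangle must ``straddle'' the gluing vertices is a correct consequence, but encoding the tangle location does not deliver the $\chi \geq 1$ inequality, which is what the counting actually needs. (The paper's Lemma \ref{le:enumpathR} just encodes the two tangle-free pieces separately, doubling the exponents of Lemma \ref{le:enumpath}.)

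Second, the probability bound you derive does not scale with $\ell$ as the statement requires. With the same $\rho_1$ in the moment bound and in the Markov threshold $(\log n)^{40}\rho_1^\ell$, the $\rho_1^{2\ell m}$ factors cancel and Markov gives at best $\dP \leq n\,(\log n)^{-cm} \sim n^{-\mathrm{const}}$, a polynomial-in-$n$ bound with no $\ell$-dependence. But the Proposition claims $\dP \leq c\,e^{-\ell\log n/(c\log\log n)}$, which for $\ell \sim \log n$ is super-polynomially smaller. To obtain the $\ell$-dependence one must choose $\rho_1$ \emph{strictly larger} than the exponential rate appearing in the moment estimate (the paper's $S \leq (c\ell m)^{32m}c_1^{2\ell m}$ with $c_1 = 9d$, multiplied by $c^{2\ell m}$ from the crude weight bound), so that Markov yields an additional factor $((\text{rate})/\rho_1)^{2\ell m} = e^{-c_2\ell m}$. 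Note in particular that $\rho_1 = \veps^{-1}$ is not only conflated between moment and target; it is also too small to dominate the rate $c\cdot 9d\cdot\veps^{-1}$ coming from the permutation-expectation bound $|\widehat w(\gamma)| \leq c(9/n)^e$ combined with $N(\gamma)\leq n^v d^e$.
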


The proof relies again on the method of moments. Proposition \ref{prop:normR} will be faster to prove than Proposition 
\ref{prop:normB} since we do not need a sharp estimate of $\rho_1$. Let $m$ be a positive integer. We argue as in 
\eqref{eq:trBl}-\eqref{eq:trBl2}, with the convention that $f_{2m + 1} = f_1$, we get 
\begin{eqnarray*}
\dE \| R_k^{(\ell)}  \| ^{2 m}  & \leq & \dE \tr \BRA{ \PAR{  R_k^{(\ell)}{R_k^{(\ell)}}^*}^{m}  } \nonumber\\
& = & \sum_{(f_1, \ldots, f_{2m})\in E^{2m}} \dE \tr  \prod_{j=1}^{m}  (R_k^{(\ell)}) _{f_{2j-1} , f_{2 j}} ( {R_k^{(\ell)}}^* )_{f_{2j} , f_{2j+1}} \nonumber \\
& \leq  &  \sum_{\gamma \in \widehat W_{\ell,m} } |\widehat w(\gamma)| \,   \tr | a ( \gamma ) |  ,  
\end{eqnarray*}
where $a(\gamma)$ is as in \eqref{eq:trBl2}, $\widehat W_{\ell,m}$ is the set of  $\gamma = ( \gamma_1, \ldots, \gamma_{2m})$ such that for any $1 \leq j \leq m$, $\gamma_j = (\gamma_{j,1} , \ldots, \gamma_{j,\ell+1}) \in F^{\ell+1}_k \backslash F^{\ell+1}$, $\gamma_{j,t} = ( x_{j,t}, i_{j,t})$, $\gamma$ with the boundary condition \eqref{eq:bound}, and we have set
$$
\widehat w(\gamma) = \dE \prod_{t=1}^{k-1} (\underline S_{i_{j,t}} )_{x_{j,t}  x_{j,t+1}}  \prod_{t=k+1}^{\ell} ( S_{i_{j,t}} )_{x_{j,t}  x_{j,t+1}}.
$$
Using that $\max_i \| a_i \| \leq \veps^{-1}$, we have $ \tr | a ( \gamma ) |\leq r \veps^{-\ell m}$ and thus, 
\begin{eqnarray}
\dE \| R_k^{(\ell)}  \| ^{2 m} 
& \leq  & c^{2\ell m} \sum_{\gamma \in \widehat W_{\ell,m} } |\widehat w(\gamma)|.    \label{eq:trRl}
\end{eqnarray}

To evaluate \eqref{eq:trRl}, we associate to each $\gamma \in \widehat W_{\ell,m}$, the graph $\widehat G_\gamma$ of
visited vertices and colored edges which appear in the expression $\widehat{w}(\gamma)$. More precisely, for each $j$, we 
set $\gamma'_j = (\gamma_{j,1}, \ldots, \gamma_{j,k}) \in F^{k}$ and 
$\gamma''_j = (\gamma_{j,k+1}, \ldots, \gamma_{j,\ell+1})\in F^{\ell+1 -k}$. Then, with the notation in Definition \ref{def1}, 
the vertex set of $\widehat G_\gamma$  is $V_\gamma  =\bigcup_j  V_{\gamma'_j} \cup V_{\gamma''_j}$ and the edge set is 
$\widehat E_\gamma  =\bigcup_j   E_{\gamma'_j} \cup E_{\gamma''_j}$ (for example, the black edge in 
Figure \ref{fig:Gamma3} is not part of $\widehat E_\gamma$).  The graph $\widehat G_\gamma$ may not be connected, 
however, due to the constraint on $\gamma$, it cannot have more vertices than edges. More precisely, let 
$\widehat G_{\gamma_j}$ denote the colored graph with vertex and edge sets $V_{\gamma'_j} \cup V_{\gamma''_j}$ 
and $E_{\gamma'_j} \cup E_{\gamma''_j}$, by the assumption that $\gamma_j \in F^{\ell+1}_k \backslash F^{\ell+1}$, it follows that either 
$\widehat G_{\gamma_j}$ is a connected graph with a cycle or it has two connected components which both contain a cycle. 
Notably, since $\widehat G_{\gamma}$ is the union of these graphs, any connected component of $\widehat G_{\gamma}$ 
has a cycle, it implies that 
\begin{equation}\label{eq:vehat}
|V_\gamma | \leq |\widehat E_\gamma |.
\end{equation}

Recall the definition of a canonical path above Lemma \ref{le:enumpath}. The following lemma bound the number of 
canonical paths in $\widehat W_{\ell,m}$.

\begin{lemma}\label{le:enumpathR}
Let $\widehat \cW_{\ell,m} (v,e) $ be the subset of canonical paths in $\widehat W_{\ell,m}$ with $|V_\gamma| = v$ and 
$|\widehat E_\gamma |= e$. We have 
$$
| \widehat \cW _{\ell,m} (v,e) | \leq   (2 d \ell  m )^{12 m \chi  + 20 m },
$$
with $\chi = e - v +1 \geq 1$. 
\end{lemma}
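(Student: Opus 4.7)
The plan is to follow essentially the same encoding strategy as in Lemma \ref{le:enumpath}, treating each $\gamma_j \in F^{\ell+1}_k$ as two separate tangle-free paths $\gamma'_j = (\gamma_{j,1},\ldots,\gamma_{j,k}) \in F^k$ and $\gamma''_j = (\gamma_{j,k+1},\ldots,\gamma_{j,\ell+1}) \in F^{\ell+1-k}$, joined by a ``free'' transition at time $(j,k)\to(j,k+1)$ which does not contribute to $\widehat{E}_\gamma$. Since $\gamma_j$ is now decomposed into two tangle-free non-backtracking subpaths, the effective number of tangle-free pieces to encode rises from $2m$ (in Lemma \ref{le:enumpath}) to $4m$, which is precisely what will double the exponent from $6m\chi+10m$ to $12m\chi+20m$.

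First, I would explore the edges of $\widehat{G}_\gamma$ in lexicographic order, building a growing spanning forest exactly as before, so that every edge is classified as a tree edge or an excess edge, and first/merging/important times are defined within each of the $4m$ subpaths $\gamma'_j, \gamma''_j$. The key bookkeeping point is that at the transition $(j,k)\to(j,k+1)$ the path is allowed to jump to an arbitrary vertex/color, so I must record $\gamma_{j,k+1} = (x_{j,k+1},i_{j,k+1})$ as an additional ``starting mark'' for $\gamma''_j$; together with the original starting mark of $\gamma'_j$, this yields $4m$ starting marks overall (each bounded by $v^2(\ell+1)$ possibilities, giving a factor at most $(v^2(\ell+1))^{4m}$).

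Next, I would apply the tangle-free hypothesis separately on each subpath $\gamma'_j$ and $\gamma''_j$. On each of these $4m$ pieces, the unique-cycle property guarantees at most one short cycling time and at most $\chi-1$ long cycling times, where $\chi = e - v + 1$ is the Euler characteristic of $\widehat{G}_\gamma$ (which is well-defined and nonnegative by \eqref{eq:vehat}; in fact each connected component of $\widehat{G}_\gamma$ contains a cycle, so $\chi \geq 1$). This yields at most $4m\chi$ important times in total, each positioned among at most $\ell+1$ slots (giving $(\ell+1)^{4m\chi}$ positional choices), each long cycling time carrying a mark in a set of size $\leq dv^2$ and each short cycling time a mark in a set of size $\leq dv^3(\ell+1)$. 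Collecting the bound
\[
|\widehat{\cW}_{\ell,m}(v,e)| \leq (\ell+1)^{4m\chi}\, (v^2(\ell+1))^{4m}\,(dv^2)^{4m(\chi-1)}\,(dv^3(\ell+1))^{4m},
\]
and using $v \leq 2\ell m + 1$ and $\ell+1 \leq 2\ell$ to bound everything in terms of $2d\ell m$, produces the claimed estimate $(2d\ell m)^{12m\chi+20m}$.

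The main conceptual (rather than computational) obstacle is to verify that splitting each $\gamma_j$ at the missing middle edge still permits the exact same forest-growth argument, i.e.\ that the excess-edge count and the ``single cycle per subpath'' property behave properly across the artificial cut; once one observes that excess edges of $\widehat{G}_\gamma$ are shared by all subpaths and that the tangle-free hypothesis \emph{already} applies to the two halves separately (by the definition of $F^{\ell+1}_k$ recalled before \eqref{eq:defR}), the estimate is a mechanical adaptation of the proof of Lemma \ref{le:enumpath}. The factor-of-two doubling in the exponent and the extra $\ell+1$ factors coming from the $2m$ additional starting marks are absorbed harmlessly into the base $(2d\ell m)$.
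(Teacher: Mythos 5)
Your proposal follows essentially the same route as the paper's own proof: the paper also splits each $\gamma_j\in F^{\ell+1}_k$ into the two tangle-free, non-backtracking pieces $\gamma'_j$ and $\gamma''_j$, runs the same spanning-forest exploration skipping the missing edge at $t=k$, assigns each piece its own starting mark (with the merging time $\sigma'$ or $\sigma''$ encoded in it), and then invokes the one-cycle-per-piece property to bound the long-cycling times by $\chi-1$ per subpath, arriving at exactly the same intermediate inequality
\[
|\widehat{\cW}_{\ell,m}(v,e)| \leq (\ell+1)^{4m\chi}\,\bigl(v^2(\ell+1)\bigr)^{4m}\,\bigl(dv^2\bigr)^{4m(\chi-1)}\,\bigl(dv^3(\ell+1)\bigr)^{4m}.
\]
Your discussion of why the count of tangle-free pieces doubles from $2m$ to $4m$, and why the Euler characteristic $\chi=e-v+1$ of $\widehat G_\gamma$ stays well defined (thanks to \eqref{eq:vehat}, so that $\chi\geq 1$), is the same as the paper's, so I have nothing to add.
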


\begin{proof}
The proof is identical to the proof of Lemma \ref{le:enumpath} up to the minor modification that for each $j$, $\gamma'_j$ 
and $\gamma''_j$ are tangle-free and non-backtracking (instead of  simply $\gamma_j$). We use notation of Lemma 
\ref{le:enumpath}, for $1 \leq j \leq 2m$, $1\leq t \leq \ell$, $t \ne k$, we denote by $e_{j,t} = ( x_{j,t},i_{j,t}, x_{j,t+1} )$ and 
$[e_{j,t}]=   [ x_{j,t},i_{j,t}, x_{j,t+1} ] \in \widehat E_\gamma$ the visited edges. The graph $G_{(j,t)}$ is the graph spanned by
 the edges $ \{ [e_{j',t'} ]: (j',t')\preceq (j,t), t' \ne k\}$ and $T_{(j,t)}$ is its spanning forest. For each $j$, we set 
 $G_{(j,k)} = G_{(j,k-1)}$. The graphs $G_{(j,t)}$ are non-decreasing over time and by definition 
 $G_{(2m,\ell)} = \widehat G_\gamma$. 

Now, for each $j$ and $\gamma'_j$, $\gamma''_j$, the {\em merging times}, denoted by $(j,\sigma')$ and $(j,\sigma'')$, 
are the times  such that $\gamma'_j$ and $\gamma''_j$ merge into a previous connected component. More precisely,  
if $(j,t)$ with $1 \leq t \leq k-1$ (resp. $k+1 \leq t \leq \ell)$ is the smallest time such that $x_{j,t+1}$ is a vertex of 
$G_{(j,1)^-}$ (resp. $G_{(j,k+1)^-}$) then $\sigma' = t$ (resp. $\sigma'' = t$). By convention, if $x_{j,1} \in G_{(j,1)^-}$, 
we set $\sigma'  =0$ (for example from \eqref{eq:bound} if $j$ is odd, $\sigma' = 0$), and we set  
$\sigma'' = k$ if $x_{j,k+1} \in G_{(j,k)^-}$. Similarly, we set $\sigma' = k$ (resp. $\sigma'' = \ell+1$) if $\gamma'_j$ 
does not interest $G_{(j,1)^-}$ (resp. $\gamma''_j$ does not intersect $G_{(j,k)^-}$). {\em First times} 
and {\em important times} are defined as in Lemma \ref{le:enumpath}. 

We mark important times $(j,t)$ by the vector $(i_{j,t},x_{j,t+1},x_{j,\tau})$, where $(j,\tau)$ is the next time that $[e_{j,\tau}]$ 
will not be a tree edge of the forest $T_{j,t}$ constructed so far (by convention, if the path $\gamma'_j$ or $\gamma''_j$ 
remains on the forest, we set $\tau = k$ or $\tau = \ell+1$). For $t=1$ and  $t = k+1$, we also add the {\em starting mark} 
$(x_{j,1},\sigma', x_{j,\tau})$ and $(x_{j,k+1},\sigma'', x_{j,\tau})$) where $\sigma'$ and $\sigma''$ are the merging times 
and $(j,\tau) \geq (j,\sigma')$ or $(j,\tau) \geq (j,\sigma'')$ is, as above, the next time that $[e_{j,\tau}]$ will not be a tree edge 
of the forest constructed so far. As in Lemma \ref{le:enumpath}, it gives rise to a first encoding $\widehat \cW_{\ell,m} (v,e)$.

It can be improved by using that $\gamma'_j$ and $\gamma''_j$ are tangle-free. For each $1 \leq j \leq 2m$ and both for 
$\gamma'_j$ and $\gamma''_j$, we define {\em short cycling}, {\em long cycling} and {\em superfluous} times as in Lemma 
\ref{le:enumpath} and we modify the mark of the short cycling time $(j,t_s)$ as 
$(i_{j,t_s},x_{j,t_s+1}, x_{j,t_1}, t_2 ,x_{j, \tau})$, where $(j,t_1)$ is the closing time of the cycle, $(j, t_2)$ is the exit time of 
the cycle and $(j,\tau) \succeq (j, t_2)$, is the next time that $[e_{j,\tau}]$ will not be a tree edge of the forest constructed so far. 
For $\gamma'_j$ (resp. $\gamma''_j$), important times $(j,t)$ with $1 \leq t < t_s$ or $\tau \leq t \leq k-1$ 
(resp. $k+1 \leq t < t_s$ or $\tau \leq t \leq \ell$) are called long cycling times, they receive the usual mark 
$(i_{j,t},x_{j,t+1},x_{j,\tau})$. The other important times are called superfluous. By convention, if there is no short cycling time, 
we call anyway, the last important time, the short cycling time. As argued in Lemma \ref{le:enumpath}, there are at most 
$\chi-1$ long cycling time for $\gamma'_j$ and $\gamma''_j$. 

This is the second encoding: we can reconstruct uniquely $\gamma$ from the starting marks, the positions of the long cycling 
and the short cycling times and their marks. For each $j$, there are $2$ starting marks and at most $2$ short cycling times 
and $ 2(\chi-1)$ long cycling times. There are at most $  (\ell +1)^{4m \chi}$ ways to position them. There are at most 
$d v^2$ different possible marks for a long cycling time and $d v^3 (\ell+1)$ possible marks for a short cycling time. 
Finally, there are $v^2(\ell+1) $ possibilities for a starting mark. We deduce that    
$$
| \cW _{\ell,m} (v,e) | \leq    (\ell +1)^{4 m \chi} (v^2 (\ell+1) )^{4m}   (d v^2 ) ^{4m (\chi-1)}(d v^3 (\ell+1)) ^{4m}. 
$$
Using $v \leq 2 \ell m$ and $\ell +1 \leq 2 \ell$, we obtain the claimed bound. \end{proof}

We are ready to prove Proposition \ref{prop:normR}.

\begin{proof}[Proof of Proposition \ref{prop:normR}]
For $n \geq 3$, we define 
\begin{equation}\label{eq:choicemR}
m = \left\lfloor  \frac{ \log n }{25 \log (\log n)} \right\rfloor.
\end{equation}
For this choice of $m$, $\ell m = o ( \log n) ^2$. From Markov inequality and \eqref{eq:trRl}, it suffices to prove that for some constants $c, c_1 >0$,
\begin{equation}\label{eq:boundSR}
S = \sum_{\gamma \in \widehat W_{\ell,m} }   | \widehat w (\gamma) |  \leq  (c \ell m )^{32 m} c_1 ^{2 \ell m},
\end{equation}

From \eqref{eq:vehat},  $|V_\gamma| \leq |\widehat E_\gamma | \leq 2\ell m$ and
\begin{eqnarray*}
S & \leq & \sum_{v = 1}^{2 \ell m} \sum_{ e  = v } ^{\infty} |\widehat \cW_{\ell,m} (v,e) | \max_{ \gamma \in \cW_{\ell,m} (v,e)} \PAR{ | \widehat w(\gamma) | N(\gamma) },
 \end{eqnarray*}
where $N(\gamma)$ is the number of $\gamma'$ in $\widehat W_{\ell,m}$ such that $\gamma'\sim \gamma$. 
If $\gamma \in \cW_{\ell,m} (v,e)$, the following trivial bound holds:
$$
N(\gamma) \leq n^v d^e,
$$
(indeed, $n^v$ bounds the possible choices for the vertices in $V_\gamma$ and $d^e$ the possible choices for the colors 
of the edges in $\widehat E_\gamma$). Moreover, from Proposition  \ref{prop:exppath} (bounding the number of 
inconsistent edges by $e$), if $\gamma \in \cW_{\ell,m} (v,e)$, 
$$
\ABS{ \widehat w(\gamma) }  \leq c \PAR{\frac 9 n}^e.
$$
Using also Lemma \ref{le:enumpathR}, we find 
\begin{eqnarray*}
S & \leq & c \sum_{v = 1}^{2 \ell m} \sum_{ e  = v } ^{\infty}  (2 d \ell  m )^{12 m (e-v)  + 32 m } n^v d^e  \PAR{\frac 9 n}^e,
\\
& = & c  (2 d \ell  m )^{32m}  \sum_{v = 1}^{2 \ell m} \PAR{9d} ^{v}\sum_{ t = 0 } ^{\infty} \PAR{  \frac{ 9 d (2 d \ell  m )^{12 m } }{n} }^{t}.
 \end{eqnarray*}
For our choice of $m$, the geometric series is convergent. It follows that for some new constant $ c , c' > 0$, 
$$
S  \leq   c  (2 d \ell  m )^{32m}  \sum_{v = 1}^{2 \ell m} \PAR{9d} ^{v} \leq c' (2 d \ell  m )^{32m} \PAR{9d} ^{2 \ell m} 
$$
It concludes the proof of \eqref{eq:boundSR} with $c_1  = 9d$ (a finer analysis as done in Proposition \ref{prop:normB}  
leads to $c_1 =  d-1 + o(1)$).
  \end{proof}

\subsection{Proof of Theorem \ref{th:mainB}}
\label{subsec:net}

Let $0< \veps <1$. For a given collection of  weights $a = (a_i) \in M_r( \dC)^ d$, we denote by $B (a)$ the corresponding 
non-backtracking operator and by $\mathcal E_\veps(a)$ the event that $\rho( B(a)_{|K_0}) > \rho( B_\star(a)) + \veps$. 
It is sufficient to prove that for some $\beta >0$, 
\begin{equation}\label{eq:tbpB}
\dP \PAR{ \bigcup_{a \in \cS_\veps^d} \mathcal E_\veps(a) }  = O ( n^{-\beta}),
\end{equation}
where  $\mathcal B \subset M_r (\dC)$ is the unit ball for the operator norm $\| \cdot \|$ and 
$$\cS_\veps = \{ b \in  M_r (\dC): b \in \veps^{-1} \mathcal B  , b^{-1} \in   \veps^{-1} \mathcal B \}.$$ 
We  use a net argument on $\cS_\veps^d$. Due to the lack of uniform continuity of spectral radii, we perform the net 
argument with operator norms. To this end, we fix an integer valued sequence $\ell (n) \sim ( \log n ) / \kappa $ for some 
$\kappa > 1$ satisfying
$$ \kappa > \log   \PAR{  ( d-1)^4   \vee   \PAR{ \frac{4 \rho_1} {\veps}} },$$
where $\rho_1$ is as in Proposition \ref{prop:normR}. 
In order to lighten the notation, we will omit the dependence in $n$ of $\ell (n)$ whenever appropriate. 
We denote by  $\mathcal E'_\veps(a)$ the event  
$$
\sup_{g \in K_0, \| g \|_2 = 1 }    \| B^{\ell} (a)g \|_2 > \PAR{ \rho( B_\star(a)) + \veps }^{\ell}.
$$
From \eqref{eq:basicrho}, the inclusion $\mathcal E_\veps (a) \subset \mathcal E'_\veps(a)$ holds. Moreover, 
from \eqref{eq:defB}, 
$ \| B(a ) \| \leq (d-1) \| a \|
$, 
where 
$$
\| a \| = \sum_{i =1}^d \| a_i   \|. 
$$
We deduce that the map $a \mapsto B^\ell (a)$ satisfies a deviation inequality 
\begin{eqnarray}
\| B^{\ell} (a ) - B^{\ell} (a') \| & \leq &   \ell  \max( \| B (a) \| , \|B (a') \|  )^{\ell-1} \| B (a - a') \| \nonumber \\
&\leq  &\ell (d-1)^\ell \max( \| a \| , \| a' \| ) ^{\ell-1} \| a - a' \|. \label{eq:LIPB}
\end{eqnarray}

We now build our net of $\cS_\veps^d$. First, since all matrix norms are equivalent and $M_r( \dC) \simeq \dR^{2r^2}$, we 
can find a subset $N _\delta\subset \veps^{-1} \mathcal  B $ of cardinality at most $( c / (\veps \delta) )^{2r^2}$ such that for 
any $b \in \veps^{-1} \mathcal B$, there exists $b_0 \in N_\delta$ with $\| b - b_0 \| \leq \delta$ (the constant $c$ depends 
on $r$). Note that
$$
\| b_0 ^{-1} \| \leq \| b^{-1} \| + \| b_0 ^{-1}  - b^{-1} \| \leq  \| b^{-1} \|  + \| b^{-1} \|  \|b_0^{-1} \| \| b_0  - b \|.
$$
Hence, if $\| b^{-1} \|  \leq \veps^{-1}$ and $\delta < \veps / 2$, we find 
$$
\| b_0 ^{-1} \| \leq \frac{ \| b^{-1} \|   } { 1 -  \delta / \veps }  \leq  \frac 2 \veps. 
$$
We deduce that, if  $\delta < \veps / 2$, there exists $N'_\delta \subset \cS_{\veps/2} \cap N_\delta$ such for any 
$b \in \cS_\veps$, there exists $b_0 \in N'_\delta$ with $\| b - b_0 \| \leq \delta$. Consequently, if $\delta <  \veps d / 2 $, 
there exists a subset  $N''_\delta = (N'_{\delta / d} ) ^d \subset \cS_{\veps/2}^d$ of cardinal number at most 
$(c d / \veps \delta)^{2 r ^2 d}$  such that  for any $a \in \cS_\veps^d$, there exists $a_0 \in N''_\delta$ with 
$\| a - a_0 \| \leq \delta$.  Besides, from Lemma \ref{le:contrho}, for all $\delta$ small enough, 
\begin{equation}\label{eq:contrhoB}
\ABS{ \rho(B_\star(a)) - \rho(B_\star (a_0)) } \leq \frac \veps  3,
\end{equation}
and, from \eqref{eq:LIPB}, for some new constant $c >0$,
$$
\| B^{\ell} (a ) - B^{\ell} (a_0) \| \leq  \ell (d-1) \PAR{\frac{ d-1}{ \veps}}^{\ell -1}  \delta \leq c^\ell \delta, 
$$
If $\delta = (\veps / 3 c)^\ell$ and $\mathcal E'_{\veps / 3} (a_0) $ does not hold, we deduce, for $n$ large enough,
\begin{eqnarray*}
\sup_{g \in K_0, \| g \|_2 = 1 }  \| B^{\ell} (a)g \|_2 &  \leq & \sup_{g \in K_0, \| g \|_2 = 1 }   \| B^{\ell} (a_0)g \|_2 +  \| B^{\ell} (a ) - B^{\ell} (a_0) \| \\
& \leq &\PAR{ \rho(B_\star (a_0)) + \frac \veps 3 }^\ell +  \PAR{\frac \veps 3 }^\ell \\
& \leq & \PAR{ \rho(B_\star (a_0)) +\frac {2\veps} 3 }^\ell.
\end{eqnarray*}
Using \eqref{eq:contrhoB}, we find that, for our choice of $\delta$ and $n$ large enough,
$$
\bigcup_{a \in \cS_\veps^d} \mathcal E_\veps (a) \subset \bigcup_{a \in \cS_\veps^d} \mathcal E'_\veps (a) \subset  \bigcup_{a \in N''_\delta} \mathcal E' _{\frac \veps 3} (a) ,
$$
and, for some $c_1 >0$ (depending on $\veps$, $r$ and $d$),
\begin{equation}\label{eq:epsnet}
| N''_\delta | \leq c_1 ^\ell.
\end{equation}
We may now use the union bound to obtain an estimate of \eqref{eq:tbpB}. If $\Omega_0$ is the event that $G^\sigma$ is 
$\ell$-tangle free, we find, for $n$ large enough,
\begin{eqnarray*}
\dP \PAR{ \bigcup_{a \in \cS^d_\veps} \mathcal E_\veps (a) } & \leq & \sum_{a \in N''_\delta}  \dP\PAR{  \mathcal E' _{\frac \veps 3} (a) \cap  \Omega_0} + \dP\PAR{ \Omega_0^c}   \\
& \leq & \sum_{a \in N''_\delta}  \dP \PAR{ J (a)  \geq   \PAR{ \rho (B_\star(a) ) + \frac \veps 3  }^\ell } + O\PAR{ \frac{\ell^3 (d-1)^{4\ell} }{ n} } , 
\end{eqnarray*}
where at the second line, we have used Lemma \ref{le:decompBl}, Lemma \ref{le:tanglefree} and set  
$J(a) = \|  \uB^{(\ell)}(a) \|  + \frac 1{ n}   \sum_{k = 1}^\ell \| R_{k}^{(\ell)} (a)\|.$
For our choice of $\ell$, we note that  $\ell^3 (d-1)^{4\ell} / n = O(n^{-\beta})$ for some $\beta >0$. 
On the other end, by Propositions \ref{prop:normB}-\ref{prop:normR} applied to $\veps' = \veps / 4$, for any 
$a \in \cS_{\veps'}$, with probability at least $1 - c \ell \exp  (  -  \ell \log n   / (c \log \log n))$, we have 
\begin{align*}
J  (a) \leq ( \log n )^{20} \PAR{ \rho (B_\star(a) ) + \frac \veps 4 } ^ \ell +  \frac {1 } { n}   \sum_{k = 1}^\ell  (\log n)^{40} \rho_1^{\ell} \leq ( \log n )^{c}   \PAR{ \rho (B_\star(a) ) + \frac \veps 4 } ^ \ell,
\end{align*} since $\ell = O (\log n)$ and (for $n$ large enough) $\rho_1^\ell \leq n (\veps/4)^{\ell}$   thanks to our choice of $\ell$. Finally, since  $(\log n)^{c / \ell} = 1 + O ( \log \log n / \log n )$, it follows that the event $\{ J (a) \geq  \PAR{ \rho (B_\star(a) ) + \frac \veps 3  }^\ell\}$ holds with probability most $c \ell \exp  (   - \ell  \log n    / (c \log \log n))$. Using \eqref{eq:epsnet}, we obtain, 
$$
\dP \PAR{ \bigcup_{a \in \cS^d} \mathcal E_\veps (a) }  =  O \PAR{   \ell e^{ - \frac{\ell  \log n   }{  c \log \log n} } c_1  ^\ell + n^{-\beta}  } = O \PAR{ n^{-\beta} } .
$$
The bound \eqref{eq:tbpB} follows.  \qed

\section{Proof of Theorem \ref{th:mainT}}
\label{sec:mainT}

We start with the inclusion \eqref{eq:lowboundA} with $A^{(2)}$ in place of $A$. Note that $\ell^2 (X^2)$ can be decomposed 
as the direct sum  $\ell^2 (X^2) = \ell^2 (X^2_=) \oplus  \ell^2 (X^2_{\ne})$  where $X^2_=  = \{ (x,x) : x \in X \}$ and 
$X^2_{\ne} = \{ (x,y) : x \ne y  \in X \}$. Moreover,  $A_{|\ell^{2} (X^2_{=})}$ can be identified with $A$. It follows that the 
spectrum of $A^{(2)}$ contains the spectrum of $A$, and thus \eqref{eq:lowboundA} holds also for $A^{(2)}$ thanks to 
Section \ref{sec:AB}. 

We turn to the inclusion \eqref{eq:upboundA} with $A^{(2)}$ in place of $A$. Recall that the vector space $V$ is spanned by 
$I$ and $J$ defined by \eqref{eq:defIJ}. We set $$K^{(2)}_0 = \dC^r  \otimes V^\perp \otimes\dC^d.$$ 

Arguing exactly as in the proof of Theorem \ref{th:main}, Theorem \ref{th:mainT} is a consequence of the following statement 
on the non-backtracking operators
$$
B = \sum_{j\ne i^*} a_j \otimes S_i \otimes S_i  \otimes E_{ij}.
$$

\begin{theorem}\label{th:mainBT}
Theorem \ref{th:mainB} holds with $A$ replaced by $A^{(2)}$ defined by \eqref{eq:defAT} and $K_0$ replaced by $K_0^{(2)}$.
\end{theorem}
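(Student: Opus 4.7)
My plan is to adapt the four-step strategy of Section~\ref{sec:mainB} to the tensor setting. The reference operator $B_\star$ lives on the free-group side and does not see the tensor product, so the target quantity $\rho(B_\star) + \veps$ is unchanged; what changes is the random side, where the vertex set $[n]$ is replaced by $[n]^2$ on which the synchronous permutations $S_i \otimes S_i$ act, and where the rank-one subtraction $\frac{1}{n} \IND \otimes \IND$ is replaced by the rank-two projection $P_V$ onto $\SPAN(I,J)$.

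For the path-decomposition of Subsection~\ref{subsec:PD}, I would introduce, in place of \eqref{eq:defS}, the centred matrices
$$\underline{S}^{(2)}_i = (S_i \otimes S_i) - P_V.$$
Because $I$ and $J$ are fixed vectors of $S_i \otimes S_i$, the projection $P_V$ commutes with $S_i \otimes S_i$ on both sides, so $\underline{S}^{(2)}_i$ coincides with the compression of $S_i \otimes S_i$ to $V^\perp$ and on $K_0^{(2)}$ the identity $B^\ell g = \underline{B}^\ell g$ of \eqref{eq:BBBk} is preserved. A non-backtracking tensor path is exactly a pair of non-backtracking base-graph paths $(\gamma^{(1)}, \gamma^{(2)})$ on $[n]$ sharing the same colour sequence, and the only randomness remains $(\sigma_i)$; I would therefore keep the definition of tangle-freeness on the base graph $G^\sigma$ on $[n]$. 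The analogues $\underline{B}^{(\ell)}$ and $R_k^{(\ell)}$ are defined as in \eqref{eq:defD} and \eqref{eq:defR} with $\underline{S}^{(2)}_i$ in place of $\underline{S}_i$ and $P_V$ in place of $\frac{1}{n}\IND \otimes \IND$ in the telescoping identity \eqref{eq:iopl}, and Lemma~\ref{le:decompBl} is preserved.

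For the trace-method estimates of Subsection~\ref{subsec:FK}, the object to enumerate is the \emph{combined} base-graph subgraph $G_\gamma = G_{\gamma^{(1)}} \cup G_{\gamma^{(2)}}$ on $[n]$ (not on $[n]^2$), since all randomness is indexed by $[n]$. The encoding of Lemma~\ref{le:enumpath} by first, important, short- and long-cycling times then applies to $G_\gamma$ with only a universal multiplicative loss per step to record which of the two coordinates a given base-graph edge comes from; the weight bound of Lemma~\ref{le:sumpath} is unchanged since the coefficients $a_i$ appear once per step along the colour sequence. For the probability-weight estimate of Lemma~\ref{le:isopath}, I would expand each factor $(\underline{S}^{(2)}_{i_t})_{(x_t,y_t),(x_{t+1},y_{t+1})}$ into its three summands (the leading $(S_{i_t})_{x_t x_{t+1}}(S_{i_t})_{y_t y_{t+1}}$ term and the two rank-two corrections on the diagonal and off-diagonal parts of $X^2$), and within each resulting summand apply Proposition~\ref{prop:exppath} separately to the two base-graph sequences $\gamma^{(1)}, \gamma^{(2)}$. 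Since each tensor step produces two base-graph edges, the gain per consistent unit-multiplicity edge improves from $O(1/\sqrt n)$ to $O(1/n)$, and this is exactly what compensates for the doubled vertex count $|X^2| = n^2$.

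The main obstacle is the bookkeeping of the $P_V$ correction terms: each time the $P_V$ piece is chosen, the randomness at that step is removed and the two coordinates become pinned in a constrained way, possibly identifying positions that are far apart and so creating "phantom" couplings between $\gamma^{(1)}$ and $\gamma^{(2)}$ that can alter the multiplicity and consistency accounting of $G_\gamma$. I would index the expansion by a pairing on $[\ell]$ recording the projected steps together with their diagonal/off-diagonal type, and show, for each pairing, that the explicit $1/n$ or $1/(n(n-1))$ factor extracted from every $P_V$ step dominates the extra freedom it unlocks in the enumeration of Lemma~\ref{le:enumpath}; this is similar in spirit to the tracking of consistent versus inconsistent edges in the proof of Proposition~\ref{prop:normB} but requires an extra parameter (the number of projected steps) throughout. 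Once the analogues of Propositions~\ref{prop:normB} and~\ref{prop:normR} are established with possibly enlarged constants, the net argument of Subsection~\ref{subsec:net} is formally identical (the set $\cS_\veps^d$ and the Lipschitz estimate \eqref{eq:LIPB} are unchanged) and yields Theorem~\ref{th:mainBT}.
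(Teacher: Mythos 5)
Your overall plan (adapt the four steps, keep tangle-freeness on the base graph $G^\sigma$, enumerate on $[n]$ rather than $[n]^2$, use a net argument) has the right shape and matches the structure of Section~\ref{sec:mainT}. But there are two genuine gaps, one minor and one serious.

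The minor one: the paper does not keep the rank-two projection $P_V$. It first decomposes $\ell^2(X^2) = \ell^2(X^2_=) \oplus \ell^2(X^2_{\neq})$. Since $S_i \otimes S_i$ respects this splitting and $I \in \ell^2(X^2_=)$, $J \in \ell^2(X^2_{\neq})$, the operator $B_{|K_0^{(2)}}$ splits as $B_{|K_0^I} \oplus B_{|K_0^J}$, and $B_{|K_0^I}$ is literally the operator $B'_{|K_0}$ of Theorem~\ref{th:mainB} (since $\ell^2(X^2_=) \cong \ell^2(X)$). So one should immediately dispose of the diagonal block and work only on $X^2_{\neq}$ with the single rank-one projection $P_J$. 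Keeping $P_V$ as a rank-two object on all of $X^2$ adds bookkeeping for no gain.

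The serious one is the treatment of the moment estimate. You propose to expand
$\underline{S}^{(2)}_{i} = (S_i \otimes S_i) - P_I - P_J$
and apply Proposition~\ref{prop:exppath} to each summand. The leading summand $(S_i \otimes S_i)$ is uncentered, and Proposition~\ref{prop:exppath} only produces the crucial $(3\tau/\sqrt{n})^{e_1}$ gain on factors that lie in the centered range $t \le \tau_0$. For the pattern where the leading summand is chosen at every step, you therefore get no $e_1$-type improvement at all, only $(1/n)^e$. But that improvement is exactly what controls the regime $v > \ell m + O(\chi + m)$ in the proof of Proposition~\ref{prop:normB} (see the line preceding the split $S = S_1 + S_2 + S_3$), and without it the sum over paths diverges. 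Similarly, the summands $\frac 1n(\underline{S_i})\otimes 1$ and $1 \otimes \frac 1n (\underline{S_i})$ carry a centered factor in only one coordinate, which again is not what Proposition~\ref{prop:exppath} applied to the combined sequence rewards. In short, centering per coordinate and centering in the tensor variable are not the same thing, and neither is a sub- nor super-set of the other, so there is no way to reduce Proposition~\ref{prop:exppathT} to Proposition~\ref{prop:exppath} by algebraic expansion.

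The paper instead proves a new estimate (Proposition~\ref{prop:exppathT}) directly for products of the tensor-centered quantities $(\underline{S_i \otimes S_i})_{x y}$, defining the gain exponent $e_1$ via a notion of tensor multiplicity $(1,1)$ for edges of $X^2_{\neq}$. The key conditional-expectation step requires controlling an alternating binomial sum $\dE(-1)^N\prod_{n=0}^{2N-1}(1/\sqrt{q} - zn)$ for a binomial $N$, which is the content of the genuinely new Lemma~\ref{le:binT}; nothing in your plan substitutes for this. There is a further subtlety you do not anticipate: edges visited once in the tensor graph $\bm G_\gamma$ need not have base-multiplicity $(1,1)$ (see Figure~\ref{fig:impoT}), so to transfer the gain exponent across from Proposition~\ref{prop:exppathT} to the weighted-path count, the paper introduces the quotient (``glued'') graph $\widetilde{G_\gamma}$ in Lemma~\ref{le:sumpathT}, which keeps $\widetilde\chi \le \chi$ and $\widetilde e_1 \le e_1$. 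Without this construction the accounting in the analogs of Lemmas~\ref{le:sumpath} and~\ref{le:isopath} does not close.
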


The proof of Theorem \ref{th:mainBT} follows essentially from  the proof of Theorem \ref{th:mainB}. We now explain how 
to adapt the above argument. We shall follow the same steps and only highlight the differences.

The first observation is that part of Theorem \ref{th:mainBT} is already contained in Theorem \ref{th:mainB}. More precisely, 
as already pointed, we have the direct sum  $\ell^2 (X^2) = \ell^2 (X^2_=) \oplus  \ell^2 (X^2_{\ne})$.  Then, it is immediate 
to check that, for any  permutation operator $S$  on $\ell^2 (X)$, $S \otimes S$ decomposes orthogonally on 
$\ell^2 (X^2_=) \oplus  \ell^2 (X^2_{\ne})$. Hence, $B$ decomposes orthogonally on  
$(\dC^r  \otimes \ell^2 (X^2_=) \otimes\dC^d) \oplus (\dC^r  \otimes \ell^2 (X^2_{\ne}) \otimes\dC^d)$. Also, since 
$J \in \ell^2 (X^2_{\ne})$ and $I \in \ell^2 (X^2_=)$, the operator $B_{|K^{(2)}_0}$ decomposes orthogonally on  
$K^{I}_0 \oplus K^{J}_0$ with 
$$
K^I_0  =  \dC^r  \otimes  ( I^\perp \cap \ell^2 (X^2_=) )  \otimes\dC^d \AND  K^J_0  =  \dC^r  \otimes (J^\perp \cap \ell^2 (X^2_{\ne})) \otimes\dC^d .
$$
Finally, $B_{|K^{I}_0}$ can be identified with $B'_{|K_0}$ where $B' = \sum_{j\ne i^*} a_j \otimes S_i \otimes E_{ij}$. 
The spectral radius of  $B'_{|K_0}$ can be bounded using Theorem \ref{th:mainB}.  As a byproduct, it remains to prove 
Theorem \ref{th:mainBT} with $K_0^{(2)}$ replaced by $K^J_0$.

\subsection{Path decomposition}
\label{subsec:PDT}

We follow Subsection \ref{subsec:PD} and use the same notation. We set $X = [n]$ and let $A^{(2)}$ be as in 
\eqref{eq:defAT}. We now denote by $\bm B$ the non-backtracking matrix of $A^{(2)}$ restricted to 
$\dC^r  \otimes \ell^2 (X^2_{\ne}) \otimes\dC^d$. Our goal is to derive the analog of Lemma  
\ref{le:decompBl}
for 
$\rho (\bm B_{|K^{J}_0})$. We define $\bm E = X^2_{\ne} \times [d]$. We may write $\bm B$  as a matrix-valued matrix on 
$\bm E$: for $e,f \in \bm E$,  $e = (x,i)$, $f = (y,j)$, $x = (x^- , x^+)$, $y = (y^-, y^+)$, 
$$
\bm B_{e f}  = a_j \IND( \sigma_i(x^-) =  y^-)\IND( \sigma_i(x^+) =  y^+)  \IND ( j \ne i^*) = a_j (S_i \otimes S_i)_{xy}  \IND ( j \ne i^*).
$$

The next definitions extend Definitions \ref{def1}-\ref{def2}.  
We revisit Definition \ref{def0}, where we replace $X$ by  $ X^2_{\ne}$, and $\sigma_i$ by $\sigma_i\otimes \sigma_i , i\in [d]$.
We may define
a colored edge $[x,i,y]$ with $x,y \in X^2_{\ne}$, $i \in [d]$.
\begin{definition}\label{defT}
Let  $\gamma = (\gamma_1, \ldots, \gamma_k)$ in $\bm E^k$, $\gamma_t = (x_t, i_t)$, $x_t = (x^-_t, x^+_t )$. 
\begin{enumerate}[-]
\item 
We set $\gamma^{\pm} = (\gamma_{1}^\pm, \ldots ,  \gamma_k ^\pm)$ with $\gamma^\pm _t = (x_t^\pm, i_t) \in E$.

\item 
The  {\em weight} of $\gamma$ is  $a(\gamma) = a(\gamma^\pm) = \prod_{t=2}^k a_{i_t}.$ 

\item 
We set $\bm V_\gamma = \{x_t  : 1 \leq t \leq k \}$, $\bm E_\gamma = \{ [x_t, i_t, x_{t+1}] : 1 \leq t \leq k \} $, 
$V_\gamma = V_{\gamma^-} \cup  V_{\gamma^+} = V_{(\gamma^-,\gamma^+)}$ and 
$E_\gamma = E_{\gamma^-} \cup  E_{\gamma^+}  = E_{(\gamma^-,\gamma^+)}$. We define the colored graphs  
$G_\gamma = (V_\gamma,E_\gamma)$ and $\bm G_\gamma = (\bm V_\gamma,\bm E_\gamma)$, see Figure \ref{fig:impoT}.

\item 
If $e , f$ are in $\bm{E}$, we define $\bm\Gamma^{k} _{ e f}$ is the subset of $\gamma$ in $\bm E^k$,  such that 
$(\gamma^-, \gamma^+) \in \Gamma^{k} _{e^- f^-} \times \Gamma^{k} _{e^+ f^+}$.  The sets $\bm \Gamma^k$, 
$\bm F^k$, $\bm F_k ^{\ell+1}$ are defined in the same way from the sets $ \Gamma^k$, $F^k$, $F_k ^{\ell+1}$. For 
example,  $\bm F^k$ is the set $\gamma$ in $\bm \Gamma^k$ such that both $(\gamma^-,\gamma^+)$ are tangle-free. 
\end{enumerate}

\end{definition}

Note that according to this definition, if $\gamma\in \bm F^k$, then $\bm G_{\gamma}$ is necessarily tangle-free, while
$G_{\gamma}$, not necessarily. See Figure \ref{fig:impoT} for an example.

\begin{figure}[htb]
\begin{center}  
\resizebox{15cm}{!}{
\begin{tikzpicture}[main node/.style={circle, draw , fill = lightgray, text = black, thick}]

\node[main node]  at (2,0) (a1) {1} ;
\node[main node] at (4,0) (a2) {2} ;

\node[] at (3,1) {$G_{\gamma^-}$};
\draw[cyan, -,ultra thick][out = -40 , in = 220] (a1) to (a2) ; 
\draw[magenta, -,ultra thick][out = 40 , in = -220] (a1) to (a2) ;

\node[main node]  at (11,0) (b3) {3} ;
\node[main node] at (13,0) (b4) {4} ;
\node[main node] at (14,1.7320508 ) (b5) {5} ; 
\node[main node]  at (15,0) (b1) {1} ;

\node[] at (12.2,1.2) {$G_{\gamma^+}$};
\draw[cyan, -,ultra thick] (b3) to (b4) ; 
\draw[magenta, -,ultra thick] (b4) to (b5) ; 
\draw[cyan, -,ultra thick] (b5) to (b1) ; 
\draw[magenta, -,ultra thick]  (b1) to (b4) ;

\node[main node]  at (0,-4) (c3) {3} ;
\node[main node] at (2,-4) (c4) {4} ;
\node[main node] at (3,-2.2679492) (c5) {5} ; 
\node[main node]  at (4,-4) (c1) {1} ;
\node[main node]  at (6,-4) (c2) {2} ;

\node[] at (1.2,-2.8) {$G_{\gamma}$};
\draw[cyan, -,ultra thick] (c3) to (c4) ; 
\draw[magenta, -,ultra thick] (c4) to (c5) ; 
\draw[cyan, -,ultra thick] (c5) to (c1) ; 
\draw[magenta, -,ultra thick]  (c1) to (c4) ; 
\draw[magenta, -,ultra thick][out = 40 , in = -220] (c1) to (c2) ; 
\draw[cyan, -,ultra thick][out = -40 , in = 220]
 (c1) to (c2) ;

\node[main node]  at (11,-4) (d1) {13} ;
\node[main node] at (13,-4) (d2) {24} ;
\node[main node] at (14,-2.2679492 ) (d3) {15} ; 
\node[main node]  at (15,-4) (d4) {21} ;

\node[] at (12.2,-2.8) {$\bm G_{\gamma}$};
\draw[cyan, -,ultra thick] (d1) to (d2) ; 
\draw[magenta, -,ultra thick] (d2) to (d3) ; 
\draw[cyan, -,ultra thick] (d3) to (d4) ; 
\draw[magenta, -,ultra thick]  (d4) to (d2) ;

\end{tikzpicture}
}
{$$ \gamma =  ((1,3),1)     ((2,4),2)     ((1,5),1)    ((2,1 ),2)    ((1, 4), 1) $$
} 
\vspace{-25pt}
\caption{A path $\gamma  \in \bm  E^5$ and its associated graphs $G_{\gamma^\pm}$, $G_\gamma$ and 
$\bm G_\gamma$ (whose vertices have been written $x^-x^+$ instead of $(x^-,x^+)$), the involution 
$i^*$ is the identity.} \label{fig:impoT}

\end{center}\end{figure}

For $e,f$ in $\bm E$, we find that
$$
(\bm B^{\ell}) _{e f}=  \sum_{\gamma \in \bm\Gamma^{\ell+1} _{e  f}} a(\gamma)  \prod_{t=1}^{\ell} ( S_{i_t} \otimes S_{i_t} )_{x_{t} x_{t+1} }  .
$$

The orthogonal projection of $S_i \otimes S_i$ on $J^\perp \cap \ell^2 (X^2_{\ne})$ is given by 
$$
(\underline{S_{i } \otimes S_i})   = ( S_{i} \otimes S_i ) - P_J.
$$
where, for any $g \in \ell^2 (X^2_{\ne})$,  $P_J(g) = J \langle J , g \rangle / (n (n-1))$. Alternatively, in matrix form, for any 
$x,y$ in $X^2_{\ne}$
\begin{equation}\label{eq:defST}
(\underline{S_{i } \otimes S_i})_{xy}   = ( S_{i} \otimes S_i )_{xy} - \frac 1 {n(n-1)}.
\end{equation}

Now, recall the definition of the colored graph $G^\sigma$ 
with vertex set $[n]$ in Definition \ref{def0}. Obviously, if $G^\sigma$ is 
$\ell$-tangle-free  and $0 \leq k \leq 2\ell$ then 
\begin{equation}\label{eq:BkBkT}
\bm B^{k}   = \bm B^{(k)},
\end{equation}
where 
$$
(\bm B^{(k)}) _{e f}=  \sum_{\gamma \in \bm F^{k+1} _{e f}} a(\gamma)  \prod_{t=1}^{k} ( S_{i_t}  \otimes S_{i_t} )_{x_{t} x_{t+1} } .
$$
We define similarly the matrix $\underline B^{(k)}$ by 
\begin{equation}\label{eq:defDT}
(\underline{\bm B}^{(k)}) _{e f}=  \sum_{\gamma \in \bm F^{k+1} _{e f}} a(\gamma)  \prod_{t=1}^{k} ( \underline{ S_{i_t}  \otimes S_{i_t} })_{x_{t} x_{t+1} } .
\end{equation}

We may now use the telescopic sum decomposition performed in Subsection \ref{subsec:PD}. We denote by  
$\overline {\bm B}$ the matrix on $\dC^r \otimes \dC^{\bm E}$ defined by 
$$\overline {\bm B} = \sum_{j \ne i^*}  a_j \otimes P_J  \otimes  E_{ij}.$$  
We also set for all $e,f \in \bm E$, 
\begin{equation}\label{eq:defR}
(\bm R^{(\ell)}_{k} )_{ef}   =  \sum_{\gamma  \in \bm F^{\ell+1} _{k,e f} \backslash \bm F^{\ell+1} _{e f}} a(\gamma)  \PAR{ \prod_{t=1}^{k-1} ( \underline {S_{i_t} \otimes S_{i_t}} )_{x_t x_{t+1}}}  \PAR{\prod_{t = k+1}^\ell (S_{i_t} \otimes S_{i_t} )_{x_t x_{t+1}}  }.
\end{equation}
Arguing as in  Subsection \ref{subsec:PD}, we have  
\begin{eqnarray*}
\bm  B^{(\ell)}   &= & \underline{ \bm B}^{(\ell)}   +    \frac 1 {n(n-1)}  \sum_{k = 1} ^{\ell}  \underline{ \bm B}^{(k-1)} \overline {\bm B}  \bm B^{(\ell - k)}  -   \frac 1 {n(n-1)}     \sum_{k = 1}^{\ell} \bm R^{(\ell)}_{k}.
\end{eqnarray*}
Now, if $G^\sigma$ is $\ell$-tangle free, then, from  \eqref{eq:BkBkT}, 
$\overline {\bm  B} \bm B^{(\ell - k)} = \overline {\bm B} {\bm  B}^{\ell -k}$. 
Since the kernel of $\overline {\bm B}$ contains $K^J_0$ and $\bm B^{\ell-k} K^J_0 \subset K^J_0$, we find that $\overline {\bm B} {\bm  B}^{\ell -k}= 0$ on $K^J_0$. So finally, if $G^\sigma$ is $\ell$-tangle free, for any $g \in K^J_0$,  
\begin{eqnarray*}
\bm  B^{(\ell)} g  &= & \underline{ \bm B}^{(\ell)} g  -   \frac 1 {n(n-1)}     \sum_{k = 1}^{\ell} \bm R^{(\ell)}_{k}g.
\end{eqnarray*}
We get the following lemma. 

\begin{lemma}\label{le:decompBlT}
Let $\ell \geq 1$ be an integer and $A^{(2)}$ as in \eqref{eq:defAT} be such that $G^\sigma$ is $\ell$-tangle free. Then,
$$
\rho (\bm B_{|K^J_0}) \leq  \PAR{\|  \underline{ \bm B}^{(\ell)} \|  +  \frac 1{ n(n-1)}   \sum_{k = 1}^\ell \| \bm R^{(\ell)}_{k} \|} ^{1/\ell}.$$
\end{lemma}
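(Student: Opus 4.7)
The proof will follow verbatim the pattern of Lemma \ref{le:decompBl}, with the tensor-product objects $\bm B$, $\underline{\bm B}^{(\ell)}$, $\bm R^{(\ell)}_k$, $K^J_0$ playing the roles of $B$, $\underline{B}^{(\ell)}$, $R^{(\ell)}_k$, $K_0$. All the necessary ingredients — the path decomposition, the telescopic identity, and the observation that $\overline{\bm B}$ vanishes on $K^J_0$ — have been set up immediately above the statement.

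The plan is as follows. First I would invoke the elementary bound
\[
\rho(\bm B_{|K^J_0}) \;=\; \rho\bigl((\bm B^\ell)_{|K^J_0}\bigr)^{1/\ell} \;\leq\; \Bigl(\sup_{g \in K^J_0,\ \|g\|_2 = 1} \|\bm B^\ell g\|_2\Bigr)^{1/\ell},
\]
which is the tensor-product analog of \eqref{eq:basicrho} and uses only that $K^J_0$ is $\bm B$-invariant (this invariance follows from the fact that each $S_i \otimes S_i$ preserves $J^\perp \cap \ell^2(X^2_{\ne})$, since $J$ is fixed by $S_i \otimes S_i$). Next, under the assumption that $G^\sigma$ is $\ell$-tangle-free, the analog of \eqref{eq:BkBkT} gives $\bm B^\ell = \bm B^{(\ell)}$, so $\bm B^\ell g = \bm B^{(\ell)} g$ for every $g \in K^J_0$.

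Then I would plug in the identity
\[
\bm B^{(\ell)} g \;=\; \underline{\bm B}^{(\ell)} g \;-\; \frac{1}{n(n-1)} \sum_{k=1}^{\ell} \bm R^{(\ell)}_k g, \qquad g \in K^J_0,
\]
which was just derived in the paragraph preceding the statement (using the telescopic expansion and the fact that $\overline{\bm B} \bm B^{\ell-k} g = 0$ whenever $g \in K^J_0$, since $K^J_0 \subset \ker \overline{\bm B}$ and $K^J_0$ is $\bm B$-invariant). Applying the triangle inequality yields
\[
\|\bm B^\ell g\|_2 \;\leq\; \|\underline{\bm B}^{(\ell)}\|\,\|g\|_2 \;+\; \frac{1}{n(n-1)}\sum_{k=1}^{\ell} \|\bm R^{(\ell)}_k\|\,\|g\|_2,
\]
and taking the supremum over unit vectors $g \in K^J_0$, followed by the $(1/\ell)$-th power, gives exactly the claimed bound.

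There is no real obstacle here: all the analytical work — the construction of the non-backtracking operator, the telescopic path decomposition, and the verification that the boundary term $\overline{\bm B}\bm B^{\ell-k}$ annihilates $K^J_0$ — has already been carried out. The only points to double-check are the invariance $\bm B K^J_0 \subset K^J_0$ (immediate from $(S_i \otimes S_i) J = J$ and the block decomposition $\ell^2(X^2) = \ell^2(X^2_=) \oplus \ell^2(X^2_{\ne})$ which both $S_i \otimes S_i$ and $\bm B$ respect), and the inclusion $K^J_0 \subset \ker \overline{\bm B}$, which is immediate from the definition of $\overline{\bm B}$ as a rank-one operator proportional to the projector $P_J$ in the middle tensor factor.
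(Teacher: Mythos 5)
Your proposal is correct and follows exactly the paper's argument: the derivation (telescopic decomposition, tangle-free identity $\bm B^\ell = \bm B^{(\ell)}$, and $\overline{\bm B}\bm B^{\ell-k}=0$ on $K^J_0$ via $\bm B$-invariance and $K^J_0\subset\ker\overline{\bm B}$) appears in the paragraph preceding the lemma, and the remaining steps (spectral-radius-via-power bound and triangle inequality) are exactly those of the scalar case Lemma~\ref{le:decompBl}. Nothing to change.
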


\subsection{Novel estimate on random permutations}

In this subsection, we prove the analog of Proposition \ref{prop:exppath} for the tensors $S_i \otimes S_i$ for the 
symmetric random permutations. To this end, we need to adapt the proof of this proposition in \cite{bordenaveCAT}.

Consider a sequence of colored edges $(f_1, \ldots, f_\tau)$ with $f_t = [x_t,i_t,y_t]$ and $x_t= (x^-_t,x^+_t),y_t = (y^-_t,y^+_t) \in X_{\ne}$. 
We set $f^\pm _t = (x^\pm_t,i_t,y^\pm_t)$. We say an edge $ [x ,i,y] \in \{ f_t : 1 \leq t \leq \tau \}$ is {\em consistent} if  
$[x^- ,i,y^-]$  \emph{and} $[x^+ ,i,y^+]$ are consistent in the colored graph  spanned by the edge $\{ f^-_t, f^+_t : 1 \leq t \leq \tau \}$ 
(where the definition of a consistent edge is given in Definition \ref{def:consistent}). It is inconsistent otherwise. 
Its {\em multiplicity} is the pair of multiplicities 
$(m^- , m^+)$ of $[x^- ,i,y^-]$ and $[x^+ ,i,y^+]$ in the sequence $(f^+_1, \ldots, f^+_\tau,f^-_1, \ldots, f^-_\tau)$ 
(that is, $m^\pm = \sum_t \IND ( f^-_t = [x^\pm,i,y^\pm] ) + \IND ( f^+_t = [x^\pm,i,y^\pm] )$). 

\begin{proposition}
\label{prop:exppathT}
For symmetric random permutations, there exists a   constant $c>0$ such that for any sequence $(f_1, \ldots, f_\tau)$, with
$f_t  = [x_t,i_t,y_t]$, $x_t,y_t \in X_{\ne}$, $4 \tau \leq n^{1/3}$ and any $\tau_0 \leq \tau$, we have, 
$$
\ABS{ \dE \prod_{t= 1} ^{\tau_0}  (\underline{S_{i_t} \otimes S_{i_t}} )_{x_t  y_{t}}  \prod_{t= \tau_0+1} ^{\tau}  ( S_{i_t}\otimes S_{i_t} )_{x_t  y_{t}} }  \leq c \, 9 ^b  \PAR{ \frac{1 }{ n }}^{e} \PAR{ \frac{  6 \tau   }{ \sqrt{n} }}^{e_1}, 
$$ 
where $e = | \{ f^+_t, f^-_t : 1 \leq t \leq \tau \} |$, $b$ is the number of inconsistent edges of  and $e_1$ is the number of  
$1 \leq t \leq \tau_0$ such that $f_t$ is consistent and has multiplicity $(1,1)$.
\end{proposition}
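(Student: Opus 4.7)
The plan is to reduce Proposition \ref{prop:exppathT} to Proposition \ref{prop:exppath} via an algebraic expansion that ``un-tensors'' each factor into entries of the ordinary permutation matrices $S_i$. Using the identities $(S_i)_{uv} = (\underline S_i)_{uv} + 1/n$ and $(S_i\otimes S_i)_{xy} = (S_i)_{x^-y^-}(S_i)_{x^+y^+}$, and writing $A_t^\pm := (\underline S_{i_t})_{x_t^\pm y_t^\pm}$, each tensor factor becomes $A_t^- A_t^+ + \tfrac{1}{n}(A_t^- + A_t^+) + \varepsilon_t$, with $\varepsilon_t = 1/n^2$ for $t>\tau_0$ and $\varepsilon_t = -1/(n^2(n-1))$ for $t\le \tau_0$ (the latter from $1/n^2 - 1/(n(n-1)) = -1/(n^2(n-1))$). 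Multiplying out the $\tau$ factors, the tensor expectation decomposes into a sum of $4^\tau$ terms indexed by $\phi:[\tau]\to\{P,-,+,0\}$; each term is a scalar $c_\phi$ times an expectation of a product of $\underline S_{i_t}$-entries over the reduced sequence obtained from the doubled sequence $(f_1^-,f_1^+,\ldots,f_\tau^-,f_\tau^+)$ by keeping $f_t^-$ iff $\phi(t)\in\{P,-\}$ and $f_t^+$ iff $\phi(t)\in\{P,+\}$.

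Each reduced expectation can then be bounded by Proposition \ref{prop:exppath} applied with its $\tau_0$-parameter equal to the full length of the reduced sequence, since every surviving factor is an $\underline S$-entry. Setting $k_\phi := |\phi^{-1}(\{-,+\})|$ and $j_\phi := |\phi^{-1}(\{0\})|$, elementary accounting gives $|c_\phi|\le n^{-(k_\phi+2j_\phi)}$; the number $e'_\phi$ of distinct edges in the reduced sequence satisfies $e'_\phi \ge e-(k_\phi+2j_\phi)$ (each deleted position either kills one of the $e$ edges or already occurs elsewhere); and the number of inconsistent edges in the reduced sequence is at most $2b$, since each inconsistent tensor edge generates at most two inconsistencies in the doubled sequence. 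Combining these gives $|c_\phi|\,(1/n)^{e'_\phi}\le(1/n)^e$ for every $\phi$. For the ``all-$P$'' term, the reduced sequence is the full doubled sequence, $e'_{1,P}\ge 2e_1$, and the hypothesis $4\tau\le n^{1/3}$ (which forces $6\tau/\sqrt n\le 1$) turns Proposition \ref{prop:exppath} into a bound of the form $c\cdot 4^b(1/n)^e(6\tau/\sqrt n)^{e_1}$, matching the target.

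The main obstacle is controlling the sum over the $4^\tau$ non-leading choices of $\phi$, because the parameter $e'_{1,\phi}$ does not behave monotonically: dropping a half of a tensor edge can both destroy some multiplicity-$1$ edges in the doubled sequence and promote previously higher-multiplicity edges to multiplicity $1$. Two ingredients must be combined: (i) many terms vanish because, for $i\ne i^*$, $(\underline S_i)$-entries have zero mean, so any isolated singleton in the reduced sequence of that color kills the expectation, as in Proposition \ref{prop:exppath}; and (ii) the strong hypothesis $4\tau\le n^{1/3}$ leaves ample slack $\tau/\sqrt n = O(n^{-1/6})$ that can be traded for the combinatorial overhead of the $\phi$-sum. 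The net effect of this bookkeeping must be to replace the $2^b$ of Proposition \ref{prop:exppath} by the enlarged constant $9^b$, with the enlargement absorbing the extra factors coming from expanding both halves of each tensor factor. Carrying out this consolidation uniformly in $\phi$ is the key technical step, and it parallels but is significantly more intricate than the analysis in \cite{bordenaveCAT}.
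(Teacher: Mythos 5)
Your proposed reduction — expand each tensor factor $(\underline{S_i\otimes S_i})_{xy}$ or $(S_i\otimes S_i)_{xy}$ into four pieces, index the resulting $4^\tau$ cross terms by $\phi\in\{P,-,+,0\}^\tau$, and bound each piece by Proposition~\ref{prop:exppath} on the doubled sequence — does not close, and the gap is not merely a matter of bookkeeping: the termwise Proposition~\ref{prop:exppath} bounds, summed over $\phi$, overshoot the target by a factor that can diverge. Here is a concrete example. Take $\tau=\tau_0=T$ with $4T\leq n^{1/3}$, all edges consistent ($b=0$), and a sequence in which $f_t^+$ is the same scalar edge $g$ for every $t$, while the $f_t^-$ are pairwise distinct. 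Then $e=T+1$ and $e_1=0$ (no $f_t$ has multiplicity $(1,1)$), so the target bound is $c\,(1/n)^{T+1}$. Now consider the $\phi$'s of the form $\phi(t)=P$ for $t\in A$ and $\phi(t)=+$ for $t\notin A$, $|A|=a$. Your own estimates give $c_\phi=(1/n)^{T-a}$, $e'_\phi=a+1$, $e'_{1,\phi}=a$, and hence a per-term Proposition~\ref{prop:exppath} bound of $(1/n)^{T+1}\bigl(6(T+a)/\sqrt n\bigr)^a$. Summing only over this family of $\phi$'s gives at least $(1/n)^{T+1}\sum_{a=0}^T\binom{T}{a}\bigl(6T/\sqrt n\bigr)^a=(1/n)^{T+1}\bigl(1+6T/\sqrt n\bigr)^T$, and with $T\sim n^{1/3}$ one has $(1+6T/\sqrt n)^T\geq e^{cT^2/\sqrt n}\sim e^{c n^{1/6}}\to\infty$. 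So the termwise sum blows up. The cancellations that would rescue the sum are exactly what the algebraic expansion destroys, and Proposition~\ref{prop:exppath} applied term by term cannot recover them.

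The paper proves Proposition~\ref{prop:exppathT} by a completely different, non-expansion route: it keeps the tensor factors intact, separates the multiplicity-$(1,1)$ consistent tensor edges into a set $T$, further splits $T$ into a ``clean'' subset $T_2$ (whose images and preimages fall either exactly on the prescribed target or entirely outside the window) and its complement $T_1$, and then conditions on a filtration that fixes $T_2$ and the non-clean matchings. The conditional expectation of the product over $T_2$ is then evaluated exactly as an alternating sum of Pochhammer symbols and bounded by the dedicated Lemma~\ref{le:binT}, which is the new technical ingredient and is what produces the crucial $(\tau/\sqrt n)^{e_1}$ gain without paying a $4^\tau$-type overhead; the remaining factors $P_3$ and $(-m)^{-|T_1|}$ are controlled by a separate graph-theoretic argument and a small-probability estimate for $|T_1|$. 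Your plan would need either a much sharper replacement for Proposition~\ref{prop:exppath} that captures the tensor cancellations directly, or a grouping of the $\phi$'s that reassembles those cancellations before bounding — neither of which is sketched — and as it stands the argument does not establish the claimed estimate.
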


We will use the Pochammer symbol, defined for non-negative integers $a,b$,
$$
(a)_b = \prod_{p=0}^{b-1} (a-p),
$$
(recall the convention that a product over an empty set is $1$). We will use the following Lemma whose proof is postponed to Section \ref{sec:aux}. 

\begin{lemma}\label{le:binT}
Let $z\geq 1$, $k\geq 1$ be an integer, $0 < p , q \leq 1/4$ and $N$ be a $\BIN ( k , p)$ variable, if  
$8 ( 1 - p - p/q)^2 \leq  4 z k ^2 \sqrt q \leq 1$, we have 
$$
\ABS{ \dE (-1)^N \prod_{n=0}^{2N-1} \PAR{ \frac 1 {\sqrt{q} } - z n } } \leq  8 ( 3  \sqrt {2z} k   q^{1/4}) ^k. 
$$
\end{lemma}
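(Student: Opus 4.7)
The strategy is to exploit the explicit identity $\dE[(-1/q)^N] = (1-p-p/q)^k = c^k$ combined with a product expansion that isolates the dominant $q^{-N}$ behaviour.

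The first step is to pair consecutive factors in the product:
\[\Big(\frac{1}{\sqrt q} - 2zm\Big)\Big(\frac{1}{\sqrt q} - z(2m+1)\Big) = \frac{1}{q}(1+\eta_m), \qquad \eta_m := -(4m+1)z\sqrt q + 2z^2 m(2m+1)q.\]
Under the hypothesis $4zk^2\sqrt q \leq 1$, an elementary estimate gives $|\eta_m| \leq 2/k$ for all $m \leq k$, so that
\[ (-1)^N\prod_{n=0}^{2N-1}\Big(\frac 1{\sqrt q} - zn\Big) = \Big(-\frac{1}{q}\Big)^N\prod_{m=0}^{N-1}(1+\eta_m)\]
is uniformly well controlled. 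Replacing $\prod(1+\eta_m)$ by $1$ yields the main term $\dE[(-1/q)^N] = c^k$, whose modulus is already bounded by $(\sqrt{z/2}\,kq^{1/4})^k$ by the hypothesis $8c^2 \leq 4zk^2\sqrt q$, i.e.\ well within the target up to universal constants.

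The second step bounds the remainder. I expand $\prod_{m=0}^{N-1}(1+\eta_m) - 1 = \sum_{s \geq 1}e_s(\eta_0, \ldots, \eta_{N-1})$ in elementary symmetric polynomials and interchange the order of summation with $N$: for each ordered tuple $0 \leq m_1 < \cdots < m_s \leq k-1$ the inner summation over $N$ becomes the binomial tail
\[ T_M := \sum_{N = M+1}^{k}\binom{k}{N}\Big(-\frac p q\Big)^N(1-p)^{k-N}, \qquad M = m_s. \]
The smallness of $|c|$ forces $p/q$ to stay bounded by a universal constant, hence a direct bound yields $|T_M| \leq (1-p+p/q)^k$. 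Combined with $|\eta_m|=O(1/k)$ and the hockey-stick identity $\sum_M \binom{M}{s-1} = \binom{k}{s}$, summing over $s$ produces a geometric series which, together with the main term, gives the target $8(3\sqrt{2z}kq^{1/4})^k$.

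The main obstacle is obtaining a bound on $|T_M|$ sharp enough to match the target: the naive estimate falls short by a constant factor per dimension. This is overcome by writing $T_M = c^k - \sum_{N \leq M}\binom{k}{N}(-p/q)^N(1-p)^{k-N}$ and using Abel summation over $M$, which reintroduces powers of the small quantity $|c|$ into the tail estimate and yields the required $(kq^{1/4}\sqrt z)^k$ scaling. Tracking the combinatorial constants carefully through this Abel summation is where the bulk of the work lies.
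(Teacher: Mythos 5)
The setup in your first two steps is sound and is in fact a cosmetic rewriting of the paper's own: the paper factors out $q^{-t}$ from all $2t$ factors (writing $\prod_{n=0}^{2t-1}(1/\sqrt q - zn) = q^{-t}\prod_{n=0}^{2t-1}(1+\delta n)$ with $\delta = -z\sqrt q$), while you pair consecutive factors, which produces the identical product $\prod_{m=0}^{N-1}(1+\eta_m) = \prod_{n=0}^{2N-1}(1+\delta n)$. Your bound on the main term $c^k$ via the hypothesis $8c^2 \leq 4zk^2\sqrt q$ is correct and you rightly observe it lands comfortably inside the target. You have also correctly located the crux: the crude bound $|T_M|\leq(1-p+p/q)^k \approx 2^k$ is nowhere near the target $(3\sqrt{2z}kq^{1/4})^k$, which can be exponentially smaller.

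The gap is in the claimed fix. Writing $T_M = c^k - S_M$ and summing by parts over $M$ is circular. Indeed with $G(M)=\prod_{m=0}^{M}(1+\eta_m)$ and $F(M)=\eta_M\,G(M-1)=G(M)-G(M-1)$, one has the exact identity $f - c^k = \sum_M F(M)T_M$, and splitting $T_M = c^k - S_M$ gives $f - c^k = (G(k-1)-1)c^k - \sum_M F(M)S_M$. Abel summation of the last sum yields $\sum_M F(M)S_M = G(k-1)c^k - f$, so the two displays combine to the tautology $f - c^k = f - c^k$. No new information is produced, and no powers of $|c|$ are "reintroduced"; the oscillating partial sums $S_M$ can be as large as roughly $(1-p+p/q)^k/\sqrt k$, which is far too big. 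What is actually needed is the algebraic cancellation that the paper exploits: expand $\prod_{n=0}^{2t-1}(1+\delta n) = 1 + \sum_{s\geq 1}\delta^s P_s(2t)$ with $P_s$ a polynomial of degree $2s$ in $t$, truncate $(1-\veps)^t$ (with $\veps = c/(1-p)$) to its degree-$(k-1-2s)$ Taylor polynomial $T_{k,s}(t)$, and use the finite-difference identity $\sum_{t=0}^k\binom{k}{t}(-1)^t P(t) = 0$ for every polynomial $P$ of degree $< k$. This kills the contribution of $T_{k,s}(t)P_s(2t)$ exactly, and the surviving remainder $R_{k,s}(t)$ carries an explicit factor $\veps^{k-2s}$, which is what produces the $(kq^{1/4}\sqrt z)^k$ scaling. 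Your elementary-symmetric-function / Abel route does not, as presented, supply any substitute for that vanishing, so the remainder estimate is not established.
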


\begin{proof}[Proof of Proposition \ref{prop:exppathT}] 
We adapt the proof of \cite[Proposition 8]{bordenaveCAT}. Using the independence of the matrices $S_i$ (up to the involution), 
it is enough to consider the case of a single permutation matrix. We set $S = S_i$ and $\sigma = \sigma_i$. The colored edge 
$[x,i,y]$ of $ \{ f_t : 1 \leq t \leq \tau \}$ is simply denoted by $(x,y)$.  Note that we may view $(x,y)$ as an oridented edge 
from $x$ to $y$ as $i$ is fixed.
We treat the case of $S$ uniformly sampled random  
permutation (the case of uniform matching is similar, see final comment below). We will repeatedly use that, if 
$1 \leq k \leq a \sqrt n$ then, for some $c = c(a) >0$,
$$
(n-k)^{-k} \leq c n^{-k}.
$$

\noindent  {\em We first assume that all edges are consistent}. 
Let $X = \{x^\veps_t : t \in [\tau],  \veps \in \{ -,+\} \}$, 
$Y =\{y^\veps_t : t \in [\tau],\veps \in \{ -,+\} \}$ and $\{ g_1, \ldots , g_e  \} = \{ (x^\veps_t,y^\veps_t)  : t \in [\tau] , \veps \in \{ -,+\}\}$ 
with $g_{s} = ( u_s , v_s  )$ be the distinct edges of $\{ f^\veps_t : t \in [\tau], \veps \in \{ -,+\}\}$. 
Let $T$ be the set $1 \leq t \leq \tau_0$ such that 
$(x_t,y_t)$ has multiplicity $(1,1)$. Let $T_2 \subset T$ be the subset of $t$ in $T$ such that 
$(\sigma (x^-_t),\sigma(x^+_t) ) \in \{ (y_t^-,y_t^+) \} \cup ( [n] \backslash Y)^2$ and 
$(\sigma^{-1} (y^-_t),\sigma^{-1} (y^+_t) ) \in \{(x_t^-,x_t^+)\} \cup ([n] \backslash X)^2$. In words, elements in $T_2$ are 
either matched perfectly ($(\sigma (x^-_t),\sigma(x^+_t) = (y^+,y^-)$)  or have their image and preimage outside of 
$\gamma$. We set $T_1  = T \backslash T_2$. By construction, if $t \in T_1$ then 
$$
(\underline{S \otimes S} )_{x_t  y_{t}}  = (-m)^{-1}.
$$
with $m = n(n-1)$. We may thus write
\begin{equation}\label{eq:djei}
P = \prod_{t= 1} ^{\tau_0}  (\underline{S \otimes S} )_{x_t  y_{t}}  \prod_{t= \tau_0+1} ^{\tau}  ( S\otimes S  )_{x_t  y_{t}} =  (-m) ^{ - |T_1|}  \times P_2 \times P_3, 
\end{equation}
where 
$$
P_2 = \prod_{t \in T_2} (\underline{S \otimes S} )_{x_t  y_{t}} \AND P_3 = \prod_{ t \in [\tau_0] \backslash T_1 \cup T_2} (\underline{S \otimes S} )_{x_t  y_{t}}  \prod_{t= \tau_0+1} ^{\tau}  ( S\otimes S  )_{x_t  y_{t}} .
$$

Let $\cF$ be the filtration generated by the variables $T_2$ and 
$\{\sigma(x^\pm_t) , \sigma^{-1}(y^\pm_t)\}_{t\in [\tau ]  \backslash T_2}$. 
By construction, the variable $T_1$ and $P_3$ are $\cF$-measurable. If $\dE_\cF [ \cdot ]$ denotes the conditional 
expectation given $\cF$, it follows that 
\begin{equation}\label{eq:decomP}
\ABS{ \dE P } = \ABS{ \dE \SBRA{ (-m)^{-|T_1|}  P_3 \dE_\cF [  P_2 ]  }   }  \leq 
c \, \dE \SBRA{ n^{-2|T_1|}     \cdot       \dE \SBRA{      |P_3|   | T_1   } \cdot    |\dE_\cF [  P_2 ] | }. 
\end{equation}

We start by estimating $P_3$ in \eqref{eq:decomP}.  Consider the graph, say $\Gamma$, with vertex set 
$\{g_1, \ldots, g_e \}$ and whose  $\tau$ edges 
are $ \{  (x^-_t,y^-_t) , (x^+_t, y^+_t) \} $ (it may have loops and multiple edges). 
Let $L$ be set of $g_s$, $1 \leq s \leq e$, such that  $\sigma (u_s) \ne v_s$ and $g_s \ne  (x^\veps_t,y^\veps_t)$ for all $ t \in T$, 
$ \veps \in \{-,+\}$. Let 
$K$ be the set of edges of $\Gamma$ adjacent to a vertex in $L$.  
We have 
\begin{equation*}
|P_3 | \leq m^{-|K|},
\end{equation*}
(if the $t$-th edge is adjacent to an element in $L$ then $|(\underline{S \otimes S} )_{x_t  y_{t}}| = m^{-1}$ and 
$(S \otimes S )_{x_t  y_{t}} = 0$). We claim that 
$$
|K| \geq \frac{2|L|}{3}.
$$
Indeed, consider the subgraph spanned by the edges in $K$. This subgraph  has vertex set $L'\supseteq L$. Consider a 
connected component of this graph, with vertex set $L'_0$ and edge set $K_0$. Set $L_0 = L \cap L'_0$. It is sufficient to 
check that $|K_0| \geq 2 |L_0| /3$.  If  $|L'_0| \geq 3$ then  the claimed bound follows from $|K_0| \geq |L'_0| - 1$. 
Similarly, if $|L'_0| = 1$ we have $L'_0 = L_0$ and $|K_0| \geq 1$. If $|L'_0 | =2$ and $|K_0| \geq 2$, the bound holds trivially. 
The last remaining case is $|L'_0| = 2$ and $|K_0| = 1$. This case follows from the claim $|L_ 0 | = 1$. 
Indeed,  since $|K_0| = 1$, if both vertices, say $g_a,g_b$ of this connected component are in $L$, then by construction, 
there is a unique $t_0$ such that $g_a$ or $g_b$ are in 
$\{ (x^\veps_{t_0}, y^\veps_{t_0}) : \veps \in \{ -,+ \}\}$ and $\{ g_a, g_b \} = \{ (x^-_{t_0}, y^-_{t_0}) , (x^+_{t_0}, y^+_{t_0}) \} $. 
It implies that $(x_{t_0},y_{t_0})$ has multiplicity $(1,1)$. It contradicts the definition of $L$ (which contains no $g_t$ with 
multiplicity $(1,1)$). So finally, we have proven that 
\begin{equation}\label{eq:P3L}
|P_3 | \leq m^{-2 |L|/3}.
\end{equation}

We now estimate the law of the random variable $|L|$. 
It follows from Equation \eqref{eq:decomP} that we have to estimate $\dP (  |L| =  x | T_1)$.
For $t \in [e]$, let $\cF_t$ be the the filtration generated by the variables 
$\sigma(u_s) , \sigma^{-1}(v_s), s\in [e]\backslash \{t\}$.  
We have 
\begin{equation}\label{eq:bcmarg}
\dP (  S_{u_t v_t}  = 1  | \cF_t ) \leq \frac{1}{\hat n},
\end{equation}
where we introduced the notation $\hat n= n-\tau +1$.
Hence, if $e_2 = e - 2 e_1$, it follows that for any integer $0 \leq x \leq e_2$, 
$$
\dP (  |L| =  x | T_1) \leq {e_2 \choose x } (\hat n) ^{x- e_2} \leq (2\tau)^{x} (\hat n) ^{x- e_2} .
$$
From \eqref{eq:P3L} and using $4 \tau \leq n^{1/3}$, we get for some $c >0$,
\begin{equation}\label{eq:P3f}
\dE [  |P_3| |  T_1 ] \leq \sum_{x =  0}^{\infty}  (2\tau )^{x}  (\hat n) ^{x - e_2} m^{  - \frac{2 x}{3} } \leq (\hat n)^{-e_2} \sum_{x =  0}^{\infty}  2^{-x} \PAR{  \frac{n^{4/3} }{m^{2/3}} }^x  \leq c n^{-e_2} .
\end{equation}

We now give an upper bound on $\dE_\cF [  P_2 ] $ in \eqref{eq:decomP}. This is where Lemma \ref{le:binT} is used.  Let $\tau_2$ 
be the number of $t \in T_2$ such that $(\sigma(x^-_t) ,\sigma(x^+_t) ) \ne (y^-_t ,y^+_t)$, we have 
\begin{eqnarray*}
\dE_\cF  P_2    & =&  \dE_{\cF}\PAR{ 1 - \frac 1 m}^{|T_2 |  -  \tau_2} \PAR{ - \frac 1 m}^{ \tau_2}. 
\end{eqnarray*}
Let $\bar n = n -  |Y| -   \sum_{t \notin T_2} \IND ( \si(u_t) \notin Y) $. 
By a direct counting argument, the law of $\tau_2$ given $\cF$ is given, for $0 \leq x \leq |T_2|$, by
\begin{equation}\label{eq:lawN}
\dP_{\cF} ( \tau_2 = x) = \frac{{|T_2| \choose x} ( \bar  n )_{  2 x} }{Z}\quad \hbox{ with } \quad Z = \sum_{x = 0}^{|T_2|} {|T_2| \choose x} ( \bar  n )_{2x}.
\end{equation}
Indeed, we use the fact that a uniform law conditioned by an event remains uniform. In turn, the
term ${|T_2| \choose x}$ accounts for the possible choices of $t \in T_2$ such that 
$(\sigma(x^-_t) ,\sigma(x^+_t) ) \ne (y^-_t ,y^+_t)$. 
Once these $t$ have been chosen, we use that for all $t \in T_2$, 
$(\sigma (x^-_t),\sigma(x^+_t) ) \in \{ (y_t^-,y_t^+) \} \cup ( [n] \backslash Y)^2$ and 
$(\sigma^{-1} (y^-_t),\sigma^{-1} (y^+_t) ) \in \{(x_t^-,x_t^+)\} \cup ([n] \backslash X)^2$.  
There are $( \bar  n )_{  2 x}$ such choices.
It is immediate to estimate $Z$. 
Indeed, since $\bar n \geq n - 4 k$, $|T_2| \leq k$ and $ k \ll  \sqrt{n}$, we find, for some $c >0$,
$$
Z \geq  \sum_{x = 0}^{|T_2|} {|T_2| \choose x}   (n - 6k)^{2x}   = [ (n - 6k)^2+1]^{|T_2|} \geq  c n^{2|T_2|}. 
$$
Also, we deduce that
\begin{eqnarray*}
\dE_\cF P_2  & = &  \frac 1 Z \sum_{x = 0} ^{|T_2|} {|T_2| \choose x}  (-1)^x \prod_{y=0}^{2 x-1} ( \bar n - y ) \PAR{ 1 - \frac 1 m}^{|T_2 |  - x} \PAR{ \frac 1 m}^{x} \\
& = &   \frac 1 Z \dE (-1)^{\tau_2} \prod_{y=0}^{2\tau_2-1} ( \bar n - y),
\end{eqnarray*}
where $N$ has distribution $\mathrm{Bin}(|T_2 | , 1 / m)$. By Lemma \ref{le:binT} applied to $z = 1$, $k = |T_2|$ 
(which is at most $k$), $p = 1/m$, $q = 1/ \bar n^2$, we deduce that, with $\veps = 3 \sqrt 2 k  / \sqrt n$, for some $c >0$,
\begin{equation}\label{eq:P2f}
 \ABS{ \dE_{\cF}   P_2  }\leq c \PAR{ \frac{\veps}{n^2} }^{|T_2|}.
\end{equation}

Since $|T_2| + |T_1| = e_1$ and $ 2 e_1 + e_2 = e$, we obtain in \eqref{eq:decomP} from \eqref{eq:P3f} and \eqref{eq:P2f} that, for some $c >0$,
$$
\ABS{ \dE P } \leq c  n^{-e} \veps^{e_1} \dE \veps^{-|T_1|}. 
$$

It thus remains to show that $ \dE \veps^{-|T_1|}$ is of order $1$. If $|T_1| = x$, then there are least $\lceil x / 2 \rceil$ distinct 
$x^\veps_t$ with $t \in [k], \veps \in \{-,+\}$ such that $\sigma(x^\veps_t) \in Y$. From \eqref{eq:bcmarg}, we find that 
$$
\dP ( |T_1| = x ) \leq { 2k \choose \lceil x / 2 \rceil} { |Y| \choose \lceil x / 2 \rceil} (\hat n)^{-\lceil x /2 \rceil} \leq c \PAR{ \frac{2k}{\sqrt n} }^{2\lceil x / 2 \rceil} \leq c \PAR{ \frac{2k}{\sqrt n} }^{x} .
$$
We deduce
$$
\dE \veps^{-|T_1|} \leq c \sum_{x = 0}^\infty \veps^{-x}  \PAR{ \frac{2k}{\sqrt n} }^{ x}  =  c \sum_{x = 0} ^\infty (\sqrt 2/3)^x. 
$$
The latter series is convergent and it concludes the proof when all edges are consistent. 

{\em We now extend to the case of inconsistent edges}. Let us say that $x \in \bm X = \{ x_t : t \in [\tau]\}$ is  inconsistent with degree 
$\delta \geq 2$, if there are $y_{1}, \cdots, y_{\delta}$ distinct elements of $\bm Y = \{ y_t : t \in [\tau]\}$ such that for any 
$1 \leq s \leq \delta$,  $(x,y_s)$ is  in $\{ f_t  : t \in [\tau] \}$.  We define similarly the degree of an inconsistent vertex $\bm Y$. 
The (vertex) inconsistency of $\bm f = (f_1, \cdots , f_\tau)$ is defined as the sum of 
the degrees of inconsistent vertices in $\bm X \cup \bm Y$. The  inconsistency of $\bm f$, say $\hat b$, is at most $2b$. Assume that $x$ is an 
inconsistent vertex of degree $\delta \geq 2$ and $(x,y_1)$ and  $(x,y_2)$ are in $\{ f_t  : t \in [\tau] \}$. Observe that 
$(S \otimes S)_{x y_{1}} (S \otimes S)_{x y_2} = 0$, hence, for any integers $p_1,p_2 \geq 1$,
\begin{eqnarray*}
(\underline{S \otimes S})_{x y_1}^{p_{1}}  (\underline{S \otimes S})_{x y_2}^{p_{2}}   &= &\PAR{ (S \otimes S)_{x y_{1}} - \frac 1 m }^{p_{1}}  \PAR{(S \otimes S)_{x y_2} - \frac 1 m }^{p_2} \\
&= & (\underline{S \otimes S})_{x y_1}^{p_{1}} \PAR{-\frac{1}{m}}^{p_2}   +  \PAR{-\frac{1}{m}}^{p_1}   (\underline{S \otimes S})_{x y_2}^{p_{2}}  -  \PAR{-\frac{1}{m}}^{p_1+p_2}.
\end{eqnarray*}
Similarly, if $q_1 \geq 1$,
\begin{eqnarray*}
(\underline{S \otimes S})_{x y_1}^{p_{1}} ({S \otimes S})_{x y_1}^{q_{1}} (\underline{S \otimes S})_{x y_2}^{p_{2}} &= & (\underline{S \otimes S})_{x y_1}^{p_{1}} ({S \otimes S})_{x y_1}^{q_{1}} \PAR{-\frac{1}{m}}^{p_2}.
\end{eqnarray*}
If $p_i$, $q_i$ is the number of occurences of $(\underline{S \otimes S})_{x y_i}$ and  $({S \otimes S})_{x y_i}$ in the product 
\eqref{eq:djei}, we thus have decomposed \eqref{eq:djei} into at most $3$ terms of the same form as \eqref{eq:djei} up to a 
factor $(-m)^{ - p'}$. Each of these terms is associated to a new sequence $\bm f'$ of colored edges which is a subsequence 
of $\bm f$ with $e'\leq e $ distincts elements
and $e'_1 \geq  e_1$ elements of multiplicity $(1,1)$. Moreover we have $p'/2 + e'\geq e$ and the inconsistency of 
$\bm f'$ is at most $\hat b -1$. By repeating this for all inconsistent vertices, we may decompose \eqref{eq:djei} into at 
most than $3^{\hat b} \leq 9^b$ terms of the form \eqref{eq:djei} with $e' \leq e$, $e'_1 \geq e_1$ where all edges are 
consistent, multiplied by a factor $(- m)^{-p'}$ with $p' / 2  + e' \geq e$. Applying the first part of the proposition to each term, 
the conclusion follows. 

{\em Case where $\sigma$ is a uniform matching}. The proof follows from the same line. The only noticable difference is for 
the distribution of the random variable $\tau_2$ in \eqref{eq:lawN}. We will apply Lemma \ref{le:binT} with $z = 2$ 
(see \cite[Proposition 8]{bordenaveCAT}).
\end{proof}

\subsection{Trace method of F\"uredi and Koml\'os}

We adapt here the content of Subsection \ref{subsec:FK} to tensors of permutation matrices. 
We explain here how to perform this adaptation.

\label{subsec:FKT}

We fix a collection $(a_i), i \in [d],$ of matrices such that
$\max_i ( \| a_i \| \vee  \| a_i ^{-1} \|^{-1} ) \leq \veps^{-1}$ for some $\veps >0$. Then, $B_\star$ is the corresponding 
non-backtracking operator in the free group. The constants may depend implicitly on $r$, $d$ and $\veps$. 
We have the following analogs of Proposition \ref{prop:normB} and Proposition \ref{prop:normR}.

\begin{proposition} \label{prop:normBT} 
Let $\veps > 0$. If $1 \leq \ell \leq  \log n$, then the event
\begin{equation*}
 \| \underline{ \bm B}^{(\ell)} \| \leq ( \log n) ^{50} ( \rho (B_\star) + \veps ) ^\ell,
\end{equation*}
holds with the probability at least $1 -  c e^{-\frac{\ell \log n}{ c \log \log n}} $ where $c >0$ depends on $r$, $d$ and $\veps$.
\end{proposition}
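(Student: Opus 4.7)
The plan is to follow, step by step, the proof of Proposition \ref{prop:normB}, replacing each ingredient by its tensor analogue. First, by the moment method,
$$ \dE \| \underline{\bm B}^{(\ell)} \|^{2m} \leq \dE \tr \bigl( (\underline{\bm B}^{(\ell)} (\underline{\bm B}^{(\ell)})^*)^m \bigr) \leq \sum_{\gamma \in \bm W_{\ell,m}} |\bm w(\gamma)| \, \tr|a(\gamma)|,$$
where $\bm W_{\ell,m}$ denotes the set of tuples $\gamma = (\gamma_1,\ldots,\gamma_{2m})$ with $\gamma_j \in \bm F^{\ell+1}$ satisfying the boundary conditions \eqref{eq:bound}, and $\bm w(\gamma)$ is the expected product of the entries $(\underline{S_{i_{j,t}}\otimes S_{i_{j,t}}})_{x_{j,t}x_{j,t+1}}$. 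The key structural fact I will exploit is that every $\gamma\in \bm W_{\ell,m}$ canonically decomposes as a pair $(\gamma^-,\gamma^+)\in W_{\ell,m}\times W_{\ell,m}$ of closed concatenations of tangle-free non-backtracking paths sharing the same sequence of colors $i_{j,t}$. The relevant graph is then $G_\gamma = G_{\gamma^-}\cup G_{\gamma^+}$, which has at most two connected components and at most $4\ell m$ edges.

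Next, I would group paths by isomorphism class under the natural action of $\cS_n\times (\cS_d)^n$ and prove the three analogues of Lemmas \ref{le:enumpath}, \ref{le:sumpath}, \ref{le:isopath}. The enumeration lemma (analogue of Lemma \ref{le:enumpath}) should give $|\bm \cW_{\ell,m}(v,e)|\leq (2d\ell m)^{12 m\chi+O(m)}$ with $\chi = e-v+c$, where $c\in\{1,2\}$ is the number of connected components of $G_\gamma$; the factor $12m\chi$ instead of $6m\chi$ reflects that each $\gamma_j$ now contributes two tangle-free paths $\gamma_j^-$ and $\gamma_j^+$, each producing its own sequence of short-cycling, long-cycling, and superfluous times to mark, and twice as many starting marks. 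The weight-sum lemma (analogue of Lemma \ref{le:sumpath}) is essentially unchanged: since $a(\gamma)=a(\gamma^-)=a(\gamma^+)$ depends only on the color sequence, the Euler-characteristic decomposition of $G_\gamma$ into paths on the free product $X_\star$ yields, for a fixed class,
$$ \sum_{\gamma'\sim \gamma} \tr|a(\gamma')| \leq c^{m+\chi+e_1} n^v \rho_0^{2(\ell m - v)} \rho^{2v},$$
with $\rho=\rho(B_\star)+\veps$, once one performs the decomposition separately on each connected component of $G_\gamma$. Finally, the probability-weight lemma (analogue of Lemma \ref{le:isopath}) is a direct application of Proposition \ref{prop:exppathT}, giving
$$ |\bm w(\gamma)| \leq c^{m+\chi}\PAR{\frac{1}{n}}^e \PAR{\frac{6\ell m}{\sqrt n}}^{(e_1 - O(\chi+m))_+},$$
together with $e_1 \geq 2(e-2\ell m)$.

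Combining these with $m=\lfloor \log n / (25\log\log n)\rfloor$ and summing over $v,e,\chi$ as in the proof of Proposition \ref{prop:normB}, the geometric series converge and one obtains $\dE \| \underline{\bm B}^{(\ell)} \|^{2m} \leq n (c\ell m)^{25m} \rho^{2\ell m}$, which after Markov and extraction of the $2m$-th root yields the claim with the exponent $50$ in place of $20$ (the doubling tracks the doubled counting in the enumeration step). The main obstacle is the enumeration analogue: one must carefully encode canonical paths in $\bm E^{2m(\ell+1)}$ using the tangle-free structure of each projection separately, while accounting for the fact that $G_\gamma$ need not be connected and that the two projections $\gamma^\pm$ can share vertices and edges in non-trivial ways. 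Formalising this—and in particular tracking precisely how edges of the combined graph $G_\gamma$ are produced by first-times, short-cycling times, or long-cycling times in either projection—is the only real work; once this is done, the weight and probability estimates proceed as in the scalar case.
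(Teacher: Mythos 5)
Your overall strategy is the right one and mirrors the paper's: reduce to the scalar-case trace-method machinery by tracking the pair $(\gamma^-,\gamma^+)$ of projections, double the bookkeeping in the enumeration lemma, invoke the tensor version of the permutation-moment estimate, and close the geometric series with $m\sim \log n/\log\log n$. Your account of the enumeration analogue of Lemma~\ref{le:enumpath} (two projections means twice as many starting marks, short-cycling, long-cycling times, and $\chi=e-v+2$ since $G_\gamma=G_{\gamma^-}\cup G_{\gamma^+}$ has up to two components) is essentially how the paper argues in Lemma~\ref{le:enumpathT}.

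However, the claim that the weight-sum analogue of Lemma~\ref{le:sumpath} is ``essentially unchanged'' and works by ``performing the decomposition separately on each connected component of $G_\gamma$'' hides the one genuinely new idea in the proof, and as stated it does not go through. The difficulty is a mismatch between two notions of multiplicity: Proposition~\ref{prop:exppathT} and the weight cancellations both run on edges of $\bm E_\gamma$ (colored edges of $\bm G_\gamma\subset X^2_{\ne}$) of multiplicity $(1,1)$, whereas the Euler-characteristic/path decomposition you invoke is performed on $G_\gamma\subset[n]$, whose ``visited once'' edges are not the same thing. In particular, an edge of $\bm E_\gamma$ can be visited only once by $\gamma$ yet project onto edges of $G_\gamma$ that are hit several times, and vice versa; the non-backtracking constraint governing which color sequences may be summed freely lives on $\bm G_\gamma$, not on $G_\gamma$. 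The paper circumvents this by introducing the quotient graph $\widetilde{G_\gamma}$ (glue vertices of $\bm V_\gamma$ that share a coordinate in $G_\gamma$), which has $\widetilde{\chi}\leq\chi$ and whose multiplicity-one edges are controlled by $e_1$; the decomposition into paths over the free product and the $\rho^{2k}$ estimates are then carried out on $\widetilde{G_\gamma}$, not $G_\gamma$. Without this construction the passage from Lemma~\ref{le:sumpath} does not reduce to a sentence, and your identification of the enumeration lemma as ``the only real work'' points in the wrong direction: the enumeration doubling is mechanical, while the quotient-graph argument in Lemma~\ref{le:sumpathT} is the nontrivial new step (and the same graph is what underlies the corrected inequality $e_1\geq 2(e-\ell m)-4\chi-8m$ in Lemma~\ref{le:isopathT}, which is not the naive $e_1\geq 2(e-2\ell m)$ you write).
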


\begin{proposition} \label{prop:normRT} 
For any $1 \leq k, \ell \leq  \log n$, the event
$$ \| \bm  R_k^{(\ell)} \| \leq \PAR{\log n}^{100} \rho_1^\ell,$$
holds with the probability at least $1 -  c e^{-\frac{\ell \log n}{ c \log \log n}} $ where $c >0$ and $\rho_1 >0$  depend 
on $r$, $d$ and $\veps$.
\end{proposition}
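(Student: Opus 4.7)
The plan is to mirror the proof of Proposition \ref{prop:normR}, substituting the tensor-valued probability estimate of Proposition \ref{prop:exppathT} for the scalar one of Proposition \ref{prop:exppath}, and replacing the graph $\widehat G_\gamma$ on $\bm{V}_\gamma$ by its projection onto $[n]$. As before, I would fix $m = \lfloor \log n / (c \log \log n) \rfloor$ for a suitably large constant $c$, and bound $\dE \| \bm R_k^{(\ell)}\|^{2m}$ via Markov's inequality after expanding $\tr [(\bm R_k^{(\ell)} (\bm R_k^{(\ell)})^*)^m]$.

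The trace expansion leads to
\begin{equation*}
\dE \| \bm R_k^{(\ell)}\|^{2m} \leq c^{2\ell m} \sum_{\gamma \in \bm{\widehat W}_{\ell,m}} |\bm{\widehat w}(\gamma)|,
\end{equation*}
where $\bm{\widehat W}_{\ell,m}$ consists of $2m$-tuples $\gamma = (\gamma_1,\ldots,\gamma_{2m})$ with each $\gamma_j \in \bm F^{\ell+1}_k$ satisfying the boundary conditions \eqref{eq:bound}, and $\bm{\widehat w}(\gamma)$ is the expectation of the product of the $(\underline{S_{i_t}\otimes S_{i_t}})_{x_t x_{t+1}}$ for the first block of each $\gamma_j$ together with the $(S_{i_t}\otimes S_{i_t})_{x_t x_{t+1}}$ for the second block. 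Here I use $\tr | a(\gamma)| \leq r\veps^{-2\ell m}$ as in \eqref{eq:trRl}.

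Next, to each $\gamma \in \bm{\widehat W}_{\ell,m}$ I would associate the projected colored graph $\widehat G_{\gamma^\pm}$ on $[n]$ with vertex set $V_\gamma = \bigcup_j (V_{\gamma_j'^\pm} \cup V_{\gamma_j''^\pm})$ and edge set $\widehat E_\gamma = \bigcup_j (E_{\gamma_j'^\pm} \cup E_{\gamma_j''^\pm})$, where $\gamma_j'$ and $\gamma_j''$ are the sub-paths of $\gamma_j$ before and after position $k$. Setting $v = |V_\gamma|$ and $e = |\widehat E_\gamma|$, the bound $v \leq e$ still holds because, for each $j$ and each sign, the tangle-freeness of the pieces $(\gamma_j')^\pm$ and $(\gamma_j'')^\pm$ in $F^{\ell+1}_k$ forces every connected component to carry a cycle, as in the derivation of \eqref{eq:vehat}. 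Proposition \ref{prop:exppathT} applied to the sequence $(f_1,\ldots,f_\tau)$ of $\tau \leq 2\ell m$ colored edges in $\bm{E}_\gamma$ then gives
\begin{equation*}
|\bm{\widehat w}(\gamma)| \leq c \PAR{\frac{c'}{n}}^{e},
\end{equation*}
where I bound the inconsistency factor $9^b$ by $9^e$. The trivial counting bound $N(\gamma) \leq n^v d^e$ for the size of an isomorphism class survives verbatim, and a canonical-path encoding identical to the one used in Lemma \ref{le:enumpathR}, applied in each coordinate, yields
\begin{equation*}
|\bm{\widehat{\cW}}_{\ell,m}(v,e)| \leq (2 d \ell m)^{C m\chi + C'm}, \qquad \chi = e - v + 1 \geq 1,
\end{equation*}
for absolute constants $C, C'$; the only change from Lemma \ref{le:enumpathR} is that the number of short/long cycling time marks per index $j$ is doubled (two sub-paths in each of two coordinates), which merely inflates the constants. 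Combining these three ingredients, summing the geometric series in $\chi$ and $v$ with $m = \lfloor \log n / (c \log \log n) \rfloor$ in the style of the computation leading to \eqref{eq:boundSR}, and applying Markov's inequality, I obtain $\| \bm R_k^{(\ell)}\| \leq (\log n)^{100} \rho_1^{\ell}$ on an event of the stated probability.

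The only real subtlety, and the main place where one must be careful, lies in the passage from the ambient ``pair'' graph $\bm G_\gamma$ to the projected graph on $[n]$: one has to ensure that the doubled edge count (each tensor step contributes up to two edges in $[n]$) is absorbed correctly, both in the weight bound $|\bm{\widehat w}(\gamma)| \leq (c/n)^e$ through the exponent $e_1 \geq 2(e - \ell m)$ and in the canonical encoding, where the cycle structure is tracked separately in each coordinate. Everything else is a verbatim adaptation of \S\ref{subsec:FK}, with the trivial constant bound $\tr|a(\gamma)| \leq r\veps^{-2\ell m}$ absorbed into $\rho_1$.
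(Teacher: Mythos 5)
Your proposal is correct and follows the same approach the paper takes: adapt the trace-method proof of Proposition \ref{prop:normR} to tensors by substituting Proposition \ref{prop:exppathT} for Proposition \ref{prop:exppath}, working with the projected colored graph on $[n]$ whose edge set is $\widehat E_\gamma = \bigcup_j (E_{(\gamma'_j)^\pm} \cup E_{(\gamma''_j)^\pm})$, and re-running the canonical-path encoding of Lemma \ref{le:enumpathR} coordinate by coordinate (which, as in Lemma \ref{le:enumpathT}, simply replaces $2m$ subpaths by $4m$ and inflates the constants). The paper itself does not spell this proof out (it only asserts, at the end of \S\ref{subsec:FKT}, that ``the case of Proposition \ref{prop:normRT} [is] identical''), so you have merely made the intended argument explicit, which you have done faithfully: the rough weight bound $|\widehat{\bm w}(\gamma)| \leq c\,(c'/n)^e$ obtained by dropping the factor $(6\tau/\sqrt n)^{e_1}$ and bounding $9^b\le 9^e$, the trivial $N(\gamma)\leq n^v d^e$, and the geometric series in $\chi=e-v+1\ge 1$ are exactly what is needed. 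One small inconsistency in your write-up: after using the crude bound $|\widehat{\bm w}(\gamma)|\le c(c'/n)^e$ (which does not rely on $e_1$ at all, just as the paper's proof of Proposition \ref{prop:normR} does not), your concluding paragraph refers to the exponent $e_1\geq 2(e-\ell m)$ as if it were used; that refinement belongs to the sharp bound in Proposition \ref{prop:normBT} (via Lemma \ref{le:isopathT}), not to the crude bound here, so you can safely delete that mention.
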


We only explain the differences arising in the proof of Proposition \ref{prop:normBT}, the case of Proposition 
\ref{prop:normRT} being identical.  Let $m$ be a positive integer. The computation leading to \eqref{eq:trBl2} gives 
\begin{eqnarray}
\dE \| \underline{ \bm B}^{(\ell)}  \| ^{2 m} & \leq  &   \sum_{\gamma \in \bm W_{\ell,m} } |w(\gamma)| \,   \tr | a ( \gamma ) |  ,  \label{eq:trBl2T}
\end{eqnarray}
where $\bm W_{\ell,m}$ is the set of  $\gamma = ( \gamma_1, \ldots, \gamma_{2m})$ such that $\gamma_j = (\gamma_{j,1} , \ldots, \gamma_{j,\ell+1}) \in \bm F^{\ell+1}$, $\gamma_{j,t} = ( x_{j,t}, i_{j,t})$ and for all $j = 1, \ldots, m$, the boundary condition $
\gamma_{2j,1} = \gamma_{2j+1, 1}$   and $ \gamma_{2j-1,\ell+1} = \gamma_{2j, \ell+1}$, with the convention that $\gamma_{2m+1} = \gamma_{1}$. In \eqref{eq:trBl2T}, we have also set 
$$
w(\gamma) = \dE \prod_{t=1}^{\ell} (\underline{S_{i_{j,t}} \otimes S_{i_{j,t}} } )_{x_{j,t}  x_{j,t+1}} \AND a(\gamma) = \prod_{j=1}^{2m}  a ( \gamma_j)^{\veps_j}.
$$
and $a ( \gamma_j)^{\veps_j}$ is $a ( \gamma_j)$ or $a ( \gamma_j)^{*}$ depending on the parity of $j$.

Let $X =  [n]$. Exactly as in the proof of Proposition \ref{prop:normB}, we define the isomorphism class 
$\gamma \sim \gamma'$ if there exist permutations $\sigma \in \mathcal S_{X^2_{\ne}}$ and 
$(\tau_x)_x \in (\mathcal S_d) ^{X^2_{\ne}}$ such that, with $\gamma'_{j,t}  = (x'_{j,t} , i'_{j,t})$, for all 
$1 \leq j \leq 2m$, $1 \leq t \leq \ell+1$, $x'_{j,t} = \sigma ( x_{j,t})$, $i'_{j,t}  = \tau_{x_{j,t}}( i_{j,t})$ and $(i'_{j,t}  ) ^*  = \tau_{x_{j,t+1}}( (i_{j,t}) ^* )$.  
For each $\gamma \in W_{\ell,m}$, we define $\gamma_-$, $\gamma_+$ and $G_\gamma$ as in Definition \ref{defT}, the 
vertex set of $G_\gamma$ is $V_\gamma = \cup_j V_{\gamma_j}$ and its  edge set is $E_\gamma = \cup_j E_{\gamma_j}$).   
We also define $\bm V_\gamma $, $\bm E_\gamma$ and $\bm G_\gamma$ as in Definition \ref{defT}. The graph 
$G_\gamma$ is the union of the graphs $G_{\gamma^-}$ and $G_{ \gamma^+}$. Since $G_{\gamma^\pm}$ is connected, 
$G_\gamma$ has at most two connected components, it follows that $|E_\gamma| - |V_\gamma| + 2 \geq 0$.  We may also 
define a canonical element in each isomorphic class as in the proof of Proposition \ref{prop:normB}.  The analog of Lemma 
\ref{le:enumpath} is the following.

\begin{lemma}\label{le:enumpathT}
Let $\bm \cW_{\ell,m} (v,e) $ be the subset of canonical paths with $|V_\gamma| = v$ and $|E_\gamma |= e$. We have 
$$
| \bm \cW _{\ell,m} (v,e) | \leq   (4 d \ell  m )^{12 m \chi  + 20 m },
$$
with $\chi = e - v +2 \geq 0$. 
\end{lemma}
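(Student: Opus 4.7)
The plan is to extend the encoding argument of Lemma \ref{le:enumpath} to the tensor setting by exploiting the fact that each $\gamma_j \in \bm F^{\ell+1}$ decomposes as a pair of tangle-free non-backtracking paths $\gamma_j^-$ and $\gamma_j^+$ in $G^\sigma$ on $X = [n]$. Thus $\gamma = (\gamma_1, \ldots, \gamma_{2m})$ yields $4m$ tangle-free subpaths whose visited vertices and colored edges constitute $G_\gamma = (V_\gamma, E_\gamma)$. Since $G_\gamma$ is the union of the two connected graphs $G_{\gamma^-}$ and $G_{\gamma^+}$ (each connected thanks to the boundary conditions of $\bm W_{\ell,m}$), it has at most two connected components, and so $\chi = e - v + 2 \geq 0$.

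I would explore the edges $[x_{j,t}^\pm, i_{j,t}, x_{j,t+1}^\pm]$ in a fixed total order, say lexicographic in $(\pm, j, t)$, and grow a spanning forest $T$ of $G_\gamma$ by adding each new edge that does not close a cycle. Each step is then classified as a first time (new tree edge), a merging time (the subpath returns to an earlier component), an important time (excess edge), or a repetition of a tree edge. Because each of the $4m$ subpaths $\gamma_j^\pm$ is tangle-free, the classification from Lemma \ref{le:enumpath} applies subpath by subpath: important times split into one short cycling time, at most $\chi - 1$ long cycling times, and superfluous times. The marks attached to these cycling events, namely (color, next vertex, exit vertex) at a long cycling time and the augmented mark $(i, x_{\text{next}}, x_{t_1}, t_2, x_\tau)$ at the short cycling time, together with $4m$ starting marks (one for each $\gamma_j^\pm$), encode $\gamma$ injectively.

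Counting the marks yields: at most $4m$ starting marks with $v^2(\ell+1)$ choices each, at most $4m$ short cycling marks with $dv^3(\ell+1)$ choices each, at most $4m(\chi-1)$ long cycling marks with $dv^2$ choices each, and $(\ell+1)^{4m\chi}$ ways to position the cycling times. Plugging in the crude bound $v \leq 4\ell m$ and collecting exponents gives the claimed bound $(4d\ell m)^{12m\chi + 20m}$, doubling the exponents from Lemma \ref{le:enumpath} exactly as in Lemma \ref{le:enumpathR}, the extra factor of $2$ in the base $4d$ (versus $2d$) absorbing the fact that $\chi$ now accommodates up to two components.

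The main obstacle is verifying that this encoding is still injective when the two coordinates of each $x_{j,t} \in X^2_{\ne}$ are coupled by sharing the color sequence $(i_{j,t})$. This works because the non-backtracking constraint forces each subpath to follow a unique path on the tree between cycling events, so the edge sequences of $\gamma^-$ and $\gamma^+$, and hence the pairs $x_{j,t}$, can be reconstructed from the marks and the canonical ordering. A secondary bookkeeping point is that the canonical form is defined via permutations of $X^2_{\ne}$ rather than of $X$; this poses no problem since $v = |V_\gamma|$ and $e = |E_\gamma|$ remain well-defined invariants of the canonical representative and the encoding is applied to that representative.
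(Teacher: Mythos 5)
Your proposal is correct and takes the same approach as the paper, which proves this lemma by applying the encoding of Lemma \ref{le:enumpath} to the $4m$ tangle-free subpaths $\gamma_j^\pm$ (i.e.\ replacing $2m$ by $4m$), while accounting for the possibility that $G_\gamma = G_{\gamma^-}\cup G_{\gamma^+}$ has two components, so that $\chi = e - v + 2$ and the merging time of $\gamma^+_1$ may be vacuous. One small imprecision in your last paragraph: the enlarged base $4d\ell m$ comes from the cruder vertex bound $v \le 4\ell m$ (since $V_\gamma$ is the union of the two vertex sets, each of size at most $2\ell m + 1$), not from the shift in the definition of the Euler characteristic $\chi$.
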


\begin{proof}
We repeat the proof of Lemma \ref{le:enumpath} where we replace $\gamma$ by the sequence $(\gamma^-,\gamma^+)$, it 
then essentially amounts to replace $2m$ by $4m$ (the merging time of $\gamma^+_{1}$ may be empty if $G_{\gamma^-}$ 
and $G_{\gamma^+}$ are disjoint).  
\end{proof}

There is also an analog of Lemma \ref{le:sumpath}.

\begin{lemma}\label{le:sumpathT} Let $\rho = \rho( B_\star) + \veps$ and any postive integer $k_0$, there exists a constant 
$c >0$ depending on $r$, $d$ and $\veps$ such that for any $\gamma\in \bm \cW_{\ell,m}(v,e)$, 
\begin{equation}\label{eq:sumpathT}
\sum_{\gamma':  \gamma' \sim \gamma} \tr |a (\gamma' ) | \leq  c^{m + \chi + e_1} n^v   \rho_0^{2(\ell m - v) } \rho^{2\ell m},
\end{equation}
where $\chi = e - v +2$, $e_1$ is the number of edges of $\bm E_\gamma$ with multiplicity $(1,1)$ and 
$$
\rho_0 = \max \NRM{ \prod_{s=1}^{k_0} a_{i_s} }^{\frac 1 {k_0}},
$$ 
and the maximum is over all non-backtracking sequences $(i_1, \cdots, i_{k_0})$, that is $i_{s+1} \ne i_{s ^*}$. Moreover, for 
all $k_0$ large enough, we have $\rho_0 \leq \rho$. \end{lemma}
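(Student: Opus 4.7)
My plan is to closely follow the proof of Lemma \ref{le:sumpath}, making targeted modifications for the tensor-product structure. First I observe that the weight $a(\gamma) = a(\gamma^-) = a(\gamma^+)$ depends only on the color sequence, so all $\gamma' \sim \gamma$ in the equivalence class carry the same weight up to local color relabeling at each vertex. The equivalence class is parameterized by an injection $V_\gamma \hookrightarrow [n]$ together with per-vertex color permutations, and this will supply the $n^v$ factor in the final bound.

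I then analyze the shadow graph $G_\gamma = G_{\gamma^-} \cup G_{\gamma^+}$ on $X$. Since each $G_{\gamma^\pm}$ is connected (as $\gamma^\pm$ is itself a non-backtracking walk in $E$), $G_\gamma$ has at most two connected components, which is precisely what justifies the definition $\chi = e - v + 2 \geq 0$. Following the scalar proof, I define $V'_\gamma \subseteq V_\gamma$ as the union of vertices of degree $\geq 3$ in $G_\gamma$ together with the $8m$ endpoints $\{x_{j,1}^\pm, x_{j,\ell+1}^\pm : j \in [2m]\}$. A degree count, now accounting for up to two components and $8m$ endpoints (four per $\gamma_j$, two from each shadow), gives $|V'_\gamma| \leq 2\chi + 8m$, and the contracted graph $G'_\gamma = (V'_\gamma, E'_\gamma)$ satisfies $|E'_\gamma| \leq 3\chi + O(m)$.

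Each shadow walk $\gamma_j^\pm$ is non-backtracking and tangle-free on $G'_\gamma$, so it admits the same type-(i)/type-(ii) subpath decomposition as in Lemma \ref{le:sumpath}. Bounding $\NRM{\prod_s a_{i_s}}^2$ on type-(i) subpaths (with $\veps^{-k}$ correction factors for edges of shadow-multiplicity one, which correspond to $\bm E_\gamma$-edges of multiplicity $(1,1)$) and $\rho_0^k$ on type-(ii) subpaths through block decomposition of size $k_0$, I obtain
$$
\NRM{a(\gamma)} \leq c^{m+\chi+e_1} \, \rho_0^{2(\ell m - e)} \prod_{[\pi] \in E'_\gamma} \NRM{\prod_s a_{i_s}}^2,
$$
where the doubled boundary correction $\veps^{-8m}$ is absorbed into $c^m$.

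To conclude, I sum over the equivalence class, contributing $n^v$, and apply the Haagerup-type bound from Lemma \ref{le:contrho}, $\sum \NRM{\prod_s a_{i_s}}^2 \leq c \rho^{2k}$ over length-$k$ non-backtracking color sequences, which after multiplication over edges of $E'_\gamma$ yields $c^{|E'_\gamma|} \rho^{2e}$. Combining with $\rho_0 \leq \rho$ (for $k_0$ large) and rewriting $\rho_0^{2(\ell m - e)} \rho^{2e}$ via $e \leq v + \chi$ and the uniform bound $\rho, \rho_0 \leq c(\veps)$, I crudely upper-bound by $\rho_0^{2(\ell m - v)} \rho^{2\ell m}$ with the residual $(\rho/\rho_0)^{2\chi}$ factor absorbed into $c^{m+\chi+e_1}$. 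The main obstacle is the bookkeeping: one must reconcile the shadow-level multiplicity on $E_\gamma$ with the pair-valued $\bm E_\gamma$-multiplicity appearing in $e_1$, and ensure that the two-component structure of $G_\gamma$ is handled consistently in both the degree count giving $|V'_\gamma| \leq 2\chi + 8m$ and the Euler-characteristic calculation yielding $|E'_\gamma| = |V'_\gamma| + \chi - 2$. These compatibility issues are exactly what force the coarser exponent $\rho^{2\ell m}$ in place of the sharper scalar exponent $\rho^{2v}$, but this looser bound is harmless for the subsequent application in Proposition \ref{prop:normBT}.
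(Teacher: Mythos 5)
Your proposal takes a genuinely different route from the paper: you work throughout with the shadow graph $G_\gamma = G_{\gamma^-}\cup G_{\gamma^+}$ on $[n]$ and its contraction $G'_\gamma$, whereas the paper's proof first builds a \emph{quotient} graph $\widetilde{G_\gamma}$ of $\bm G_\gamma$ (the graph on $X^2_{\ne}$), obtained by iteratively merging vertices of $\bm V_\gamma$ that share a coordinate, then runs the scalar Lemma~\ref{le:sumpath} argument on $\widetilde\gamma$ and $\widetilde{G_\gamma}$ together with the inequalities $\widetilde\chi\leq\chi$ and $\widetilde e_1\leq e_1$. The shadow-graph route is the more natural first guess, but it runs into the precise obstacle that the $\widetilde{G_\gamma}$ construction is designed to remove, and that obstacle is not overcome in your sketch.

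The gap is in the parenthetical claim that ``edges of shadow-multiplicity one $\ldots$ correspond to $\bm E_\gamma$-edges of multiplicity $(1,1)$.'' This is not a correspondence: a shadow edge of multiplicity one may be paired, within its unique $\bm E_\gamma$-edge, with a partner shadow edge of arbitrarily high multiplicity, in which case the $\bm E_\gamma$-edge is \emph{not} of multiplicity $(1,1)$ and is not counted in $e_1$. The paper makes exactly this warning and exhibits Figure~\ref{fig:impoT}, where every $\bm E_\gamma$-edge is traversed once but none is of multiplicity $(1,1)$ (they are all $(2,1)$); there the shadow-multiplicity-one count is $4$ while $e_1=0$. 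More generally (take $\gamma^-$ winding on a short segment while $\gamma^+$ traverses a long simple path), the number of shadow-multiplicity-one edges can exceed $e_1$ by $\Theta(\ell m)$, which is far beyond the $O(m+\chi)$ slack your argument could absorb. Consequently the accumulated $\veps^{-k}$ corrections in your type-(i) estimate give $\veps^{-e_1^{\mathrm{shadow}}}$ rather than $\veps^{-e_1}$, and the claimed bound $c^{m+\chi+e_1}\rho_0^{2(\ell m-e)}\prod_{[\pi]}\NRM{\prod_s a_{i_s}}^2$ does not follow.

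There is a second structural problem in the decomposition step. Each $a(\gamma_j)$ has exactly $\ell$ matrix factors, one per traversal of a $\bm E_\gamma$-edge, and $a(\gamma_j^-)=a(\gamma_j^+)=a(\gamma_j)$. Decomposing both shadow walks $\gamma_j^\pm$ as you propose double-counts these $\ell$ factors (the shadow graph is traversed $4\ell m$ times while $a(\gamma)$ has only $2\ell m$ factors); decomposing only one of them leaves the type-(i)/(ii) classification of edges of $G'_\gamma$ blind to the other shadow's visits, so the multiplicity bookkeeping is again wrong. The paper's $\widetilde{G_\gamma}$ resolves both issues at once: $\widetilde\gamma$ traverses $\widetilde{G_\gamma}$ exactly $2\ell m$ times, in bijection with the matrix factors, and the quotient construction forces $\widetilde e_1\leq e_1$. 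Your closing paragraph correctly flags ``reconcile the shadow-level multiplicity on $E_\gamma$ with the pair-valued $\bm E_\gamma$-multiplicity'' as the main obstacle, but the proposal does not supply the device that makes this reconciliation possible.
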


\begin{proof}
In order to adapt the proof of Lemma \ref{le:sumpath}, we may consider the graph $\bm G_\gamma$ with vertex set 
$\bm V_\gamma$ and colored edges $\bm E_\gamma$. An issue is that edges visited once on this graph by $\gamma$ are 
not necessarily edges of multiplicity $(1,1)$: for example, in Figure \ref{fig:impoT} all edges of $\bm E_\gamma$ are visited 
exactly once but none of them is of multiplicity $(1,1)$.

To circumvent this difficulty, we introduce a new graph.  Consider the following equivalence class on $\bm V_{\gamma}$: 
$x \sim x'$  if there exists a sequence $y_0,\ldots,y_t$ in $\bm V_\gamma$ such that $y_0 = x$, $y_t = x'$ and for all $s \in [t]$, $\{y^-_{s-1},y^+_{s-1} \} \cap \{y^-_{s},y^+_{s} \} \ne \emptyset$  
(in words, we glue iteratively together elements of $\gamma$ which share some common vertices of $G_\gamma$).  
If $\widetilde{V_\gamma}$ is the set of equivalence classes for this equivalence relation, we may define the graph 
$\widetilde{G_\gamma}= (\widetilde{{V_\gamma}}, \widetilde{ E_\gamma})$ as the quotient graph of $\bm G_{\gamma}$. 
More precisely, $\widetilde{E_\gamma}$ is obtained from $\bm E_{\gamma}$ by identifying two edges $[u,i,u']$ and $[v,i,v']$  if there exists a sequence $[x_0,i,y_0],\ldots,[x_t,i,y_t]$ in $\bm E_\gamma$ such that $(x_0,i,y_0) = (u,i,u')$, $(x_t,i,y_t)= (v,i,v')$ and for all $s \in [t]$, $\{(x^-_{s-1},i,y^-_{s-1}),(x^+_{s-1},i,y^+_{s-1}) \} \cap \{(x^-_{s},i,y^-_{s}),(x^+_{s},i,y^+_{s}) \} \ne \emptyset$. For example if  $\gamma  \in \bm  E^5$ is the path of Figure \ref{fig:impoT}, then $\widetilde{V_\gamma}$ has a single vertex with two colored loops attached. If $\widetilde e = | \widetilde E_\gamma|$, $\widetilde v = | \widetilde V_\gamma|$ and 
$\widetilde \chi = \widetilde e - \widetilde v +1$,  by iteration on the successive gluings of vertices of $G_\gamma$, we find easily that $\widetilde \chi \leq \chi$.

Similarly, we define $\widetilde \gamma = ((\widetilde x_{j,t} , i_{j,t} ))_{j,t}$ from the original path 
$\gamma = ((x_{j,t} , i_{j,t} ))_{j,t}$. Then, by construction, the number of edges of multiplicty $1$ in 
$\widetilde \gamma$, say $\widetilde e_1$, is at most $e_1$, the number of edges of muliplicity $(1,1)$ in $\gamma$.
We may then simply repeat the proof of Lemma \ref{le:sumpath}  with $\widetilde \gamma$ and $\widetilde G_\gamma$ 
in place of $\gamma$ and $G_\gamma$.  
\end{proof}

We finally give the analog of Lemma \ref{le:isopath} for $w(\gamma)$ defined below \eqref{eq:trBl2T}. 

\begin{lemma}\label{le:isopathT}
There exists a constant $c >0$ such that for any $\gamma\in \bm \cW_{\ell,m}(v,e)$ and $8 \ell m \leq  n ^{1/3}$, 
$$
 | w (\gamma) | \leq  c^{m + \chi}  \PAR{ \frac{1} {n} }^e  \PAR{\frac{ 12 \ell m }{ \sqrt n} } ^{  (e_1 - 4 \chi - 8  m)_+}.  
$$
with $\chi = e - v +2$ and $e_1$ is the number of edges of $\bm E_\gamma$ with multiplicity $(1,1)$. Moreover, 
$$
e_1 \geq 2 ( e -  \ell m)  - 4 \chi - 8m. 
$$
\end{lemma}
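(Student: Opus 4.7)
The plan is to mirror the proof of Lemma \ref{le:isopath}, with Proposition \ref{prop:exppathT} playing the role of Proposition \ref{prop:exppath}. The essential structural difference is that each tensor path $\gamma_j$ decomposes into two scalar non-backtracking tangle-free paths $\gamma_j^-$ and $\gamma_j^+$ in $G_\gamma$, so the family $(\gamma_j^\pm)_{j \in [2m], \pm}$ provides $4m$ scalar tangle-free paths — twice as many as in the scalar case. This doubling propagates through the combinatorial counts and accounts, in particular, for the change from $4\chi + 4m$ in the scalar case to $4\chi + 8m$ here.

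I would first establish the ``moreover'' lower bound on $e_1$. The total number of tensor-edge traversals is $\tau = 2\ell m$, so $\sum_{F \in \bm E_\gamma} \bm m_F = 2\ell m$, and the standard counting argument used in Lemma \ref{le:isopath} (separating edges by multiplicity and using $\sum_j j \bm e_j = \tau$) yields a bound of the desired form once one accounts for the gap between the condition of multiplicity $(1,1)$ and the weaker condition $\bm m_F = 1$. This gap arises from tensor edges $F$ with $\bm m_F = 1$ whose scalar projections $F^-$ or $F^+$ coincide with a projection of another tensor edge, and bounding this discrepancy against $4\chi + 8m$ uses the canonical-path / spanning-forest structure established in the proof of Lemma \ref{le:enumpathT}, exploiting the fact that $\gamma$ is a concatenation of $4m$ scalar tangle-free paths glued by the boundary conditions.

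For the main bound on $|w(\gamma)|$, I would apply Proposition \ref{prop:exppathT} to the concatenated sequence $(f_t)_{t \in [\tau]}$ with $\tau_0 = \tau$, obtaining
$$ |w(\gamma)| \leq c \cdot 9^b \PAR{\frac{1}{n}}^e \PAR{\frac{6\tau}{\sqrt n}}^{e_1^{\mathrm{prop}}}, $$
where $e_1^{\mathrm{prop}}$ counts steps at which $f_t$ is consistent and has multiplicity $(1,1)$. Since multiplicity $(1,1)$ forces $\bm m_F = 1$, each such tensor edge is encountered at a unique step, so $e_1^{\mathrm{prop}}$ equals the number of consistent edges of $\bm E_\gamma$ of multiplicity $(1,1)$, giving $e_1^{\mathrm{prop}} \geq e_1 - b$. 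The number of inconsistent edges satisfies $b \leq 4\chi + 8m$ by the same canonical-path reasoning as in the scalar proof: each of the $4m$ scalar paths $\gamma_j^\pm$ can produce at most $2$ inconsistent edges at its starting and merging times (total $\leq 8m$), and each of the $\chi$ excess edges of $G_\gamma$ contributes at most $2$ upon its first visit (total $\leq 4\chi$). Absorbing $9^{4\chi + 8m}$ into a prefactor of the form $c^{m+\chi}$ and using $6\tau = 12 \ell m$ yields the claimed bound.

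The main obstacle will be the ``moreover'' part: whereas the scalar analog $e_1 \geq 2(e - \ell m)$ in Lemma \ref{le:isopath} is immediate from a single inequality on scalar multiplicities, the tensor version is more delicate because multiplicity $(1,1)$ on a tensor edge is strictly stronger than the edge appearing only once in $\gamma$, and controlling the shared-projection correction in terms of $\chi$ and $m$ requires exploiting the tangle-free structure of each $\gamma_j^\pm$ and the way the $4m$ scalar paths are glued.
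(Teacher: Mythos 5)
Your treatment of the main bound on $|w(\gamma)|$ is essentially the paper's: apply Proposition \ref{prop:exppathT} to the concatenated sequence of $\tau = 2\ell m$ tensor edges with $\tau_0 = \tau$, note that each multiplicity-$(1,1)$ edge of $\bm E_\gamma$ is visited exactly once so the proposition's $e_1$-exponent is at least $e_1 - b$, bound the number of inconsistent edges by $b \leq 4\chi + 8m$ via the canonical-path argument (doubled because $(\gamma^-,\gamma^+)$ is a family of $4m$ scalar tangle-free paths), and absorb $9^b$ into $c^{m+\chi}$. That part is sound.

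The gap is the ``moreover'' inequality, and you have correctly diagnosed \emph{where} the difficulty lies but you have not closed it. Saying that the discrepancy between ``$\bm m_F = 1$'' and ``multiplicity $(1,1)$'' should be ``bounded against $4\chi + 8m$ using the canonical-path structure'' is a plan, not a proof: the scalar-projection coincidences that make a once-visited tensor edge fail to have multiplicity $(1,1)$ are not directly localized at excess edges or merging times of the spanning-forest exploration, so the scalar argument does not transfer by simple bookkeeping. The device the paper actually uses — and which you do not invoke — is the quotient graph $\widetilde G_\gamma$ introduced in the proof of Lemma \ref{le:sumpathT}: one glues together vertices of $\bm V_\gamma$ that share a scalar coordinate, obtaining a graph whose edges of multiplicity one are automatically of tensor multiplicity $(1,1)$ (so $\widetilde e_1 \leq e_1$), and then runs the scalar counting $\widetilde e_1 + 2\widetilde e_{\geq 2} \leq 2\ell m$ on the quotient. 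This construction is the one piece of machinery your proposal is missing, and without it (or a genuine substitute) the ``moreover'' part is unproved. (It is worth noting that the paper's own write-up of this step is itself terse — it contains a dangling equation reference — so a careful reconstruction of the quotient-graph argument is exactly what the lemma demands.)
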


\begin{proof}
Consider the graph $\widetilde G_\gamma$ defined in Lemma \ref{le:sumpathT}. Let $\widetilde e_1$ be the the number 
of edges of multiplicty $1$ in $\widetilde \gamma$. Arguing as in the proof of Lemma \ref{le:isopath}, 
$\widetilde e_1 \geq 2(e - \ell m)$. The last statement is thus a consequence of \eqref{eq:te1}. The first statement is 
a consequence   Proposition \ref{prop:exppathT} and that the number of inconsistent edges of $\gamma$ is at most in 
$4 \chi + 8m$ (as already used in the proof of Lemma \ref{le:sumpathT}). 
\end{proof}

All ingredients are in order to prove Proposition \ref{prop:normBT}.

\begin{proof}[Proof of Proposition \ref{prop:normBT}]
For $n \geq 3$, we define 
\begin{equation*}\label{eq:choicemT}
m = \left\lfloor  \frac{ \log n }{25 \log (\log n)} \right\rfloor.
\end{equation*}
We may then repeat the proof of Proposition \ref{prop:normB} with the exponents slightly modified. \end{proof}

\subsection{Proof of Theorem \ref{th:mainBT}}
\label{subsec:netT}

With Proposition \ref{prop:normBT} and Proposition \ref{prop:normRT}, The net argument used in Subsection \ref{subsec:net} to prove Theorem \ref{th:mainB} can be applied exactly in the same way to prove Theorem \ref{th:mainBT}.

\section{Proof of Theorem \ref{thm:strong} and Theorem \ref{thm:strongT}}
\label{sec:cor}

The proof of Theorem \ref{thm:strong} and Theorem \ref{thm:strongT} has become standard in the last decade, it is 
based on the the  linearization trick. Let us outline it here. 
Let $U=(U_1, \ldots , U_d)$ be unitary elements of a unital $C^*$-algebra $A$ and $V=(V_1, \ldots , V_d)$ be unitary  elements of a unital 
$C^*$-algebra $B$. Let $P$ be a non-commutative polynomial in $d$ free variables and their adjoints.
Then, from Pisier \cite[Proposition 6]{MR1401692}, the following are equivalent, 
\begin{enumerate}[(i)]
\item For any $P$, $||P(U)||=||P(V)||$.
\item For any polynomial $P$ with \emph{matrix coefficients} with matrices of any size, $||P(U)||=||P(V)||$.
\item For any integer $r$, and $r\times r$ matrices $a_0,\ldots, a_d$, 
$$||a_0\otimes 1+ a_1\otimes U_1+ \ldots + a_d\otimes U_d||=||a_0\otimes 1+ a_1\otimes V_1+ \ldots + a_d\otimes V_d||.$$
\item 
For any integer $r$, and $r\times r$ matrices $a_0,\ldots, a_d$ such that 
$a_0\otimes 1+ a_1\otimes U_1+ \ldots + a_d\otimes U_d$ and $a_0\otimes 1+ a_1\otimes V_1+ \ldots + a_d\otimes V_d$
are self-adjoint, 
$$||a_0\otimes 1+ a_1\otimes U_1+ \ldots + a_d\otimes U_d||=||a_0\otimes 1+ a_1\otimes V_1+ \ldots + a_d\otimes V_d||.$$
\end{enumerate}
Here, all norms are $C^*$-algebra norms. For a closely related linearization trick, we refer to \cite[p256]{MR3585560}  (Exercise 1 following  Proposition 4). 

Back to our restricted permutation matrices $(S_i)_{| \IND^\perp}$ and $(S_i \otimes S_i)_{|V^\perp}$,  the important point is 
that instead of treating general non-commuting polynomials in  $(S_i)_{| \IND^\perp}$ (resp.  $(S_i \otimes S_i)_{|V^\perp}$) as
in (i) and (ii), we treat degree one self-adjoint polynomials with matrix values as in (iv).  In Theorem \ref{th:main}, Theorem 
\ref{th:mainT}, we prove the asymptotic convergence of operator norms of operators of the form \eqref{eq:defA} for any integer 
$r$, and $r\times r$ matrices $a_0,\ldots, a_d$,  therefore by the above criterion (iv), it implies the result for any polynomial $P$ 
 in $(S_i)_{| \IND^\perp}$ (resp.  $(S_i \otimes S_i)_{|V^\perp}$)
with matrix coefficients, as in (ii).

\section{Proofs of auxiliary results}

\label{sec:aux}
\subsection{Proof of Proposition \ref{prop:inclusion}}

We note that if the matrices $a_i$ have non-negative entries then an analog of the Alon-Boppana lower bound 
\eqref{eq:AB} holds in this context, see \cite{CECCHERINISILBERSTEIN2004735}. The proof of Proposition 
\ref{prop:inclusion} is based on the notion of spectral measure. If $A$ is of the form \eqref{eq:defA} and the symmetry 
condition \eqref{eq:sym} holds, then $A$ is a bounded self adjoint operator. If $\phi \in  \dC^r \otimes \ell^{2} (X)$, we denote by 
$\mu^\phi_A$ the spectral measure of $A$, that is 
$$
\mu^\phi_ A (\cdot) = \langle \phi , E (\cdot ) \phi \rangle.
$$ 
where $E$ is the spectral resolution of the identity of $A$.
We have for any integer $k \geq 0$, 
\begin{equation}\label{eq:momsm}
\int \lambda^k d \mu_A^\phi ( \lambda) = \langle \phi, A^k \phi \rangle. 
\end{equation}
If $x \in X$, we also define the spectral measure 
$$
\mu_A ^{x} (\cdot )  =  \frac 1 r \tr \BRA{ E(\cdot)_{xx} }= \frac 1 r \sum_{j=1}^{r} \mu_A^{f_j \otimes \delta_x} (\cdot),
$$ 
where we used the notation \eqref{eq:matrixvalued} for $E(\cdot)_{xx}$  and $(f_1, \ldots, f_r)$ is an orthonormal basis of 
$\dC^r$. 
Moreover if $X = [n]$ is finite and $(\psi_1, \ldots, \psi_{nr})$ is an orthonormal basis of eigenvectors of $A$ with 
eigenvalues $\lambda_1, \ldots, \lambda_{nr}$, we have
$$
\mu_A ^x = \frac 1 {r} \sum_{k=1}^{nr}  \| (\psi_k)(x) \|^2 _2 \delta_{\lambda_k}.
$$
where $\psi(x) \in \dC^r$ is the projection of $\psi$ on  $\dC^r \otimes \{ \delta_x \}$. Recall the orthogonal decomposition of 
$H_0 \oplus H_1$ of $\dC^r \otimes \ell^2(X)$. Let us assume without loss of generality that $\psi_1, \ldots , \psi_r$ is an  
orthonormal basis  of $H_1$. Then, for $1 \leq k \leq r$, $\psi_k = (f_k \otimes \IND) / \sqrt n $  where $(f_1, \ldots, f_r)$ is an
 orthonormal basis of $\dC^r$. We set 
$$
\mu_{A_{|H_0}} ^x = \frac 1 {r} \sum_{k=r+1}^{nr}  \| (\psi_k)(x) \|^2 _2 \delta_{\lambda_k}.
$$
It follows that 
$$
| \mu_A ^x  - \mu^x_{A_{|H_0}} | (\dR) = \frac 1 {r} \sum_{k=1}^{r}  \| (\psi_k)(x) \|^2 _2 = \frac 1 n,
$$
and by construction 
$$
\SUPP (\mu_{A_{|H_0}}^x ) \subset \sigma(A_{|H_0}).
$$
We readily deduce the following lemma. 
\begin{lemma}\label{le:projdede}
Assume that $X = [n]$ and that the symmetry condition \eqref{eq:sym} holds. If $f: \dR \to \dR$ is a function uniformly 
bounded by $1$ and $\int f d\mu_A^x > 1/n$ then $\sigma( A_{|H_0}) \cap \SUPP ( f) \ne 0$. 
\end{lemma}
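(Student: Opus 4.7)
The plan is to exploit directly the two observations made in the paragraph preceding the lemma: the signed measure $\mu_A^x - \mu_{A_{|H_0}}^x$ has total variation exactly $1/n$, and the support of $\mu_{A_{|H_0}}^x$ is contained in $\sigma(A_{|H_0})$. Together these reduce the statement to a one-line triangle inequality.

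Concretely, I would write $\int f \, d\mu_A^x = \int f \, d\mu_{A_{|H_0}}^x + \int f \, d(\mu_A^x - \mu_{A_{|H_0}}^x)$. Using $\|f\|_\infty \leq 1$ and the total variation bound $|\mu_A^x - \mu_{A_{|H_0}}^x|(\dR) = 1/n$ established just above the lemma, the second term is bounded in absolute value by $1/n$. Combined with the hypothesis $\int f \, d\mu_A^x > 1/n$, this forces
\begin{equation*}
\int f \, d\mu_{A_{|H_0}}^x \;>\; \frac{1}{n} - \frac{1}{n} \;=\; 0.
\end{equation*}

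In particular the integral is strictly positive, so $f$ cannot vanish identically on $\SUPP(\mu_{A_{|H_0}}^x)$. Hence there exists $\lambda \in \SUPP(\mu_{A_{|H_0}}^x)$ with $f(\lambda) \neq 0$, which means $\lambda \in \SUPP(f)$. Invoking the inclusion $\SUPP(\mu_{A_{|H_0}}^x) \subset \sigma(A_{|H_0})$, we conclude $\lambda \in \sigma(A_{|H_0}) \cap \SUPP(f)$, as required.

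There is essentially no obstacle here: the whole content of the lemma is packaged in the identities for $\mu_A^x$ and $\mu_{A_{|H_0}}^x$ derived from the spectral decomposition of $A$, which have already been carried out in the excerpt. The only subtle point worth a brief mention is that $\SUPP(f)$ must be interpreted as the closure of $\{\lambda : f(\lambda) \neq 0\}$ (so that $f(\lambda)\neq 0$ implies $\lambda \in \SUPP(f)$), which is the standard convention and is what makes the final line go through without any further work.
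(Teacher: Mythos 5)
Your proof is correct and is exactly the "readily deduced" argument the paper intends: split $\int f\,d\mu_A^x$ into the $\mu_{A_{|H_0}}^x$ part plus the remainder, bound the remainder by the total variation $1/n$ computed just above the lemma, conclude $\int f\,d\mu_{A_{|H_0}}^x>0$, and use $\SUPP(\mu_{A_{|H_0}}^x)\subset\sigma(A_{|H_0})$. Nothing to add.
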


We may now prove Proposition \ref{prop:inclusion}. 
\begin{proof}[Proof of Proposition \ref{prop:inclusion}]
Since $\sigma(A_\star)$ is compact, it is sufficient to prove that for any $\lambda  \in \sigma(A_\star)$ there exists an integer 
$h = h(\veps,\lambda)$ such that if $(G^\sigma,x)_{h }$ contains no cycle for some $x \in [n]$ then
$
 \sigma (A_{|H_0}) \cap [\lambda -\veps ,\lambda + \veps]\ne 0. 
$ 

Let $S = \{ g_i: i \in [d] \}$ be the symmetric generating set of the free group $X_\star$ and let $o$ be its unit. Let $M$ be a 
bounded operator on $\dC^r \otimes \ell^2 ( X_ \star )$ in the $C^\star$-algebra generated by finite linear combinations of 
operators of the form $b \otimes \lambda(g)$. We introduce the standard tracial state $\tau$ defined in \eqref{eq:deftau} 
(with $r$ instead of $k$). Then 
$$
\int \lambda^k d \mu^{o} _{A_\star} (\lambda)  =\tau (A^k_\star)
$$
and $$\SUPP ( \mu_{A_\star}^o) = \sigma(A_\star).$$ In particular, if $\lambda \in  \sigma(A_\star)$ and 
$f (x) = \max(0,1 - |x  -  \lambda| / \veps)$, we have
$$
\eta =  \int f \, d\mu_{A_\star}^o > 0. 
$$
Set $T = \sum_i \|a_i\|$ and $I = [ - T , T ]$. From Stone-Weierstrass Theorem, there exists a polynomial $p$ of degree $m$ 
such that for any $x \in I$,
$$
| f (x) - p(x) | < \eta / 4. 
$$
Since the norms of $A$ and $A_\star$ are bounded by $T$, we deduce that 
$$
\ABS{ \int f \,  d \mu^x_A  - \int f\,  d \mu^{o} _{A_\star} } < \ABS{ \int p \, d \mu^x_A  - \int p \, d \mu^{o} _{A_\star} } + \eta / 2. 
$$
However, from \eqref{eq:momsm}, $
 \int \lambda^ k \, d \mu^x_A (\lambda)
$ is a function of $(G^\sigma,x)_h$. More precisely, we have 
$$
 \int \lambda^ k \, d \mu^x_A (\lambda) = \frac 1 r  \sum_{\gamma} \tr \prod_{t=1}^{k} a_{i_t}, 
$$
where the sum is over all  closed paths $\gamma = (x_1, i_1, x_2, i_2, \ldots, x_{k+1})$ in $G^\sigma$ of length $k$ with 
$x_1 = x_{k+1} = x$. We deduce that if $(G^\sigma,x)_h$ contains no cycle then, for any integer $0 \leq k \leq 2h$, 
$$
 \int \lambda^ k \, d \mu^x_A (\lambda) = \int \lambda^k  \, d \mu^{o} _{A_\star} (\lambda).
$$
Hence, if $2h \geq m$, we obtain
$$
\int p \, d \mu^x_A  = \int p \, d \mu^{o} _{A_\star} ,
$$
and consequently
$$
 \int f \,  d \mu^x_A  >  \int f\,  d \mu^{o} _{A_\star}   - \ABS{ \int p \, d \mu^x_A  - \int p \, d \mu^{o} _{A_\star} } - \eta / 2 > \eta / 4. 
$$
If $n \geq 4 / \eta$ then we may conclude using Lemma \ref{le:projdede} (note that the condition $n \geq \eta / 4$ is included in the condition $(G^\sigma,x)_{\lceil \eta / 4 \rceil}$ contain no cycle).
\end{proof}

\subsection{Proof of Theorem \ref{th:PF}}

\begin{proof}
Let us give a proof of \eqref{PF2} which is the result that we have actually used. We only prove \eqref{PF1} when $L$ itself is 
positive semi-definite. We may argue as in Gross \cite[Theorem 2]{MR0339722}. From Lemma \ref{le:stabnn}, $L$ maps 
hermitian and skew-hermitian matrices to hermitian and skew-hermitian matrices.  We deduce that if $\lambda$ is a (non-
negative) eigenvalue of $L$ with eigenvector $x$ then both its hermitian and skew-hermitian parts are eigenvector. At least one 
of the two parts is non-trivial and it follows that there exists an hermitian eigenvector $x$ with eigenvalue $\rho$. We write 
$x = a - b $ with $a,b$ positive semi-definite. We have $|x| = a +b$ and
\begin{align*}
\rho \langle |x| , |x| \rangle & =  \rho \tr ( x^2 ) = \langle   Lx, x \rangle \\
& = \langle L  a ,  a  \rangle + \langle L b  ,  b \rangle - 2   \langle L a  ,  b \rangle \\
& \leq  \langle L  a ,  a  \rangle + \langle L b  ,  b \rangle + 2   \langle L a  ,  b \rangle = \langle L |x| , |x| \rangle, 
\end{align*}
where at the last line, we have used that if $a,c$ are positive semi-definite, then $c^{1/2} a  c^{1/2}$ is positive semi-definite 
and $\langle a , c \rangle = \tr ( a c) = \tr (  c ^{1/2}  a  c^{1/2}) \geq 0$. Since $\rho$ is the operator norm of $L$, we get 
$$
\rho \langle |x| , |x| \rangle \leq \langle L |x| , |x| \rangle \leq \rho \langle |x| , |x| \rangle.
$$
Hence $\langle (\rho - L) |x| , |x| \rangle =0$. Since $\rho - L $ is positive semi-definite, we thus have proved that $|x|$ is 
an eigenvector of $L$ with eigenvalue $\rho$. It concludes the proof of \eqref{PF1} when $L$ is positive semi-definite.

We may then prove \eqref{PF2}. From Lemma \ref{le:stabnn}, $ (L^n )^* L^n $ is positive semi-definite and of negative type. 
Let $\rho_n^{2n}$ be the operator norm of $(L^n )^* L^n $, Gelfand's formula implies that $\rho_n$ converges to $\rho$. 
Moreover, from what precedes, $ (L^n )^* L^n $ has a positive semi-definite eigenvector $y_n$ with $\| y_n \|_2 = 1$ with
 eigenvalue $\rho_n^{2n}$.  From the spectral theorem, we have
\begin{equation}\label{eq:lnlnx}
\rho_{n}^{2n} \| x \|^2_{2} \geq \| L^n x \|_{2}^2 = \langle x , (L^n )^* L^n  x \rangle \geq \rho_{n}^{2n} \ABS{\langle x , y_n \rangle}^2.
\end{equation}
However, since for any positive semi-definite $x,y$, $\tr ( x y )  = \tr ( y^{1/2} x y^{1/2} ) \leq \tr ( x ) \| y \|$ (where $\| y \|$ is the 
operator norm), we deduce that 
$$
1 = \tr ( y_n^2  ) \leq \tr ( y_n  )  =  \tr (x^{-1/2} x^{1/2} y_n x^{1/2} x^{-1/2} ) \leq \tr (  x^{1/2} y_n x^{1/2} ) \| x^{-1} \| = \langle x , y_n \rangle \| x^{-1} \| .  
$$
Hence $\ABS{\langle x , y_n \rangle}^2$ is lower bounded uniformly in $n$ by some $\delta > 0$. Taking the power $1/(2n)$ in 
\eqref{eq:lnlnx} concludes the proof of \eqref{PF2}.
\end{proof}

\subsection{Proof of Lemma \ref{le:binT}}

\begin{proof}
We adapt the proof of \cite[Lemma 9]{bordenaveCAT}. Let $f(x) =  \dE (-1)^N \prod_{n=0}^{2N-1} \PAR{ 1/ \sqrt{x} - z n}$,  $\delta  = - z \sqrt q $ and $\veps = (1 -  p - p/q)/ ( 1 - p)$.  We write
$$
f(q) = \sum_{t = 0} ^k {k \choose t} p^{t} ( 1- p)^{k-t} (-1)^t \prod_{n=0}^{2t-1} \PAR{ \frac 1 {\sqrt{q}} - zn  }  = \PAR{ 1 - p}^k   \sum_{t = 0} ^k {k \choose t} ( -1 + \veps)^{t}  \prod_{n=0}^{2t-1} \PAR{ 1 + \delta n}.$$

We will use that by assumption, $p \leq 1/4$ and $|\veps| \leq  (4/3) ( 1 / \sqrt 8 ) \leq 1/2$.  We write
$$
  \prod_{n=0}^{2t-1} \PAR{ 1 + \delta n} = 1 + \sum_{s=1}^{2t-1} \delta^s \sum_{(s)}  \prod_{i=1}^s n_i  = 1 +  \sum_{s=1}^{2t-1} \delta^s P_s(2t),
$$
where $\sum_{(s)}$ is the sum over all $(n_i)_{1\leq i \leq s}$ all distinct and $1 \leq n_i \leq 2t-1$. We observe that 
$t \mapsto P_s(t)$ is a polynomial of degree $2s$ in $t$, which vanish at integers $0 \leq t \leq s$  and for integer $t \geq s+1$, 
$0 \leq P_s (t) \leq ( \sum_{1 \leq n \leq t-1} n )^s \leq (t^{2}/2)^ s$.  Setting $P_0 (t) =1$, we have
\begin{eqnarray}\label{eq:fpabs}
|f(q)| \leq \ABS{  \sum_{s=0}^{2k-1}   \delta^s   \sum_{t=0} ^k  {k \choose t} (-1+\veps)^{t}  P_s(t)}. 
\end{eqnarray}
We may then repeat verbatim the proof of \cite[Lemma 9]{bordenaveCAT}. In \eqref{eq:fpabs}, for large values of $s$ we 
have the rough bound 
\begin{eqnarray*}
  \sum_{s =  \lfloor \frac{k -1}{2} \rfloor +1 }^{2k-1}   | \delta| ^s   \sum_{t=0} ^k  {k \choose t} \ABS{ -1 +  \veps}^t \ABS{  P_s(2t)}  \leq    \sum_{s =   \lfloor \frac{k -1}{2} \rfloor +1}^\infty | \delta| ^s  3 ^{k} k^{2s}2 ^{s} 
 \leq   2 . 3^k \PAR{ 2 | \delta|  k^2 }^{\frac k 2} = 2 ( 3 \sqrt 2  k  z^{1/2} q^{1/4})^k,
\end{eqnarray*}
where we have used that $|1 - \veps| \leq 2$, $\sum_{t=0}^k {k \choose t} 2^t  = 3^k$, $\ABS{  P_s(2t)} \leq (2 k^{2}) ^s$ and, 
at the third step, that $2 |\delta| k ^2 = 2 z  k ^2 \sqrt q \leq 1/2$ and $\sum_{s \geq r}  x^k \leq 2x^r$ if $0 \leq x \leq 1/2$.

For $1 \leq s \leq  \lfloor (k -1)/2 \rfloor$, there are some algebraic cancellations in \eqref{eq:fpabs}. Consider the derivative 
of order $m$ of $( 1+ x)^k = \sum_{t=0}^{k} {k \choose t} x^t$.  It vanishes at $x = -1$ for any $0 \leq m \leq k-1$. We get that 
for any $0 \leq m \leq k-1$,
$$
0 =  \sum_{t=0}^{k} {k \choose t} (-1)^t (t)_m.
$$
Since $Q_m ( x )  = (x)_m$ is a monic polynomial of degree $m$, $Q_0, \ldots , Q_{k-1}$ is a basis of $\dR_{k-1}[x]$, the 
real polynomials of degree  at most $k-1$. Hence, by linearity that for any  $P \in \dR_{k-1} [x]$, 
\begin{equation}\label{eq:PPPP}
0 =  \sum_{t=0}^{k} {k \choose t} (-1)^t P(t).
\end{equation}

Now, using again the binomial identity, we write
$$
\PAR{1 - \veps}^t  = \sum_{r=0}^{t} (-\veps)^r {t \choose r} =  T_{k,  s} (t) + R_{k,s} (t),
$$
where $T_{k,s} (t) = \sum_{r=0}^{k-1 - 2s} (-\veps)^r {t \choose r}$ is a polynomial in $t$ of degree $k-1 - 2s $. Using 
$0 \leq \veps \leq 1/2$ and, if $t \geq 2$, $0 \leq r \leq t$, $
{t \choose r} \leq 2^{t-1}$ (from Pascal's identity), 
$$
|R_ {k,s} (t)|   =    \ABS{ \sum_{r=k - 2s}^{t}   (-\veps)^r  {t \choose r} } \leq 2^{t-1} \sum_{r=k - 2s}^{t}   \veps^r    \leq 2^{t} \veps^{k - 2 s}.
$$
If $t \in \{0,1\}$ this last inequality also holds. Since $P_s$ is a polynomial of degree $2s$, we get from \eqref{eq:PPPP} that
\begin{eqnarray*}
  I = \sum_{s=0}^{ \lfloor \frac{k -1}{2} \rfloor}   \delta^s   \sum_{t=0} ^k  {k \choose t} (-1+\veps)^{t}  P_s(2t) =   \sum_{s=0}^{ \lfloor \frac{k -1}{2} \rfloor}   \delta^s   \sum_{t=0} ^k  {k \choose t} ( -1) ^t  R_{k,s} (t) P_s(2t).
\end{eqnarray*}
Taking absolute values and using again $|P_s(2 t)| \leq 2^s t^{2s} $, we find
\begin{eqnarray*}
\ABS{ I }   \leq   \sum_{s=0}^{ \lfloor \frac{k -1}{2} \rfloor}     |\delta|^s   \sum_{t=0} ^k  {k \choose t} 2^{t} \veps^{k-2s} k^{2s}2 ^{s}  =   ( 3 \veps) ^{k}  \sum_{s=0}^{ \lfloor \frac{k -1}{2} \rfloor}   \PAR{ \frac{2 |\delta| k^2}{  \veps^2}} ^s \leq  2 ( 3 \veps) ^{k}  \PAR{ \frac{2|\delta| k^2}{   \veps^{2} }} ^{\frac k 2 }, 
\end{eqnarray*}
where at the last step, we use that $ |\delta| k^2 /  \veps^2 \geq 1 $ and $\sum_{s=0}^r x^s \leq 2 x^r$ if $x \geq 2$. We obtain $|I| \leq 2 ( 3 \sqrt 2  k  z^{1/2} q^{1/4})^k$. It concludes the proof of the lemma. 
\end{proof}

\bibliographystyle{abbrv}
\bibliography{bib}

\bigskip
\noindent
 Charles Bordenave \\ 
Institut Math\'ematiques de Marseille . \\
39 rue Fr\'ed\'eric Joliot Curie.  13013 Marseille, France. \\
\noindent
{E-mail:}\href{mailto:charles.BORDENAVE@univ-amu.fr}{charles.bordenave@univ-amu.fr} \\

\noindent
Beno\^it Collins \\
Department of Mathematics, Graduate School of Science,  \\
Kyoto University, Kyoto 606-8502, Japan. \\
\noindent
{E-mail:}\href{mailto:collins@math.kyoto-u.ac.jp}{collins@math.kyoto-u.ac.jp} \\

\end{document}